\renewcommand*{\i}{\mathbf{i}}
\newcommand{\realr}{\mathbb{R}}
\newcommand{\abs}[1]{\left| #1 \right|}
\newcommand{\norm}[1]{\left\lVert#1\right\rVert}
\newcommand{\lvlset}{\mathcal{A}_t}
\newcommand{\suplvlset}{\mathcal{U}_t}
\newcommand{\R}{\mathbb{R}}
\newcommand{\intR}{\int \limits_\R}
\newcommand{\intRtwo}{\int \limits_{\R^2}}
\newtheorem{theorem}{Theorem}
\newtheorem{remark}[theorem]{Remark}
\newtheorem{definition}[theorem]{Definition}
\newtheorem{proposition}[theorem]{Proposition}
\newtheorem{lemma}[theorem]{Lemma}
\newtheorem{corollary}[theorem]{Corollary}
\newtheorem{conjecture}[theorem]{Conjecture}
\numberwithin{theorem}{section}
\numberwithin{equation}{section}
\DeclareMathOperator{\suppp}{supp \,}
\DeclareMathOperator{\dist}{dist\,}
\DeclareMathOperator{\dvg}{div}
\title{Stable  Gabor Phase Retrieval and Spectral Clustering}
\date{}
\begin{document}

\author{Philipp Grohs}
\address{Faculty of Mathematics, Universit\"at Wien, 1090 Wien, Austria}
\email{philipp.grohs@univie.ac.at}

\author{Martin Rathmair}
\address{Faculty of Mathematics, Universit\"at Wien, 1090 Wien, Austria}
\email{martin.rathmair@univie.ac.at}


\begin{abstract}
	We consider the problem of reconstructing a signal $f$ from its spectrogram, i.e., the magnitudes $|V_\varphi f|$ of its Gabor transform
	$$V_\varphi f (x,y):=\int_{\mathbb{R}}f(t)e^{-\pi (t-x)^2}e^{-2\pi \i y t}dt, \quad x,y\in \mathbb{R}.$$ Such problems occur in a wide range of applications, from optical imaging of nanoscale structures to audio processing and classification. 
	
	While it 
	is well-known that the solution of the above Gabor phase retrieval problem is unique up to natural identifications, the stability of the reconstruction has remained wide open. The present paper discovers a deep and surprising connection between phase retrieval, spectral clustering and spectral geometry. We show that the stability of the Gabor phase reconstruction is bounded by the reciprocal of the \emph{Cheeger constant} of the flat metric on $\mathbb{R}^2$, conformally multiplied with $|V_\varphi f|$.  The Cheeger constant, in turn, plays a prominent role in the field of spectral clustering, and it precisely quantifies the `disconnectedness' of the measurements $V_\varphi f$. 
	
	It has long been known that a disconnected support of the measurements results in an instability -- our result for the first time provides a converse in the sense that there are no other sources of instabilities. 
	
	Due to the fundamental importance of Gabor phase retrieval in coherent diffraction imaging, we also provide a new understanding of the stability properties of these imaging techniques: Contrary to most classical problems in imaging science whose regularization requires the promotion of smoothness or sparsity, the correct regularization of the phase retrieval problem promotes the `connectedness' of the measurements in terms of bounding the Cheeger constant from below. Our work thus, for the first time, opens the door to the development of 
	efficient regularization strategies.
\end{abstract}
\keywords{Phase Retrieval, Stability, Spectral Riemannian Geometry, Gabor Transform, Coherent Diffraction Imaging, Weighted Poincar\'e Inequalities}

\subjclass[2010]{
42A16, 58Jxx, 35A23, 35P15, 30D15, 94A12}

\maketitle

\section{Introduction}
\subsection{Motivation}
A signal is typically modeled as an element $f\in \mathcal{B}$ with $\mathcal{B}$ an $\infty$-dimensional Banach space.
\emph{Phase retrieval} refers to the reconstruction of a signal  from phaseless linear measurements
\begin{equation}
	\label{eq:phasemeas}
	(|\varphi_\omega (f) |)_{\omega\in \Omega},
\end{equation}
where $\Phi=(\varphi_\omega)_{\omega\in \Omega} \subset \mathcal{B}'$, the dual of $\mathcal{B}$. 
Since for any $\alpha \in \mathbb{R}$ the signal $e^{\i\alpha }f$ will yield the same phaseless linear measurements as $f$, a signal 
can only be reconstructed \emph{up to global phase} e.g., up to the identification $f\sim e^{\i \alpha} f$, where $\alpha \in \mathbb{R}$. If any $f\in \mathcal{B}$ can be uniquely reconstructed from its phaseless measurements (\ref{eq:phasemeas}), up to global phase, we say that $\Phi$ \emph{does phase retrieval}.

Phase retrieval problems of the aforementioned type occur in a remarkably wide number of physical problems (often owing to the fact that the phase of a high-frequency wave cannot be measured), probably most prominently in coherent diffraction imaging \cite{fienup1982phase,gerchberg1972practical,shechtman2015phase,rodenburg2007hard,marchesini2003x} where $\Phi$ is either a Fourier- or a Gabor dictionary. Other applications include quantum mechanics \cite{pauli1958allgemeinen},
audio processing \cite{bozkurt2005use,bruna2013audio} or Radar \cite{jaming1999phase}.

Given a concrete phase retrieval problem defined by a measurement system $\Phi$ it is notoriously difficult to study whether $\Phi$ does phase retrieval and
there are only a few concrete instances where this is known. In the $\infty$-dimensional setting, examples of such instances include phase retrieval from Poisson wavelet measurements \cite{waldspurger2015phase}, from Gabor measurements \cite{alaifari2016stable} or from masked Fourier measurements \cite{yang2013phase}, while it is known that the reconstruction of a compactly supported function from its Fourier magnitude is in general not uniquely possible \cite{hofstetter1964construction}.

From a computational standpoint, solving a given phase retrieval problem is even more challenging: Assuming that $\Phi$ does phase retrieval, an algorithmic reconstruction of a signal $f$ would require additionally
that the reconstruction be stable in the sense that
\begin{equation}\label{eq:stab}
	d_\mathcal{B}(f,g)\le c(f)\||\Phi(f)|-|\Phi(g)|\|_{\mathcal{D}}		\quad \mbox{for all } g\in \mathcal{B}
\end{equation} 
holds true, where we have put
\begin{equation}
	d_\mathcal{B}(f,g):=\inf_{\alpha\in \mathbb{R}}\|f-e^{\i\alpha}g\|_\mathcal{B},
\end{equation}
$\Phi(f):=\left\{\begin{array}{ccc}\Omega & \to &\mathbb{C}\\ \omega &\mapsto & \varphi_\omega(f)\end{array}\right.$, and $\|\cdot \|_{\mathcal{D}}$ a suitable norm on the measurement space of functions $\Omega \to \mathbb{C}$.

\subsection{Phase Retrieval is Severely Ill-Posed}
Despite its formidable relevance, the study of stability properties of phase retrieval problems has seen little
progress until
in recent work \cite{alaifari2016stable,cahill2016phase} a striking instability phenomenon has been identified by
showing that $\sup_{f\in \mathcal{B}} c(f) =\infty$, whenever $\mbox{dim}\mathcal{B}=\infty$ and some
natural conditions on $\mathcal{B}$ and $\mathcal{D}$ are satisfied. Even worse, the stability of finite-dimensional approximations to such problems in general degenerates exponentially in a power of the dimension \cite{aifarigrohsinstab,cahill2016phase}.   This means that 
\begin{quote} 
	{\it	every $\infty$-dimensional
		(and therefore every practically relevant) phase retrieval problem, as well as any fine-grained finite-dimensional approximation thereof, is unstable -- Phase retrieval is severely ill-posed. } 
\end{quote}
In view of this negative result, any phase retrieval problem needs to be regularized and any regularization strategy for a given phase retrieval problem would require a deeper understanding of the behaviour of the local Lipschitz constant $c(f)$. This is a challenging problem requiring genuninely new methods: in \cite{aifarigrohsinstab} we show that all conventional regularization methods based on the promotion of smoothness or sparsity are unsuitable for the regularization of phase retrieval problems.

\subsection{What are the Sources for Instability?}\label{subsec:sourcesinstab}
We briefly summarize the current understanding of the situation. 

A well-known source of instability (e.g., a very large constant $c(f)$), coined `multicomponent-type instability' in \cite{alaifari2016stable} arises whenever 
the measurements $\Phi(f)$ are separated in the sense that $f=u+v$ with $\Phi(u)$ and $\Phi(v)$ concentrated
in disjoint subsets of $\Omega$.  Intiutively, in this case the function $g=u-v$ will produce measurements $|\Phi(g)|$
very close to the original measurements $|\Phi(f)|$, while the distance $d_\mathcal{B}(f,g)$ is not small at all, resulting in an instability  (see also Figure \ref{fig} for an illustration). If $\mathcal{B}$ is a finite-dimensional Hilbert space over $\mathbb{R}$ (i.e., the real-valued case where only a sign and not the full phase needs to be determined) the correctness of this intuition has been proved in 
\cite{bandeira2014saving} and generalized in \cite{grohsstab} to the setting of $\infty$-dimensional real or complex Banach spaces: 
\begin{quote}
	{\it	If the measurements $\Phi(f)$ are concentrated on a union of at least two disjoint domains, phase retrieval becomes unstable and correspondingly, the constant $c(f)$ becomes large.}
\end{quote}

If $\mathcal{B}$ is a Banach space over $\mathbb{R}$ it is not very difficult to show that the `multicomponent-type instability' as just described is the only source of instability. More precisely, one can characterize $c(f)$, via the so-called $\sigma$--strong complement property (SCP) which indeeds provides a measure for the disconnectedness of the measurements, see \cite{bandeira2014saving,grohsstab}. While these results provide a complete characterization of the stability of phase retrieval problems over $\mathbb{R}$, we hasten to add that the verification of the $\sigma$--strong complement property is computationally intractable which severely limits their applicability. 

The (much more interesting) complex case is considerably more challenging and almost nothing is known. In this case the validity of the $\sigma$-SCP does not imply stability of the corresponding phase retrieval problem (it does not even imply uniqueness of the solution) \cite{bandeira2014saving}.

Nevertheless, the results in the real-valued case suggest the following informal conjecture. 
\begin{conjecture}\label{conj}
	Phase retrieval is unstable if and only if the measurements are concentrated on at least two distinct domains. In other words:
	if $c(f)$ is large, then it is possible to partition the parameter set $\Omega$ into  
	two disjoint domains $\Omega_1,\Omega_2\subset \Omega$ such that the measurements $\Phi(f):\Omega\to \mathbb{C}$ are `clustered' on $\Omega_1$, resp. $\Omega_2$.
\end{conjecture}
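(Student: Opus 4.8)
The plan is to establish Conjecture~\ref{conj} for the Gabor phase retrieval problem, by converting the stability constant $c(f)$ from \eqref{eq:stab} into a weighted Poincar\'e constant of the time--frequency plane and then invoking Cheeger's inequality. First I would pass to the Bargmann--Fock picture: there is an entire function $F$ (the Bargmann transform of $f$, up to normalization) with $|V_\varphi f(x,y)| = e^{-\pi(x^2+y^2)/2}\,|F(x+\i y)|$, so that phase retrieval amounts to recovering the holomorphic $F$ from $|F|$. On any connected component of $\{V_\varphi f\neq 0\}$ the function $\log F = \log|F| + \i\arg F$ is holomorphic, hence $\arg F$ is a harmonic conjugate of $\log|F|$ and is pinned down by $|F|$ only up to a single additive constant; several components carry several independent constants, which is the classical multicomponent mechanism behind instability.

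To obtain a quantitative converse I would linearize \eqref{eq:stab}. Differentiating a path $g_\tau$ with $g_0 = f$, the modulus perturbation $\rho := \partial_\tau\log|V_\varphi g_\tau|$ at $\tau = 0$ is harmonic away from the zeros of $V_\varphi f$ --- the $\tau$-independent term $\Delta\log|V_\varphi f| = -2\pi$ is annihilated by $\partial_\tau$, while moving a zero contributes a dipole, so in general $\Delta\rho$ is a finite sum of dipoles at the zeros --- its induced phase perturbation $\psi$ is the harmonic conjugate of $\rho$ (single valued, since the monodromy of $\arg V_\varphi g_\tau$ around a zero is a locally constant integer multiple of $2\pi$), and $\partial_\tau g_\tau$ corresponds in Fock space to $F\cdot(\rho+\i\psi)$. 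Since $V_\varphi$ is a constant multiple of an isometry and $\partial_\tau|V_\varphi g_\tau| = |V_\varphi f|\,\rho$, the local Lipschitz constant satisfies
\[
c(f)^2 \;\asymp\; 1 + \sup\left\{ \frac{\int_{\R^2}|V_\varphi f|^2\,\psi^2}{\int_{\R^2}|V_\varphi f|^2\,\rho^2}\;:\; \rho+\i\psi \text{ holomorphic off the zeros of } V_\varphi f,\ \textstyle\int_{\R^2}|V_\varphi f|^2\,\psi = 0 \right\},
\]
the mean-zero constraint removing the global phase. I would then argue that this \emph{local} constant controls the \emph{global} one in \eqref{eq:stab}: any far-apart competitor $g$ with $\||V_\varphi f| - |V_\varphi g|\|$ small produces, on each connected component of a superlevel set $\{|V_\varphi f| > \delta\}$ (on which $V_\varphi f$ does not vanish), a quotient $V_\varphi g/V_\varphi f$ that is holomorphic, nonvanishing, and of modulus close to $1$, hence a phase difference close to a constant there --- the integrated form of the supremum above; reproducing-kernel estimates in Fock space turn the $L^2$-closeness of the spectrograms into the pointwise control this step requires.

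The analytic heart is the weighted Poincar\'e-type inequality encoded in that supremum. Because in two dimensions the Dirichlet energy is conformally invariant and $|\nabla\rho| = |\nabla\psi|$ by the Cauchy--Riemann equations, the relevant inequality is one for the Laplacian of the flat metric conformally rescaled by $|V_\varphi f|$ --- exactly the object in the abstract --- and its sharp constant is governed by a weighted isoperimetric profile, so that Cheeger's inequality for this metric yields $c(f) \lesssim 1/h$ with $h$ the Cheeger constant. Consequently, if $c(f)$ is large then $h$ is small: there is a set $\Omega_1\subset\R^2$ whose boundary has small perimeter relative to the smaller of the two volumes in the weighted metric. Since that perimeter weights by $|V_\varphi f|$, this forces $|V_\varphi f|$ to be small along $\partial\Omega_1$ while $\Omega_1$ and $\Omega_2 := \R^2\setminus\Omega_1$ each carry comparable mass --- precisely the assertion that $V_\varphi f$ is clustered on $\Omega_1$ and on $\Omega_2$. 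The reverse implication, that clustering forces $c(f)$ large, is the classically known lower bound: the competitor whose Bargmann transform is $e^{\i\beta_1}F$ on $\Omega_1$ and $e^{\i\beta_2}F$ on $\Omega_2$ with $\beta_1\neq\beta_2$ perturbs the spectrogram by at most an amount controlled by the weighted perimeter of $\Omega_1$.

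The step I expect to be the main obstacle is making this reduction rigorous in the presence of the zeros of $V_\varphi f$: there $\log|F|$ is harmonic only away from a discrete set, $\{V_\varphi f\neq 0\}$ is multiply connected, and $\rho$ solves a dipole equation rather than $\Delta\rho = 0$. Controlling the contribution of small neighbourhoods of the zeros --- where the weight $|V_\varphi f|^2$ degenerates and the Gauss curvature of the rescaled metric blows up like $1/|V_\varphi f|$ --- and showing that it does not corrupt the Cheeger comparison is the delicate part; this is also why a matching lower bound $c(f)\gtrsim 1/h$ (which would give $c(f)\asymp 1/h$ but is \emph{not} needed for Conjecture~\ref{conj}) would require a Buser-type reverse Cheeger inequality that does not follow from standard curvature hypotheses.
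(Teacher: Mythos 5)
Your overall strategy points in the same philosophical direction as the paper (weighted Poincar\'e constant of the $|V_\varphi f|$-rescaled plane, then Cheeger), but the route you take has two genuine gaps. The first is the linearization step. You replace the stability constant $c(f)$ of (\ref{eq:stab}) by a supremum over infinitesimal perturbations $\rho+\i\psi$ and assert $c(f)^2\asymp 1+\sup\{\dots\}$; this equivalence is never established, and it is not a harmless reduction: (\ref{eq:stab}) quantifies over \emph{all} $g$ in the modulation space, not over nearby $g$, and the promised passage from the local constant to the global one (``reproducing-kernel estimates turn $L^2$-closeness into the pointwise control this step requires'') is exactly the hard content, not a routine patch. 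The paper avoids linearization altogether: it estimates $\inf_\alpha\|F_1-e^{\i\alpha}F_2\|_{L^p}$ directly through the meromorphic quotient $F=F_2/F_1$, using the identity $|F'|=|\nabla|F||$ (Lemma \ref{keylemma}) to reduce to moduli, and it handles the constraint $|a|=1$ (which you silently drop when you impose only a mean-zero condition on $\psi$) by the sampling-constant construction of Section \ref{sec:balancesec}, balanced against the analytic Poincar\'e constant via Proposition \ref{prop:balance}.

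The second gap is the norm in which the spectrogram error is measured, and the logarithmic derivative that this forces. As stated, your conclusion $c(f)\lesssim 1/h$ with a plain $L^2$ (or $L^p$) norm of $|V_\varphi f|-|V_\varphi g|$ is false in general: the paper remarks that without derivative terms and a polynomial weight the constant is infinite (e.g.\ $f(t)=\frac{1}{1+t^2}$). The reason is visible in the quotient-rule computation: after the Poincar\'e step one must control $\nabla\frac{|F_2|}{|F_1|}$, which produces not only $\nabla|F_1|-\nabla|F_2|$ but also the term $\nabla\log(V_\varphi f)\cdot(|V_\varphi f|-|V_\varphi g|)$, and $\nabla\log(V_\varphi f)$ is unbounded at the zeros. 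Absorbing this term is where the paper invests its function-theoretic work (Hadamard factorization and Jensen's formula to bound the density of zeros, yielding Propositions \ref{proplogderivest} and \ref{prop:logdermain}), and it is what dictates the weighted norm $\mathcal{D}_{p,q}^{1,6}$ and the restriction $p<2$. Relatedly, you invoke ``Cheeger's inequality for this metric,'' but since the conformal factor $|V_\varphi f|^p$ vanishes, the standard Riemannian statement does not apply; the paper proves the needed weighted Poincar\'e--Cheeger comparison by hand, via a co-area argument restricted to meromorphic functions (Theorem \ref{thm:maincheeger}), precisely to sidestep the degeneracy you flag. So while you correctly identify the zeros as the main obstacle and correctly note that no Buser-type converse is needed, the parts you leave unresolved are exactly the parts that constitute the proof; your easy direction (clustering $\Rightarrow$ instability) is fine but is not what the paper needs to establish.
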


While this conjecture seems to be folklore in the phase retrieval community (for example in \cite[page 1273]{waldspurger2015wavelet} it is explicitly stated that `all instabilities [...] that we were able to observe in practice were of the form we described [...]', meaning that they arise from measurements with disconnected components. Furthermore, \cite{waldspurger2015wavelet} provides partial theoretical support for Conjecture \ref{conj} for phase retrieval problems based on wavelet measurements) and ensuring connectedness of 
the essential support of the measurements is a common empirical regularization strategy \cite{waldspurger2015wavelet,fannjiang2012absolute,bandeira2014saving,jaganathan2012recovery}, we are not aware of any mathematical result which resolves Conjecture \ref{conj} for any concrete phase retrieval problem.

\subsection{Phase Retrieval and Spectral Clustering}
Looking at Conjecture \ref{conj}, clustering problems in data analysis come to mind. We may, as a matter of fact, look into this field to formalize what it could possibly mean that `data is clustered on two disjoint sets'. Let us suppose that $\Omega = \mathbb{R}^d$. We could interpret the measurements $|\Phi(f)|:\Omega \to \mathbb{R}_+$ as a density
measure $d\mu = |\Phi(f)|dx$ (we shall also write $\mu^{d-1}$ for the induced surface measure) of data points and attempt to find two (or more)
`clusters' (i.e., subsets of $\Omega$) on which this measure is concentrated. In data analysis, the standard notion which describes
the degree to which it is possible to divide data points into clusters is the Cheeger constant  which may be defined as

\begin{equation}   
\label{eq:CheegerFirstDef}
\inf_{C\subset \Omega}\frac{\mu^{d-1}(\partial C)}{\min(\mu(C),\mu(\Omega\setminus C))}
= \inf_{C\subset \Omega,\ \mu(C)\le \frac12 \mu(\Omega)}\frac{\mu^{d-1}(\partial C)}{\mu(C)},
\end{equation}
see for example \cite{kannan2004clusterings,chung1997spectral,spielman2007spectral}. 
Looking at the above definition it becomes clear that the Cheeger constant indeed gives a measure of disconnectedness: if the constant above is small, 
there exists a partition of $\Omega$ into a set $C$ and $\Omega\setminus C$ such that the volume of both $C$ and $\Omega\setminus C$ is large, while the volume of the `interface' $\partial C$ is small. 

\subsection{This Paper}
The present paper establishes a surprising connection between the mathematical analysis of clustering problems and phase retrieval: we show that for
a Gabor dictionary $$\Phi(f)=\left(V_\varphi f(x,y):=\int_{\mathbb{R}}f(t)e^{-\pi (t-x)^2}e^{-2\pi \i ty}dt\right)_{(x,y)\in \mathbb{R}^2}$$ the Cheeger constant also characterizes the stability of the corresponding phase retrieval problem:

Given $f\in \mathcal{B}$ where $\mathcal{B}$ denotes a certain modulation space and $\|\cdot \|_\mathcal{D}$ a natural norm on the measurement space of functions on $\Omega=\mathbb{R}^2$ our main result, Theorem \ref{thm:main}, shows that the stability constant $c(f)$
can be bounded from above (up to a fixed constant, independent of $f$) by $h(f)^{-1}$, where
$$
h(f)=\inf_{\{C\subset \mathbb{R}^2 \text{ open}: \partial C\text{ is smooth}\atop \text{and }\int_{C}|V_\varphi f|\le \frac12 \int_{\mathbb{R}^2}|V_\varphi f|\}} \frac{\|V_\varphi f\|_{L^1(\partial C)}}{\|V_\varphi f\|_{L^1(C)}}
$$
denotes what we call the \emph{Cheeger constant} of $f$. Note that the above definition is completely in line with (\ref{eq:CheegerFirstDef}) by setting $d\mu = |V_\varphi f(x,y)|dxdy$. The motivation for the term Cheeger constant stems from the fact that
$h(f)$ is actually equal to the well-known Cheeger constant from Riemannian geometry \cite{chavel1984eigenvalues} if we endow $\mathbb{R}^2$ with
the Riemannian metric\footnote{accepting the slight inaccuracy that $|V_\varphi f|$ may have zeros, so one does not in general get a Riemannian metric} induced by the metric tensor $$\left(|V_\varphi f(x,y)|\begin{bmatrix}1 & 0 \\ 0 & 1\end{bmatrix}\right)_{(x,y)\in \mathbb{R}^2}.$$
Such a metric is sometimes also called a \emph{conformal multiplication of the flat metric} by $|V_\varphi f|$.

We would like to stress that our result can be regarded as a formalization and as a proof of Conjecture \ref{conj}:
The fact that $h(f)$ is small precisely describes the fact that the measurement space $\Omega = \mathbb{R}^2$ can be
partitioned into two sets $C$ and $\mathbb{R}^2\setminus C$ such that both $\|V_\varphi f\|_{L^1(C)}$ and 
$\|V_\varphi f\|_{L^1(\mathbb{R}^2\setminus C)}$ are large, but on their separating boundary $\partial C$, the measurements are small. The quantity $h(f)$ is therefore a mathematical measure for the disconnectedness of the measurements.  Indeed, as already mentioned, the Cheeger constant
forms a crucial quantity in spectral clustering algorithms \cite{spielmat1996spectral} and is, in the field of data science, a well-established quantity describing the degree of disconnectedness of data. Our results show that such a disconnectedness is the only possible source of instability of phase retrieval from Gabor measurements and we find it quite remarkable that the notion of Cheeger constant which is standard in clustering problems occurs as a natural characterization of the stability of phase retrieval. 

\begin{figure}[ht] 
\centering
\includegraphics[width=0.89\textwidth]{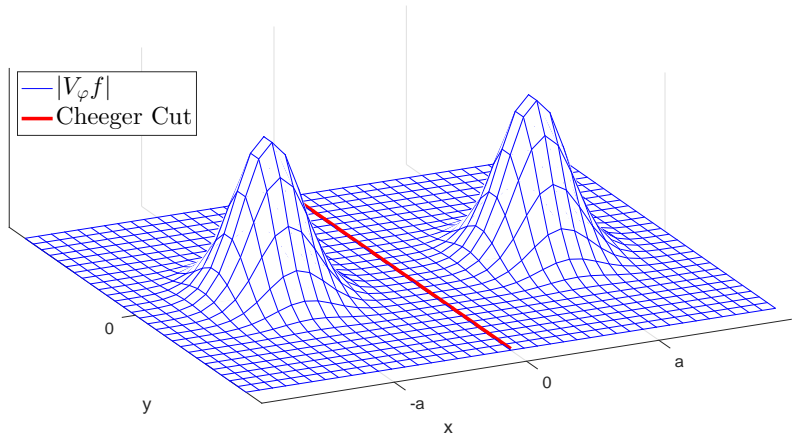}
\caption{Standard examples of instabilites are constructed by adding functions whose measurements are essentially supported on sets that are far apart from each other. For the Gabor phase retrieval problem, such instabilities can be constructed as
	$f(\cdot)=\varphi(\cdot+a) + \varphi(\cdot-a)$,
	where $\varphi(\cdot)=e^{-\pi\cdot^2}$ denotes the Gaussian and $a>0$ is a large real number.
	Since $V_\varphi f(x,y) = V_\varphi \varphi(x+a,y) + V_\varphi \varphi(x-a,y)$ holds true, Lemma \ref{lem:GaborGauss} yields that
	$
	\abs{V_\varphi f} \approx \abs{V_\varphi g}
	$,
	where $g(\cdot)=\varphi(\cdot+a) - \varphi(\cdot-a)$. Cutting the time-frequency plane along the line $x=0$ results in two sets of equal measure w.r.t. $\abs{V_\varphi f}(x,y)dxdy$. 
	On the seperating line (called a `Cheeger cut') the weight is small, therefore also the Cheeger constant  will be very small. Our main result shows that all instabilities `look like the above picture'.}
\label{fig}
\end{figure}

\subsection{Implications}
Aside from providing the first ever stability bounds for any realistic $\infty$-dimensional phase retrieval problem, our result has a number of important implications:
\begin{itemize}
	\item Given measurements $V_\varphi f$, estimating the Cheeger constant $h(f)$  is a computationally tractable procedure \cite{arias2012normalized,ng2001spectral}. In this way one can decide from the measurements how 
	noise-stable the reconstruction is expected to be.
	\item Our results (in particular Corollary \ref{cor:noisymeas} below) for the first time open the door to the construction of regularization methods for the notoriously ill-posed phase retrieval problem from Gabor measurements. Any useful regularizer will have to promote 
	the connectedness of the measurements in terms of keeping the value $h(f)$ above a certain threshold. To put it more pointedly:\begin{quote}    \emph{Contrary to most classical problems in imaging science whose regularization requires the promotion of smoothness or sparsity, the correct regularization of the phase retrieval problem promotes the `connectedness' of the measurements in terms of the Cheeger constant!}\end{quote}
	We will explore algorithmic implications in future work.
	\item Often one has a priori knowledge on the data $f$ to be measured in the sense that $f$ belongs to 
	a compact subset $\mathcal{C}\subset \mathcal{B}$ (such as, for example, piecewise smooth nonnegative functions).
	By studying the quantity $\inf_{f\in \mathcal{C}}h(f)$ we can for the first time decide what type of a priori knowledge is useful for the phase retrieval problem. We also expect our stability results to lead to insights on how to design masks $\omega$ such that the Gabor phase retrieval problem of the masked signal $\omega f$ becomes stable.
	\item In \cite{alaifari2016stable} it has been observed that for various applications, such as audio processing,
	the multi-component-type instability is actually harmless because the assignment of different bulk phases to 
	different connected components of the measurements is not recognizable by the human ear. Our results show that
	in fact no other instabilities occur which, for these applications, makes phase retrieval a stable problem! 
	In particular, using our insights we expect to be able to make the concept of `multicomponent instability' of \cite{alaifari2016stable} rigorous. 
	Furthermore in Section \ref{sec:partalg} we outline how to algorithmically 
	find multicomponent decompositions for unstable Gabor measurements using well-established spectral clustering algorithms which are precisely based on minimizing the Cheeger constant associated with the data \cite{spielmat1996spectral}.
	
	\item The quantity $h(f)^{-1}$ has another interpretation: it provides a bound for the \emph{Poincar\'e constant}
	on the weighted $L^1(\mathbb{R}^2,\mu)$ space with measure $d\mu = |V_\varphi f| dx dy$. In fact, our results show that the stability of Gabor phase retrieval is controlled by the Poincar\'e constant. There exists a huge body of research providing bounds on such weighted Poincar\'e constants in terms of properties of $|V_\varphi f|$. By our results, every such result directly implies a stability result for phase retrieval from Gabor measurements.  
	\item In Section \ref{sec:heatflow} we outline an intimate connection between Gabor phase retrieval and the solution of the backward heat equation. Our results therefore also have implications on the latter problem which we will study in detail in future work.
\end{itemize}
Our proof techniques are not restricted to the case of Gabor measurements but crucially assume that, up to multiplication with a smooth function, the measurements $\Phi(f)$ constitute a holomorphic function which is for example also satisfied if the measurements arise from a wavelet transform with a Poisson wavelet \cite{waldspurger2015phase}. In terms of practical applications, the case of Gabor measurements is already of great relevance:
Such measurements arise for instance in Ptychography, a subfield of diffraction imaging where an extended object is scanned through a highly coherent X-ray beam, producing measurements which can be modeled as Gabor measurements \cite{goodman2005introduction,rodenburg2007hard,shechtman2015phase}. Another application area is in audio processing where phase retrieval from Gabor measurements arises in the so-called `phase coherence problem' for phase vocoders \cite{flanagan1966phase,auger2012phase,pruuvsa2016non}.

\section{Summary of our Main Result}
\subsection{Main Results of This Paper}
This section summarizes our main results. We denote by $\mathcal{S}(\mathbb{R})$ the space
of Schwartz test functions and with $\mathcal{S}'(\mathbb{R})$ its dual, the space of tempered distributions \cite{strichartz2003guide}.
The short-time Fourier transform (STFT) is then defined as follows.
\begin{definition}\label{def:gabor}
	Let $g\in \mathcal{S}(\mathbb{R})$. Then the short-time Fourier tranform (STFT) (with window function $g$) of a tempered distribution 
	$f\in \mathcal{S}'(\mathbb{R})$ is defined as
	$$
	V_g f(x,y):=\left(f,\overline{g(\cdot-x)}e^{-2\pi \i y \cdot}\right)_{\mathcal{S}'(\mathbb{R})\times\mathcal{S}(\mathbb{R})}\footnote{If $f$ is a regular tempered distribution, i.e., abusing notation, $(f,g)_{\mathcal{S}'(\mathbb{R})\times\mathcal{S}(\mathbb{R})}:= \int_{\mathbb{R}}f(t)g(t)dt$ for $g\in \mathcal{S}(\mathbb{R})$, we would get the usual formula $V_g f(x,y)=\int_\mathbb{R} f(t)\overline{g(t-x)} e^{-2\pi \i y t}dt$}.
	$$
	If $g(t)=\varphi(t):= e^{-\pi t^2}$ we call the arising STFT the Gabor transform.
\end{definition}
The functional analytic properties of the STFT are best studied within the framework of modulation spaces
as defined below.
\begin{definition}
	Given $1\le p\le \infty$, the Modulation space $M^{p,p}(\mathbb{R})$ is defined as 
	$$
	M^{p,p}(\mathbb{R}):=\left\{f\in \mathcal{S}'(\mathbb{R}):\ 
	V_gf \in L^p(\mathbb{R}^2) \right\},
	$$
	with induced norm
	$$
	\|f\|_{M^{p,p}(\mathbb{R})}:= \|V_g f\|_{L^p(\mathbb{R}^2)}.
	$$
	Its definition is independent of $g\in \mathcal{S}(\mathbb{R})$, see \cite{grochenig}.
\end{definition}
Our goal will be to restore a signal $f$ in a modulation space  $M^{p,p}(\mathbb{R})$ from its phaseless
Gabor measurements $|V_\varphi f|: \mathbb{R}^2\to \mathbb{R}_+$, up to a global phase. 

It is well-known that for any suitable window function the resulting phase retrieval problem is uniquely solvable:
\begin{theorem}\label{thm:uniqueness}
	Suppose that $g\in \mathcal{S}(\mathbb{R})$ is such that its \emph{ambiguity function}
	$$
	\mathcal{A}(g)(x,y):= \int_{\mathbb{R}}g(t)\overline{g(t-x)}e^{-2\pi \i t y}dt,\quad
	(x,y)\in \mathbb{R}^2
	$$
	is nonzero everywhere. 
	Then, for any $f,h\in \mathcal{S}'(\mathbb{R})$ with $|V_gf|=|V_g h|$ 
	there exists $\alpha \in \mathbb{R}$ such that $f = e^{\i \alpha }h$.
\end{theorem}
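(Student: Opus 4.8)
The plan is to transfer the phaseless relation $|V_gf|=|V_gh|$ into an honest equality of tensor products $f\otimes\overline f=h\otimes\overline h$ in $\mathcal{S}'(\R^2)$, and then to read the global phase off the latter by a short duality argument. The nonvanishing hypothesis on $\mathcal{A}(g)$ will enter exactly once, as a divisibility statement.

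First I would record that for $f\in\mathcal{S}'(\R)$ and $g\in\mathcal{S}(\R)$ the function $V_gf$ is smooth with polynomially bounded derivatives, so $|V_gf|^2$ defines a tempered distribution on $\R^2$, and I would establish the classical ambiguity identity
\[
\iint_{\R^2}|V_gf(x,y)|^2\,e^{-2\pi\i(y\xi-x\tau)}\,dx\,dy=\mathcal{A}(f)(-\xi,-\tau)\,\overline{\mathcal{A}(g)(-\xi,-\tau)},
\]
as an equality in $\mathcal{S}'(\R^2)$: one checks it by a direct Fubini computation for $f\in\mathcal{S}(\R)$, extends it bilinearly (polarizing and using weak-$*$ density of $\mathcal{S}(\R)$ in $\mathcal{S}'(\R)$ in each slot separately), or simply quotes the Wigner/ambiguity calculus of \cite{grochenig}, noting $\mathcal{A}(g)=V_gg\in\mathcal{S}(\R^2)$. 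If $|V_gf|=|V_gh|$, the two left-hand sides coincide, and since $(\xi,\tau)\mapsto(-\xi,-\tau)$ is a bijection we get $(\mathcal{A}(f)-\mathcal{A}(h))\cdot\overline{\mathcal{A}(g)}=0$ in $\mathcal{S}'(\R^2)$. Now the hypothesis is used: because $\mathcal{A}(g)$ is nowhere zero, multiplication by the Schwartz function $\overline{\mathcal{A}(g)}$ has \emph{dense range} in $\mathcal{S}(\R^2)$ --- given $\Phi\in\mathcal{S}(\R^2)$ one approximates it by $\Phi\cdot\chi(\cdot/n)=\overline{\mathcal{A}(g)}\cdot\bigl(\Phi\,\chi(\cdot/n)/\overline{\mathcal{A}(g)}\bigr)$ with $\chi\in C_c^\infty(\R^2)$ equal to $1$ near the origin --- so every tempered distribution annihilated by this multiplier is zero, whence $\mathcal{A}(f)=\mathcal{A}(h)$.

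Next I would observe that $F\mapsto\bigl((x,y)\mapsto\mathcal{F}_{t\to y}[\,F(t,\,t-x)\,]\bigr)$ is a composition of pullback by an invertible linear map with a partial Fourier transform, hence a bijection of $\mathcal{S}'(\R^2)$, and that it sends $f\otimes\overline f$ precisely to $\mathcal{A}(f)$; thus $\mathcal{A}(f)=\mathcal{A}(h)$ is equivalent to $f\otimes\overline f=h\otimes\overline h$ in $\mathcal{S}'(\R^2)$. From here the conclusion is routine. If $f=0$, testing against $\phi\otimes\overline\phi$ forces $h=0$. Otherwise pick $\phi_0\in\mathcal{S}(\R)$ with $(f,\phi_0)\ne0$; testing $f\otimes\overline f=h\otimes\overline h$ against $\phi_0\otimes\psi$ for arbitrary $\psi\in\mathcal{S}(\R)$ yields $(f,\psi)=\overline\lambda\,(h,\psi)$ with $\lambda=(h,\phi_0)/(f,\phi_0)$ independent of $\psi$, so $f=\overline\lambda\,h$; re-substituting into the tensor identity gives $|\lambda|^2=1$, i.e. $f=e^{\i\alpha}h$ with $e^{\i\alpha}=\overline\lambda$.

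The genuinely delicate point is the middle step: one must make sense of the ambiguity identity and, above all, of the division by $\mathcal{A}(g)$ at the level of tempered distributions, since $1/\mathcal{A}(g)$ is in general nowhere near being a tempered multiplier --- it is only the density of the range of multiplication by $\mathcal{A}(g)$, which uses the bare nonvanishing and no quantitative lower bound, that rescues the argument. By contrast, the identification $\mathcal{A}(f)=\mathcal{A}(h)\Leftrightarrow f\otimes\overline f=h\otimes\overline h$ and the final extraction of the unimodular constant are elementary, and it is the last move that makes transparent why uniqueness can hold only modulo a global phase.
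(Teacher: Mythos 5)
Your proposal is correct and follows essentially the same route as the paper's own proof (Theorems \ref{thm:factorizationspectrogram} and \ref{thm:ambiguitypr} in Appendix \ref{app:gabor}): establish the distributional ambiguity identity relating $\mathcal{F}|V_gf|^2$ to $\mathcal{A}f\cdot\mathcal{A}g$ (up to reflections/conjugation that depend on conventions), use the nonvanishing of $\mathcal{A}g$ together with density of $C_c^\infty$ in $\mathcal{S}$ to divide it out and conclude $\mathcal{A}f=\mathcal{A}h$, and then extract a unimodular constant from the resulting tensor-product identity $f\otimes\overline f=h\otimes\overline h$. The one place the paper invests real effort is the extension of the ambiguity identity to $f\in\mathcal{S}'(\realr)$ (carried out via Riemann-sum approximations and a density argument in the window), which you flag as the delicate step and outsource to a polarization/density heuristic or to \cite{grochenig}; apart from that shortcut the arguments coincide.
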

\begin{proof}
	This is essentially folklore. For the convenience of the reader we provide a proof in Appendix \ref{app:gabor}.
\end{proof}
Since the Gabor window $\varphi(t)=e^{-\pi t^2}$ satisfies the assumptions of Theorem \ref{thm:uniqueness},
we know that any $f$ is uniquely, up to global phase, determined by its Gabor transform magnitudes $|V_\varphi f|$.
For nice signals we even have an explicit reconstruction formula (see Theorem \ref{thm:ambiguitypr} in the appendix and denoting $\mathcal{F}_2$ the Fourier transform in the second coordinate 
and $S$ the transform defined by $SF(x,y)=F(y,x)$):
$$
  f(t)\cdot \overline{f(0)} = \mathcal{F}_2^{-1}\left(S\mathcal{F}\abs{V_g f}^2 / \mathcal{A}g\right)(t,t).
$$
We do not know however how to exploit this formula for the question of stability of our phase retrieval problem and our methods do not make use of it.

What makes the Gabor transform special is that it possesses a lot of additional structure as compared to an ordinary STFT. For instance, it turns out that the Gabor transform of a tempered distribution is, after simple modifications, a holomorphic function.
\begin{theorem}\label{thm:gaborholo} Let $z:= x+\i y\in \mathbb{C}$.
	Define $\eta(z):=e^{\pi \left(\frac{|z|^2}{2}-\i xy\right)}$. Then for every $f\in \mathcal{S}'(\mathbb{R})$ the function $x+\i y\mapsto \eta(x,y)\cdot V_\varphi f(x,-y)$ is an entire function.
\end{theorem}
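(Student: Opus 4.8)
The plan is to reduce the claimed entireness to the classical fact that the Bargmann transform of a tempered distribution is an entire function, then verify that $\eta(x,y)\cdot V_\varphi f(x,-y)$ agrees with (a reparametrization of) the Bargmann transform up to a nowhere-vanishing analytic factor. Concretely, I would first write out, for a regular $f$, the integral $V_\varphi f(x,-y)=\int_\R f(t) e^{-\pi(t-x)^2} e^{2\pi\i yt}\, dt$ and multiply by $\eta(z) = e^{\pi(|z|^2/2 - \i xy)}$. Completing the square in the exponent $-\pi(t-x)^2 + 2\pi\i yt + \pi(|z|^2/2 - \i xy)$ in the variable $t$, with $z=x+\i y$, the goal is to massage it into the form $-\pi t^2 + 2\pi t z - \tfrac{\pi}{2} z^2$ (or a similarly holomorphic-in-$z$ expression), so that
$$
\eta(z)\, V_\varphi f(x,-y) = \int_\R f(t)\, e^{-\pi t^2}\, e^{2\pi t z - \pi z^2/2}\, dt = e^{-\pi z^2/2}\int_\R f(t)\, e^{-\pi t^2 + 2\pi t z}\, dt,
$$
which, up to the entire prefactor $e^{-\pi z^2/2}$, is exactly the Bargmann transform $Bf(z)$ of $f$ (in the appropriate normalization).

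Next I would justify the holomorphy of $z\mapsto \int_\R f(t)\, e^{-\pi t^2 + 2\pi t z}\, dt$ for general $f\in\mathcal{S}'(\R)$, not just regular ones. The clean way is to interpret the integral as the pairing $\bigl(f,\, t\mapsto e^{-\pi t^2 + 2\pi t \bar z}\bigr)_{\mathcal{S}'\times\mathcal{S}}$ — note the kernel $t\mapsto e^{-\pi t^2+2\pi t w}$ is a Schwartz function for each fixed $w\in\C$ because the Gaussian decay dominates — and then check two things: (i) the map $w\mapsto e^{-\pi t^2 + 2\pi t w}$ is a holomorphic $\mathcal{S}(\R)$-valued function of $w$, with difference quotients converging in the Schwartz topology to $t\mapsto 2\pi t\, e^{-\pi t^2 + 2\pi t w}$; and (ii) continuity of $f$ on $\mathcal{S}(\R)$ then transfers this to holomorphy of the scalar function $z\mapsto (f, \cdot)$. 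Point (i) is the routine-but-slightly-fiddly estimate: one expands $e^{2\pi t h} = 1 + 2\pi t h + O(h^2)$ and controls the Schwartz seminorms $\sup_t |t^k \partial_t^\ell(\cdots)|$ uniformly for $h$ in a small disk, using that every polynomial factor is absorbed by $e^{-\pi t^2}$.

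Finally I would assemble the pieces: $\eta(z)\, V_\varphi f(x,-y) = e^{-\pi z^2/2}\cdot (f, e^{-\pi(\cdot)^2 + 2\pi(\cdot)\bar z})$ up to conjugation bookkeeping on $z$ versus $\bar z$ — I will need to be careful whether the Bargmann kernel should involve $z$ or $\bar z$, and correspondingly whether the claimed entire function is the one written or its complex conjugate reflected; the statement as phrased (with $V_\varphi f(x,-y)$ rather than $V_\varphi f(x,y)$) is presumably exactly the choice that makes it holomorphic rather than anti-holomorphic. Since $e^{-\pi z^2/2}$ is entire and the pairing is entire by the previous paragraph, the product is entire. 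I expect the main obstacle to be purely bookkeeping: getting the completion-of-the-square identity and the sign/conjugation conventions exactly right so that the $\i xy$ term in $\eta$ and the $e^{-2\pi\i y t}$ (with $y\mapsto -y$) conspire to kill all anti-holomorphic dependence, leaving a function of $z$ alone. There is no deep difficulty — the content is that the Bargmann transform is holomorphic, which is standard (e.g. \cite{grochenig}) — so in the writeup I would either cite that fact directly or include the short self-contained argument sketched above.
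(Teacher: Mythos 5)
Your proposal is correct, and it takes a genuinely different route from the paper. The paper invokes the structure theorem for tempered distributions (every $f\in\mathcal{S}'(\R)$ is a finite-order distributional derivative of a continuous function of polynomial growth), transfers the derivatives onto the Gaussian kernel via duality, and then verifies entireness of the resulting ordinary integral with a dominated-convergence/Morera argument on compact disks. You instead algebraically identify $\eta(z)\,V_\varphi f(x,-y)$ with $e^{-\pi z^2/2}\,(f,\,t\mapsto e^{-\pi t^2+2\pi tz})_{\mathcal{S}'\times\mathcal{S}}$ and argue that $z\mapsto e^{-\pi(\cdot)^2+2\pi(\cdot)z}$ is a holomorphic $\mathcal{S}(\R)$-valued map (difference quotients converge in every Schwartz seminorm, the Gaussian absorbing all polynomial factors), so that composing with the continuous linear functional $f$ yields a holomorphic scalar function. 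Both routes are standard and sound; yours is more abstract and, one might argue, cleaner in that it avoids the structure theorem and the explicit Hadamard-type bookkeeping with $p(t-z)$, at the price of requiring the (also standard, but slightly fiddly) lemma that weak holomorphy of a locally convex space-valued map implies holomorphy of its scalar pairings.

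One small point worth nailing down in the writeup: with the paper's conventions the pairing $(\cdot,\cdot)_{\mathcal{S}'\times\mathcal{S}}$ is bilinear (no conjugation in the second slot), and the complex conjugate in the definition $V_g f(x,y)=(f,\overline{g(\cdot-x)}e^{-2\pi\i y\cdot})$ acts only on the (real) Gaussian window and therefore vanishes; hence after absorbing $\eta(z)$ the test function becomes exactly $t\mapsto e^{-\pi t^2+2\pi t z}$ with $z$, not $\bar z$. You correctly flag this as bookkeeping to check, and indeed it works out in favor of holomorphy precisely because of the reflection $y\mapsto -y$ in the statement. Once that is made explicit, the argument is complete.
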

\begin{proof}
	This is again well-known, at least for $f\in L^2(\mathbb{R})$, see for example \cite{ascensi2009model} where it is also shown that $\varphi$ 
	is essentially the only window function with this property. For the convenience of the reader we present a proof in Appendix \ref{app:gabor}.
\end{proof}

We 
are interested in stability estimates of the form (\ref{eq:stab}). To this end we need to put
a norm $\|\cdot \|_\mathcal{D}$ on the measurement space $\mathcal{S}'(\mathbb{R}^2)$.
A suitable family of norms on the measurement space turns out to be the following.
\begin{definition}\label{def:Dnorms}
	For a bivariate tempered distribution $F\in \mathcal{S}'(\mathbb{R}^2)$ and $D\subset\mathbb{R}^2$ we define the norms
	$$
	\|F\|_{\mathcal{D}_{p,q}^{r,s}(D)}:=\|F\|_{L^p(D)}+\|F\|_{L^q(D)} +
	\|\nabla^r F\|_{L^p(D)}
	+  \|(|x|+|y|)^s F(x,y)\|_{L^q(D)}
	$$
	where $\nabla^r$ denotes the $r$-th order total differential of a bivariate tempered distribution.
	
	If $D=\mathbb{R}^2$ we simply write $\mathcal{D}_{p,q}^{r,s}$ instead of $\mathcal{D}_{p,q}^{r,s}(\mathbb{R}^2)$.
\end{definition}
\begin{remark}
	It may appear slightly irritating that the norms on measurement space include a polynomial weight. It turns out
	that without any polynomial weight (for example putting $p=q=2$ and $r=s=0$), the stability constant $c(f)$ will in general be infinite (as a nontrivial exercise the reader may verify this for the function $f(t)=\frac{1}{1+t^2}$). In a sense the norms $\mathcal{D}_{p,q}^{r,s}(D)$ possess some symmetry between the space domain and the Fourier domain in the sense that they promote both spatial as well as Fourier-domain localization.
\end{remark}
The norms as just introduced measure the time-frequency concentration of $F$ in terms of both smoothness and spatial localization. Note that the last term $\|(|x|+|y|)^s F(x,y)\|_{L^q(\mathbb{R}^2)}$ in its definition is not translation-invariant and therefore it will be convenient to apply the norm to, what we call, centered functions.
\begin{definition}\label{def:centered}
	A function $F:\mathbb{R}^2\to \mathbb{C}$ is \emph{centered} if $\abs{F}$ possesses a maximum at the origin
	$(x,y)=(0,0)$.
\end{definition}
Our setup is now complete; with $\Phi = (\varphi(\cdot - x)e^{2\pi \i y\cdot })_{(x,y)\in \mathbb{R}^2}$, $\mathcal{B}=M^{p,p}(\mathbb{R})$ a modulation space and $\mathcal{D}$ 
the norm as defined above we are interested in estimating the constant $c(f)$ as defined in (\ref{eq:stab}).

The  main insight of this paper is that the constant $c(f)$ behaves like the reciprocal of, what we call, the $p$-Cheeger constant of $f$.
It is defined as follows.
\begin{definition}\label{def:cheegerconst}
	For $f\in \mathcal{S}'(\mathbb{R})$, $D\subset\mathbb{R}^2$ and $p\in [1,\infty)$ define its \emph{$p$-Cheeger constant}%
	\begin{equation}
		\label{eq:cheeger}	
		h_{p,D}(f):=\inf_{\{C\subset D \text{ open}:\ \partial C \cap D\text{ is smooth, and }\int_{C}|V_\varphi f|^p \le \frac12 \int_{D}|V_\varphi f|^p\}} \frac{\|V_\varphi f\|_{L^p(\partial C)}^p}{\|V_\varphi f\|_{L^p(C)}^p}.
	\end{equation}
	If $D= \mathbb{R}^2$ we simply write $h_{p}(f)$ instead of $h_{p,\mathbb{R}^2}(f)$.
\end{definition}
As already mentioned in the introduction we borrowed here a term from spectral geometry. Indeed, our definition
of $h_p(f)$ is equal to the usual Cheeger constant of the flat Riemannian manifold $\mathbb{R}^2$, conformally multiplied with $|V_\varphi f(x,y)|^p$, see \cite{chavel1984eigenvalues}.

We are ready to give an appetizer to our results by stating the following theorem which confirms that disconnected measurements form the only source of instabilities for Gabor phase retrieval.
\begin{theorem}\label{thm:main}Let $p\in [1,2)$ and $q\in (2p/(2-p),\infty]$. 
	Suppose that $f\in M^{p,p}(\mathbb{R})\cap M^{\infty,\infty}(\mathbb{R})$ be such that its Gabor transform $V_\varphi f$ is centered. Then there exists a constant $c>0$
	\emph{only depending on $p,q$ and the quotient $\|f\|_{M^{p,p}(\mathbb{R})}/\|f\|_{M^{\infty,\infty}(\mathbb{R})}$}
	such that for any $g\in M^{p,p}(\mathbb{R})$ it holds that
	$$
	d_{M^{p,p}(\mathbb{R})}(f,g)\le c\cdot (1+h_p(f)^{-1})\cdot \| |V_\varphi f | - |V_\varphi g|\|_{\mathcal{D}_{p,q}^{1,6}}.
	$$
\end{theorem}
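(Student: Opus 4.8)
The strategy is to reduce the phase retrieval stability estimate to a statement about the weighted space $L^p(\mathbb{R}^2,\mu)$ with $d\mu=|V_\varphi f|^p\,dx\,dy$, where the Cheeger constant enters through a weighted Poincar\'e/Cheeger inequality. Write $F:=V_\varphi f$ and $G:=V_\varphi g$. Since by Theorem \ref{thm:gaborholo} the functions $F$ and $G$ become, after multiplication by the nowhere-vanishing smooth factor $\eta$, entire functions, one may work with the holomorphic representatives $\Ftilde,\Gtilde$. The central object is the function $u:=\Gtilde/\Ftilde$, which is meromorphic, and on the (open, conull) set where $F\neq 0$ one has $|u|=|G|/|F|$. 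The hypothesis $\||F|-|G|\|_{\mathcal D^{1,6}_{p,q}}=:\varepsilon$ small should force $|u|$ to be close to $1$ in a weighted $L^p$ sense; the goal is to show $u$ is close to a unimodular constant $e^{\i\alpha}$, since $\Gtilde=e^{\i\alpha}\Ftilde$ exactly recovers $g=e^{\i\alpha}f$. The quantitative passage from ``$|u|\approx 1$'' to ``$u\approx$ const'' is where the Cheeger constant $h_p(f)$ appears: harmonicity of $\log|u|$ away from zeros/poles, together with the isoperimetric content of $h_p(f)$, yields a weighted Poincar\'e inequality $\|v-\bar v\|_{L^p(\mu)}\lesssim h_p(f)^{-1}\|\nabla v\|_{L^p(\mu)}$ that one applies to $v=\log|u|$ (or to a suitable phase function).

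Concretely I would proceed in the following steps. First, establish the analytic preliminaries: using the holomorphy (Theorem \ref{thm:gaborholo}) and the $M^{p,p}\cap M^{\infty,\infty}$ membership, show that $F$ decays with all needed derivative bounds, that $\log|F|$ is subharmonic with a controlled distributional Laplacian (a sum of point masses at zeros of $\Ftilde$ minus a smooth background coming from $\eta$), and that centering lets us use the $\mathcal D^{1,6}_{p,q}$-norm effectively (the polynomial weight $(|x|+|y|)^6$ and the exponents $q>2p/(2-p)$ are tuned so that Sobolev/H\"older embeddings convert the measurement-norm bound into pointwise or local-$L^\infty$ control on $|F|-|G|$). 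Second, derive the ``local'' estimate: on regions where $|F|$ is bounded below, $u=\Gtilde/\Ftilde$ is holomorphic and close to a modulus-one function, so $\log|u|$ is harmonic there and small in $L^p(\mu)$; globally, the zeros of $\Ftilde$ and $\Gtilde$ must nearly cancel, which is where one invokes a Jensen-type / argument-principle bound to control the meromorphic part. Third, prove the weighted Cheeger–Poincar\'e inequality: for any $w$ with $\nabla w\in L^p(\mu)$,
\begin{equation}\label{eq:poincareplan}
\inf_{a\in\mathbb{R}}\|w-a\|_{L^p(\mathbb{R}^2,\mu)}\le C\,h_p(f)^{-1}\,\|\,|\nabla w|\,\|_{L^p(\mathbb{R}^2,\mu)},
\end{equation}
by the standard coarea/layer-cake argument: for the super-level sets $C_t=\{w>t\}$ the definition of $h_p(f)$ gives $\mu(C_t)\le h_p(f)^{-1}\mu^{p-1}(\partial C_t)$ when $\mu(C_t)\le\frac12\mu(\mathbb{R}^2)$, then integrate in $t$ and use coarea to recognize $\int\mu^{p-1}(\partial C_t)\,dt$ as $\|\,|\nabla w|\,\|$ in the weighted sense (a small amount of care is needed because the surface measure here is $|F|^p\,d\mathcal H^1$, matching exactly the $L^p(\partial C)$ in Definition \ref{def:cheegerconst}). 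Finally, combine: apply \eqref{eq:poincareplan} to a phase-type function $w$ built from $u$, absorb the smooth background term from $\eta$ and the controlled zero-cancellation term using the other pieces of the $\mathcal D$-norm (the gradient term $\|\nabla(|F|-|G|)\|_{L^p}$ feeds $\|\nabla w\|_{L^p(\mu)}$, and the polynomial-weight term controls the tails), and translate back via $d_{M^{p,p}}(f,g)=\inf_\alpha\|F-e^{\i\alpha}G\|_{L^p}\le\inf_\alpha\|\Ftilde-e^{\i\alpha}\Gtilde\|\cdot(\text{weight})$ to obtain the claimed bound with constant depending only on $p,q$ and $\|f\|_{M^{p,p}}/\|f\|_{M^{\infty,\infty}}$ (the latter ratio controls the normalization of $F$ after centering).

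\textbf{Main obstacle.} The delicate point is handling the zeros of $\Ftilde$ (equivalently, of $V_\varphi f$). Away from zeros everything is a clean harmonic-function / Poincar\'e argument, but near a zero of $F$ the quotient $|G|/|F|$ can be wild, and a genuinely new phenomenon must be excluded: that $G$ ``creates or destroys'' zeros relative to $F$ at low cost in the measurement norm. Controlling this requires a quantitative argument-principle or Hadamard-factorization estimate showing that, to first order in $\varepsilon$, the zero sets of $\Ftilde$ and $\Gtilde$ must be close and that their contribution to $d_{M^{p,p}}(f,g)$ is itself governed by $h_p(f)^{-1}$ — intuitively because a zero of $F$ lying on a thin ``neck'' is exactly the mechanism that makes $h_p(f)$ small. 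Making this rigorous, and doing it uniformly in $f$ with the stated constant dependence, is the heart of the proof; the interplay between the subharmonicity of $\log|F|$, the location of its zeros, and the Cheeger cuts is what links phase retrieval to spectral geometry. A secondary technical difficulty is choosing the exponents and weights so that the single measurement norm $\mathcal D^{1,6}_{p,q}$ simultaneously (i) controls $|F|-|G|$ pointwise where needed, (ii) controls the gradient term in the Poincar\'e inequality, and (iii) controls the tails via the degree-$6$ polynomial weight; the constraint $q>2p/(2-p)$ and the specific power $6$ are presumably forced by exactly this bookkeeping.
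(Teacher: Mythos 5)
Your overall philosophy is the paper's (weighted Poincar\'e/Cheeger inequality for the measure $|V_\varphi f|^p\,dx\,dy$ proved by a coarea argument, holomorphy of $\eta V_\varphi f$, Jensen/Hadamard control of zeros), but the central mechanism is missing and your plan as stated cannot close. You propose to apply the weighted Poincar\'e inequality to $v=\log|u|$ with $u=\widetilde G/\widetilde F$. That only controls the \emph{modulus} of $u$: concluding $\log|u|\approx\mathrm{const}$ is essentially a restatement of the hypothesis $|G|\approx|F|$ and says nothing about the argument of $u$, which is exactly the unknown in phase retrieval; the parenthetical ``or to a suitable phase function'' is where the whole difficulty sits, since the argument is multivalued around zeros and nothing in the measurement norm bounds its gradient directly. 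The paper avoids the modulus/phase split entirely: it applies the (analytic) Poincar\'e inequality to the meromorphic quotient $F=F_2/F_1$ itself in $L^p(D,|F_1|^p)$, and the Cauchy--Riemann identity $|F'|=|\nabla|F||$ (Lemma \ref{keylemma}) converts $\|F'\|_{L^p(D,|F_1|^p)}$ -- which does see the phase -- into quantities involving only $|F_1|,|F_2|$ via the quotient rule (Theorem \ref{prop:main}). Without this step, or a substitute for it, your scheme does not produce a bound on $\inf_\alpha\|F_1-e^{\i\alpha}F_2\|_{L^p}$. A second unaddressed point: a Poincar\'e inequality yields closeness of $u$ to some complex constant $a$, not to a unimodular one; the paper handles this with the sampling-constant/balancing argument (Definition \ref{def: Csamp}, Proposition \ref{prop:balance}), choosing the base point near a maximum of $|V_\varphi f|$ where $|V_\varphi f|$ is uniformly bounded below, which uses $|\nabla|V_\varphi f||=|V_{\varphi'}f|$ and the modulation-space norm equivalence (Proposition \ref{prop:gaborbal}); your plan never explains how the constant is forced to have modulus one with the stated constant dependence.

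Relatedly, the obstacle you single out as the heart of the proof -- a quantitative argument-principle statement that the zero sets of $\widetilde F$ and $\widetilde G$ nearly coincide -- is not needed in the paper's route and you give no workable way to prove it. In the paper the zeros of $G$ play no role at all: the zeros of $F$ enter only (i) through the weighted Cheeger--Poincar\'e inequality $C_{poinc}(p,D,|V_\varphi f|^p)\le 4p/h_{p,D}(f)$ (Theorem \ref{thm:cheegerpoinc}), whose coarea proof you correctly anticipate, and (ii) through the logarithmic derivative term $\|\nabla\log(F_1)\,(|F_1|-|F_2|)\|_{L^p}$, which is tamed by Jensen's formula giving $\|\nabla\log V_\varphi f\|_{L^r(B_R)}\lesssim R^5$ for $r<2$, uniformly in $f$ (Proposition \ref{proplogderivest}), and then absorbed by H\"older over dyadic annuli into the weighted $L^q$ part of the $\mathcal D^{1,6}_{p,q}$-norm (Proposition \ref{prop:logdermain}) -- this, not a Sobolev embedding to pointwise control, is where the constraints $q>2p/(2-p)$ and the weight exponent $6$ come from. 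So the proposal correctly guesses several ingredients (weighted Poincar\'e via coarea, Jensen-type zero counting, the role of the polynomial weight) but misses the key lemma that links phase to modulus and misidentifies the main difficulty; as written it has a genuine gap.
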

Theorem \ref{thm:main} is proved in Section \ref{sec:finalGabor} as a special case of the more general Theorems \ref{thm:mainCwithcheeger} and \ref{thm:mainDwithcheeger}.

The theorem above also establishes a noise-stability result for reconstruction of  a signal from 
noisy spectrogram measurements
$$
	\text{noisy measurements}=|V_\varphi f|+\eta .
$$
\begin{corollary}\label{cor:noisymeas}
	Let $p\in [1,2)$ and $q\in (2p/(2-p),\infty]$. 
	Suppose that $f\in M^{p,p}(\mathbb{R})\cap M^{\infty,\infty}(\mathbb{R})$ be such that its Gabor transform $V_\varphi f$ is centered.
	Then there exists a constant $c>0$
	\emph{only depending on $p,q$ and the quotient $\|f\|_{M^{p,p}(\mathbb{R})}/\|f\|_{M^{\infty,\infty}(\mathbb{R})}$}
	such that for any  $\eta\in \mathcal{D}_{p,q}^{1,6}$ with $\|\eta\|_{\mathcal{D}_{p,q}^{1,6}}\le \nu$ 
	and any 
	$$
		h\in \mathrm{argmin}_{g\in M^{p,p}(\mathbb{R})}\|(|V_\varphi f|+\eta )-|V_\varphi g|\|_{\mathcal{D}_{p,q}^{1,6}}.
	$$
	it holds that
	$$
	d_{M^{p,p}(\mathbb{R})}(f,h)\le c\cdot(1+ h_p(f)^{-1})\cdot \nu.
	$$
\end{corollary}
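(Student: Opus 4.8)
The plan is to deduce the corollary directly from Theorem \ref{thm:main} by a standard triangle-inequality argument. First, since $h$ is a minimizer of $g\mapsto \|(|V_\varphi f|+\eta)-|V_\varphi g|\|_{\mathcal{D}_{p,q}^{1,6}}$ over $g\in M^{p,p}(\mathbb{R})$, and since $f$ itself is an admissible competitor (it lies in $M^{p,p}(\mathbb{R})$ by hypothesis), we have
$$
\|(|V_\varphi f|+\eta)-|V_\varphi h|\|_{\mathcal{D}_{p,q}^{1,6}}\le \|(|V_\varphi f|+\eta)-|V_\varphi f|\|_{\mathcal{D}_{p,q}^{1,6}}=\|\eta\|_{\mathcal{D}_{p,q}^{1,6}}\le\nu.
$$
By the triangle inequality in $\mathcal{D}_{p,q}^{1,6}$,
$$
\||V_\varphi f|-|V_\varphi h|\|_{\mathcal{D}_{p,q}^{1,6}}\le \|(|V_\varphi f|+\eta)-|V_\varphi h|\|_{\mathcal{D}_{p,q}^{1,6}}+\|\eta\|_{\mathcal{D}_{p,q}^{1,6}}\le 2\nu.
$$

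Next I would invoke Theorem \ref{thm:main} with $g:=h$. This is legitimate precisely because the constant $c$ there depends only on $p$, $q$ and the ratio $\|f\|_{M^{p,p}(\mathbb{R})}/\|f\|_{M^{\infty,\infty}(\mathbb{R})}$ — i.e. on $f$ alone, not on $h$ — and because $h\in M^{p,p}(\mathbb{R})$. Theorem \ref{thm:main} then yields
$$
d_{M^{p,p}(\mathbb{R})}(f,h)\le c\cdot(1+h_p(f)^{-1})\cdot\||V_\varphi f|-|V_\varphi h|\|_{\mathcal{D}_{p,q}^{1,6}}\le 2c\cdot(1+h_p(f)^{-1})\cdot\nu.
$$
Absorbing the factor $2$ into the constant (renaming $2c$ as $c$, which preserves the stated dependence) gives exactly the claimed bound $d_{M^{p,p}(\mathbb{R})}(f,h)\le c\cdot(1+h_p(f)^{-1})\cdot\nu$.

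The only genuine point requiring care — and the nearest thing to an obstacle — is making sure the argmin is over the right class and that $f$ is a legitimate competitor in it, so that the minimality inequality applies; this is immediate here since $f\in M^{p,p}(\mathbb{R})$ by assumption. One should also note that existence of a minimizer $h$ is not asserted by the corollary (it says "for any $h\in\mathrm{argmin}\dots$"), so no compactness or lower-semicontinuity argument is needed; the statement is vacuous if the argmin is empty. Thus the corollary is a formal consequence of Theorem \ref{thm:main} together with two applications of the triangle inequality, and there is no substantial new content beyond that theorem.
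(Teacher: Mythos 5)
Your argument is correct and is essentially the paper's own proof: both rest on applying Theorem \ref{thm:main} with $g=h$, the triangle inequality in $\mathcal{D}_{p,q}^{1,6}$, and the minimality of $h$ tested against the competitor $f$, differing only in the trivial order in which these steps are performed. The factor $2$ you absorb into $c$ is likewise implicit in the paper's conclusion.
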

Due to its simplicity, we present the proof here.
\begin{proof}
	 By Theorem \ref{thm:main}, it holds that
	 \begin{eqnarray*}
	 d_{M^{p,p}(\mathbb{R})}(f,h)&\le &c\cdot(1+ h_p(f)^{-1})\cdot \| |V_\varphi f | - |V_\varphi h|\|_{\mathcal{D}_{p,q}^{1,6}} \\
	 &\le &
	 c\cdot (1+h_p(f)^{-1})\cdot \left(\| (|V_\varphi f |+\eta) - |V_\varphi h|\|_{\mathcal{D}_{p,q}^{1,6}} +\nu\right).
	 \end{eqnarray*}
	 To finish the argument we note that, due to the definition of $h$, it holds that 
	 $$
	  \| (|V_\varphi f |+\eta) - |V_\varphi h|\|_{\mathcal{D}_{p,q}^{1,6}}\le  \| (|V_\varphi f |+\eta) - |V_\varphi f|\|_{\mathcal{D}_{p,q}^{1,6}}\le \nu. 
	 $$
\end{proof}
Typically, one is mainly interested in the reconstruction of a specific time-frequency regime of $f$. 
To this end, we will also establish a local stability result of which we here offer a special case in the following theorem.

\begin{theorem}\label{thm:mainDwithcheegerappetite}Let $p\in [1,2)$, $q\in (2p/(2-p),\infty]$ and $R>0$. 
	Suppose that $f\in M^{p,p}(\mathbb{R})\cap M^{\infty,\infty}(\realr)$ be such that its Gabor transform $V_\varphi f$ is centered. 
	Suppose further that $f$ is $\varepsilon$-concentrated on a ball $B_R(0)\subset \mathbb{R}^2$ in the sense
	that
	$$
	\int_{\mathbb{R}^2\setminus B_R(0)}|V_\varphi f(x,y)|^p dx dy \le \varepsilon^p
	$$
	Then there exists a constant $c>0$
	\emph{only depending on $p,q$ and 
		$$\max\left\{\frac{\|V_\varphi f\|_{L^p(B_R(0))}}{\|V_\varphi f\|_{L^{\infty}(B_R(0))}},\frac{\|V_{\varphi '} f\|_{L^\infty(B_R(0))}}{ 
			\|V_{\varphi} f\|_{L^\infty(B_R(0))}}\right\}$$}
	
	such that for any $g\in M^{p,p}(\mathbb{R})$ which is $\varepsilon$-concentrated in $B_R(0)$ it holds that
	$$
	 d_{M^{p,p}(\realr)}(f,g)\le c\cdot\left( (1+ h_{p,B_R(0)}(f)^{-1})\cdot \| |V_\varphi f | - |V_\varphi g|\|_{\mathcal{D}_{p,q}^{1,6}(B_R(0))}+\varepsilon\right).
	$$
\end{theorem}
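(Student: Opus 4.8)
The plan is to reduce Theorem~\ref{thm:mainDwithcheegerappetite} to the global statement of Theorem~\ref{thm:main} (or rather its more general incarnations, Theorems~\ref{thm:mainCwithcheeger} and~\ref{thm:mainDwithcheeger}, which presumably already carry a localized hypothesis) by a truncation-and-extension argument, paying for the truncation with the $\varepsilon$-term. First I would use the $\varepsilon$-concentration of both $f$ and $g$ to replace the global measurement norm by a local one: since $\|V_\varphi f\|_{L^p(\mathbb{R}^2\setminus B_R(0))}\le \varepsilon$ and likewise for $g$, the difference $|V_\varphi f|-|V_\varphi g|$ has $L^p$-mass outside $B_R(0)$ bounded by $2\varepsilon$, and one needs to check that the smoothness term $\|\nabla^1(\cdot)\|_{L^p}$ and the polynomial-weight term $\||(x,y)|^6(\cdot)\|_{L^q}$ outside the ball are also controlled — here I would invoke the holomorphy structure (Theorem~\ref{thm:gaborholo}) together with standard interior estimates for holomorphic functions and the membership $f\in M^{p,p}\cap M^{\infty,\infty}$ to bootstrap decay of $V_\varphi f$ and its derivatives; this passes the outside contributions into the $\varepsilon$-error with a constant depending only on the displayed ratios.

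The central step is then to relate the signal-space distance $d_{M^{p,p}(\mathbb{R})}(f,g)$ to a quantity one can attack by the global machinery. I would argue that one may, after a global phase alignment, find a decomposition in which the relevant Cheeger-cut competitors $C$ are forced to live (essentially) inside $B_R(0)$, so that $h_{p,B_R(0)}(f)$ rather than $h_p(f)$ is what appears. Concretely: if $f = u+v$ is a multicomponent split realized by a Cheeger cut $C$, the $\varepsilon$-concentration means that any near-optimal cut has both $\|V_\varphi f\|_{L^p(C)}$ and $\|V_\varphi f\|_{L^p(\mathbb{R}^2\setminus C)}$ bounded below by a fixed fraction of $\|V_\varphi f\|_{L^p(B_R(0))}$ only if $C$ meaningfully partitions the ball; cuts that separate a negligible outside piece contribute $O(\varepsilon)$ and are absorbed. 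This is exactly the point at which the hypothesis $q>2p/(2-p)$ is used, via the embedding/interpolation that lets one trade $L^q$ control against $L^p$ control on the complement of the ball. I would carry out this comparison by applying Theorems~\ref{thm:mainCwithcheeger}/\ref{thm:mainDwithcheeger} to $f$ with domain $D=B_R(0)$ — these are stated (per the excerpt) as the general results of which Theorem~\ref{thm:main} is the $D=\mathbb{R}^2$ special case — and then upgrading the conclusion from ``$g$ with $V_\varphi g$ supported near $B_R(0)$'' to ``$g$ merely $\varepsilon$-concentrated in $B_R(0)$'' by the truncation estimate above.

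The main obstacle I anticipate is not the bookkeeping of the $\varepsilon$-terms but the \emph{stability of the Cheeger constant under the outside perturbation}: a priori, adding a small $L^p$-mass outside $B_R(0)$ could in principle create a very cheap cut (if $|V_\varphi f|$ happened to be tiny somewhere on $\partial B_R(0)$), which would make $h_p(f)$ much smaller than $h_{p,B_R(0)}(f)$ and break the direction we want. The resolution must again come from holomorphy: because $\eta\cdot V_\varphi f(\cdot,-\cdot)$ is entire (Theorem~\ref{thm:gaborholo}), $|V_\varphi f|$ cannot be simultaneously small on a whole separating curve and large on both sides without the $\varepsilon$-concentration being violated — more precisely, a Harnack-type / unique-continuation estimate for $|V_\varphi f|$ forces any cut that is cheap relative to the inside mass to be essentially a cut of $B_R(0)$ itself. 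Making this quantitative, with constants depending only on the ratio $\|V_{\varphi'}f\|_{L^\infty(B_R(0))}/\|V_\varphi f\|_{L^\infty(B_R(0))}$ (which measures exactly how fast $V_\varphi f$ can vary), is where the real work lies; everything else is a matter of combining it with the already-established global theorem and the standard modulation-space estimates.
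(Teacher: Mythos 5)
Your proposal contains the correct starting point---apply the local result, Theorem \ref{thm:mainDwithcheeger}, with $D=B_R(0)$---but you then bury it under machinery that is neither needed nor supplied, and the step you yourself designate as ``where the real work lies'' is a misdiagnosis. The error norm in the conclusion is already the local norm $\mathcal{D}_{p,q}^{1,6}(B_R(0))$, so nothing has to be controlled outside the ball on the measurement side: no interior estimates for holomorphic functions, no bootstrapped decay of $\nabla|V_\varphi g|$ (which could not be extracted from the mere $L^p$ $\varepsilon$-concentration of $g$ anyway). Likewise, no comparison between $h_p(f)$ and $h_{p,B_R(0)}(f)$ is ever required, so the ``stability of the Cheeger constant under the outside perturbation,'' which you propose to settle with an unspecified Harnack/unique-continuation estimate, simply does not arise; since you defer the essential content of your argument to precisely this unproven (and unnecessary) step, the proposal as written is not a proof. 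Also, the hypothesis $q>2p/(2-p)$ has nothing to do with trading $L^q$ against $L^p$ on the complement of the ball: it is used inside the proof of the local theorem, in Proposition \ref{prop:logdermain}, where H\"older's inequality against the logarithmic derivative requires an exponent $r=pq/(q-p)<2$.

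The actual derivation is a short truncation on the signal side. Theorem \ref{thm:mainDwithcheeger} applies to \emph{every} $g\in M^{p,p}(\mathbb{R})$ (no support or concentration restriction on $g$), and with $D=B_R(0)$ it yields $\inf_{\alpha}\|V_\varphi f-e^{\i\alpha}V_\varphi g\|_{L^p(B_R(0))}\le c\,(1+h_{p,B_R(0)}(f)^{-1})\,\||V_\varphi f|-|V_\varphi g|\|_{\mathcal{D}_{p,q}^{1,6}(B_R(0))}$, with $c$ depending exactly on the ratios displayed in the statement. Since $\|f-e^{\i\alpha}g\|_{M^{p,p}(\mathbb{R})}=\|V_\varphi f-e^{\i\alpha}V_\varphi g\|_{L^p(\mathbb{R}^2)}$, for \emph{any} $\alpha$ the outside contribution obeys $\|V_\varphi f-e^{\i\alpha}V_\varphi g\|_{L^p(\mathbb{R}^2\setminus B_R(0))}\le\|V_\varphi f\|_{L^p(\mathbb{R}^2\setminus B_R(0))}+\|V_\varphi g\|_{L^p(\mathbb{R}^2\setminus B_R(0))}\le 2\varepsilon$, uniformly in $\alpha$; choosing $\alpha$ (near-)optimal for the local estimate and adding the two pieces gives the theorem. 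This is the only place where the $\varepsilon$-concentration of $f$ and $g$ enters. The one genuine technicality you did not flag is that Theorem \ref{thm:mainDwithcheeger} requires $D$ to satisfy the hypotheses of Proposition \ref{prop:balance} (convexity and boundary curvature bounded by $1$, i.e.\ $R\ge 1$); for small $R$ one instead invokes the variant stated after Theorem \ref{thm:mainDwithcheeger}, based on Remark \ref{rem:balance} and the quantity $\tilde{\delta}_{B_R(0)}(V_\varphi f)$, which drops all assumptions on the domain.
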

Similar to Corollary \ref{cor:noisymeas}, also a local noise-stability result can be deduced in an obvious way. We leave the details to the reader.
\subsection{Putting our Results in Perspective}
In this subsection we briefly relate our results to the stable solution of the backwards heat equation and our previous work \cite{alaifari2016stable}.
\subsubsection{Connections with the Backwards Heat Equation}
\label{sec:heatflow}
We would like to draw the reader's attention to an intricate connection between phase retrieval and the solution of the backwards heat equation.

 Consider the heat equation in the plane:
 \begin{equation}\label{heatequation}
 u_t(t,x,y)=\Delta u(t,x,y)=u_{xx}(t,x,y)+u_{yy}(t,x,y) \quad \text{and} \quad u(0,x,y)=f(x,y),\quad x,y \in \realr, ~t>0.
 \end{equation}
 The backward heat equation problem, i.e. (stabily) reconstructing the initial value $f$ given $u(t,.,.)$ for fixed $t$ is known to be severely ill--posed.
 Solving the heat equation in the frequency domain yields
 $$
 \widehat{u}(t,\xi,\eta)=\widehat{f}(\xi,\eta)\cdot e^{-4\pi^2(\xi^2+\eta^2)t}.
 $$
 Therefore solving the backward heat equation problem amounts to deconvolving $u(t,\cdot,\cdot)$ with a Gaussian kernel.\\
 
 In Appendix \ref{app:gabor} we show that 
 $$
 \mathcal{F}\abs{V_g f}^2(\eta,\xi)= \mathcal{A}f(\xi,\eta) \cdot \mathcal{A}g(\xi,\eta), 
 $$
(where $\mathcal{F}$ denotes the two-dimensional Fourier transform)
 as well as the fact that
 $\mathcal{A}g$ is a $2$-dimensional Gaussian for the Gaussian window $g=e^{-\pi.^2}$. Thus, reconstrucing the ambiguity function of $f$ from the absolute values of its Gabor transform amounts to solving the backward heat equation problem. Consequently, the Gabor phase retrieval problem and the backward heat equation problem, as well as their stabilization, are closely related. We consider the investigation of the consequences of our results for the stabilization of the backwards heat equation an interesting problem for future work.

\subsubsection{Comparison with the Results of \cite{alaifari2016stable}}\label{sec:compare}
Our result is very much inspired by stability results in recent work \cite{alaifari2016stable} by Rima Alaifari, Ingrid Daubechies, Rachel Yin and one of the authors and in fact grew out of this work.

In order to put our current results in perspective and to exemplify the improvement of our present results as compared to those in \cite{alaifari2016stable} we give a short comparison between the main stability results of \cite{alaifari2016stable} and the present paper.

In \cite{alaifari2016stable}  it is shown that, for certain measurement scenarios (including Gabor and Poisson wavelet measurements), stable phase reconstruction is locally possible on subsets $\Omega'\subset \Omega$ on which the variation of the measurements, namely $$\frac{\sup_{\omega\in \Omega'}|\varphi_\omega(f)|}{\inf_{\omega\in \Omega'}|\varphi_\omega(f)|}$$ is bounded.  However, in an $\infty$-dimensional problem, the quantity $\sup_{\omega\in \Omega'}|\varphi_\omega(f)|/\inf_{\omega\in \Omega'}|\varphi_\omega(f)|$ will not be bounded and therefore the results of \cite{alaifari2016stable} do not provide bounds for $c(f)$.

For concreteness we compare the sharpness of our result to the results of \cite{alaifari2016stable} at hand of a very simple example, namely
a Gaussian signal $f=e^{-\pi t^2}$. A simple calculation (see Lemma \ref{lem:GaborGauss}) reveils that 
$$
\abs{V_\varphi f(x,y)}=r e^{-\pi/2 (x^2 + y^2)}
$$
for some positive number $r$.
Clearly, $f$ is $\varepsilon$-concentrated on $B_R(0)$ with $\varepsilon \lesssim e^{-\pi/2 R^2}$.

The results of \cite{alaifari2016stable} rely on the assumption that the measurements $V_\varphi f$
are of little variation on the domain of interest, which for our particular example is $B_R(0)$. The main parameter
governing the stability in the results of \cite{alaifari2016stable} would be 
$$\frac{\sup_{(x,y)\in B_R(0)}|V_\varphi f(x,y)|^2}{\inf_{(x,y)\in B_R(0)}|V_\varphi f(x,y)|^2} = e^{\pi R^2}$$
and the best stability bound that can be achieved using the results of \cite{alaifari2016stable} is thus
of the form
\begin{equation}\label{eq:prev}
	\inf_{\alpha\in \mathbb{R}}\| f - e^{\i \alpha} g\|_{M^{2,2}(\mathbb{R})}\le c\cdot\left(   e^{\pi R^2}\cdot\| |V_\varphi f | - |V_\varphi g|\|_{W^{2,1}(B_R(0))}+e^{-\pi/2 R^2}\right),
\end{equation}
where $g$ is an arbitrary function which is also $\varepsilon$-concentrated on $B_R(0)$.

We see that the stability bound obtainable from the results of \cite{alaifari2016stable} grows exponentially in $R^2$ which still suggests that the problem to reconstruct $f$ from its spectrogram is severely ill-posed.

It turns out that this is not the case.
In the Appendix (Theorem \ref{thm:CheegerGauss}) we see that $h_{p,B_R(0)}(f)\gtrsim 1$ with
the implicit constant independent of $R$ (in fact, this is well-known and follows from the Gaussian isoperimetric inequality and geometric arguments). 

We can thus directly apply Theorem \ref{thm:mainDwithcheegerappetite} and get the following.
\begin{theorem}Let $f(t)=e^{-\pi t^2}$.
	Let $p\in [1,2)$, $q\in (2p/(2-p),\infty]$ and $\varepsilon>0$. 
	Then there exists a constant $c>0$
	\emph{only depending on $p,q$ and $\varepsilon$}	
	such that for any $R>1$ and $g\in M^{p,p}(\mathbb{R})$ which is $\varepsilon$-concentrated in $B_R(0)$ it holds that
	$$
	\inf_{\alpha\in \mathbb{R}}\| f - e^{\i \alpha} g\|_{M^{p,p}(\mathbb{R})}\le c\cdot\left(  \| |V_\varphi f | - |V_\varphi g|\|_{W^{p,1}(B_R(0))}+R^6\cdot\| |V_\varphi f | - |V_\varphi g|\|_{L^{q}(B_R(0))}+e^{-\pi/2 R^2}\right).
	$$
\end{theorem}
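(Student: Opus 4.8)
The plan is to apply Theorem~\ref{thm:mainDwithcheegerappetite} directly to $f=e^{-\pi t^2}$, the only real work being to check that every $R$-dependent quantity in that theorem is in fact bounded uniformly for $R>1$. First I would record the explicit measurements: by Lemma~\ref{lem:GaborGauss}, $|V_\varphi f(x,y)|=r\,e^{-\pi(x^2+y^2)/2}$ for an absolute constant $r>0$, so $|V_\varphi f|$ attains its global maximum at the origin, $V_\varphi f$ is centered, and $f\in M^{p,p}(\R)\cap M^{\infty,\infty}(\R)$ for all $p$. A direct Gaussian computation (the window $\varphi'=-2\pi t\,\varphi$ produces a Gaussian integral with a linear prefactor) shows that $|V_{\varphi'}f(x,y)|$ equals a degree-one polynomial in $(x,y)$ times $e^{-\pi(x^2+y^2)/2}$, hence is bounded on all of $\R^2$ by an absolute constant $c_2$.

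Since $\|V_\varphi f\|_{L^\infty(B_R(0))}=r$ for every $R>0$, this yields
\[
\frac{\|V_\varphi f\|_{L^p(B_R(0))}}{\|V_\varphi f\|_{L^\infty(B_R(0))}}\le r^{-1}\|V_\varphi f\|_{L^p(\R^2)}
\qquad\text{and}\qquad
\frac{\|V_{\varphi'}f\|_{L^\infty(B_R(0))}}{\|V_\varphi f\|_{L^\infty(B_R(0))}}\le c_2\,r^{-1},
\]
both independent of $R$, so the constant furnished by Theorem~\ref{thm:mainDwithcheegerappetite} depends only on $p$ and $q$.

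Next I would invoke the uniform Cheeger lower bound: by Theorem~\ref{thm:CheegerGauss} (a consequence of the Gaussian isoperimetric inequality together with an elementary geometric argument) one has $h_{p,B_R(0)}(f)\gtrsim 1$ with implicit constant independent of $R$, whence $1+h_{p,B_R(0)}(f)^{-1}$ is bounded uniformly in $R$. To pass from the $\mathcal{D}_{p,q}^{1,6}(B_R(0))$-norm to the norms in the statement, note that on $B_R(0)$ one has $(|x|+|y|)^6\le (2R)^6$ and, for $R>1$, also $1\le R^6$; therefore, writing $F:=|V_\varphi f|-|V_\varphi g|$,
\[
\|F\|_{\mathcal{D}_{p,q}^{1,6}(B_R(0))}\lesssim \|F\|_{W^{p,1}(B_R(0))}+R^6\,\|F\|_{L^q(B_R(0))}.
\]
Finally, $f$ is itself $\delta_R$-concentrated on $B_R(0)$ with $\delta_R=\|V_\varphi f\|_{L^p(\R^2\setminus B_R(0))}=r(2/p)^{1/p}e^{-\pi R^2/2}\lesssim e^{-\pi R^2/2}$; applying Theorem~\ref{thm:mainDwithcheegerappetite} with concentration level $\max(\varepsilon,\delta_R)$ — at which level both $f$ and $g$ are concentrated — and combining the displays above gives the asserted estimate, the dependence of the final constant on $\varepsilon$ entering only through this choice of concentration level.

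I do not expect a substantial obstacle here: the single place where the specific Gaussian structure of $|V_\varphi f|$ is genuinely needed is the uniform bound $h_{p,B_R(0)}(f)\gtrsim 1$, and this is delegated to Theorem~\ref{thm:CheegerGauss}; all remaining steps reduce to explicit Gaussian integrals and the trivial estimate $(|x|+|y|)^6\le (2R)^6$ on $B_R(0)$. The one point requiring a little care is the bookkeeping of the concentration parameter for $f$ versus $g$, which is precisely the reason the final constant is allowed to depend on $\varepsilon$.
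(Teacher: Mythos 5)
Your proposal follows exactly the paper's route: Lemma \ref{lem:GaborGauss} for the explicit Gaussian form of $|V_\varphi f|$, Theorem \ref{thm:CheegerGauss} for the $R$-independent lower bound on $h_{p,B_R(0)}(f)$, the elementary bound $(|x|+|y|)^6\le (2R)^6$ on $B_R(0)$ to pass from the $\mathcal{D}_{p,q}^{1,6}(B_R(0))$-norm to the $W^{p,1}$ and $R^6\,L^q$ terms, and then a direct application of Theorem \ref{thm:mainDwithcheegerappetite}. The details you supply that the paper leaves implicit (boundedness of $|V_{\varphi'}f|$, the $R$-uniform bounds on the two ratios entering the constant of Theorem \ref{thm:mainDwithcheegerappetite}, the value of the concentration level $\delta_R$ of $f$) are all correct.

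The one step that does not deliver the displayed inequality is the final concentration bookkeeping. Applying Theorem \ref{thm:mainDwithcheegerappetite} at level $\varepsilon':=\max(\varepsilon,\delta_R)$ yields an additive error of order $\varepsilon'$ (the constant in that theorem does not depend on the concentration level at all; the level only enters additively), and for fixed $\varepsilon$ and $R$ large one has $\varepsilon'=\varepsilon$, which is not dominated by $c(p,q,\varepsilon)\,e^{-\pi R^2/2}$ since the latter tends to $0$ as $R\to\infty$. So your argument proves the estimate with additive term $\varepsilon+e^{-\pi R^2/2}$, not with $e^{-\pi R^2/2}$ alone; your closing remark that the $\varepsilon$-dependence of the constant absorbs this is exactly the point that does not work. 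Indeed the literal statement cannot hold with only $e^{-\pi R^2/2}$: take $g=f+h$ with $h$ a time-frequency shift of $\varphi$ localized at distance much larger than $R$ from the origin and $\|h\|_{M^{p,p}}\sim\varepsilon$; then $g$ is $\varepsilon$-concentrated on $B_R(0)$, the left-hand side stays of order $\varepsilon$, while every measurement term on $B_R(0)$ and $e^{-\pi R^2/2}$ tend to $0$ as $R\to\infty$. This defect lies in the statement rather than in your approach: the paper's own proof is the same one-line invocation of Theorem \ref{thm:mainDwithcheegerappetite} and is silent on the issue, which disappears either by keeping the additive $\varepsilon$ or by reading the theorem, as the surrounding discussion in Section \ref{sec:compare} suggests, with $\varepsilon\lesssim e^{-\pi R^2/2}$. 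Apart from flagging this, your proof is the paper's proof with the omitted verifications filled in.
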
 
We remark that a more careful analysis (which exploits the specific form of $f$) would yield an estimate of the form
\begin{equation}\label{eq:betterthanprev}
	\inf_{\alpha\in \mathbb{R}}\| f - e^{\i \alpha} g\|_{M^{p,p}(\mathbb{R})}\le c\cdot\left(  R\cdot\| |V_\varphi f | - |V_\varphi g|\|_{W^{p,1}(B_R(0))}+e^{-\pi/2 R^2}\right),
\end{equation}
valid for every $p\in [1,\infty]$.

Comparing our result (\ref{eq:betterthanprev}) with the bound (\ref{eq:prev}) from \cite{alaifari2016stable}
we see that our bound is much tighter. In particular,
\begin{quote}\emph{our bound turns a superexponential growth of the stability constant into a low-order polynomial growth}!
\end{quote}

\subsubsection{An Algorithm for finding a meaningful partition of the data and estimating the corresponding multi-component stability constant}\label{sec:partalg}

Again we want to take up an idea from \cite{alaifari2016stable}, where the concept of multi-component phase retrieval was introduced: 
The multi-component paradigm amounts to the following identification of measurements $F=V_\varphi f$, $G=V_\varphi g$:
$$
F=\sum_{j=1}^k F_j \sim G=\sum_{j=1}^k e^{\i \alpha_j} F_j
$$
for any $\alpha_1,\ldots,\alpha_k \in \realr$ where the components $F_1,\ldots, F_k$ are essentially supported on mutually disjoint domains $D_1,\ldots,D_k$.
This means we consider $F$ and $G$ to be close to each other 
whenever the quantity
$$
\inf_{\alpha_1,\ldots,\alpha_k} \sum_{j=1}^k \norm{F- e^{\i \alpha_j}G}_{L^p(D_j)}
$$
is small. Thus we no longer demand that there is a global phase factor but allow different phase factors which are constant on the distinct subdomains $D_i$.
Since the human ear cannot recognize an identification $F\sim G$ whenever the measurements $F_j$ are distant from each 
other, this notion of distance is sensible for the purpose of applications in audio.

Assume we are given a signal $f$ such that the Cheeeger constant $h_p(f)$ is small meaning that we will expect the phase retrieval problem to be very unstable.
A natural question to ask is whether it is possible to partition the time-frequency plane in subdomains $D_1,\ldots,D_k$ such that Gabor phase retrieval is stable in the multi-component sense, i.e.,
\begin{equation}\label{alg:mcstab}
\inf_{\alpha_1,\ldots,\alpha_k\in \realr} \sum_{j=1}^k \norm{V_\varphi f- e^{\i \alpha_j}V_\varphi g}_{L^p(D_j)} \leq B \cdot \norm{\abs{V_\varphi f}-\abs{V_\varphi g}}_{\mathcal{D}_{p,q}^{1,6}},
\end{equation}
for moderately large $B>0$ and all $g$.\\

Obviously the finer the partition the smaller $B$ will become.
However in view on the motivation from audio applications we will not want to choose a very fine partition, 
because then the corresponding multi-component distance will not be naturally meaningful.

The challenge therefore is to find -- given a signal $f$ -- a partition $D_1,\ldots,D_k$ such that 
\begin{enumerate}[(i)]
 \item $B$ is small and \label{mcdecomp:it1}
 \item the measurements $V_\varphi f \cdot \chi_{D_j}$ and $V_\varphi f \cdot \chi_{D_l}$ are distant for all $j\neq l$ \label{mcdecomp:it2}
\end{enumerate}
simultaeously hold.

Corollary \ref{cor:multicomponent} tells us that $B$ can essentially be bounded by the quantity
\begin{equation}\label{bound_multicomponent}
\min_{j=1,\ldots,k} (1+h_{p,D_j}(f)^{-1})\cdot (1+\frac{\kappa_j^p}{\delta_j^2}),
\end{equation}
where 
$$\delta_j=\min \{\sup \{r>0: ~B_r(z)\subset D, ~\inf_{\zeta\in B_r(z)} \abs{V_\varphi f(\zeta)}\geq \frac12 \norm{V_\varphi f}_{L^\infty(D_j)} \}, 1\}
$$
and
$$
\kappa_j=\frac{\norm{V_\varphi f}_{L^p(D_j)}}{\norm{V_\varphi f}_{L^\infty(D_j)}}.
$$

Since in practice one only has finitely many samples of $\abs{V_\varphi f}$ at hand,
we consider a discrete version of this partitioning problem. 
Spectral Clustering methods from Graph theory provide algorithms that aim at finding partitions minimizing a discrete Cheeger ratio \cite{Buhler}.

We now suggest an iterative approach.
Once the domain $D$ is partitioned into two components $C$ and $D\setminus C$ (see Figure \ref{fig:partition1}) 
we can again measure the disonnectedness of these two sets by estimating their respective Cheeger constants (see Figure \ref{fig:partition2}).
If this estimate lies above a given threshold we leave the set untouched in view of \eqref{mcdecomp:it2}. Otherwise we partition again.
After carrying out this iterative procedure a few times, we expect to arrive at a partition $C_1,\ldots,C_l$ of $D$ such that each $C_j$ is well connected (in terms of the Cheeger constant being large) 
and simultaneously for any $k\neq j$ the set $C_k\cup C_j$ is very disconnected (in terms of the Cheeger constant being small).
We hence find a partition such that $h_{p,C_j}(f)$ is moderately large for all $j$.\\
However to use Theorem \ref{cor:multicomponent} we also need $\delta_j$ not to be too small and $\kappa_j$ not to be too large, which can be verified a posteriori.

In Appendix \ref{app:algorithm} we describe the algorithm we used for the experiment illustrated in Figures \ref{fig:spectrogram} to \ref{fig:partition3} in detail.

\begin{figure}
 \includegraphics[width=0.8\textwidth]{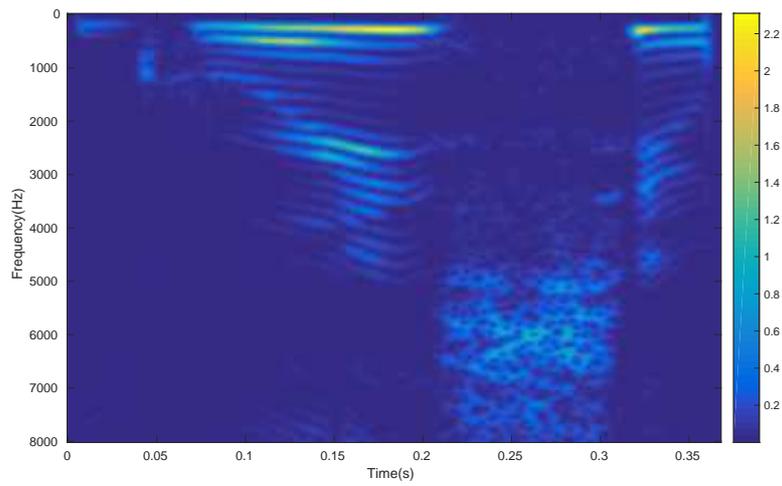}
 \caption{Magnitudes of the discrete Gabor transform of the signal "greasy" from the LTFAT toolbox.}\label{fig:spectrogram}
\end{figure}

\begin{figure}
 \includegraphics[width=0.8\textwidth]{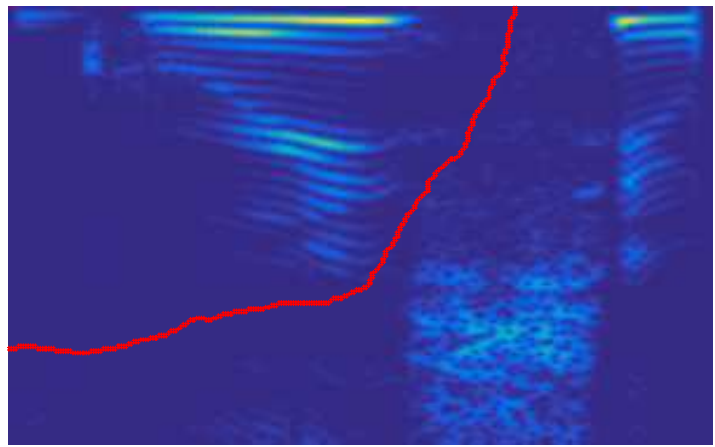}
 \caption{The partitioning of $\abs{V_\varphi f}$ estimates the Cheeger constant $h_D(f)\approx 0.0019119$.}\label{fig:partition1}
\end{figure}

\begin{figure}
 \begin{subfigure}{.5\textwidth}
  \centering
  \includegraphics[width=.7\linewidth]{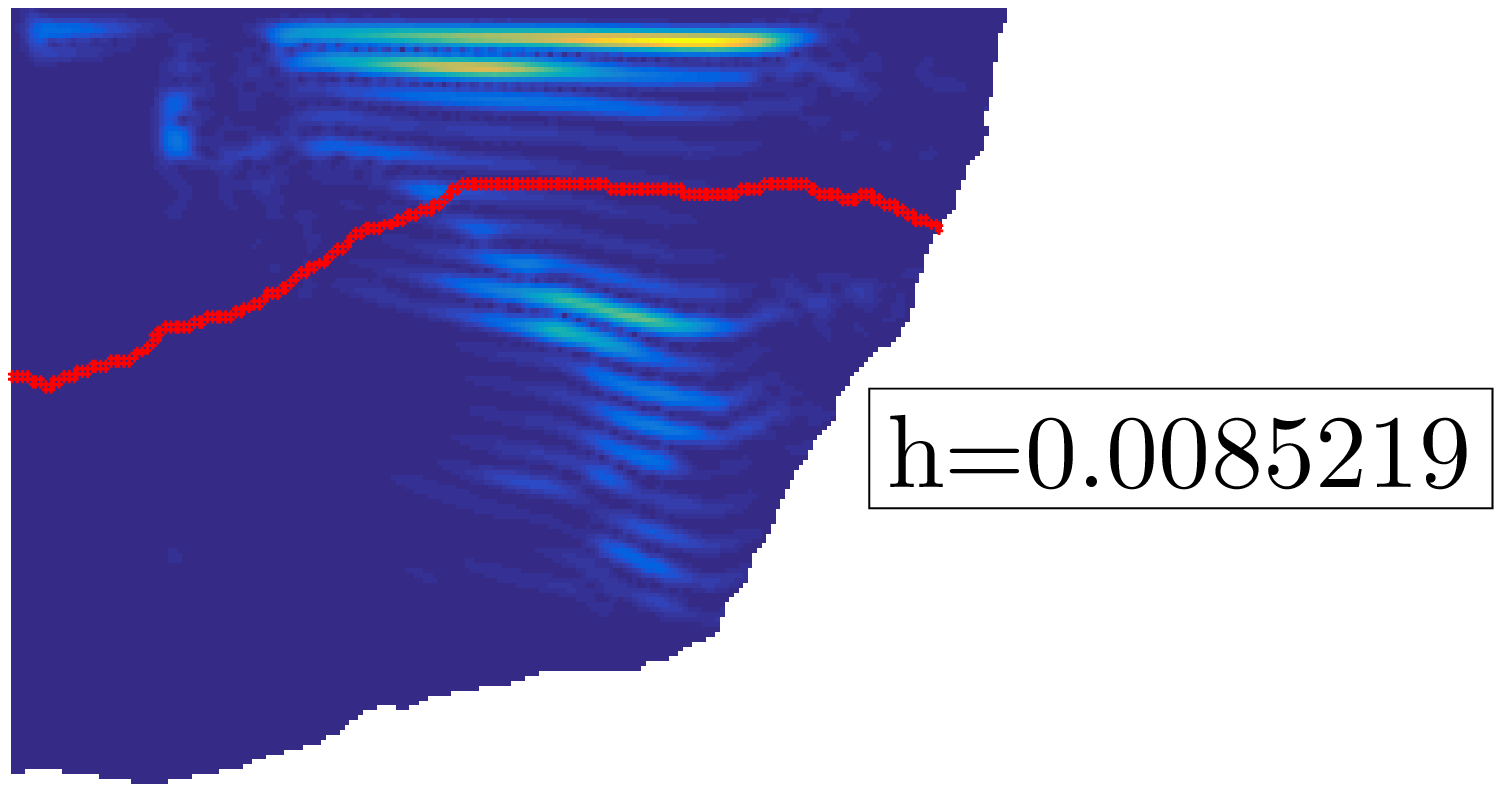}
  \label{fig:sfig1}
\end{subfigure}%
\begin{subfigure}{.5\textwidth}
  \centering
  \includegraphics[width=.7\linewidth]{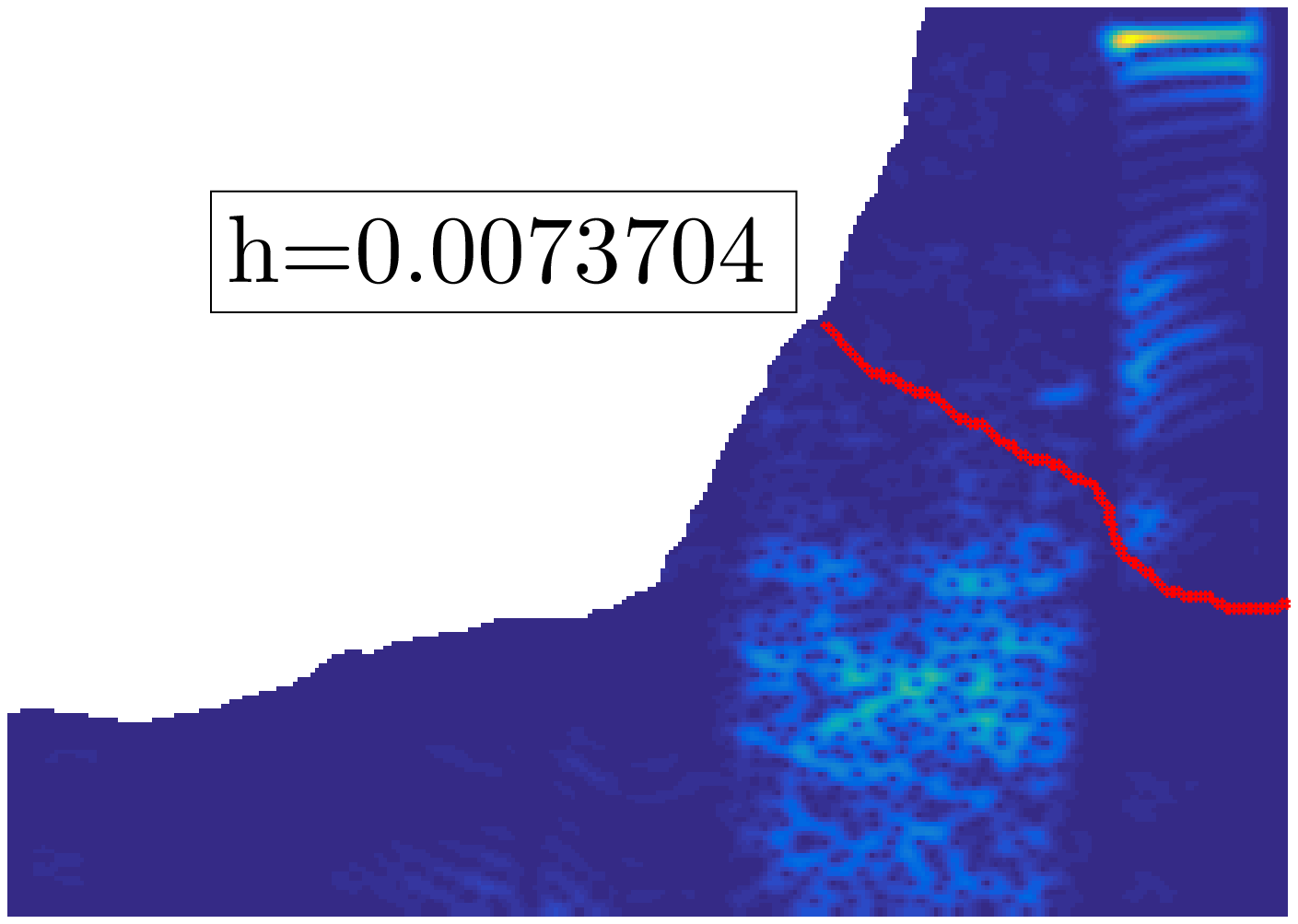}
  \label{fig:sfig2}
\end{subfigure}
  \caption{Partitioning of the two subdomains and estimating their respective Cheeger constants.}\label{fig:partition2}
\end{figure}

\begin{figure}
 \includegraphics[width=0.8\textwidth]{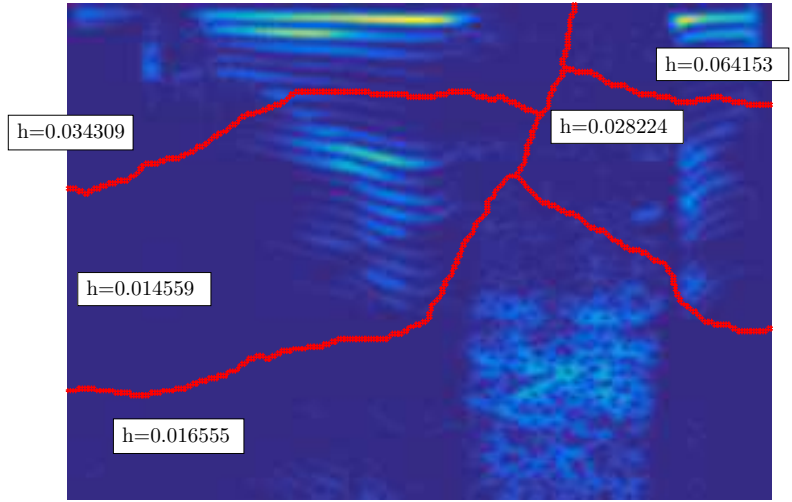}
 \caption{The algorithm terminates as soon as the (estimated) Cheeger constants of all subdomains are above a given threshold.}\label{fig:partition3}
\end{figure}

\subsection{Architecture of the Proof}
The proof of our main result is quite convoluted and draws on techniques from different mathematical fields such as complex analysis, functional analysis or spectral Riemannian geometry. For the benefit of the reader we provide a short sketch of our argumentation, before we go to the details in the later sections.

Let us start with the following observation:
Given two functions $F_1,F_2:D\to \mathbb{C}$, we have
\begin{equation}\label{eq:motiv1}   
	\inf_{\alpha \in \mathbb{R}}\| F_1 - e^{\i\alpha}F_2\|_{L^p(D)}^p
	= \inf_{a\in \mathbb{C},\ |a|=1}\int_D \left|\frac{F_2(z)}{F_1(z)} - a\right|^p w(z)dz,
\end{equation}  
where $w(z)dz$ is the Lebesgue measure with density $w(z)=|F_1(z)|^p$.

Now suppose that we could just disregard the constraint $|a|=1$ in the above formula (\ref{eq:motiv1}) (it turns out that one cannot do this but in Section \ref{sec:balancesec} we develop tools which effectively amount to an equivalent result).
Then, using the notation $L^p(D,w)$ for the $L^p$ space with respect to the measure $w dz$, we
would need to estimate a term of the form
\begin{equation}\label{eq:motiv2}   
	\inf_{a\in \mathbb{C}}\left\|\frac{F_2}{F_1} - a\right\|_{L^p(D,w)}.
\end{equation}  
The Poincar\'e inequality tells us that (provided $w$ and $D$ are `nice')  there exists
a constant $C_{poinc}(p,D,w)<\infty$, depending only on the domain and the weight, such that (\ref{eq:motiv1})
can be bounded by
\begin{equation}
	\label{eq:motiv3}
	C_{poinc}(p,D,w)\cdot \left\|\nabla \frac{F_2}{F_1}\right\|_{L^p(D,w)}.
\end{equation}
Now spectral geometry enters the picture. 
Cheeger's inequality \cite{cheeger} says that the Poincar\'e constant on a Riemannian manifold can be controlled by the reciprocal of the Cheeger constant. We would like to apply this result to the metric induced by the metric tensor 
$\left(w(z)\begin{bmatrix}1 & 0 \\ 0 & 1\end{bmatrix}\right)_{z\in D}$ in order to get a bound on $	C_{poinc}(p,D,w)$.
However, since $w$ in our case arises from Gabor measurments it generally has zeros and therefore does not qualify as Riemannian manifold.
In Appendix \ref{sec:cheegepoinc} we will show that for $F_1=V_\varphi f$
$$C_{poinc}(p,D,w) \le \frac{4p}{h_{p,D}(f)},$$
where $h_{p,D}(f)$ is defined as in \eqref{def:cheegerconst} holds true, nevertheless.\\
Assuming that all heuristics up to this point were correct, we get a bound of the form
$$
\inf_{\alpha \in \mathbb{R}}\| F_1 - e^{\i\alpha}F_2\|_{L^p(D)}\le c \cdot  h_{p,D}(f)^{-1}\cdot \left\|\nabla \frac{F_2}{F_1}\right\|_{L^p(D,w)},
$$
where here and in the following $c$ denotes an unspecified constant.

We are faced with the problem of converting $\left\|\nabla \frac{F_2}{F_1}\right\|_{L^p(D,w)}$ into a useful estimate in the difference $|F_1|-|F_2|$.

Now complex analysis enters the picture. If $F_1(z) = V_\varphi f(x,-y)$ and $F_2(z)= V_\varphi g(x,-y)$ it is known that the quotient $\frac{F_2}{F_1}$ is a mereomorphic function (Theorem \ref{thm:gaborholo}) which, almost everywhere, satisfies the Cauchy-Riemann equations. It is a simple exercise (Lemma \ref{keylemma}) to verify that for any meromorphic function it holds that
$$
\left|\nabla \frac{F_2}{F_1}\right| = \sqrt{2} \left|\nabla \frac{|F_2|}{|F_1|}\right|
$$
almost everywhere. This is great, since we now can get a bound that only depends on the absolute values $|F_1|,|F_2|$!

To summarize, if all our heuristics were correct, we would get a bound of the form
$$
\inf_{\alpha \in \mathbb{R}}\| F_1 - e^{\i\alpha}F_2\|_{L^p(D)}\le c\cdot  h_{p,D}(f)^{-1}\cdot \left\|\nabla \frac{|F_2|}{|F_1|}\right\|_{L^p(D,w)}.
$$
If we now apply the quotient rule to the estimate above and utilize the fact that $w = |F_1|^p$ we would 
get a bound of the form
\begin{multline} \label{eq:motivlog}
	\inf_{\alpha \in \mathbb{R}}\| F_1 - e^{\i\alpha}F_2\|_{L^p(D)}\le c\cdot  h_{p,D}(f)^{-1}\cdot \\ \left( \left\|\left(\frac{\nabla|F_1|}{|F_1|}\right)\cdot (|F_1|-|F_2|)\right\|_{L^p(D)}
	+\left\|\nabla |F_1|- \nabla |F_2|\right\|_{L^p(D)}\right).
\end{multline}
This is precisely Proposition \ref{prop:main} and Theorem \ref{thm:gaborstab1} (although the details of these results and their proofs are significantly more delicate than this informal discussion may suggest, see Section \ref{sec:balancesec}).

The estimate (\ref{eq:motivlog}) is already close to what one would like to have, were it not for the term $\frac{\nabla|F_1|}{|F_1|}$ in the first summand of the right hand side of (\ref{eq:motivlog}). Indeed, since $F_1$
will in general have zeroes, this term will not be bounded. 

Here again complex analysis will come to our rescue: The function $F_1(z)= V_\varphi f(x,-y)$ is, after multiplication with
a suitable function $\eta$, an entire function of order 2.  Jensen's formula \cite{conway2012functions} provides
bounds for the distribution of zeros of $F_1$ and this allows us to show that, for $1\le p<2$ the norms $\|\frac{\nabla|F_1|}{|F_1|}\|_{L^p(B_R(0))}$ grow at most like a low-order polynomial in $R$ which is, remarkably, independent of $f$! These arguments are carried out in Section \ref{sec:loggabor}.

Finally we can put all our estimates together and arrive at our main stability theorems which are summarized
in Section \ref{sec:finalGabor}.

\subsection{Outline}
The outline of this article is as follows. In Section \ref{sec:main} we start by proving a general stability result, Proposition \ref{prop:main}, for phase retrieval problems. This result, which depends on some at this point unspecified constants, namely an analytic Poincar\'e constant and a sampling constant, 
is inspired by and generalizes the main result of \cite{alaifari2016stable}. In Section \ref{sec:balancesec} we gain control of the two unspecified constants of the main result in Section \ref{sec:main} and show that they can be controlled in terms of the global variation of the measurements as defined in Definition \ref{def:globalvar}, see Proposition \ref{prop:balance}. In Section \ref{sec:gaborsec} we specialize to the case of Gabor phase retrieval. 
We first show that the global variation of Gabor measurements is independent of the signal to be analyzed, which will yield an estimate of the type (\ref{eq:motivlog}), see Theorem \ref{thm:gaborstab1}. Finally, in Section \ref{sec:loggabor}
we remove the logarithmic derivative in the estimate of Theorem \ref{thm:gaborstab1} at the expense of introducing weighted norms in the error estimate, see Proposition \ref{prop:logdermain} whose proof requires deep function-theoretic properties of the Gabor transform. In Section \ref{sec:finalGabor} we formulate and prove our main stability result.

Finally, Appendix \ref{app:gabor} is concerned with auxilliary properties of the Gabor phase retrieval problem and
in Appendix \ref{app:cheeger} we state and prove several auxilliary facts related to Cheeger- and Poincar\'e constants.
In Appendix \ref{app:algorithm} we provide some details on Spectral clustering algorithms that aim at estimating Cheeger constants of graphs.

\subsection{Notation}
We pause here to collect some notation that will be used throughout this article. Since some proofs will turn out to be quite technical we hope that this will prevent the reader to get lost in his or her reading.

\begin{itemize}
	\item For $1\le p< \infty$, $D\subset \mathbb{R}^d$ and a weight function $w:D\to \mathbb{R}_+$ we write (somewhat informally)
	$$
	L^p(D,w):=\left\{F:D\to \mathbb{C}:\ \|f\|_{L^p(D,w)}<\infty \right\},
	$$
	where 
	$$
	\|F\|_{L^p(D,w)}^p := \int_{D}|F(u)|^p w(u)du.
	$$
	If $w\equiv 1$ we simply write $L^p(D)$ instead of $L^p(D,1)$.
	\item For $w:D\to \mathbb{R}_+$ we shall write
	$$
	w(D):=\int_D w(u)du.
	$$
	\item For $F:D\to \mathbb{C}$ and $w:D\to \mathbb{R}_+$ we shall write
	$$
	F_D^w:= \frac{1}{w(D)} \int_D F(u) w(u)du.
	$$

	\item For $1\le p< \infty$, $D\subset \mathbb{R}^d$, $l\in \mathbb{N}$ and a weight function $w:D\to \mathbb{R}_+$ we write (somewhat informally)
	$$
	W^{p,l}(D,w):=\left\{f:D\to \mathbb{C}:\ \|f\|_{W^{p,l}(D,w)}<\infty \right\},
	$$
	where
	$$
	\|f\|_{W^{p,l}(D,w)}^p:= \sum_{k=0}^l \|\nabla^k f\|_{L^p(D,w)}^p.
	$$
	If $w\equiv 1$ we simply write $W^{p,l}(D)$ instead of $W^{p,l}(D,1)$.
	\item We shall often identify $\mathbb{R}^2$ with $\mathbb{C}$ via the isomorphism $(x,y)\in \mathbb{R}^2 \leftrightarrow z:=x + \i y\in \mathbb{C}$. Using this identification we may also interpret a subset $D\subset \mathbb{R}^2$ as a subset of $\mathbb{C}$.
	\item For $z\in \mathbb{C}$ we shall write $B_r(z)$ for the ball of radius $r$ around $z$.
	\item For a set $C\subset \realr^2$ let $\abs{C}$ denote the $2$-dimensional Lebesgue measure of $C$.
	      For a smooth curve $A\subset \realr^2$ let $\ell(A)$ denote the Euclidean length of $A$.
	\item For $D\subset \mathbb{C}$ we denote $\mathcal{O}(D)$ the ring of holomorphic functions on $D$ and 
	$\mathcal{M}(D)$ the field of meromorphic functions on $D$.
	\item For $F\in \mathcal{M}(D)$ we may write (somewhat informally)
	$$
	F(z)= u(x,y) + \i v(x,y)
	$$
	where $u,v$ denote the real resp. imaginary part of $F$. 
	We shall also write
	$$
	F'(z):= \frac{\partial}{\partial x}u(x,y)+ \i \frac{\partial}{\partial x} v(x,y),
	$$
	whenever defined.
	\item For $D\subset \mathbb{C}$ we denote $\chi_D$ the indicator function of $D$.
\end{itemize}

\section{A First Stability Result for Phase Reconstruction from Holomorphic Measurements}\label{sec:main}
The starting point of our work will be a general stability result, Proposition \ref{prop:main} that we prove in the present section. The estimate will essentially depend on two quantities:  
an analytic Poincar\'e constant and a sampling constant. We will see later on how these two constants can be controlled, but for the time being we simply present their definitions.

\begin{definition}\label{def:analpoinc}
	Given a domain $D\subset \mathbb{C}$, $1\le p \le \infty$, a number $\delta>0$, a point $z_0 \in D$ such that $B_\delta(z_0)\subset D$ and a weight $w:D\to \mathbb{R}_+$, we define  $C^a_{poinc}(p,D,z_0,\delta,w)>0$ as the smallest constant such that
	\begin{equation}\label{eq:poincare}
		\norm{F - F(z_0)}_{L^p(D,w)} \le C^a_{poinc}(p,D,z_0,\delta,w)\norm{F'}_{L^p(D,w)}
	\end{equation}
	for all $F\in \mathcal{M}(D)\cap \mathcal{O}(B_\delta(z_0))\cap W^{1,p}(D,w)$.
\end{definition}
We will refer to $C^a_{poinc}(p,D,z_0,\delta,w)$ as an `analytic Poincar\'e constant'. We will see later on, in Section \ref{subsec:poincare}, how one can control this quantity.

Next we define what we call a `sampling constant'. 
\begin{definition}\label{def: Csamp}
	Let $D$ be a domain, $w:D\to \mathbb{R}_+$ and $G\in L^p(D,w)$. Then we define, for 
	$z_0\in D$ $1\le p\le \infty$ the \emph{Sampling constant}
	$$
	C_{samp}(p,D,z_0,G,w):=
	\frac{\norm{G(z_0)}_{L^p(D,w)}}{\norm{G}_{L^p(D,w)}}.
	$$
\end{definition}
Later on, in Section \ref{subsec:pointev} we will see how to control this quantity.

Having defined the notion of analytic Poincar\'e constant and sampling constant we can now state and prove the following general stability result.
\begin{theorem}\label{prop:main}Let $D\subset \mathbb{C}$ and $1\leq p <\infty$.
	Suppose that  $F_1, F_2\in L^p(D)$ are smooth function such that
	there exists a continuous, nowhere vanishing function
	$\eta: D \to \mathbb{C}$ for which
	both functions $\eta\cdot F_1,\ \eta\cdot F_2\in \mathcal{O}(D)$. 
	
	Suppose that $z_0\in D$ and $\delta >0$ with $B_\delta(z_0)\subset D$ and
	$$|F_1(z)|>0\quad \mbox{for all }z\in B_\delta(z_0).$$ 
	
	Then the following estimate holds:
	\begin{multline}
		\inf_{\alpha \in \mathbb{R}}\norm{F_1-e^{\i\alpha}F_2}_{L^p(D)}
		\le C_{samp}(p,D,z_0,|F_2/F_1|-1,|F_1|^p)\||F_2|-|F_1|\|_{L^p(D)}  \\  
		+ 2^{\abs{1/p-1/2}} C_{poinc}^a(p,D,z_0,\delta,|F_1|^p) 
		\left(\|\nabla |F_1|- \nabla |F_2| \|_{L^{p}(D)}+\|\nabla\log(F_1)(|F_1|-|F_2|)\|_{L^p(D)}\right).\label{eq:mainest}
	\end{multline}
\end{theorem}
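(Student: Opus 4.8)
Looking at this statement, I need to prove a stability estimate relating $\inf_\alpha \|F_1 - e^{i\alpha} F_2\|_{L^p(D)}$ to terms involving $|F_1| - |F_2|$ and their gradients. Let me think through the proof strategy.

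\begin{proof}[Proof plan]
The plan is to reduce the estimate to the meromorphic quotient $Q:=F_2/F_1$ and then apply, in turn, a judicious choice of global phase, the analytic Poincar\'e inequality of Definition~\ref{def:analpoinc}, and the complex-analytic identity relating the gradient of a meromorphic function to the gradient of its modulus.

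First I would set up the reduction. Since $\eta F_1$ is holomorphic (by hypothesis, and here $\eta F_1$ plays the role of the function in Theorem~\ref{thm:gaborholo}) and not identically zero --- it is nonzero on $B_\delta(z_0)$ --- its zero set, which contains the zero set of $F_1$, is discrete, hence Lebesgue-null; the same holds for the poles of $Q=(\eta F_2)/(\eta F_1)\in\mathcal M(D)$. Consequently, with $w:=|F_1|^p$, one has for every $\alpha\in\R$
\[
\|F_1-e^{\i\alpha}F_2\|_{L^p(D)}^p=\int_D|F_1(z)|^p\bigl|1-e^{\i\alpha}Q(z)\bigr|^p\,dz=\bigl\|1-e^{\i\alpha}Q\bigr\|_{L^p(D,w)}^p,
\]
so it suffices to bound the right-hand side for one good choice of $\alpha$. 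Since $|F_1(z_0)|>0$, the number $Q(z_0)$ is well defined, and I would choose $\alpha$ with $e^{\i\alpha}Q(z_0)=|Q(z_0)|\ge 0$. The triangle inequality in $L^p(D,w)$ then gives
\[
\bigl\|1-e^{\i\alpha}Q\bigr\|_{L^p(D,w)}\le\bigl\|1-|Q(z_0)|\bigr\|_{L^p(D,w)}+\bigl\|Q(z_0)-Q\bigr\|_{L^p(D,w)}.
\]
For the first summand, $\|1-|Q(z_0)|\|_{L^p(D,w)}=|1-|Q(z_0)||\,w(D)^{1/p}$, and since $\||Q|-1\|_{L^p(D,w)}^p=\int_D||F_2|-|F_1||^p\,dz=\||F_2|-|F_1|\|_{L^p(D)}^p$, the definition of the sampling constant (Definition~\ref{def: Csamp}, with $G=|F_2/F_1|-1$ and weight $|F_1|^p$) yields precisely
\[
\bigl\|1-|Q(z_0)|\bigr\|_{L^p(D,w)}=C_{samp}\bigl(p,D,z_0,|F_2/F_1|-1,|F_1|^p\bigr)\cdot\bigl\||F_2|-|F_1|\bigr\|_{L^p(D)}.
\]

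Next comes the Poincar\'e step. The function $Q$ lies in $\mathcal M(D)$, and since $\eta F_1$ is zero-free on $B_\delta(z_0)$ also in $\mathcal O(B_\delta(z_0))$; if the right-hand side of the asserted estimate is infinite there is nothing to prove, and otherwise the gradient bounds below force $Q\in W^{1,p}(D,w)$, so Definition~\ref{def:analpoinc} applies:
\[
\bigl\|Q-Q(z_0)\bigr\|_{L^p(D,w)}\le C^a_{poinc}(p,D,z_0,\delta,|F_1|^p)\,\|Q'\|_{L^p(D,w)}.
\]
Now complex analysis enters: for a meromorphic function one has, almost everywhere, $|\nabla Q|=\sqrt2\,|\nabla|Q||$ (Lemma~\ref{keylemma}) and $|\nabla Q|=\sqrt2\,|Q'|$ by the Cauchy--Riemann equations, hence $|Q'|=|\nabla|Q||$ a.e. Writing $|Q|=|F_2|/|F_1|$ and using the quotient rule,
\[
\nabla|Q|=\frac{\nabla|F_2|-\nabla|F_1|}{|F_1|}+\frac{|F_1|-|F_2|}{|F_1|}\cdot\frac{\nabla|F_1|}{|F_1|},
\]
so that, multiplying by $|F_1|$ and using $\tfrac{|\nabla|F_1||}{|F_1|}=|\nabla\log|F_1||\le|\nabla\log(F_1)|$,
\[
|Q'|\,|F_1|\le\bigl|\nabla|F_2|-\nabla|F_1|\bigr|+\bigl|\nabla\log(F_1)\bigr|\,\bigl||F_1|-|F_2|\bigr|\qquad\text{a.e.}
\]
Raising to the $p$-th power, integrating against $dz$ (recall $w=|F_1|^p$), and applying Minkowski's inequality bounds $\|Q'\|_{L^p(D,w)}$ by the sum $\|\nabla|F_1|-\nabla|F_2|\|_{L^p(D)}+\|\nabla\log(F_1)(|F_1|-|F_2|)\|_{L^p(D)}$; the numerical factor $2^{|1/p-1/2|}$ in the statement is a (non-optimal) constant that accounts for the passage between the complex derivative $Q'$ and the Euclidean norm of the real gradient of $|Q|$ together with elementary $\ell^p$--$\ell^2$ comparisons. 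Combining this with the Poincar\'e bound and the sampling-term identity, and recalling $\inf_{\beta}\|F_1-e^{\i\beta}F_2\|_{L^p(D)}\le\|F_1-e^{\i\alpha}F_2\|_{L^p(D)}$ for the $\alpha$ fixed above, gives \eqref{eq:mainest}.

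The main obstacle is the last step: one must make the identity $|Q'|=|\nabla|Q||$ rigorous on all of $D$ despite the zeros and poles of $Q$ (these form a null set, but the bookkeeping of the weight $w=|F_1|^p$ near zeros of $F_1$ has to be handled with care), and, more seriously in the larger argument, the factor $\nabla\log(F_1)=F_1'/F_1$ is unbounded near the zeros of $F_1$ --- it is precisely this term that later forces the use of Jensen's formula to control the zero distribution of the entire function $\eta\cdot V_\varphi f$, so that $\|\nabla\log(F_1)\|_{L^p(B_R)}$ grows only polynomially in $R$.
\end{proof}
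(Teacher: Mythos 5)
Your proposal is correct and follows essentially the same route as the paper's proof: the same choice of phase at $z_0$, the same split into a sampling term and a Poincar\'e term for the quotient $F_2/F_1$ weighted by $|F_1|^p$, the identity $|Q'|=|\nabla|Q||$ from Lemma~\ref{keylemma}, and the same quotient-rule regrouping producing the logarithmic-derivative term. The only (harmless) difference is that by estimating full gradients and applying Minkowski you get constant $1$ in front of the second bracket, whereas the paper's component-wise argument produces the stated factor $2^{\abs{1/p-1/2}}\ge 1$, so your bound is if anything slightly sharper.
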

We remark that this result draws its inspiration from, and generalizes, the main result of \cite{alaifari2016stable}.
At its heart lies the following elementary lemma which is proved in \cite{alaifari2016stable} and which follows directly from the Cauchy-Riemann equations.
\begin{lemma}\label{keylemma}
	Suppose that $F\in\mathcal{M}(D)$. Then for any $z=x + \i y\in D$ which is not a pole of $F$ 
	we have the equality
	$$ |F'(z)| = \left| \nabla |F|(x,y)\right|.$$
\end{lemma}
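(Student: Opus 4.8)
\textbf{Proof plan for Lemma \ref{keylemma}.}
The plan is to reduce the statement to a direct computation using the Cauchy--Riemann equations, localizing away from the poles of $F$. Fix $z = x+\i y \in D$ which is not a pole of $F$. Since the poles of a meromorphic function are isolated, there is a punctured neighborhood of $z$ on which $F$ is holomorphic, and in fact (extending holomorphically across $z$ if $z$ happened to be a removable point) $F$ is holomorphic on some open disk $B_r(z)$. Write $F = u + \i v$ with $u, v$ real-valued and smooth on $B_r(z)$. The holomorphy of $F$ gives the Cauchy--Riemann equations $u_x = v_y$ and $u_y = -v_x$ on $B_r(z)$, and in particular $|F'(z)|^2 = u_x^2 + v_x^2 = u_x^2 + u_y^2 = \tfrac12(u_x^2+u_y^2+v_x^2+v_y^2)$.

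Next I would compute $\nabla |F|$. First handle the generic case $F(z) \neq 0$: then $|F| = \sqrt{u^2+v^2}$ is smooth near $z$, and
\begin{equation*}
\partial_x |F| = \frac{u u_x + v v_x}{\sqrt{u^2+v^2}}, \qquad \partial_y |F| = \frac{u u_y + v v_y}{\sqrt{u^2+v^2}}.
\end{equation*}
Hence $|\nabla |F||^2 = \dfrac{(uu_x+vv_x)^2 + (uu_y+vv_y)^2}{u^2+v^2}$. Expanding the numerator and substituting the Cauchy--Riemann relations $v_y = u_x$, $v_x = -u_y$, the cross terms cancel and one is left with $(u^2+v^2)(u_x^2+u_y^2)$, so that $|\nabla |F||^2 = u_x^2 + u_y^2 = |F'(z)|^2$, which is the claimed identity at every point where $F \neq 0$.

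It remains to treat a zero $z_0$ of $F$, which is the only mildly delicate point; here $|F|$ need not be differentiable at $z_0$, so the gradient must be interpreted in the a.e.\ (or approximate) sense, consistent with how Lemma \ref{keylemma} is invoked. If $z_0$ is a zero of order $m \geq 1$, write $F(w) = (w-z_0)^m G(w)$ with $G$ holomorphic and $G(z_0) \neq 0$; then $|F(w)| = |w-z_0|^m |G(w)|$, and away from $z_0$ the identity already proven applies, while the set $\{F = 0\}$ is a discrete set and hence Lebesgue-null, so the pointwise equality $|F'| = |\nabla|F||$ holds almost everywhere on $D$. (Alternatively, one can note $|F|^2 = u^2+v^2$ is smooth everywhere and compute with $\nabla(|F|^2) = 2|F|\nabla|F|$, giving $|\nabla(|F|^2)| = 2|F||\nabla|F||$ and separately $|\nabla(|F|^2)| = 2|F||F'|$ via the computation above, so the identity extends by continuity wherever $|F| \neq 0$ and the remaining zero set is null.) The main obstacle is purely bookkeeping: making sure the ``almost everywhere'' qualification is stated compatibly with the zero set of $F_1$, since in the applications $F = F_2/F_1$ is genuinely meromorphic and its poles coincide with zeros of $F_1$ — but these again form a discrete set, so they do not affect any $L^p$ estimate.
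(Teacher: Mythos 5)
Your computation is correct and is exactly the Cauchy--Riemann argument the paper relies on (it does not spell out a proof, merely citing \cite{alaifari2016stable} and noting the identity "follows directly from the Cauchy--Riemann equations"). Your extra remark about the zero set is sensible and even slightly more careful than the stated lemma, since at a simple zero $|F|$ is not classically differentiable and the identity is only used in an almost-everywhere / $L^p$ sense anyway.
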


Having Lemma \ref{keylemma} at hand we can now proceed to the proof of the main result of this section.

\begin{proof}[Proof of Theorem \ref{prop:main}]
	
	We need to bound the quantity
	\begin{equation}\label{eq:firstesta}
		\norm{F_2(z)-e^{\i\alpha}F_1(z)}_{L^p(D)}
	\end{equation}
	for suitable $\alpha \in \mathbb{R}$
	
	{\it Step 1. } As a first step we start by developing a basic estimate.
	Consider 
	$$
	F:=F_2/F_1.
	$$
	By assumption it holds that $F\in \mathcal{M}(D)$.
	
	Pick $\alpha$ such that
	\begin{equation}\label{alphapick}
		|F(z_0)-e^{\i\alpha}|=||F(z_0)|-1|.
	\end{equation}
	Now consider for $z\in D$ arbitrary
	\begin{eqnarray}
		|F_2(z)-e^{\i\alpha}F_1(z)| & = & |F_1(z)||F(z)-e^{\i\alpha}|\nonumber \\
		&\le & |F_1(z)|\left(|F(z)-F(z_0)|+|F(z_0)-e^{\i\alpha}|\right) \nonumber \\
		&=&
		|F_1(z)|\cdot |F(z)-F(z_0)|+|F_1(z)|\cdot ||F(z_0)|-1|\label{eq:basicest}
	\end{eqnarray}
	
	It follows that 

	$$
	\|F_2(z)-e^{\i\alpha}F_1(z)\|_{L^p(D)}
	\le \|F(z)-F(z_0)\|_{L^p(D,|F_1|^p)}+
	\||F(z_0)|-1|\|_{L^p(D,|F_1|^p)} =: (I)+(II).
	$$

	{\it Step 2 (Estimating (II)). }
	By Definition \ref{def: Csamp} with $w=|F_1|^p$ we see that
	$$
	(II) =
	C_{samp}(p,D,z_0,|F|-1,w)\norm{|F_1| - |F_2| }_{L^p( D)}.
	$$	

	{\it Step 3 (Estimating (I)). }
	By Definition \ref{def:analpoinc} with $w=|F_1|^p$ and  $F\in \mathcal{O}(B_\delta(z_0))$
	(which follows from the fact that $F_1$ is nonzero on $B_\delta(z_0)$)
	we get that
	\begin{equation}\label{eq:outer}
		(I)\le C_{poinc}^a(p,D,z_0,\delta,w)\cdot \|F'\|_{L^p(D,w)}.
	\end{equation}
	We now need to get 
	a bound on $\norm{F' }_{L^p(D,w)}$ in terms of 
	$\norm{|F_1|-|F_2|}_{W^{1,p}(D)}$ to finish the proof. This is where
	our key lemma, Lemma \ref{keylemma} comes into play, stating that
	$$
	\norm{F' }_{L^p(D,w)}= \norm{\nabla |F| }_{L^p(D,w)}.
	$$
	It thus remains to achieve a bound for $\norm{\nabla |F|}_{L^p(D,w)}$.
	To this end we consider
	\begin{eqnarray*}
		\frac{\partial}{\partial x}|F| &=&
		\frac{|F_1| \frac{\partial}{\partial x}|F_2|  - |F_2|\frac{\partial}{\partial x}|F_1|}{|F_1|^2} \\
		&=&
		\frac{\frac{\partial}{\partial x}|F_1| (|F_1| - |F_2|)
			+ |F_1|(\frac{\partial}{\partial x}|F_2|-\frac{\partial}{\partial x}|F_1|)}{|F_1|^2}
	\end{eqnarray*}
	which holds at least for all points where neither $F_1$ nor $F_2$ vanishes.
	Since this set of points is discrete by our assumptions we see that
	$$
	\|\frac{\partial}{\partial x} |F|\|_{L^p(D,|F_1|^p)}^p
	\le 
	\int_D \frac{|F_1|_x^p}{|F_1|^p}\cdot ||F_1|-|F_2||^p
	+ \int_D |\frac{\partial}{\partial x}|F_2|-\frac{\partial}{\partial x}|F_1||^p.
	$$
	Since it always holds that $|\frac{\partial}{\partial x}|f(x)||\le |\frac{\partial}{\partial x}f(x)|$
	(by the inverse triangle inequality)
	we get  that
	$$
	\|\frac{\partial}{\partial x} |F|\|_{L^p(D,|F_1|^p)}^p
	\le 
	\int_D |\log(F_1)'|^p\cdot||F_1|-|F_2||^p
	+ \int_D |\frac{\partial}{\partial x}|F_2|-\frac{\partial}{\partial x}|F_1||^p,
	$$
	where we have used that $\log(F_1)' = F_1'/F_1$, the logarithmic derivative.\\
	
	The norm of the derivative of $\abs{F}$ w.r.t. $y$ can be estimated analogously. Therefore we get
	\begin{eqnarray*}
	 \norm{\nabla \abs{F}}_{L^p(D,w)}^p &=& \int_D \left( \abs{F}_x^2 + \abs{F}_y^2 \right)^{p/2} \cdot \abs{F_1}^p \\
	 &\leq& \max\{2^{p/2-1},1\} \cdot \int_D\left( \abs{F}_x^p+\abs{F}_y^p\right)\cdot\abs{F_1}^p \\
	 &\leq& \max\{2^{p/2-1},1\} \cdot \Big[ \int_D \left( \abs{\log(F_1)_x}^p+\abs{\log(F_1)_y}^p\right) \cdot \abs{\abs{F_1}-\abs{F_2}}^p\\ 
	 & & ~ + \int_D \left( \abs{\abs{F_2}_x-\abs{F_1}_x}^p + \abs{\abs{F_2}_y-\abs{F_1}_y}^p\right) \Big]\\
	 &\leq& \max\{2^{p/2-1},1\} \cdot \max\{2^{1-p/2},1\} \Big[ \int_D \abs{\nabla \log(F_1)}^p ~\abs{\abs{F_1}-\abs{F_2}}^p\\
	 & & ~ +\int_D \abs{\nabla \abs{F_2}-\nabla\abs{F_1}}^p \Big]\\
	 &\leq& 2^{\abs{1-p/2}} \cdot \left[ \norm{\nabla \log(F_1) \cdot(\abs{F_1}-\abs{F_2})}_{L^p(D)} + \norm{\nabla \abs{F_2}-\nabla\abs{F_1}}_{L^p(D)} \right]^p.
	\end{eqnarray*}

\end{proof}
As it stands, Proposition \ref{prop:main} is not yet satisfactory for at least two reasons. 
First, it is not yet clear how the analytic Poincar\'e constant and the sampling constant can be (simultaeously) controlled. 
Second, the term $\|\nabla \log(F_1)(|F_1|-|F_2|)\|_{L^p(D)}$ in the estimate (\ref{eq:mainest}) is difficult to interpret since the logarithmic derivative $\nabla\log(F_1)$ will in general be unbounded. The purpose of the remainder of this article
is to show that all these dependencies can be absorbed into a natural quantity which describes the degree
of disconnectedness of the measurements.

\section{Balancing The Constants}\label{sec:balancesec}
Having the technical result in Proposition \ref{prop:main} main at hand, the next task is to get a grip
on the error term on the right hand side of (\ref{eq:mainest}). Indeed, we will show that both the 
analytic Poincar\'e constant, as well as the sampling constant can be simultaneously controlled.
\subsection{Weighted Analytic Poincar\'e Inequalities}\label{subsec:poincare}
While the concept of analytic Poincar\'e constant seems not to be very widely studied, the classical Poincar\'e constant as defined next is certainly much more well-known.
\begin{definition} \label{def:PoincClass}
	For $1\le p < \infty$ denote by $C_{poinc}(p,D,w)$ the Poincar\'e constant
	of the domain $D$ w.r.t. the weight $w$, i.e. the optimal constant $C$ such that
	for all $F\in W^{1,p}(D,w)\cap \mathcal{M}(D)$ we have
	$$
	\norm{F - F_D^w}_{L^p(D,w)} \le C\norm{\nabla F}_{L^p(D,w)},
	$$
	where we put $F_D^w:=\frac{1}{w(D)}\int_D F(z)w(z)dz$, and $w(D):=\int_Dw(z)dz$.
\end{definition}
There exists a huge body of work devoted to the study of weighted Poincar\'e inequalities as just described.
In Appendix \ref{app:cheeger} we present a collection of results which are especially relevant for the present paper.

\begin{remark}
	Observe that in Definition \ref{def:PoincClass}, the defining inequality only needs to be satisfied for meromorphic functions. This is certainly non-standard but sufficient for our purposes, where $F$ will always
	be the quotient of two (up to normalization) holomorphic functions. The reason for this somewhat odd definition is that we will ultimately estimate the Poincar\'e constant in terms of the Cheeger constant related
	to the measurements. The proof of this estimate is carried out in Appendix \ref{app:cheeger} but it does not
	necessarily apply to all functions $F\in W^{1,p}(D,w)$, the reason being the famous \emph{Lavrentiev phenomenon} which states that smooth functions need not necessarily be dense in $W^{1,p}(D,w)$ \cite{zhikov1995lavrentiev}.  
\end{remark}

The next result shows that analytic Poincar\'e constants as defined in Definition \ref{def:analpoinc} can be, to some extent, controlled
by the usual Poincar\'e constant as defined in Definition \ref{def:PoincClass}.
\begin{lemma}\label{lem:analpoincconst}With the notation of Definition \ref{def:analpoinc}
	we have the estimate
	\begin{equation}\label{eq:analpoincconst}  
		C^a_{poinc}(p,D,z_0,\delta,w)\le C_{poinc}(p,D,w)\cdot \left(1+w(D)^{1/p}\cdot \inf_{0<a\le\delta}\frac{w(B_a(z_0))^{1-1/p}\|w^{-1}\|_{L^\infty(B_a(z_0))}}{|B_a(z_0)|}\right).
	\end{equation}
\end{lemma}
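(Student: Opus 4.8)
The plan is to reduce the analytic Poincaré constant to the classical one by controlling the discrepancy between centering at a point $z_0$ and centering at the weighted mean $F_D^w$. Concretely, for any $F\in \mathcal{M}(D)\cap \mathcal{O}(B_\delta(z_0))\cap W^{1,p}(D,w)$, I would write
\begin{equation*}
\norm{F-F(z_0)}_{L^p(D,w)} \le \norm{F-F_D^w}_{L^p(D,w)} + \norm{F_D^w - F(z_0)}_{L^p(D,w)},
\end{equation*}
and bound the first summand by $C_{poinc}(p,D,w)\norm{\nabla F}_{L^p(D,w)} = C_{poinc}(p,D,w)\norm{F'}_{L^p(D,w)}$ directly from Definition \ref{def:PoincClass} (using that for meromorphic $F$ the modulus of the gradient equals $|F'|$, though here we only need the plain gradient norm). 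The second summand is a constant function on $D$, so $\norm{F_D^w-F(z_0)}_{L^p(D,w)} = |F_D^w - F(z_0)|\cdot w(D)^{1/p}$, and the whole game is to bound $|F_D^w - F(z_0)|$ by $\left(\inf_{0<a\le\delta}\tfrac{w(B_a(z_0))^{1-1/p}\|w^{-1}\|_{L^\infty(B_a(z_0))}}{|B_a(z_0)|}\right)\cdot C_{poinc}(p,D,w)\cdot\norm{F'}_{L^p(D,w)}$.

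For that second step I would fix $a\in(0,\delta]$ and exploit holomorphicity on $B_a(z_0)$: since $F$ is holomorphic on the ball, the \emph{unweighted} average of $F$ over $B_a(z_0)$ equals $F(z_0)$ by the mean value property, i.e. $F(z_0) = \frac{1}{|B_a(z_0)|}\int_{B_a(z_0)} F(z)\,dz$. Therefore
\begin{equation*}
|F(z_0) - F_D^w| = \left|\frac{1}{|B_a(z_0)|}\int_{B_a(z_0)} \big(F(z) - F_D^w\big)\,dz\right| \le \frac{1}{|B_a(z_0)|}\int_{B_a(z_0)} |F(z)-F_D^w|\,dz.
\end{equation*}
Now I insert the weight: $|F(z)-F_D^w| = |F(z)-F_D^w|\, w(z)^{1/p} w(z)^{-1/p} \le \|w^{-1/p}\|_{L^\infty(B_a(z_0))}\,|F(z)-F_D^w|\,w(z)^{1/p}$, and apply Hölder's inequality on $B_a(z_0)$ with exponents $p$ and $p'=p/(p-1)$ to get
\begin{equation*}
\int_{B_a(z_0)} |F(z)-F_D^w|\, w(z)^{1/p}\,dz \le |B_a(z_0)|^{1-1/p}\left(\int_{B_a(z_0)} |F(z)-F_D^w|^p w(z)\,dz\right)^{1/p} \le |B_a(z_0)|^{1-1/p}\,\norm{F-F_D^w}_{L^p(D,w)}.
\end{equation*}
(When $p=1$ this step is trivial with $|B_a(z_0)|^{0}=1$, and $\|w^{-1/p}\|_\infty = \|w^{-1}\|_\infty$; for general $p$, $\|w^{-1/p}\|_{L^\infty}^p = \|w^{-1}\|_{L^\infty}$, which accounts for the $\|w^{-1}\|_{L^\infty(B_a(z_0))}$ appearing without a root — I should double-check whether the statement intends $\|w^{-1}\|_\infty$ or its $1/p$-th power, and adjust the exponent bookkeeping accordingly, but the structure is as above.) Combining, $|F(z_0)-F_D^w| \le \tfrac{|B_a(z_0)|^{1-1/p}}{|B_a(z_0)|}\|w^{-1/p}\|_{L^\infty(B_a(z_0))}\norm{F-F_D^w}_{L^p(D,w)}$; I then bound $\norm{F-F_D^w}_{L^p(D,w)}$ by $C_{poinc}(p,D,w)\norm{F'}_{L^p(D,w)}$ again, and optimize over $a\in(0,\delta]$.

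Putting the two contributions together yields
\begin{equation*}
\norm{F-F(z_0)}_{L^p(D,w)} \le C_{poinc}(p,D,w)\left(1 + w(D)^{1/p}\inf_{0<a\le\delta}\frac{|B_a(z_0)|^{1-1/p}\|w^{-1/p}\|_{L^\infty(B_a(z_0))}}{|B_a(z_0)|}\right)\norm{F'}_{L^p(D,w)},
\end{equation*}
which is the claimed bound modulo rewriting $|B_a(z_0)|^{1-1/p}\|w^{-1/p}\|_\infty$ as $w(B_a(z_0))^{1-1/p}\|w^{-1}\|_\infty$; note $w(B_a(z_0))^{1-1/p} \le |B_a(z_0)|^{1-1/p}\|w\|_{L^\infty(B_a(z_0))}^{1-1/p}$, so the paper's version with $w(B_a(z_0))$ is actually the one that naturally drops out if one instead applies Hölder the other way (bounding $\int_{B_a} |F-F_D^w|\,dz$ directly against the weighted norm by pulling out $w^{-1/p}$ pointwise and using the measure $w\,dz$), which is the cleaner route and I would use that. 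The main obstacle is purely bookkeeping: getting the exponents on $|B_a(z_0)|$, $w(B_a(z_0))$ and $\|w^{-1}\|_{L^\infty}$ to match the stated form exactly, and handling the endpoint $p=1$ (where $p'=\infty$) as a separate trivial case; there is no deep analytic difficulty once the mean-value-property trick for holomorphic $F$ on $B_\delta(z_0)$ is in place, which is the one genuinely essential ingredient (and the reason the hypothesis $F\in\mathcal{O}(B_\delta(z_0))$ is needed).
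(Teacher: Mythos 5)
Your proposal is correct and follows essentially the same route as the paper: the triangle inequality around the weighted mean $F_D^w$, the mean value property of the holomorphic $F$ on $B_a(z_0)$ to identify $F(z_0)$ with the ball average, a pointwise pull-out of the inverse weight plus H\"older, a second application of the classical Poincar\'e inequality, and an infimum over $a\in(0,\delta]$. The variant you settle on at the end (pulling out $\|w^{-1}\|_{L^\infty(B_a(z_0))}$ and applying H\"older with respect to the measure $w\,dz$, which produces the factor $w(B_a(z_0))^{1-1/p}$) is exactly the paper's computation, so only the minor exponent bookkeeping you already flagged remains to be written out.
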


\begin{proof}
	The analytic Poincar\'e inequality as defined in Definition \ref{def:analpoinc} 
	applies to functions $F$ which are holomorphic in $B_\delta(z_0)$, so, for any disc $B:=B_a(z_0)\subset D$ 
	with $0<a<\delta$, it holds that
	$F(z_0)=F_B:=\frac{1}{|B|}\int_B F(z)dz$, so we need to estimate
	\begin{equation}\label{eq:poincsum} 
		\|F - F(z_0)\|_{L^p(D,w)}=\|F - F_B\|_{L^p(D,w)}\le
		\|F - F_D^w\|_{L^p(D,w)} + \|F_B - F_D^w\|_{L^p(D,w)}.
	\end{equation} 
	The first summand above is bounded by $C_{poinc}(p,D,w)\|\nabla F\|_{L^p(D,w)}$, by the definition of the Poincar\'e constant.
	
	For the second summand we estimate
	$$
	|F_B - F_D^w|\le \frac{1}{\abs{B}}\int_{B}|F(z)-F_D^w|dz\le \frac{\|w^{-1}\|_{L^\infty(B)}}{\abs{B}}\int_B|F(z)-F_D^w|w(z)dz,
	$$
	which, by H\"older's inequality, can be bounded by
	$$
	\frac{\|w^{-1}\|_{L^\infty(B)}}{\abs{B}}\|F - F_D^w\|_{L^p(D,w)}w(B)^{1-1/p}.
	$$
	Thus, it holds that
	$$
	\|F_B - F_D^w\|_{L^p(D,w)}\le \frac{\|w^{-1}\|_{L^\infty(B)}}{\abs{B}}\|F - F_D^w\|_{L^p(D,w)}w(B)^{1-1/p}w(D)^{1/p}.
	$$
	Applying the Poincar\'e inequality again yields that the second summand in (\ref{eq:poincsum})
	can be bounded by
	$$
	C_{poinc}(p,D,w)\cdot \frac{\|w^{-1}\|_{L^\infty(B)}}{\abs{B}}w(B)^{1-1/p}w(D)^{1/p} \cdot \|\nabla F\|_{L^p(D,w)}
	$$
	Since the expression above continuously depends on $a>0$ we can also admit $a=\delta$. This proves the claim.
\end{proof}
Taking a close look at the statement of Lemma \ref{lem:analpoincconst} we see that the analytic Poincar\'e constant at $z_0$ can be controlled by the classical Poincar\'e constant whenever there exists a not too small neighbourhood around $z_0$ such that the weight function $w$ is lower bounded on this neighbourhood. 
Since we will later on apply this result to very specific weight functions we will see that such $z_0$ can always be found.

\subsection{Weighted Stable Point Evaluations}\label{subsec:pointev}
Having obtained an estimate for the analytic Poincar\'e constant in the previous subsection, we go on to
develop bounds for the sampling constant which occurs in the right hand side of (\ref{eq:mainest}).
We start with the following Lemma which shows that there exist 'many' points with a given sampling constant.
\begin{lemma}\label{lem:z0}
	Suppose that $D\subset \mathbb{C}$ is a domain, $w:D\to \mathbb{R}_+$ a weight function
	and let $G\in L^p(D,w)$ for $1\le p< \infty$. For $C>0$ 
	we denote 
	$$
	D_C(G):=\left\{z\in D:\ \norm{G(z)}_{L^p(D,w)}\le C\norm{G}_{L^p(D,w)}\right\}. 
	$$
	Then
	$$
	w(D_C(G))\geq w(D)\cdot \left(1-\frac{1}{C^p}\right).
	$$
\end{lemma}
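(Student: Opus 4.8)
The plan is to use a Chebyshev/Markov-type argument. The key observation is that the quantity $\norm{G(z_0)}_{L^p(D,w)}$, viewed as a function of $z_0\in D$, is simply $|G(z_0)|\cdot w(D)^{1/p}$, since $G(z_0)$ is a constant with respect to the integration variable. Hence the defining condition for $z_0\in D_C(G)$ reads $|G(z_0)|^p\, w(D)\le C^p\norm{G}_{L^p(D,w)}^p = C^p\int_D |G(u)|^p w(u)\,du$, i.e.
$$
|G(z_0)|^p \le \frac{C^p}{w(D)}\int_D |G(u)|^p w(u)\,du.
$$
So $D_C(G)$ is precisely the sublevel set $\{z\in D:\ |G(z)|^p\le C^p\,\overline{M}\}$ where $\overline{M}:=\frac{1}{w(D)}\int_D|G|^p w$ is the $w$-average of $|G|^p$.

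First I would pass to the complement $E:=D\setminus D_C(G)=\{z\in D:\ |G(z)|^p > C^p\,\overline M\}$ and apply Markov's inequality with respect to the measure $w(u)\,du$:
$$
w(E)\cdot C^p\,\overline M \;\le\; \int_E |G(u)|^p w(u)\,du \;\le\; \int_D |G(u)|^p w(u)\,du \;=\; w(D)\,\overline M.
$$
If $\overline M>0$ we may divide to get $w(E)\le w(D)/C^p$, whence $w(D_C(G)) = w(D)-w(E)\ge w(D)(1-C^{-p})$, which is the claim. If $\overline M=0$ then $G=0$ $w$-a.e., so $D_C(G)$ has full measure and the inequality is trivial (the right-hand side is at most $w(D)$). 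The case $w(D)=\infty$ is handled the same way, interpreting the bound as vacuous or splitting into the finite-measure exhaustion; but in the intended applications $w=|F_1|^p$ with $F_1\in L^p$, so $w(D)<\infty$.

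There is essentially no main obstacle here — the statement is a routine measure-theoretic estimate once one recognizes that $\norm{G(z_0)}_{L^p(D,w)}=|G(z_0)|w(D)^{1/p}$ factorizes. The only point requiring a modicum of care is the degenerate cases ($w(D)=0$ or $\infty$, or $G\equiv 0$), which should be dispatched in a sentence, and the measurability of $E$, which follows from continuity (or at least measurability) of $G$ on $D$. I would present the argument in three short lines: factorize the point-evaluation norm, apply Markov's inequality to the complement, and rearrange.
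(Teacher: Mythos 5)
Your proof is correct and is essentially the paper's own argument: the paper likewise splits $\norm{G}_{L^p(D,w)}^p$ into the integrals over $D_C(G)$ and its complement, uses the pointwise bound $|G(x)|^p>\frac{C^p}{w(D)}\norm{G}_{L^p(D,w)}^p$ on the complement, and rearranges --- the same Markov-type estimate after recognizing that $\norm{G(z)}_{L^p(D,w)}=|G(z)|\,w(D)^{1/p}$. Your explicit handling of the degenerate cases ($G=0$ $w$-a.e., $w(D)=\infty$) is a minor refinement the paper passes over silently.
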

\begin{proof}
	We compute
	$$
	\int_{D\setminus D_C(G)}|G(x)|^p w(x)dx
	+ \int_{D_C(G)}|G(x)|^p w(x)dx
	= \norm{G}_{L^p(D,w)}^p 
	$$
	By the definition of $D_C(G)$ we have that
	$$
	|G(x)|^p> \frac{C^p}{w(D)}\norm{G}_{L^p(D,w)}^p \mbox{  for all }x\in D\setminus D_C(G),
	$$
	and this implies that
	$$
	w(D\setminus D_C(G)) \frac{C^p}{w(D)} \norm{G}_{L^p(D,w)}^p 
	+\int_{D_C(G)}|G(x)|^p w(x)dx \le \norm{G}_{L^p(D)}^p.
	$$
	Consequently,
	$$
	\left(w(D)-w(D_C(G))\right) \frac{C^p}{w(D)}  
	\leq 1
	$$
	and this yields the statement.
\end{proof}
\subsection{Simultaneously Balancing Poincar\'e- and Sampling Constants}
Since Theorem \ref{thm:gaborstab1} requires simultaneous control of the Poincar\'e and the Sampling constant we
now show how the results of the previous two subsections may be combined to achieve this. 
We consider, for simplicity the case that $D\subset \mathbb{C}$ is convex such that the boundary of $D$ has bounded curvature -- the more general case would be more technical and is therefore omitted
(see however Remark \ref{rem:balance}).
\begin{definition}\label{def:globalvar}
	Let $D\subset \mathbb{C}$ and $F_1:\overline{D}\to \mathbb{C}$ be differentiable. We define the \emph{global variation} of $F_1$ as
	\begin{equation}
	\label{eq:deltaf1}
	\delta_D(F_1):= \min\left\{\frac12 \cdot \frac{\|F_1\|_{L^\infty(D)}}{\|\nabla |F_1|\|_{L^\infty(D)}},1\right\}.\end{equation}
\end{definition}
The following elementary result will be used later on.
\begin{lemma}\label{lem:deltamin}Let $D\subset \mathbb{C}$ be convex and $F_1:\overline{D}\to \mathbb{C}$ be a differentiable function.
	Suppose that $z_0$ is a maximum of $\abs{F_1}$ in $D$, e.g., $|F_1(z_0)| = \|F_1\|_{L^\infty(D)}$.
	Then it holds that
	\begin{equation}\label{eq:deltaminmax}   
		\inf_{z\in B_{\delta_D(F_1)}(z_0)\cap D}|F_1(z)|\geq \frac12 \|F_1\|_{L^\infty(D)}.
	\end{equation} 
\end{lemma}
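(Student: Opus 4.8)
The plan is to prove the estimate $\inf_{z\in B_{\delta_D(F_1)}(z_0)\cap D}|F_1(z)|\geq \tfrac12\|F_1\|_{L^\infty(D)}$ by a direct mean-value argument along line segments emanating from $z_0$, using convexity of $D$ to guarantee these segments stay inside $D$. First I would discard the trivial case: if $\delta_D(F_1)=1$ because $\|\nabla|F_1|\|_{L^\infty(D)}=0$, then $|F_1|$ is constant on the convex (hence connected) set $D$, so the infimum equals $\|F_1\|_{L^\infty(D)}\geq\tfrac12\|F_1\|_{L^\infty(D)}$ and we are done. Otherwise set $L:=\|\nabla|F_1|\|_{L^\infty(D)}>0$ and observe that $|F_1|$ is Lipschitz on $D$ with constant $L$: for any $z,w\in D$ the segment $[z,w]\subset D$ by convexity, and integrating $\nabla|F_1|$ along it gives $\big||F_1(z)|-|F_1(w)|\big|\le L\,|z-w|$.

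Next I would apply this Lipschitz bound with $w=z_0$. For any $z\in B_{\delta_D(F_1)}(z_0)\cap D$ we have $|z-z_0|<\delta_D(F_1)\le \tfrac12\,\|F_1\|_{L^\infty(D)}/L$, hence
\begin{equation*}
|F_1(z)|\ge |F_1(z_0)|-L\,|z-z_0|> \|F_1\|_{L^\infty(D)}-L\cdot\frac{\|F_1\|_{L^\infty(D)}}{2L}=\frac12\,\|F_1\|_{L^\infty(D)},
\end{equation*}
using $|F_1(z_0)|=\|F_1\|_{L^\infty(D)}$. Taking the infimum over all such $z$ yields \eqref{eq:deltaminmax}. (If one prefers to avoid strict inequalities at the boundary of the ball, note the same computation with the closed ball $\overline{B}_{\delta_D(F_1)}(z_0)$ still gives $\ge\tfrac12\|F_1\|_{L^\infty(D)}$, which a fortiori covers the open ball.)

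I do not anticipate a serious obstacle here; this is an elementary lemma. The only point requiring a little care is the justification that $|F_1|$ is genuinely $L$-Lipschitz: $F_1$ is assumed differentiable on $\overline D$, so $|F_1|$ is Lipschitz away from the zero set of $F_1$, and at zeros of $F_1$ one has $|F_1(z)|=0\le L|z-w|+|F_1(w)|$ trivially, so the inequality $\big||F_1(z)|-|F_1(w)|\big|\le L|z-w|$ persists by continuity/a limiting argument; alternatively one integrates $\tfrac{d}{dt}|F_1(z_0+t(z-z_0))|$ which is bounded by $|\nabla|F_1||\le L$ wherever it exists, and handles the (measure-zero) exceptional set by absolute continuity of $t\mapsto |F_1(z_0+t(z-z_0))|$. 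Convexity of $D$ is what makes the segment argument legitimate, and the definition $\delta_D(F_1)=\min\{\tfrac12\|F_1\|_{L^\infty(D)}/\|\nabla|F_1|\|_{L^\infty(D)},1\}$ is tailored precisely so that $L\cdot\delta_D(F_1)\le\tfrac12\|F_1\|_{L^\infty(D)}$, which is the one inequality the whole proof hinges on.
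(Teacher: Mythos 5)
Your proof is correct and follows essentially the same route as the paper: the Lipschitz/mean-value estimate $\bigl||F_1|(z_0)-|F_1|(z)\bigr|\le |z-z_0|\,\|\nabla|F_1|\|_{L^\infty(D)}$ along the segment from $z_0$ to $z$ (legitimate by convexity of $D$), combined with the fact that the definition of $\delta_D(F_1)$ forces $|z-z_0|\,\|\nabla|F_1|\|_{L^\infty(D)}\le \tfrac12|F_1(z_0)|$. Your additional care about the zero-gradient case and the zeros of $F_1$ is fine but not needed beyond what the paper's two-line argument already contains.
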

\begin{proof}
	This is a simple consequence of the fact that for all $z\in D$%
	\begin{equation}
		\label{eq:sampgrad}
		||F_1|(z_0)-|F_1|(z)|\le |z-z_0|\cdot\|\nabla |F_1|\|_{L^\infty(D)}
	\end{equation}
	and
	\begin{equation}
		\label{eq:sampmax}
		|F_1(z_0)| = \|F_1\|_{L^\infty(D)}:
	\end{equation}
	Suppose that $z\in B_{\delta_D(F_1)}(z_0)\cap D$. Then 
	$$|z-z_0|\cdot\|\nabla |F_1|\|_{L^\infty(D)}\le \frac12 |F_1(z_0)|.$$
	By (\ref{eq:sampgrad}) and (\ref{eq:sampmax}) it follows that
	$$
	|F_1(z)|\geq \frac12 \|F_1\|_{L^\infty(D)}.
	$$
\end{proof}
The following proposition shows that the analytic Poincar\'e and Sampling constants can always be balanced, provided that the quantity $\delta(F_1)$ is not too small.
\begin{proposition}\label{prop:balance}
	Let $1\le p <\infty$.
	Suppose that $D\subset \mathbb{C}$ is convex and that the curvature of the boundary $\partial D$ is everywhere bounded by $1$.
	Suppose $F_1:\overline{D}\to \mathbb{C}$ is differentiable, $\delta_D(F_1)$ as defined in \eqref{def:globalvar} is positive and $G\in L^p(D,\abs{F_1}^p)$.
	Then there exists $z\in D$
	with
	\begin{equation}
		C_{poinc}^a(p,D,z,\delta_D(F_1)/4,|F_1|^p)\le C_{poinc}(p,D,|F_1|^p)\cdot \left(1 + 
		\frac{2^p}{\pi \delta_D(F_1)^2/16} \cdot \frac{\|F_1\|_{L^p(D)}^p}{\|F_1\|_{L^\infty(D)}^p} \right),
	\end{equation}
	and
	\begin{equation}
		C_{samp}(p,D,z,G,|F_1|^p)\le \frac{\|F_1\|_{L^p(D)}}{\|F_1\|_{L^\infty(D)}}\cdot (\delta_D(F_1)^2 \pi)^{-1/p}\cdot 2\cdot 16^{-1/p}.
	\end{equation} 
	Furthermore it holds that 
	\begin{equation}
		\label{eq:fisholo}
		\inf_{u\in B_{\delta_D(F_1)/4}(z)}|F_1(u)|>0.
	\end{equation}
\end{proposition}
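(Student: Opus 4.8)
The plan is to combine the estimate for analytic Poincar\'e constants from Lemma~\ref{lem:analpoincconst}, the existence of many points with good sampling constants from Lemma~\ref{lem:z0}, and the geometric lower bound for $|F_1|$ near a maximum from Lemma~\ref{lem:deltamin}, and to verify that all three can be met at a \emph{single} point $z\in D$. First I would fix notation: write $\delta:=\delta_D(F_1)$ and $w:=|F_1|^p$, and let $z_0\in D$ be a point where $|F_1|$ attains its maximum over $D$ (such a point exists since we will ultimately apply this with $F_1=V_\varphi f$ centered, but in any case we may take a near-maximizer and pass to the limit, or simply assume the supremum is attained on $\overline D$ as in the statement). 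By Lemma~\ref{lem:deltamin}, $\inf_{z\in B_{\delta}(z_0)\cap D}|F_1(z)|\ge \tfrac12\|F_1\|_{L^\infty(D)}$, and since $D$ is convex with $\partial D$ of curvature bounded by $1$, the ball $B_{\delta/4}(z_0)$ is contained in $D$ provided $\delta\le \ldots$; in general, by convexity at least a fixed fraction of $B_{\delta/4}(z_0)$ lies in $D$, and on $B_{\delta/4}(z_0)\cap D$ we have $w\ge 2^{-p}\|F_1\|_{L^\infty(D)}^p$. This already gives \eqref{eq:fisholo} at $z=z_0$, and more importantly it will hold at any point $z$ close enough to $z_0$, say $z\in B_{\delta/4}(z_0)\cap D$, since then $B_{\delta/4}(z)\subset B_{\delta/2}(z_0)\subset B_{\delta}(z_0)$ and the lower bound persists.

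\medskip

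Next I would choose $z$ to simultaneously handle the Poincar\'e and sampling estimates. Apply Lemma~\ref{lem:z0} with $G$ as given and with the constant $C:=\tfrac{\|F_1\|_{L^p(D)}}{\|F_1\|_{L^\infty(D)}}\cdot(\delta^2\pi)^{-1/p}\cdot 2\cdot 16^{-1/p}$; then $w(D_C(G))\ge w(D)(1-C^{-p})$, and a short computation shows $C^{-p}= \tfrac{\|F_1\|_{L^\infty(D)}^p}{\|F_1\|_{L^p(D)}^p}\cdot \tfrac{\delta^2\pi}{4\cdot 16^{-1}\cdot\ldots}$ — the precise bookkeeping is arranged so that $w(D\setminus D_C(G))< w\big(B_{\delta/4}(z_0)\cap D\big)$, the latter being bounded below by $2^{-p}\|F_1\|_{L^\infty(D)}^p\cdot |B_{\delta/4}(z_0)\cap D|\ge 2^{-p}\|F_1\|_{L^\infty(D)}^p\cdot \tfrac{\pi}{2}(\delta/4)^2$ using convexity (the intersection of a ball centered inside a convex set with that set has area at least half the ball). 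Hence $D_C(G)\cap B_{\delta/4}(z_0)\cap D\neq\emptyset$, and I pick $z$ in this intersection. For such $z$: the sampling bound $C_{samp}(p,D,z,G,w)\le C$ holds by membership in $D_C(G)$, which is exactly the second displayed inequality.

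\medskip

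For the Poincar\'e bound I would feed $z$ into Lemma~\ref{lem:analpoincconst} with radius $\delta/4$, using that $F$ (the quotient $F_2/F_1$, or rather any function to which the analytic Poincar\'e constant is applied) is holomorphic on $B_{\delta/4}(z)$ because $|F_1|\ge\tfrac12\|F_1\|_{L^\infty(D)}>0$ there. The lemma gives
\begin{equation*}
C^a_{poinc}(p,D,z,\delta/4,w)\le C_{poinc}(p,D,w)\cdot\left(1+w(D)^{1/p}\cdot \frac{w(B_{\delta/4}(z))^{1-1/p}\,\|w^{-1}\|_{L^\infty(B_{\delta/4}(z))}}{|B_{\delta/4}(z)|}\right).
\end{equation*}
Now I bound each factor: $\|w^{-1}\|_{L^\infty(B_{\delta/4}(z))}\le 2^p\|F_1\|_{L^\infty(D)}^{-p}$ from the lower bound on $|F_1|$; $w(B_{\delta/4}(z))^{1-1/p}\le w(D)^{1-1/p}$; and $w(D)=\|F_1\|_{L^p(D)}^p$, so $w(D)^{1/p}\cdot w(D)^{1-1/p}= w(D)=\|F_1\|_{L^p(D)}^p$; finally $|B_{\delta/4}(z)|=\pi(\delta/4)^2=\pi\delta^2/16$. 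Multiplying these gives the bracketed factor $\le 1+\tfrac{2^p}{\pi\delta^2/16}\cdot\tfrac{\|F_1\|_{L^p(D)}^p}{\|F_1\|_{L^\infty(D)}^p}$, which is exactly the first displayed inequality. I expect the main obstacle to be purely bookkeeping: calibrating the constant $C$ in Lemma~\ref{lem:z0} against the volume lower bound $w(B_{\delta/4}(z_0)\cap D)$ so that the two ``good'' sets are forced to intersect, and being careful with the convexity/curvature hypothesis when $B_{\delta/4}(z_0)$ is not entirely inside $D$ (handled by the ``ball meets convex set in at least half its volume'' fact). No genuinely new idea is needed beyond assembling the three lemmas at a common point.
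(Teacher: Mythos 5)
Your overall strategy is the same as the paper's: combine Lemma \ref{lem:deltamin} (lower bound on $|F_1|$ near a maximizer), Lemma \ref{lem:z0} (abundance of good sampling points), and Lemma \ref{lem:analpoincconst}, and calibrate the threshold $C$ so that the set of good sampling points must intersect a neighbourhood on which $w=|F_1|^p$ is bounded below. However, there is a genuine gap in how you treat the case where the maximizer $z_0$ lies on (or near) $\partial D$. The conclusion of the proposition requires a point $z$ with $B_{\delta/4}(z)\subset D$: the quantity $C^a_{poinc}(p,D,z,\delta/4,|F_1|^p)$ is only defined (Definition \ref{def:analpoinc}) when the disc $B_{\delta/4}(z)$ is contained in $D$, Lemma \ref{lem:analpoincconst} uses the mean-value property of a holomorphic function over the \emph{full} disc $B_a(z)\subset D$ together with $|B_a(z)|=\pi a^2$, and \eqref{eq:fisholo} refers to $F_1$ on all of $B_{\delta/4}(z)$. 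Your choice $z\in B_{\delta/4}(z_0)\cap D$ does not guarantee this containment when $z_0\in\partial D$, and your proposed workaround, replacing the disc by $B_{\delta/4}(z_0)\cap D$ and invoking ``a ball centered in a convex set meets it in at least half its volume,'' fails on two counts: that volume fact is false for general convex sets (a thin convex sliver of width much smaller than $\delta$ is a counterexample), and even where it holds it only repairs the measure estimate in the sampling step, not the disc containment needed for the analytic Poincar\'e constant and for \eqref{eq:fisholo}.

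This is precisely where the hypothesis that $\partial D$ has curvature bounded by $1$ (which your argument never uses) enters. The paper first re-centers: since $D$ is convex, its boundary curvature is at most $1$, and $\delta=\delta_D(F_1)\le 1$, the set $B_{\delta}(z_0)\cap D$ contains a full disc $B_{\delta/2}(\tilde z_0)\subset D$ on which $|F_1|\ge \tfrac12\|F_1\|_{L^\infty(D)}$ (by Lemma \ref{lem:deltamin}). Every $z\in B_{\delta/4}(\tilde z_0)$ then satisfies $B_{\delta/4}(z)\subset B_{\delta/2}(\tilde z_0)\subset D$ with the same lower bound on $|F_1|$, so the analytic Poincar\'e constant is well defined and Lemma \ref{lem:analpoincconst} applies with $|B_{\delta/4}(z)|=\pi\delta^2/16$; the lower bound $w(B_{\delta/4}(\tilde z_0))\ge \|F_1\|_{L^\infty(D)}^p\,\pi\delta^2/(2^p\cdot 16)$ then drives the intersection argument with $D_C(G)$ exactly as you describe. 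With this re-centering step inserted, the rest of your bookkeeping (the bounds on $\|w^{-1}\|_{L^\infty}$, $w(B)^{1-1/p}$, $w(D)$, and the calibration of $C$ in Lemma \ref{lem:z0}) goes through as in the paper.
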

\begin{proof}Suppose that $z_0\in \overline{D}$ is a maximum of $\abs{F_1}$ and put $\delta :=\delta_D(F_1)\le 1$ as defined in (\ref{eq:deltaf1}).
	First we note that by our assumptions on $D$ and by the definition (\ref{eq:deltaf1}) 
	it holds that the set $B_{\delta}(z_0)\cap D$ contains a ball of radius $\delta/2$, i.e., there exists $\tilde z_0$ such that
	$$
	B_{\delta/2}(\tilde z_0)\subset D\quad \mbox{and}\quad
	\inf_{z\in B_{\delta/2}(\tilde z_0)}|F_1(z)|\geq \frac{1}{2}\|F_1\|_{L^\infty(D)}.
	$$
	But this implies that for all $z\in B_{\delta/4}(\tilde z_0)$ it holds that
	$$
	B_{\delta/4}(z)\subset D\quad \mbox{and}\quad
	\inf_{z\in B_{\delta/4}(z)}|F_1(z)|\geq \frac{1}{2}\|F_1\|_{L^\infty(D)}.
	$$

	Using this fact, we start by estimating the analytic Poincar\'e constant for such a $z$, with the estimate 
	from Lemma \ref{lem:analpoincconst}. More precisely we will use the estimate
	(\ref{eq:analpoincconst}) with $a=\delta/4$ and $B_a:=B_{\delta/4}(z)$, which yields that
	\begin{eqnarray*}
		C_{poinc}^a(p,D,z,\delta/4,|F_1|^p)
		&\le & C_{poinc}(p,D,|F_1|^p)\cdot \left(1 + \|F_1\|_{L^p(D)}\cdot
		\frac{\|F_1\|_{L^p({B_{\delta/4}})}^{p-1}2^p\|F_1\|_{L^\infty(D)}^{-p}}{\pi \delta^2/16} \right)
		\\
		&\le &
		C_{poinc}(p,D,|F_1|^p)\cdot \left(1 + 
		\frac{2^p}{\pi \delta^2/16} \cdot \frac{\|F_1\|_{L^p(D)}^p}{\|F_1\|_{L^\infty(D)}^p} \right).
	\end{eqnarray*}
	Recall that the above estimate holds for any $z\in B_{\delta/4}(\tilde z_0)$. 
	
	We now abbreviate $B_{\delta/4}:=B_{\delta/4}(\tilde z_0)$ 
	and show that
	there exists such a $z\in B_{\delta/4}$ which also generates good sampling constants. Put $w=|F_1|^p$. By  (\ref{eq:deltaminmax}), we have that 
	\begin{equation}\label{eq:analmeas}  
		w(B_{\delta/4})=\int_{B_{\delta/4}}|F_1(z)|^p dz\geq \|F_1\|_{L^\infty(D)}^p \frac{\pi \delta^2}{2^{p}\cdot 16}.
	\end{equation} 
	The measure of `good' sampling points  
	$$
	D_{C}(G):=\{z\in D:\ C_{samp}(p,D,z,G,|F_1|^p)\le C\}, 
	$$
	by Lemma \ref{lem:z0}, satisfies
	$$
	w(D_{C}(G))\geq w(D)\cdot \left(1-\frac{1}{C^p}\right).
	$$
	Therefore, if
	$$
	C > \frac{\|F_1\|_{L^p(D)}}{\|F_1\|_{L^\infty(D)}}\cdot
	(\delta^2 \pi)^{-1/p}\cdot 2\cdot 16^{1/p},
	$$
	by (\ref{eq:analmeas}) it holds that
	$$
	w(D_{C}(G))> w(D) - w(B_{\delta/4})
	$$
	which implies that 
	$$
	D_{C}(G) \cap B_{\delta/4}\neq \emptyset.
	$$
	Any $z$ in this intersection will satisfy the desired estimates.
\end{proof}
The result of Proposition \ref{prop:balance} may still seem very technical. However, we have succeeded in
providing bounds for both the analytic Poincar\'e constant as well as the sampling constant which appear in
the right hand side of (\ref{eq:mainest}). 

Indeed, from Proposition \ref{prop:balance} we can infer that these constants essentially depend only
on the Poincar\'e constant of the measurements $|F_1|^p$ and the quantity $\delta_D(F_1)$. 

\begin{remark}\label{rem:balance}
 Alternatively to the global variation as defined in \eqref{def:globalvar} we may look at the quantity
 \begin{equation}\label{def:deltatilde}
  \tilde{\delta}_D{(F_1)} := \min \{\sup \{r>0: ~B_r(z)\subset D, \inf_{\zeta\in B_r(z)} \abs{F_1(\zeta)} \geq \frac12 \norm{F_1}_{L^\infty(D)} \}, 1\}
 \end{equation}
Replicating the proof of Proposition \ref{prop:balance} reveals that for any $G\in L^p(D,\abs{F_1}^p)$ there is a $z\in D$ such that 
\begin{equation}\label{rembalance:poinc}
 		C_{poinc}^a(p,D,z,\tilde{\delta}_D(F_1)/2,|F_1|^p)\le C_{poinc}(p,D,|F_1|^p)\cdot \left(1 + 
		\frac{2^p}{\pi \tilde{\delta}_D(F_1)^2/4} \cdot \kappa_D^p \right),
\end{equation}
and
\begin{equation}\label{rembalance:samp}
 		C_{samp}(p,D,z,G,|F_1|^p) \le \kappa_D \cdot (\tilde{\delta}_D(F_1)^2 \pi)^{-1/p}\cdot 2\cdot 4^{-1/p},
\end{equation}
where we denote $\kappa_D:=\frac{\norm{F_1}_{L^p(D)}}{\norm{F_1}_{L^\infty(D)}}$.
Additionally it holds that 
\begin{equation}\label{rembalance:positivity}
 \inf_{u\in B_{\tilde{\delta}_D(F_1)/2}(z)}|F_1(u)|>0.
\end{equation}
Note that in contrast to Proposition \ref{prop:balance} we do not need the domain $D$ to be convex and its boundary does not have to meet any curvature assumptions. 
\end{remark}

In the next section
we shall see that the quantity $\delta_D(F_1)$ can always be uniformly bounded if $F_1$ arises as the Gabor transform of any $f\in \mathcal{S}'(\mathbb{R})$, e.g., $F_1(z) = V_\varphi f(x,-y)$.
\section{Gabor Phase Retrieval}\label{sec:gaborsec}
Up to now all results have applied to general functions $F_1,F_2$ which map from  a domain $D\subset \mathbb{C}$ 
to $\mathbb{C}$ and which are holomorphic after multiplication with a function $\eta$. Indeed, by combining
Proposition \ref{prop:main} with Proposition \ref{prop:balance} we obtain a stability result 
which essentially depends only on the Poincar\'e constant $C_{poinc}(p,D,|F_1|^p)$ and the quantity
$\delta_D(F_1)$.

We will, from now on, specialize to the case that $F_1$ is -- up to a reflection -- the Gabor transform of a function $f\in \mathcal{S}'(\mathbb{R})$,
e.g.,
$$
F_1(z)=V_\varphi f(x,-y),
$$
where $\varphi(t)=e^{-\pi t^2}$ and $V_\varphi f$ is defined as in Definition \ref{def:gabor}.
The Gabor transform enjoys a lot of structure which allows us to obtain major improvements in the general stability bound
(\ref{eq:mainest}).
In order to estimate 
$$ \inf_{\alpha \in \realr} \norm{V_\varphi f- e^{\i \alpha} V_\varphi g}_{L^p(D)} $$
we will apply the results of Chapters \ref{sec:main} and \ref{sec:balancesec} on $F_1$, $F_2(z)=V_\varphi g(x,-y)$ and the reflected domain $\{\bar{z}: ~z\in D\}$.

First, in Section \ref{sec:balancegabor} we shall see that the quantity $\delta_D(V_\varphi f)$ can essentially be bounded
independently of $f$ which will finally give us complete control over the implicit contants which appear in 
the estimate (\ref{eq:mainest}). Then, in Section \ref{sec:loggabor} we will show that, in the case
of Gabor measurements, the term involing a logarithmic derivative in (\ref{eq:mainest}) can be absorbed
into an error term with respect to a norm $\mathcal{D}_{p,q}^{r,s}$ for suitable parameters. This latter result will exploit deep function theoretic properties of the Gabor transform.
Finally, in Section \ref{sec:finalGabor} we will put all these results together and present our final stability estimates for Gabor phase retrieval.
\subsection{Balancing the Constants}\label{sec:balancegabor}
The goal of the present section is to establish the following result.
\begin{proposition}\label{prop:gaborbal}
	Let $D\subset \mathbb{C}$ and suppose $f\in M^{\infty,\infty}(\realr)$. Then 
	there exists $\delta >0$, only depending on $\|V_{\varphi} f\|_{L^\infty(D)}/ 
	\|V_{\varphi '} f\|_{L^\infty(D)}$ such that
	\begin{equation}   
		\label{eq:gaborbalfin}
		\delta_D(V_\varphi f)\geq \delta.
	\end{equation} 
	For $D=\mathbb{C}$ we get a stronger statement:
	There exists  a universal constant $\delta>0$ with
	\begin{equation}   \label{eq:gaborbalc}   
		\inf_{f\in M^{\infty,\infty}(\mathbb{R})}\delta_\mathbb{C}(V_\varphi f)\geq \delta.
	\end{equation}
\end{proposition}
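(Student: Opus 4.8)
The plan is to reduce the global variation $\delta_D(V_\varphi f)$ to the ratio $\|V_\varphi f\|_{L^\infty(D)}/\|V_{\varphi'}f\|_{L^\infty(D)}$ by establishing the pointwise identity $\bigl|\nabla|V_\varphi f|\bigr| = |V_{\varphi'}f|$. First I would record, by differentiating under the pairing in Definition \ref{def:gabor} and using $t\,\varphi(t-x) = -\tfrac{1}{2\pi}\varphi'(t-x) + x\,\varphi(t-x)$ (valid since $\varphi(s)=e^{-\pi s^2}$ satisfies $s\varphi(s) = -\tfrac{1}{2\pi}\varphi'(s)$), the two elementary formulas
$$\partial_x V_\varphi f = -\,V_{\varphi'}f, \qquad \partial_y V_\varphi f = \i\,V_{\varphi'}f - 2\pi\i\,x\,V_\varphi f,$$
valid for every $f \in \mathcal S'(\mathbb R)$. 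At a point where $V_\varphi f \neq 0$ one then gets $\partial_x|V_\varphi f| = -|V_\varphi f|^{-1}\operatorname{Re}(\overline{V_\varphi f}\,V_{\varphi'}f)$ and, since $\operatorname{Re}\big(\overline{V_\varphi f}\cdot(-2\pi\i x V_\varphi f)\big) = -2\pi x\,|V_\varphi f|^2\operatorname{Re}(\i) = 0$, also $\partial_y|V_\varphi f| = -|V_\varphi f|^{-1}\operatorname{Im}(\overline{V_\varphi f}\,V_{\varphi'}f)$; squaring and adding yields $\bigl|\nabla|V_\varphi f|\bigr|^2 = |V_\varphi f|^{-2}\,\bigl|\overline{V_\varphi f}\,V_{\varphi'}f\bigr|^2 = |V_{\varphi'}f|^2$. (The same cancellation is visible through Theorem \ref{thm:gaborholo}: writing $|V_\varphi f|(x,y) = e^{-\pi(x^2+y^2)/2}|F(x-\i y)|$ with $F$ entire, the $-\pi(x,y)$ coming from the Gaussian exactly cancels the $\pi\bar z$-part of $F'/F$.) Since $V_\varphi f$ has only isolated zeros (again by Theorem \ref{thm:gaborholo}), $|V_\varphi f|$ is Lipschitz and $V_{\varphi'}f$ is continuous, this gives $\|\nabla|V_\varphi f|\|_{L^\infty(D)} = \|V_{\varphi'}f\|_{L^\infty(D)}$; only the inequality ``$\le$'' will actually be needed.

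Granting this, the first assertion is immediate: by Definition \ref{def:globalvar},
$$\delta_D(V_\varphi f) = \min\!\left\{\tfrac12\,\frac{\|V_\varphi f\|_{L^\infty(D)}}{\|V_{\varphi'}f\|_{L^\infty(D)}},\ 1\right\},$$
which is a function of the ratio $\rho_D := \|V_\varphi f\|_{L^\infty(D)}/\|V_{\varphi'}f\|_{L^\infty(D)}$ alone, so one may take $\delta := \min\{\rho_D/2,1\}$.

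For the stronger statement with $D = \mathbb C$ it remains to bound $\|V_{\varphi'}f\|_{L^\infty(\mathbb R^2)}$ by $\|V_\varphi f\|_{L^\infty(\mathbb R^2)} = \|f\|_{M^{\infty,\infty}(\mathbb R)}$ with a \emph{universal} constant. Here I would use the standard change-of-window estimate for the STFT of a tempered distribution (see \cite{grochenig}): for $g_1,g_2,\gamma\in\mathcal S(\mathbb R)$ with $\langle\gamma,g_1\rangle\neq 0$ one has $|V_{g_2}f| \le |\langle\gamma,g_1\rangle|^{-1}\,\bigl(|V_{g_1}f| * |V_{g_2}\gamma|\bigr)$ on $\mathbb R^2$. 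Applying this with $g_2 = \varphi'$ and $g_1 = \gamma = \varphi$ --- note that the naive choice $\gamma=\varphi'$ is excluded because $\langle\varphi,\varphi'\rangle = 0$, whereas $\langle\varphi,\varphi\rangle = \|\varphi\|_{L^2(\mathbb R)}^2 = 2^{-1/2}\neq 0$ --- gives $|V_{\varphi'}f| \le \|\varphi\|_{L^2(\mathbb R)}^{-2}\,|V_\varphi f| * |V_{\varphi'}\varphi|$. Since $\varphi,\varphi'\in\mathcal S(\mathbb R)$, the function $V_{\varphi'}\varphi$ is a fixed Schwartz function on $\mathbb R^2$, hence in $L^1(\mathbb R^2)$, and Young's inequality yields $\|V_{\varphi'}f\|_{L^\infty(\mathbb R^2)} \le C_0\,\|V_\varphi f\|_{L^\infty(\mathbb R^2)}$ with $C_0 := \|V_{\varphi'}\varphi\|_{L^1(\mathbb R^2)}/\|\varphi\|_{L^2(\mathbb R)}^2$. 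Plugging this into the formula above gives $\delta_{\mathbb C}(V_\varphi f) \ge \min\{1/(2C_0),1\} =: \delta > 0$ for every $f\in M^{\infty,\infty}(\mathbb R)$, as claimed.

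The main obstacle I anticipate is the clean proof of $\bigl|\nabla|V_\varphi f|\bigr| = |V_{\varphi'}f|$: one must justify the differentiation of $V_\varphi f$ for a general tempered distribution and deal with the (discrete) set where $V_\varphi f = 0$ and $|V_\varphi f|$ fails to be differentiable --- this is harmless, since for the lower bound on $\delta_D$ only the essential-supremum inequality $\|\nabla|V_\varphi f|\|_{L^\infty(D)} \le \|V_{\varphi'}f\|_{L^\infty(D)}$ is required and $V_{\varphi'}f$ is continuous. The change-of-window step is then routine, the only point worth flagging being the orthogonality $\langle\varphi,\varphi'\rangle = 0$ that forces $\varphi$ (not $\varphi'$) as the auxiliary window.
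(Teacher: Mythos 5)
Your proposal is correct and follows essentially the same route as the paper. The paper simply cites the pointwise identity $\bigl|\nabla|V_\varphi f|\bigr| = |V_{\varphi'}f|$ (attributing it to Auger--Chassande-Mottin--Flandrin), which you re-derive cleanly from $\partial_x V_\varphi f = -V_{\varphi'}f$ and $\partial_y V_\varphi f = \i V_{\varphi'}f - 2\pi\i x V_\varphi f$; and for $D=\mathbb C$ the paper invokes the norm equivalence $\|V_\varphi\cdot\|_{L^\infty}\sim\|V_{\varphi'}\cdot\|_{L^\infty}$ from Gr\"ochenig's Proposition~11.3.2(c), whose relevant half you reprove via the change-of-window convolution estimate and Young's inequality --- the same mechanism underlying Gr\"ochenig's proposition. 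So the two arguments coincide in substance, with yours supplying the details the paper leaves as references.
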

\begin{proof}
	The proof proceeds by showing that the $L^\infty$ norm of the gradient of $|V_\varphi f|$
	cannot be much larger than the $L^\infty$ norm of $|V_\varphi f|$.
	Indeed, a simple calculation (or a look at Equations (3,5) in \cite{auger2012phase}) reveils that
	$$
	|\nabla|V_\varphi f||=
	|V_{\varphi'} f|
	$$
	which directly implies 
	$$
	\|\nabla |V_\varphi f|\|_{L^\infty(D)}
	= \|V_{\varphi'} f\|_{L^\infty(D)}
	$$
	which, looking at  (\ref{eq:deltaf1}) implies (\ref{eq:gaborbalfin}).
	
	For the case $D=\mathbb{C}$ we can use the norm equivalence
	$$
	\norm{V_\phi \cdot}_{L^p(\mathbb{C})} \sim \norm{V_{\phi'} \cdot}_{L^p(\mathbb{C})}
	$$
	on $M^{p,p}(\realr)$ for any $p\in[1,\infty]$ (see \cite[Proposition 11.3.2(c)]{grochenig}).
\end{proof}
As a corollary we get the following result for $D=\mathbb{C}$.
\begin{corollary}\label{cor:gaborstab}
	Let $1\le p<\infty$.
	Suppose that $f\in M^{\infty,\infty}(\mathbb{R})$ and let $F_1(z)=V_\varphi f(x,-y)$. Further suppose that $G\in L^p(\mathbb{C},\abs{F_1}^p)$.
	Then there exist constants $c,\delta>0$ ( independent of $f$ and $G$!)
	such that there exists $z\in \mathbb{C}$ with
	\begin{equation}
		C_{poinc}^a(p,\mathbb{C},z,\delta,|F_1|^p)\le c\cdot C_{poinc}(p,\mathbb{C},|F_1|^p)\cdot \left(1 + 
		\frac{\|F_1\|_{L^p(\mathbb{C})}^p}{\|F_1\|_{L^\infty(\mathbb{C})}^p} \right),
	\end{equation}
	\begin{equation}
		C_{samp}(p,\mathbb{C},z,G,|F_1|^p)\le c\cdot  \frac{\|F_1\|_{L^p(\mathbb{C})}}{\|F_1\|_{L^\infty(\mathbb{C})}},
	\end{equation} 
	and
	\begin{equation}
		\label{eq:deltauniform}
		\inf_{u\in B_\delta(z)}|F_1(u)|>0.
	\end{equation}
\end{corollary}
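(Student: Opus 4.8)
The plan is to combine the uniform lower bound on the global variation of Gabor measurements, Proposition \ref{prop:gaborbal}, with the balancing machinery of Section \ref{sec:balancesec}, the one twist being that the radius of holomorphy must be made a \emph{universal} constant rather than one depending on $f$. First I would note that $F_1(z)=V_\varphi f(x,-y)$ arises from $V_\varphi f$ by a reflection, which leaves both $\|\cdot\|_{L^\infty(\mathbb{C})}$ and $\|\nabla|\cdot|\|_{L^\infty(\mathbb{C})}$ invariant, so $\delta_{\mathbb{C}}(F_1)=\delta_{\mathbb{C}}(V_\varphi f)$; hence \eqref{eq:gaborbalc} supplies a universal $\delta_0>0$ with $\delta_{\mathbb{C}}(F_1)\ge \delta_0$ for every $f\in M^{\infty,\infty}(\mathbb{R})$. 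I would then set $\delta:=\delta_0/4$ once and for all; this is the radius in the statement, and $c$ will be a constant depending only on $p$ and $\delta_0$.

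Second, following the opening of the proof of Proposition \ref{prop:balance}: let $z_0$ maximise $|F_1|$ (if the supremum is not attained one works with a near-maximiser, which only changes constants), and write $\delta':=\delta_{\mathbb{C}}(F_1)\ge\delta_0$. Since $D=\mathbb{C}$ has no boundary, $B_{\delta'/2}(z_0)\subset\mathbb{C}$ and, by Lemma \ref{lem:deltamin}, $\inf_{B_{\delta'/2}(z_0)}|F_1|\ge\tfrac12\|F_1\|_{L^\infty(\mathbb{C})}$. Because $\delta_0/4\le\delta'/4$, every $z\in B_{\delta_0/4}(z_0)$ satisfies $B_{\delta_0/4}(z)\subset B_{\delta_0/2}(z_0)\subset B_{\delta'/2}(z_0)$, whence $\inf_{B_{\delta}(z)}|F_1|\ge\tfrac12\|F_1\|_{L^\infty(\mathbb{C})}>0$. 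For any such $z$ this is exactly \eqref{eq:deltauniform} with the fixed radius $\delta=\delta_0/4$, and it also guarantees that $w:=|F_1|^p$ is bounded below on $B_\delta(z)$ with $\|w^{-1}\|_{L^\infty(B_\delta(z))}\le 2^p\|F_1\|_{L^\infty(\mathbb{C})}^{-p}$, which is what the subsequent steps need.

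Third, for the analytic Poincaré bound I would feed the ball of Step two into Lemma \ref{lem:analpoincconst} with $w=|F_1|^p$ and $a=\delta=\delta_0/4$: using $w(B_\delta(z))^{1-1/p}\le\|F_1\|_{L^p(\mathbb{C})}^{p-1}$, $|B_\delta(z)|=\pi\delta_0^2/16$, $w(\mathbb{C})^{1/p}=\|F_1\|_{L^p(\mathbb{C})}$ and the $\|w^{-1}\|_\infty$ bound above, one gets
\[
C^a_{poinc}(p,\mathbb{C},z,\delta,|F_1|^p)\le C_{poinc}(p,\mathbb{C},|F_1|^p)\Bigl(1+\tfrac{16\cdot 2^p}{\pi\delta_0^2}\cdot\tfrac{\|F_1\|_{L^p(\mathbb{C})}^p}{\|F_1\|_{L^\infty(\mathbb{C})}^p}\Bigr),
\]
which, since $1+Kt\le\max(1,K)(1+t)$ for $t\ge0$, is of the required form. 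For the sampling constant I would invoke Lemma \ref{lem:z0}: $w(\mathbb{C}\setminus D_C(G))\le\|F_1\|_{L^p(\mathbb{C})}^p/C^p$, whereas $w(B_{\delta}(z_0))\ge\frac{\pi\delta_0^2}{16\cdot 2^p}\|F_1\|_{L^\infty(\mathbb{C})}^p$, so as soon as $C$ exceeds a fixed multiple of $\|F_1\|_{L^p(\mathbb{C})}/\|F_1\|_{L^\infty(\mathbb{C})}$ the bad set cannot cover $B_\delta(z_0)$, forcing $D_C(G)\cap B_\delta(z_0)\ne\emptyset$. Picking $z$ in this intersection makes all three estimates hold simultaneously, and taking $c$ to be the maximum of the finitely many explicit constants encountered finishes the argument.

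The one genuine point of care — and the reason one cannot simply quote Proposition \ref{prop:balance} verbatim — is that there the radius of holomorphy is $\delta_{\mathbb{C}}(F_1)/4$, which depends on $f$, while here it must be the fixed $\delta_0/4$; since \emph{shrinking} the radius only \emph{increases} the analytic Poincaré constant, one really has to re-run the last step of the balancing argument at the universal radius and exploit the nesting of balls as in Step two. Apart from this, and the minor issue of the attainment of $\|F_1\|_{L^\infty(\mathbb{C})}$, the proof is the routine bookkeeping already set up in Sections \ref{sec:main} and \ref{sec:balancesec}; I do not expect any substantive obstacle.
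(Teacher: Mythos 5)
Your proof is correct and follows essentially the same route as the paper, which simply declares the corollary a direct consequence of Propositions \ref{prop:balance} and \ref{prop:gaborbal}. Your extra care---re-running the balancing argument of Proposition \ref{prop:balance} at the universal radius $\delta_0/4$ rather than the $f$-dependent $\delta_{\mathbb{C}}(F_1)/4$, justified by the correct observation that shrinking the radius can only increase the analytic Poincar\'e constant---is a legitimate refinement of a detail the paper glosses over, as is your remark on near-maximisers when $\|F_1\|_{L^\infty(\mathbb{C})}$ is not attained.
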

\begin{proof}
	This is a direct consequence of Propositions \ref{prop:balance} and \ref{prop:gaborbal}.
\end{proof}
Observe that for $f,g \in M^{p,p}(\realr)$ the function
$
G:=\abs{\frac{F_2}{F_1}}-1 
$
is an element of $L^p(\mathbb{C},\abs{F_1}^p)$,
where we put $F_1(z)=V_\varphi f(x,-y)$ and $F_2(z)=V_\varphi g(x,-y)$.
Applying Corollary \ref{cor:gaborstab}, together with the well-known fact that $\eta \cdot V_\varphi f$ is holomorphic
for suitable $\eta$ (Theorem \ref{thm:gaborholo}) to Proposition \ref{prop:main} we get the following stability result.
\begin{theorem}\label{thm:gaborstab1}
	Suppose that $f\in M^{\infty,\infty}(\mathbb{R})\cap M^{p,p}(\realr)$ and $g\in M^{p,p}(\mathbb{R})$. Then there exists a constant $c>0$ only depending on 
	$\|V_\varphi f\|_{L^p(\mathbb{C})}/\|V_\varphi f\|_{L^{\infty}(\mathbb{C})}$ such that
	\begin{multline}
		\inf_{\alpha \in \mathbb{R}}\|f-e^{\i\alpha}g\|_{M^{p,p}(\mathbb{R})}
		\le c\cdot (1+C_{poinc}(p,\mathbb{C},|F_1|^p))\cdot \\ \left(\||F_1|- |F_2| \|_{W^{1,p}(\mathbb{C})}+\|\nabla\log(F_1)(|F_1|-|F_2|)\|_{L^p(\mathbb{C})}\right),\label{eq:mainestgabor1}
	\end{multline}	
	where $F_1:=V_\varphi f$ and $F_2:=V_\varphi g$.
\end{theorem}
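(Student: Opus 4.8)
\textbf{Proof plan for Theorem \ref{thm:gaborstab1}.}

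The plan is to feed the output of Corollary \ref{cor:gaborstab} into the general stability estimate of Theorem \ref{prop:main}, taking $D=\mathbb{C}$. First I would set $F_1(z):=V_\varphi f(x,-y)$ and $F_2(z):=V_\varphi g(x,-y)$. By Theorem \ref{thm:gaborholo} there is a continuous, nowhere-vanishing $\eta$ (namely $\eta(z)=e^{\pi(|z|^2/2-\i xy)}$) such that $\eta\cdot F_1$ and $\eta\cdot F_2$ are entire, so the hypotheses of Theorem \ref{prop:main} concerning holomorphy are met; membership of $F_1,F_2$ in $L^p(\mathbb{C})$ is exactly the assumption $f,g\in M^{p,p}(\mathbb{R})$, and smoothness of the Gabor transform is standard. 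Next, observe that $G:=|F_2/F_1|-1\in L^p(\mathbb{C},|F_1|^p)$ since $\|G\|_{L^p(\mathbb{C},|F_1|^p)}=\||F_2|-|F_1|\|_{L^p(\mathbb{C})}\le \|F_1\|_{L^p}+\|F_2\|_{L^p}<\infty$. Therefore Corollary \ref{cor:gaborstab} applies and yields a point $z_0\in\mathbb{C}$ and a universal $\delta>0$ with $\inf_{u\in B_\delta(z_0)}|F_1(u)|>0$ (so in particular $|F_1|>0$ on $B_\delta(z_0)$, as required by Theorem \ref{prop:main}), together with the bounds
\[
C^a_{poinc}(p,\mathbb{C},z_0,\delta,|F_1|^p)\le c\, C_{poinc}(p,\mathbb{C},|F_1|^p)\Bigl(1+\tfrac{\|F_1\|_{L^p(\mathbb{C})}^p}{\|F_1\|_{L^\infty(\mathbb{C})}^p}\Bigr),\qquad
C_{samp}(p,\mathbb{C},z_0,G,|F_1|^p)\le c\,\tfrac{\|F_1\|_{L^p(\mathbb{C})}}{\|F_1\|_{L^\infty(\mathbb{C})}}.
\]

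Then I would simply substitute these two estimates into the right-hand side of \eqref{eq:mainest} with this choice of $z_0$ and $\delta$. The sampling constant multiplies $\||F_2|-|F_1|\|_{L^p(\mathbb{C})}$, which is dominated by $\||F_1|-|F_2|\|_{W^{1,p}(\mathbb{C})}$; the analytic Poincar\'e constant (times the harmless fixed factor $2^{|1/p-1/2|}$) multiplies $\|\nabla|F_1|-\nabla|F_2|\|_{L^p(\mathbb{C})}+\|\nabla\log(F_1)(|F_1|-|F_2|)\|_{L^p(\mathbb{C})}$. Folding the prefactor $1+\|F_1\|_{L^p}^p/\|F_1\|_{L^\infty}^p$ and the other constant-factor $\|F_1\|_{L^p}/\|F_1\|_{L^\infty}$ into a single constant $c$ depending only on the ratio $\|V_\varphi f\|_{L^p(\mathbb{C})}/\|V_\varphi f\|_{L^\infty(\mathbb{C})}$, and bounding $\max\{C_{samp},C^a_{poinc}\}\le c(1+C_{poinc}(p,\mathbb{C},|F_1|^p))$, gives exactly \eqref{eq:mainestgabor1}. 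Finally I would use that $\inf_\alpha\|f-e^{\i\alpha}g\|_{M^{p,p}(\mathbb{R})}=\inf_\alpha\|V_\varphi f-e^{\i\alpha}V_\varphi g\|_{L^p(\mathbb{R}^2)}=\inf_\alpha\|F_1-e^{\i\alpha}F_2\|_{L^p(\mathbb{C})}$ (the reflection $(x,y)\mapsto(x,-y)$ being a measure-preserving bijection of $\mathbb{R}^2$, and $|e^{\i\alpha}|=1$), so the left-hand side of \eqref{eq:mainest} is precisely the quantity we want to bound.

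There is essentially no hard analytic content left at this stage — Theorem \ref{prop:main} and Corollary \ref{cor:gaborstab} already do the heavy lifting. The one point that needs a little care is the bookkeeping of which quantities the final constant $c$ is allowed to depend on: Corollary \ref{cor:gaborstab} produces constants independent of $f$ but the residual prefactors $1+\|F_1\|_{L^p}^p/\|F_1\|_{L^\infty}^p$ and $\|F_1\|_{L^p}/\|F_1\|_{L^\infty}$ genuinely involve the ratio $\|V_\varphi f\|_{L^p(\mathbb{C})}/\|V_\varphi f\|_{L^\infty(\mathbb{C})}$, and one must check that no other $f$-dependence sneaks in through $\delta$ (it does not, since for $D=\mathbb{C}$ Proposition \ref{prop:gaborbal} gives a universal $\delta$). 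A second minor point is verifying that $F_2/F_1$ is meromorphic on $\mathbb{C}$ — this follows because $\eta F_1,\eta F_2$ are entire and $\eta$ is nowhere zero, so $F_2/F_1=(\eta F_2)/(\eta F_1)$ is a quotient of entire functions — which is needed so that the appearance of $C^a_{poinc}$ and of Lemma \ref{keylemma} inside the proof of Theorem \ref{prop:main} is legitimate. Neither of these is a real obstacle; the theorem is a clean corollary of the machinery assembled in the previous two sections.
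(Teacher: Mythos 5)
Your proposal is correct and follows exactly the route the paper takes: the paper derives Theorem \ref{thm:gaborstab1} precisely by applying Corollary \ref{cor:gaborstab} (with $G=|F_2/F_1|-1\in L^p(\mathbb{C},|F_1|^p)$) together with the holomorphy statement of Theorem \ref{thm:gaborholo} to the general estimate of Theorem \ref{prop:main}, absorbing the resulting prefactors into a constant depending only on $\|V_\varphi f\|_{L^p(\mathbb{C})}/\|V_\varphi f\|_{L^\infty(\mathbb{C})}$. Your additional bookkeeping (universal $\delta$ from Proposition \ref{prop:gaborbal}, meromorphy of $F_2/F_1$, and the measure-preserving reflection identifying $\inf_\alpha\|F_1-e^{\i\alpha}F_2\|_{L^p(\mathbb{C})}$ with $d_{M^{p,p}}(f,g)$) is exactly the detail the paper leaves implicit.
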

For general $D\subset \mathbb{C}$ we get the following result.
%
\begin{corollary}\label{cor:gaborstabfinite}
	Suppose that $f\in M^{\infty,\infty}(\mathbb{R})$ and let $F_1(z) = V_\varphi f(x,-y)$. 
	Suppose that $D\subset \mathbb{C}$ satisfies the assumptions of Proposition \ref{prop:balance} and that $G\in L^p(D,\abs{F_1}^p)$. Then there exists a constant $c$ which only depends monotonically increasingly on  $\|V_{\varphi '} f\|_{L^\infty(D)}/ 
	\|V_{\varphi} f\|_{L^\infty(D)}$ (and which is otherwise independent of $f$ and $D$!),
	a constant $\delta>0$ which depends monotonically decreasingly on $\|V_{\varphi} f\|_{L^\infty(D)}/ 
	\|V_{\varphi'} f\|_{L^\infty(D)}$ (and which is otherwise independent of $f$ and $D$!), and $z\in D$ with
	$B_\delta(z)\subset D$, such that
	\begin{equation}
		C_{poinc}^a(p,D,z,\delta,|F_1|^p)\le c\cdot C_{poinc}(p,D,|F_1|^p)\cdot \left(1 + 
		\frac{\|F_1\|_{L^p(D)}^p}{\|F_1\|_{L^\infty(D)}^p} \right),
	\end{equation}
	\begin{equation}
		C_{samp}(p,D,z,G,|F_1|^p)\le c\cdot  \frac{\|F_1\|_{L^p(D)}}{\|F_1\|_{L^\infty(D)}}.
	\end{equation}
	and
	\begin{equation}
		\label{eq:deltauniformlocal}
		\inf_{u\in B_\delta(z)}|F_1(u)|>0.
	\end{equation} 
\end{corollary}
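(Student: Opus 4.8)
The plan is to read off Corollary \ref{cor:gaborstabfinite} from Propositions \ref{prop:balance} and \ref{prop:gaborbal} by exactly the combination used to prove Corollary \ref{cor:gaborstab} (the case $D=\mathbb{C}$); the only additional work is to repackage the $\delta_D(F_1)$-dependent prefactors in Proposition \ref{prop:balance} into a single constant $c$ and a radius $\delta$ with the asserted monotone dependence on the measurement ratio.

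First I would pin down the global variation explicitly. Writing $F_1(z)=V_\varphi f(x,-y)$, the identity $|\nabla|V_\varphi f||=|V_{\varphi'}f|$ from the proof of Proposition \ref{prop:gaborbal} (applied in the second variable, where the sign reflection is irrelevant) gives $\|\nabla|F_1|\|_{L^\infty(D)}=\|V_{\varphi'}f\|_{L^\infty(D)}$, so that directly from \eqref{eq:deltaf1}
\[
\delta_0:=\delta_D(F_1)=\min\left\{\tfrac12\,\tfrac{\|F_1\|_{L^\infty(D)}}{\|V_{\varphi'}f\|_{L^\infty(D)}},\,1\right\}.
\]
Since $\varphi'\in\mathcal{S}(\mathbb{R})$ and $f\in M^{\infty,\infty}(\mathbb{R})$, the norm $\|V_{\varphi'}f\|_{L^\infty(D)}$ is finite, hence $\delta_0>0$ (the degenerate case $F_1\equiv0$ on $D$ is vacuous), and $\delta_0$ depends only on the ratio $\|F_1\|_{L^\infty(D)}/\|V_{\varphi'}f\|_{L^\infty(D)}$, monotonically, in the manner the statement requires.

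Next I would invoke Proposition \ref{prop:balance} with this $F_1$ and the given $G$. Its hypotheses are met: $D$ is convex with $\partial D$ of curvature $\le1$ by assumption, $F_1$ is differentiable (the Gabor transform of a tempered distribution is smooth), $\delta_D(F_1)=\delta_0>0$, and $G\in L^p(D,|F_1|^p)$ by assumption. Setting $\delta:=\delta_0/4$, the proposition supplies a point $z\in D$ with $B_\delta(z)\subset D$ (implicit already in the meaning of $C_{poinc}^a(p,D,z,\delta_0/4,\cdot)$ via Definition \ref{def:analpoinc}, and explicit in its proof), with $\inf_{u\in B_\delta(z)}|F_1(u)|>0$ by \eqref{eq:fisholo}, and with
\[
C_{poinc}^a(p,D,z,\delta,|F_1|^p)\le C_{poinc}(p,D,|F_1|^p)\Bigl(1+\tfrac{16\cdot2^p}{\pi\delta_0^2}\cdot\tfrac{\|F_1\|_{L^p(D)}^p}{\|F_1\|_{L^\infty(D)}^p}\Bigr),
\]
\[
C_{samp}(p,D,z,G,|F_1|^p)\le \tfrac{\|F_1\|_{L^p(D)}}{\|F_1\|_{L^\infty(D)}}\,\bigl(\delta_0^2\pi\bigr)^{-1/p}\,2\cdot16^{-1/p}.
\]

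The last step is elementary algebra. With $a:=\tfrac{16\cdot2^p}{\pi\delta_0^2}$ and $b:=\|F_1\|_{L^p(D)}^p/\|F_1\|_{L^\infty(D)}^p$ one has $1+ab\le\max\{1,a\}(1+b)$, which converts the first bound into $c\cdot C_{poinc}(p,D,|F_1|^p)\bigl(1+\|F_1\|_{L^p(D)}^p/\|F_1\|_{L^\infty(D)}^p\bigr)$ with $c:=\max\{1,a\}$; the second bound is already of the form $c'\cdot\|F_1\|_{L^p(D)}/\|F_1\|_{L^\infty(D)}$ with $c':=2\cdot16^{-1/p}(\delta_0^2\pi)^{-1/p}$. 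Replacing $c$ by $\max\{c,c'\}$ yields one constant that depends only on $\delta_0$, hence only on $\|F_1\|_{L^\infty(D)}/\|V_{\varphi'}f\|_{L^\infty(D)}$, and grows as $\delta_0\to0$, i.e.\ monotonically increasingly in $\|V_{\varphi'}f\|_{L^\infty(D)}/\|V_\varphi f\|_{L^\infty(D)}$, while $\delta=\delta_0/4$ carries the stated monotone dependence on $\|V_\varphi f\|_{L^\infty(D)}/\|V_{\varphi'}f\|_{L^\infty(D)}$. As the whole argument just combines two already-proved propositions with this bit of algebra, there is no genuine obstacle; the only point demanding attention is the bookkeeping of the constants' monotone dependence (together with the purely cosmetic reflection in the second variable relating $F_1$ to $V_\varphi f$).
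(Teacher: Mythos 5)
Your argument is precisely the paper's: the corollary is proved there as a direct consequence of Proposition \ref{prop:balance} and Proposition \ref{prop:gaborbal} (together with smoothness of the Gabor transform), and your explicit bookkeeping of $\delta_0=\delta_D(F_1)$, the choice $\delta=\delta_0/4$, and the repackaging $1+ab\le\max\{1,a\}(1+b)$ just fills in that combination correctly. The only caveat is cosmetic: your $\delta=\delta_0/4$ is monotonically \emph{increasing} in $\|V_{\varphi} f\|_{L^\infty(D)}/\|V_{\varphi'} f\|_{L^\infty(D)}$ (equivalently decreasing in the reciprocal ratio), so the word ``decreasingly'' in the corollary's statement is evidently the paper's own slip rather than a gap in your proof, and it is immaterial for the later applications, which only use dependence on the maximum of the two ratios.
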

\begin{proof}
	This is a direct consequence of Theorem \ref{thm:gaborholo}, Proposition \ref{prop:balance} and Proposition \ref{prop:gaborbal}.
\end{proof}
As before, Corollary \ref{cor:gaborstabfinite} directly leads to a stability result for Gabor phase retrieval.
\begin{theorem}\label{thm:gaborstab1finite}
	Suppose that $f,g\in M^{p,p}(\mathbb{R})$ and let $D\subset \mathbb{C}$ satisfy the assumptions of Proposition \ref{prop:balance}. Then there exists a constant $c>0$ only depending on 
	$$\max\left\{\frac{\|V_\varphi f\|_{L^p(D)}}{\|V_\varphi f\|_{L^{\infty}(D)}},\frac{\|V_{\varphi '} f\|_{L^\infty(D)}}{ 
		\|V_{\varphi} f\|_{L^\infty(D)}}\right\}$$ 
	such that
	\begin{multline}
		\inf_{\alpha \in \mathbb{R}}\|F_1-e^{\i\alpha}F_2\|_{L^p(D)}
		\le c\cdot (1+C_{poinc}(p,D,|F_1|^p))\cdot \\ \left(\||F_1|- |F_2| \|_{W^{1,p}(D)}+\|\nabla\log(F_1)(|F_1|-|F_2|)\|_{L^p(D)}\right).\label{eq:mainestgabor1finite}
	\end{multline}	
\end{theorem}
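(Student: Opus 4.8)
The plan is to specialize the general estimate of Proposition~\ref{prop:main} to Gabor data and to absorb the two undetermined constants in \eqref{eq:mainest} using Corollary~\ref{cor:gaborstabfinite}.

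\textbf{Step 1 (pass to the holomorphic picture).} I would work on the reflected domain $\widetilde D := \{\bar z : z\in D\}$, setting $F_1(z):=V_\varphi f(x,-y)$ and $F_2(z):=V_\varphi g(x,-y)$ for $z=x+\i y$. By Theorem~\ref{thm:gaborholo} there is a continuous, nowhere-vanishing $\eta$ with $\eta F_1,\eta F_2\in\mathcal O(\widetilde D)$, so the hypotheses of Proposition~\ref{prop:main} hold; and since reflection preserves convexity and boundary curvature, $\widetilde D$ still satisfies the assumptions of Proposition~\ref{prop:balance}, hence of Corollary~\ref{cor:gaborstabfinite}. As $z\mapsto\bar z$ is an isometry preserving moduli and the magnitudes of all partial derivatives, one has $\inf_\alpha\|V_\varphi f-e^{\i\alpha}V_\varphi g\|_{L^p(D)}=\inf_\alpha\|F_1-e^{\i\alpha}F_2\|_{L^p(\widetilde D)}$, the Poincar\'e constant is unchanged, and the terms $\||F_1|-|F_2|\|_{W^{1,p}}$ and $\|\nabla\log(F_1)(|F_1|-|F_2|)\|_{L^p}$ transfer without change; so it suffices to prove the estimate on $\widetilde D$.

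\textbf{Step 2 (invoke Corollary~\ref{cor:gaborstabfinite} and substitute).} Apply Corollary~\ref{cor:gaborstabfinite} with $G:=|F_2/F_1|-1$, which lies in $L^p(\widetilde D,|F_1|^p)$ because $||F_2/F_1|-1|^p|F_1|^p=||F_1|-|F_2||^p$ is integrable for $f,g\in M^{p,p}(\mathbb R)$. This produces $z_0\in\widetilde D$ and $\delta>0$ --- with $\delta$ controlled monotonically by $\|V_{\varphi'}f\|_{L^\infty(D)}/\|V_\varphi f\|_{L^\infty(D)}$ --- such that $B_\delta(z_0)\subset\widetilde D$, $\inf_{B_\delta(z_0)}|F_1|>0$, and
\[
C^a_{poinc}(p,\widetilde D,z_0,\delta,|F_1|^p)\le c\,C_{poinc}(p,\widetilde D,|F_1|^p)\Bigl(1+\tfrac{\|F_1\|_{L^p}^p}{\|F_1\|_{L^\infty}^p}\Bigr),\qquad C_{samp}(p,\widetilde D,z_0,G,|F_1|^p)\le c\,\tfrac{\|F_1\|_{L^p}}{\|F_1\|_{L^\infty}}.
\]
Plugging $z_0,\delta$ into \eqref{eq:mainest}, the sampling term is $\le c\,\frac{\|F_1\|_{L^p}}{\|F_1\|_{L^\infty}}\,\||F_1|-|F_2|\|_{L^p(\widetilde D)}$ and the analytic-Poincar\'e term is $\le 2^{\abs{1/p-1/2}}c\,C_{poinc}(p,\widetilde D,|F_1|^p)\bigl(1+\tfrac{\|F_1\|_{L^p}^p}{\|F_1\|_{L^\infty}^p}\bigr)\bigl(\|\nabla|F_1|-\nabla|F_2|\|_{L^p}+\|\nabla\log(F_1)(|F_1|-|F_2|)\|_{L^p}\bigr)$.

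\textbf{Step 3 (collapse the constants).} Since $\|F_1\|_{L^p(\widetilde D)}/\|F_1\|_{L^\infty(\widetilde D)}=\|V_\varphi f\|_{L^p(D)}/\|V_\varphi f\|_{L^\infty(D)}$ and $\|V_{\varphi'}f\|_{L^\infty(D)}/\|V_\varphi f\|_{L^\infty(D)}$ are both dominated by the maximum in the statement, the factor $1+\|F_1\|_{L^p}^p/\|F_1\|_{L^\infty}^p$ and the constant $c$ from Corollary~\ref{cor:gaborstabfinite} are each bounded by a constant of the required dependence; using $C_{poinc}\le 1+C_{poinc}$ and merging $\||F_1|-|F_2|\|_{L^p}+\|\nabla|F_1|-\nabla|F_2|\|_{L^p}$ into $\||F_1|-|F_2|\|_{W^{1,p}(\widetilde D)}$ gives \eqref{eq:mainestgabor1finite}. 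I expect the only delicate point to be pure bookkeeping: ensuring the final constant depends on $f$ only through the stated ratio and not on $D$ itself, which reduces to checking that $\delta$ enters Corollary~\ref{cor:gaborstabfinite} exclusively through $\|V_\varphi f\|_{L^\infty}/\|V_{\varphi'}f\|_{L^\infty}$ and that the reflection leaves every one of these quantities invariant.
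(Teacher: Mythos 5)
Your proposal is correct and follows essentially the same route as the paper: the paper obtains Theorem \ref{thm:gaborstab1finite} precisely by applying Proposition \ref{prop:main} on the reflected domain with $F_1(z)=V_\varphi f(x,-y)$, $F_2(z)=V_\varphi g(x,-y)$ and $G=|F_2/F_1|-1$, and then absorbing the analytic Poincar\'e and sampling constants via Corollary \ref{cor:gaborstabfinite} (itself based on Propositions \ref{prop:balance} and \ref{prop:gaborbal}), exactly as in your Steps 1--3. Your bookkeeping of the constant's dependence on the stated ratios (and the harmless $p$-dependent factor $2^{|1/p-1/2|}$) matches the paper's implicit argument.
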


\subsection{Controlling the Logarithmic Derivative}\label{sec:loggabor}
Compared to Theorem \ref{thm:main}, Theorems \ref{thm:gaborstab1} and \ref{thm:gaborstab1finite} are now independent of any choice of $z_0$ which is very nice. However, the term involving the logarithmic derivative
of $F_1$ in (\ref{eq:mainestgabor1}) and (\ref{eq:mainestgabor1finite}) is still bothersome, in particular because, in general $|\nabla\log(F_1)|$ will certainly be unbounded. It turns out that in the case of Gabor measurements this
quantity can be absorbed into an error term with respect to a norm as defined in Definition \ref{def:Dnorms}. The proof of this fact is however quite difficult and involves deep function-theoretic properties of the Gabor transform. 

We begin by estimating the norms of the logarithmic derivative of a Gabor transform $V_\varphi f$ on discs with growing radii. 
It turns out that these can be  estimated independently of the original signal $f\in M^{\infty,\infty}(\mathbb{R})$. The reason for this perhaps surprising fact is that the holomorphic function $\eta\cdot V_\varphi f$, with suitable $\eta$, satisfies certain restricted growth properties. 
Jensen's formula relates the distribution of zeros of a holomorphic functions with its growth rate which allows us to bound the number of zeros of $V_\varphi f$ in a given disc. This in turn will yield a bound on the norm of the logarithmic derivative of $V_\varphi f$ as follows.
\begin{proposition}\label{proplogderivest}
	Let $1\le r<2$. 
	There exists a polynomial $\rho$ of degree at most $5$ such that for all $f\in M^{\infty,\infty}(\mathbb{R})$
	and all $R>0$ we have an estimate
	$$
	\| \nabla \log(V_\varphi f)\|_{L^r(B_R(z_0))}\le \rho(R),
	$$
	where $z_0$ is a maximum of $\abs{V_\varphi f}$.
\end{proposition}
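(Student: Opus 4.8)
The plan is to reduce everything to a growth estimate for the \emph{entire} function attached to $V_\varphi f$ and then to control its logarithmic derivative via Hadamard's factorization and Jensen's formula. Since a phase-space translation of $f$ translates $\abs{V_\varphi f}$ (hence $\nabla\log\abs{V_\varphi f}$) accordingly, and since multiplying $f$ by a scalar leaves $\nabla\log\abs{V_\varphi f}$ unchanged, I may assume that the maximum $z_0$ of $\abs{V_\varphi f}$ sits at the origin and that $\norm{V_\varphi f}_{L^\infty}=1$. Put $H(z):=\eta(z)\,V_\varphi f(x,-y)$, which is entire by Theorem \ref{thm:gaborholo}; since $\abs{\eta(z)}=e^{\pi\abs z^2/2}$ we get $\abs{H(z)}\le e^{\pi\abs z^2/2}$ (so $H$ has order $\le 2$) and $\abs{H(0)}=1$. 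As $\eta$ is smooth and nowhere zero, the chain rule together with the explicit form of $\eta$ gives $\abs{\nabla\log(V_\varphi f)(z)}\le c\big(\abs{H'(z)/H(z)}+\abs z\big)$ — here Lemma \ref{keylemma} is used on the holomorphic $H$ in the form $\abs{\nabla\abs H}=\abs{H'}$ to identify $\abs{\nabla\log\abs H}$ with $\abs{H'/H}$. Since $\norm{\,\abs z\,}_{L^r(B_R(0))}\lesssim R^{1+2/r}$, it suffices to prove $\norm{H'/H}_{L^r(B_R(0))}\lesssim R^{3+2/r}$.

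I first collect two a priori facts, both \emph{uniform in $f$}. \emph{Zero count:} Jensen's formula for $H$, using $\abs{H(0)}=1$ and $\abs{H(z)}\le e^{\pi\abs z^2/2}$, yields $\int_0^{2R}\frac{n(t)}{t}\,dt\le 2\pi R^2$, hence $n(R)\le\frac{2\pi}{\log 2}R^2$ for the zero-counting function $n$ of $H$. \emph{Repulsion of zeros:} by the identity $\abs{\nabla\abs{V_\varphi f}}=\abs{V_{\varphi'}f}$ and the window-independence $\norm{V_{\varphi'}\cdot}_{L^\infty}\lesssim\norm{V_\varphi\cdot}_{L^\infty}$ on $M^{\infty,\infty}(\R)$ (both used in the proof of Proposition \ref{prop:gaborbal}), $\abs{V_\varphi f}$ is $K$-Lipschitz with $K$ universal; since $\abs{V_\varphi f(0)}=1$, every zero $a$ of $H$ (equivalently of $V_\varphi f$) satisfies $\abs a\ge 1/K=:c_0>0$, and moreover $\abs{V_\varphi f}\ge\tfrac12$ on $B_{c_0/2}(0)$, so for $R\le c_0/2$ the bound is trivial and I may assume $R\ge c_0/2$.

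Now write the Hadamard factorization $H(z)=H(0)\,e^{Q(z)}\prod_k E_2(z/a_k)$ with $\deg Q\le 2$ (since $H$ has order $\le 2$), so that
\[
\frac{H'}{H}(z)=Q'(z)+\sum_{\abs{a_k}\le 2R}\Big(\frac{1}{z-a_k}+\frac{1}{a_k}+\frac{z}{a_k^2}\Big)+\sum_{\abs{a_k}>2R}\frac{z^2}{a_k^2(a_k-z)}.
\]
Because $(\log E_2)'$ and $(\log E_2)''$ vanish at $0$, one has $Q'(0)=H'(0)/H(0)$ and $Q''(0)=H''(0)/H(0)-(H'(0)/H(0))^2$, both bounded by universal constants through Cauchy's estimates applied to $\abs H\le e^{\pi\abs z^2/2}$; as $Q'$ is affine with those coefficients, $\abs{Q'(z)}\lesssim 1+\abs z$, contributing $\lesssim R^{1+2/r}$ to the $L^r(B_R(0))$ norm. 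On $B_R(0)$ the far sum satisfies $\abs{a_k-z}\ge\abs{a_k}/2$, so it is bounded pointwise by $c\sum_{\abs{a_k}>2R}R^2\abs{a_k}^{-3}\lesssim R$ (integrate $\abs{a_k}^{-3}$ against $dn$ and use $n(t)\lesssim t^2$), again contributing $\lesssim R^{1+2/r}$. For the near sum there are $N:=n(2R)\lesssim R^2$ terms; using $\abs{a_k}\ge c_0$ we get $\int_{B_R(0)}\abs{z-a_k}^{-r}\lesssim R^{2-r}$, $\int_{B_R(0)}\abs{a_k}^{-r}\lesssim R^2$, $\int_{B_R(0)}\abs z^r\abs{a_k}^{-2r}\lesssim R^{r+2}$, and with $\abs{\sum_{k=1}^N x_k}^r\le N^{r-1}\sum_k\abs{x_k}^r$ ($r\ge1$) the near sum contributes $\lesssim\big(N^r R^{r+2}\big)^{1/r}\lesssim R^2\,R^{1+2/r}=R^{3+2/r}$. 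Adding the three pieces, $\norm{H'/H}_{L^r(B_R(0))}\lesssim R^{3+2/r}$, whence $\norm{\nabla\log(V_\varphi f)}_{L^r(B_R(z_0))}\lesssim 1+R^{3+2/r}\le 1+R^5$ for $1\le r<2$, and one may take $\rho(R)=c(1+R^5)$.

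The only genuine difficulty is to keep \emph{every} constant uniform in $f$, and this is exactly where the special structure of Gabor measurements is needed: the normalization $\abs{H(0)}=1$ and the growth $\abs H\le e^{\pi\abs z^2/2}$ make Jensen's bound on $n(t)$ universal and, through Cauchy's estimates, tame the Hadamard exponential factor $e^{Q}$; and the repulsion $\abs{a_k}\ge c_0$ — resting on $\abs{\nabla\abs{V_\varphi f}}=\abs{V_{\varphi'}f}$ together with the window-independence of the $M^{\infty,\infty}$-norm — is what controls the otherwise unbounded terms $1/a_k$ and $z/a_k^2$. Everything else is bookkeeping with the counting function and the elementary power-mean inequality.
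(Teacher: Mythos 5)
Your proof is correct and follows essentially the same route as the paper's: pass to the entire function $\eta\cdot V_\varphi f(x,-y)$ of order two, get a zero count uniform in $f$ from Jensen's formula (using that $z_0$ is a maximum), exclude zeros near the origin via the universal bound on $\|V_{\varphi'}f\|_{L^\infty}/\|V_\varphi f\|_{L^\infty}$, and estimate the Hadamard logarithmic derivative in $L^r$, $r<2$, after splitting the zeros at radius $2R$, arriving at the same exponent $3+2/r\le 5$. The only (harmless) deviations are that you bound $Q'$ through Cauchy estimates at the origin rather than through $b=0$ and $\abs{a}\le\pi/2$ as in the paper, you treat the far zeros by integrating against the counting function instead of dyadic annuli, and your far-zero term carries a sign slip ($\frac{z^2}{a_k^2(a_k-z)}$ should be $\frac{z^2}{a_k^2(z-a_k)}$), which is irrelevant since only absolute values are used.
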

\begin{proof}	
	We assume w.l.o.g. that $z_0=0$ (otherwise translate and modulate).
	We will only estimate the norm of $\frac{\partial}{\partial x} \log(V_\varphi f)$; the derivative in direction of the second variable can be bounded in the same way.
	
	Let $F_1(z)=V_{\varphi} f(x,-y)$ and $\eta(z):=e^{\pi(\frac{\abs{z}^2}{2}-ixy)}$. Then, by Theorem \ref{thm:gaborholo}, 
	the function $G=\eta F_1$ is an entire function of order $2$ and type $\frac{\pi}{2}$ \cite[Chapter XI]{conway2012functions}.\\	
	Let the zeros of $G$ - and therefore of $F_1$ - be denoted by $(\zeta_i)_{i\in\mathbb{N}}$ (counted by multiplicity).
	By the Hadamard factorization theorem \cite[Chapter XI, §3]{conway2012functions} we obtain
	\begin{equation}
		G(z)=e^{az^2+bz+c} \cdot \prod_{i=1}^{\infty} E\left(\frac{z}{\zeta_i}\right),
	\end{equation}
	where $E(u):=(1-u) e^{u+\frac{u^2}{2}}$ and $\abs{a}\leq\frac{\pi}{2}$.
	Next compute the logarithmic derivative of $G$:
	\begin{equation}\label{logderiv}
		\frac{G'}{G}(z)=\sum_i \frac{z^2}{(z-\zeta_i)\zeta_i^2} + 2az+b.
	\end{equation}
	Since $\nabla\abs{\eta}(0)=0$ (by calculation) and since $\nabla\abs{F_1}(0)=0$ (by the assumption that $0$ is a maximum of $F_1$) the product rule gives $\nabla\abs{G}(0)=0$.	
	By Lemma \ref{keylemma} we have $G'(0)=0$. Comparison with equation \eqref{logderiv} implies $b=0$.\\	
	Computing the logarithmic derivative of $G$ in a second way yields
	\begin{equation}
		\frac{G'}{G} = \frac{\eta' F_1 +\eta F_1'}{\eta F_1}=\frac{\eta'}{\eta} + \frac{F_1'}{F_1}.
	\end{equation}
	A direct calculation shows that $\eta'/\eta=\pi \bar{z}$ and we get
	\begin{equation}\label{logderiv2}
		\log(V_\varphi f)'(z)=\frac{F_1'}{F_1}(z) = \sum_i \frac{z^2}{(z-\zeta_i)\zeta_i^2} + 2az -\pi\bar{z}.
	\end{equation}
	We shall now develop an estimate for the expression on the right hand side of (\ref{logderiv2}).
	
	Let's get to the hardest part: Estimating the norm of the (infinite) sum $\sum_i \frac{z^2}{(z-\zeta_i)\zeta_i^2}$.
	The first step is to bound the number of zeros of $G$ in balls of radius $R>0$.
	We do this by using Jensen's formula (see \cite[Chapter XI, 1.2]{conway2012functions}), which states that for any entire function $G$
	\begin{equation*}
		\log\abs{G(0)} - \sum_{\abs{\zeta_i}<R} \log\abs{\frac{\zeta_i}{R}} = \frac{1}{2\pi} \int_0^{2\pi} \log \abs{G(R e^{i\theta})} d\theta.
	\end{equation*}
	Therefore, for any $R>0$ it holds that
	\begin{eqnarray}
		\abs{\{i: \abs{\zeta_i}< R\}} &\leq& \frac{1}{\log(2)} \sum_{\{i:\abs{\zeta_i}< 2R\}} \log \left(\frac{2R}{\abs{\zeta_i}}\right)\nonumber\\
		&=&\frac{1}{\log(2)}\left( \frac{1}{2\pi} \int_{0}^{2\pi} \log \abs{F_1(2Re^{i\theta})} + \log \abs{\eta(2Re^{i\theta})} d\theta - \log\abs{F_1(0)}-\log\abs{\eta(0)}\right)\nonumber \\
		&\leq& \frac{1}{\log(2)} \left(2\pi R^2 + \log \norm{F_1}_{\infty} - \log \abs{F_1(0)}\right) = \frac{2\pi}{\log(2)} R^2.\label{eq:jensenzero}
	\end{eqnarray}
	Let us assume that $R=2^l$ for some $l\in\mathbb{N}$.
	We define subsets of $\mathbb{N}$ by
	\begin{equation*}
		I:=\{i: \abs{\zeta_i}<2^{l+1}\} \quad\text{and}\quad  I_j:=\{i: 2^j \leq \abs{\zeta_i}<2^{j+1}\}, ~j\geq l+1.
	\end{equation*}
	The above estimate (\ref{eq:jensenzero}) guarantees that $\abs{I}\leq 8\pi 2^{2l+2}$ and $\abs{I_j}\leq 8\pi 2^{2j+2}$.
	
	Using this information we now split the sum (\ref{logderiv2}) over $i\in \mathbb{N}$ into a sequence of sums
	over $I$ and $I_j$, $j\geq l+1$ and estimate the $L^r(B_R(0))$ norm on each of these parts.
	
	First we take care of the term $\sum_{i\in I} \frac{z^2}{(z-\zeta_i)\zeta_i^2}$:
	Since there exists $\delta>0$ independent from $f$ such that $\abs{\zeta_i}\geq\delta$ {(this follows from the assumption that $0$ is a maximum of $V_\varphi f$, together with Proposition \ref{prop:gaborbal} and Equation (\ref{eq:deltaf1}))} we can estimate
	\begin{equation}
		\norm{\sum_{i\in I} \frac{.^2}{(.-\zeta_i)\zeta_i^2}}_{L^r(B_R(0))} \leq \frac{R^2}{\delta^2} \cdot \sum_{i\in I} \norm{\frac{1}{.-\zeta_i}}_{L^r(B_R(0))}. 
	\end{equation}
	The summands can be uniformly bounded:
	\begin{equation}
		\norm{\frac{1}{.-\zeta_i}}_{L^r(B_R(0))}^r \leq \norm{\frac{1}{.}}_{L^r(B_R(0))}^r
		= 2\pi \int_0^R s^{1-r} ds = 2\pi \cdot \frac{R^{2-r}}{2-r}=\tau_{2-r}^r,
	\end{equation}
	where $\tau_q=\tau_q(R):=\left(2\pi\cdot\frac{R^q}{q}\right)^{1/r}$.\\
	
	Next we estimate the norm of the expression $\sum_{i\in I_j} \frac{z^2}{(z-\zeta_i)\zeta_i^2}$ for fixed $j\geq l+1$:
	Since $\abs{\zeta_i}\geq 2^j$ for $i\in I_j$ we obtain for $\abs{z}\leq R=2^l$
	\begin{equation*}
		\abs{z-\zeta_i} \geq \abs{\abs{z}-\abs{\zeta_i}} \geq 2^j-2^l = 2^l(2^{j-l}-1)\geq 2^l 2^{j-l-1}=2^{j-1}.
	\end{equation*}
	Together with the computation
	\begin{equation*}
		\norm{z^2}_{L^r(B_R(0))}^r = 2\pi \int_0^R s^{2r+1} ds = \tau_{2r+2}^r
	\end{equation*}
	this yields
	\begin{equation}
		\norm{\sum_{i\in I_j} \frac{z^2}{(z-\zeta_i)\zeta_i^2}}_{L^r(B_R(0))} \leq \abs{I_j} \cdot 2^{-2j} \cdot 2^{-j+1}\cdot \tau_{2r+2}
	\end{equation}
	A straight forward computation shows us 
	\begin{equation}
		\norm{2az}_{L^r(B_R(0))} = 2\abs{a} \tau_{r+2} \quad\text{and}\quad
		\norm{\pi\bar{z}}_{L^r(B_R(0))} = \pi \tau_{r+2}.
	\end{equation}
	
	The only thing that is left is to put all the pieces together:
	\begin{eqnarray*}
		\| \frac{\partial}{\partial x} \log(V_\varphi f)\|_{L^r(B_R(0))} &=&
		\norm{\frac{F_1'}{F_1}}_{L^r(B_R(0))}\\
		&\leq& \norm{\sum_{i\in I} \frac{.^2}{(.-\zeta_i)\zeta_i^2}}_{L^r(B_R(0))} + 
		\sum_{j\geq l+1} \norm{\sum_{i\in I_j} \frac{.^2}{(.-\zeta_i)\zeta_i^2}}_{L^r(B_R(0))} \\
		&~&+ \norm{2a.}_{L^r(B_R(0))} + \norm{\pi \bar{.}}_{L^r(B_R(0))}\\
		&\leq& \frac{R^2}{\delta^2}\cdot \abs{I}  \cdot\tau_{2-r} 
		+ \sum_{j\geq l+1} 2 \abs{I_j} 2^{-3j}\cdot \tau_{2r+2} 
		+ (2\abs{a}+\pi)\cdot \tau_{r+2}\\
		&\leq& \frac{R^2}{\delta^2}\cdot 32\pi\cdot R^2 \cdot\tau_{2-r}+ 64\pi \cdot\tau_{2r+2} \underbrace{\sum_{j\geq l+1} 2^{-j}}_{=2^{-l}=R^{-1}} + 2\pi \cdot\tau_{r+2}\\
	\end{eqnarray*}
	Having a look on the definition of $\tau_q$ we observe that the term on the right hand side of the estimate can be bounded by a polynomial of maximal order of $5$;
	i.e. there exists a constant $c$ that only depends on $r$ such that
	
	\begin{equation}
		\norm{\frac{\partial}{\partial x} \log(V_\varphi f)}_{L^r(B_R(0))} \leq c \cdot(R^5+1).
	\end{equation}	
\end{proof}
Proposition \ref{proplogderivest} yields important information on how fast the $L^r$ norm of the logarithmic derivative of a Gabor transform can possibly grow as the size of the integration domain increases. It is remarkable that this quantity can be bounded independent of the original signal.

Moreover, with Proposition \ref{proplogderivest} in hand we can go on to control the bothersome logarithm term in 
the estimates of Theorems \ref{thm:gaborstab1} and \ref{thm:gaborstab1finite}:
\begin{proposition}\label{prop:logdermain}Let $p\in[1,2)$ and $q\in(\frac{2p}{2-p},\infty]$. 
	Then
	there exists a polynomial $\sigma$ of maximal order $6$  such that for any
	$f\in \mathcal{S}'(\mathbb{R})$ with $V_\varphi f$ centered (see Definition \ref{def:centered}), all $g\in \mathcal{S}'(\mathbb{R})$ and all domains $D\subset \mathbb{C}$ with $0\in \mathbb{C}$ ($D=\mathbb{C}$ is allowed!) it holds that
	$$
	\norm{  \nabla \log(V_\varphi f)(|V_\varphi f|-|V_\varphi g|)}_{L^p(D)}
	\le \norm{\left( \abs{V_\varphi f}-\abs{V_\varphi g}\right) \sigma(\abs{.})}_{L^q(D)}
	$$
\end{proposition}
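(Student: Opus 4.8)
The plan is to dispose of the logarithmic derivative by a single application of Hölder's inequality, trading the unbounded factor $\nabla\log(V_\varphi f)$ for a polynomial weight at the cost of passing from $L^p$ to $L^q$. Define $r$ by $\tfrac1r:=\tfrac1p-\tfrac1q$ (with $\tfrac1\infty:=0$). First I would observe that the hypotheses $p\in[1,2)$, $q\in(\tfrac{2p}{2-p},\infty]$ are precisely equivalent to $\tfrac1r\in(\tfrac12,\tfrac1p]$, i.e.\ $r\in[p,2)\subseteq[1,2)$, which is exactly the range covered by Proposition \ref{proplogderivest}. I would also note that centredness of $V_\varphi f$ forces $\|V_\varphi f\|_{L^\infty}=|V_\varphi f(0)|<\infty$, hence $f\in M^{\infty,\infty}(\mathbb{R})$, so Proposition \ref{proplogderivest} applies with the maximum taken at the origin: it furnishes a polynomial $\rho$ with $\deg\rho\le5$, say $\rho(t)\le c_\rho(1+t^5)$, such that $\|\nabla\log(V_\varphi f)\|_{L^r(B_R(0))}\le\rho(R)$ for every $R>0$, \emph{uniformly in $f$}.

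The heart of the argument is to show that, for the polynomial weight $W(t):=(1+t)^6$, the quantity $\int_{\mathbb{C}}|\nabla\log(V_\varphi f)(z)|^r\,W(|z|)^{-r}\,dz$ is bounded by a finite constant $K=K(p,q)$ independent of $f$. I would prove this by a dyadic decomposition $\mathbb{C}=B_1(0)\cup\bigcup_{k\ge1}A_k$ with $A_k:=B_{2^k}(0)\setminus B_{2^{k-1}}(0)$. On $A_k$ one has $W(|z|)\ge W(2^{k-1})\ge 2^{6(k-1)}$, while $\int_{A_k}|\nabla\log(V_\varphi f)|^r\le\|\nabla\log(V_\varphi f)\|_{L^r(B_{2^k}(0))}^r\le\rho(2^k)^r\lesssim 2^{5rk}$; therefore the $k$-th contribution is $\lesssim 2^{-rk}$ and the geometric series converges, the $B_1(0)$-piece being crudely bounded by $\rho(1)^r$. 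It is exactly the gap $\deg W=6>5\ge\deg\rho$ that makes the series summable, which is the reason the weight $\sigma$ must be allowed order $6$.

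Finally, writing $h:=|V_\varphi f|-|V_\varphi g|$, for an arbitrary domain $D\subseteq\mathbb{C}$ I would factor $|\nabla\log(V_\varphi f)|\,|h|=\bigl(|\nabla\log(V_\varphi f)|/W(|\cdot|)\bigr)\bigl(W(|\cdot|)\,|h|\bigr)$ and apply Hölder's inequality over $D$ with exponents $r$ and $q$, which is legitimate since $\tfrac1p=\tfrac1r+\tfrac1q$:
\[
\bigl\|\nabla\log(V_\varphi f)\,h\bigr\|_{L^p(D)}\le\Bigl\|\tfrac{\nabla\log(V_\varphi f)}{W(|\cdot|)}\Bigr\|_{L^r(D)}\,\bigl\|W(|\cdot|)\,h\bigr\|_{L^q(D)}\le K^{1/r}\,\bigl\|W(|\cdot|)\,h\bigr\|_{L^q(D)}.
\]
Setting $\sigma(t):=K^{1/r}(1+t)^6$, a polynomial of order $6$ depending only on $p$ and $q$, the right-hand side is exactly $\|(|V_\varphi f|-|V_\varphi g|)\,\sigma(|\cdot|)\|_{L^q(D)}$, which is the asserted inequality.

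The only genuine obstacle is the uniform-in-$f$ weighted integrability of $\nabla\log(V_\varphi f)$ against a polynomial weight, and this is delivered wholesale by Proposition \ref{proplogderivest} (itself the delicate function-theoretic input, via Jensen's formula and Hadamard factorisation); the remaining steps — fixing $r$, summing the dyadic contributions, one Hölder estimate — are routine bookkeeping. Two minor points deserve a line each: the estimate is insensitive to whether $0\in D$, because each dyadic piece $D\cap A_k$ still lies inside $B_{2^k}(0)$; and the reflection $y\mapsto-y$ that relates $V_\varphi f$ to the entire function of Theorem \ref{thm:gaborholo} leaves all $L^r$ and $L^q$ norms over origin-centred balls unchanged.
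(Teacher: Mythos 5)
Your proof is correct, and it is organized a bit differently from the paper's. The paper decomposes the integral $\int_D |\nabla\log V_\varphi f|^p\,|h|^p$ over dyadic annuli $D_j$, applies Hölder's inequality (with exponents $s=q/(q-p)$, $s'=q/p$) annulus by annulus, and then deploys a \emph{second} Hölder inequality for the resulting series, inserting factors $2^{\pm j/s}$ by hand; the polynomial weight emerges at the end from collecting the dyadic factors $2^{(5ps'+s'/s)j}\chi_{D_j}$. You instead isolate a single clean estimate — the uniform weighted $L^r$ bound $\|\nabla\log(V_\varphi f)/(1+|\cdot|)^6\|_{L^r(\mathbb{C})}\le K$, proved by the same dyadic count but without any Hölder — and then conclude with one global application of Hölder on $D$ using the factorization $|\nabla\log V_\varphi f|\,|h|=(|\nabla\log V_\varphi f|/W)\cdot(W|h|)$. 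Both routes rest on the same two facts (Proposition \ref{proplogderivest} gives $L^r$ growth of order $5$ on $B_R$ uniformly in $f$, and $r=pq/(q-p)<2$ exactly under the stated hypotheses on $p,q$), and both exploit the gap $6>5$ to make a geometric series converge. Your arrangement is arguably preferable: it makes explicit \emph{what} uniform quantity is being controlled and why, replaces the Hölder-for-series step with a single summable series, and makes the independence from the particular domain $D$ (and from whether $0\in D$) immediate, since $\|\cdot\|_{L^r(D)}\le\|\cdot\|_{L^r(\mathbb{C})}$. The paper's version yields the slightly sharper exponent $5+\tfrac1p-\tfrac1q$ on $|z|$, but both fall under the stated "order at most $6$", so nothing is lost.
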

\begin{proof}
	We write $F_1(z)= V_\varphi f(x,-y)$ and $F_2(z) = V_\varphi g(x,-y)$.
	The statement is only proven for $D=\mathbb{C}$ since the general case can be proven in the same way.\\
	We will only estimate the norm of $\frac{\partial}{\partial x} \log(F_1)$; the derivative w.r.t. the second variable $y$ can be handled analogously.
	
	Let $D_0:=B_1(0)$ and $D_j:=B_{2^j}(0)\setminus B_{2^{j-1}}(0)$ for $j\geq 1$, then
	\begin{equation}
		\int_{\mathbb{C}} \abs{\log(F_1)'}^p \cdot \abs{\abs{F_1}-\abs{F_2}}^p = 
		\sum_{j\geq 0} \int_{D_j} \abs{\log(F_1)'}^p \cdot\abs{\abs{F_1}-\abs{F_2}}^p
	\end{equation}
	
	The numbers $s=\frac{q}{q-p}$ and $s'=\frac{q}{p}$ are Hölder conjugated.
	Denoting $r:=ps=\frac{pq}{q-p}$ we have
	\begin{equation*}
		\frac{1}{r}=\frac{1}{p}-\frac{1}{q}>\frac{1}{p}-\frac{2-p}{2p}=\frac{1}{2},
	\end{equation*}
	and therefore $r<2$.
	Applying Hölder's inequality and Proposition \ref{proplogderivest} we obtain 
	\begin{eqnarray*}
		\int_{D_j} \abs{\log(F_1)'}^p \cdot \abs{\abs{F_1}-\abs{F_2}}^p &\leq& 
		\norm{\log(F_1)'}_{L^r(D_j)}^p \cdot \norm{\abs{F_1}-\abs{F_2}}_{L^{q}(D_j)}^p\\
		&\leq& \left(c \cdot (2^{5j}+1)\right)^p \cdot \norm{\abs{F_1}-\abs{F_2}}_{L^{q}(D_j)}^p\\
		&\leq& c' \cdot 2^{5pj} \cdot \left(\int_{D_j} \abs{\abs{F_1}-\abs{F_2}}^{q}\right)^{1/s'}
	\end{eqnarray*}
	where $c'$ depends only on $r$ and $p$.
	Using Hölder's inequality for sums yields
	\begin{eqnarray*}
		\int_{\mathbb{C}} |\log(F_1)'|^p\cdot ||F_1|-|F_2||^p &\leq&
		c' \cdot \sum_{j\geq 0} 2^{-\frac{j}{s}} \cdot 2^{5pj+\frac{j}{s}} \cdot \left(\int_{D_j} \abs{\abs{F_1}-\abs{F_2}}^{q}\right)^{1/s'}\\
		&\leq& c' \cdot \left( \sum_{j\geq 0} 2^{-j}\right)^{1/s} \cdot \left( \sum_{j\geq 0} 2^{(5ps'+\frac{s'}{s})j} \int_{D_j} \abs{\abs{F_1}-\abs{F_2}}^{q}\right)^{1/s'}\\
		&=& 2^{1/s} \cdot c'\cdot \left(\int_{\mathbb{C}} \abs{\abs{F_1}-\abs{F_2}}^{q} \cdot \sum_{j\geq 0} 2^{(5ps'+\frac{s'}{s})j} \chi_{D_j} \right)^{1/s'} 
	\end{eqnarray*}
	For $z\in D_j$ 
	\begin{equation*}
		2^{(5ps'+\frac{s'}{s})j} \leq 2 \cdot \left(\abs{z}^{5ps'+\frac{s'}{s}}+1\right)=:2 \cdot \sigma^{q}(z)
	\end{equation*}
	holds. 
	Therefore we conclude
	\begin{equation}
		\int_{\mathbb{C}} |\log(F_1)'|^p||F_1|-|F_2||^p \leq \underbrace{2^{1/s+1/s'}}_{=2} c' \norm{(\abs{F_1}-\abs{F_2})\sigma(\abs{.})}_{L^{q}(\mathbb{C})}^p.
	\end{equation}
	Since $\frac{s'}{s}=s'-1=\frac{q}{p}-1$ we have
	\begin{equation}
		\sigma(z) \approx \abs{z}^{5+\frac{1}{p}-\frac{1}{q}} +1 \lesssim \abs{z}^6+1
	\end{equation}
	and the stated result holds.	
\end{proof}
\subsection{Putting Everything Together}\label{sec:finalGabor}
We can now apply Proposition \ref{prop:logdermain} to control the logarithmic derivative in Theorem 
\ref{thm:gaborstab1} and immediately get the following result.
\begin{theorem}\label{thm:mainC}Let $p\in [1,2)$ and $q\in (2p/(2-p),\infty]$. 
	Suppose that $f\in M^{p,p}(\mathbb{R})\cap M^{\infty,\infty}(\mathbb{R})$ be such that its Gabor transform $V_\varphi f$ is centered (otherwise we could translate and modulate $f$). Then there exists a constant $c>0$
	\emph{only depending on $p,q$ and the quotient $\|f\|_{M^{p,p}(\mathbb{R})}/\|f\|_{M^{\infty,\infty}(\mathbb{R})}$}
	such that for any $g\in M^{p,p}(\mathbb{R})$ it holds that
	$$
	d_{M^{p,p}(\mathbb{R})}(f,g)\le c\cdot (1+ C_{poinc}(p,\mathbb{C},|V_\varphi f|^p))\cdot \| |V_\varphi f | - |V_\varphi g|\|_{\mathcal{D}_{p,q}^{1,6}}.
	$$
\end{theorem}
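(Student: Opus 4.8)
The plan is to obtain Theorem \ref{thm:mainC} as a bookkeeping consequence of Theorem \ref{thm:gaborstab1} and Proposition \ref{prop:logdermain}: the former already provides a stability estimate whose right-hand side is $\norm{\abs{V_\varphi f}-\abs{V_\varphi g}}_{W^{1,p}(\mathbb{C})}$ plus a logarithmic-derivative term, and the latter converts that logarithmic-derivative term into a polynomially weighted $L^q$-norm of $\abs{V_\varphi f}-\abs{V_\varphi g}$. All that then remains is to recognise every surviving term as one of the four summands building the norm $\norm{\cdot}_{\mathcal{D}_{p,q}^{1,6}}$.

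First I would invoke Theorem \ref{thm:gaborstab1} with $F_1=V_\varphi f$, $F_2=V_\varphi g$; its hypotheses $f\in M^{\infty,\infty}(\mathbb{R})\cap M^{p,p}(\mathbb{R})$ and $g\in M^{p,p}(\mathbb{R})$ are exactly those assumed here, and its constant depends only on $\norm{V_\varphi f}_{L^p(\mathbb{C})}/\norm{V_\varphi f}_{L^\infty(\mathbb{C})}$, which equals $\norm{f}_{M^{p,p}(\mathbb{R})}/\norm{f}_{M^{\infty,\infty}(\mathbb{R})}$ since the modulation-space norms are precisely the $L^p$- resp.\ $L^\infty$-norms of $V_\varphi f$. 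This produces
\[
d_{M^{p,p}(\mathbb{R})}(f,g)\le c_0\,(1+C_{poinc}(p,\mathbb{C},\abs{V_\varphi f}^p))\left(\norm{\abs{V_\varphi f}-\abs{V_\varphi g}}_{W^{1,p}(\mathbb{C})}+\norm{\nabla\log(V_\varphi f)\,(\abs{V_\varphi f}-\abs{V_\varphi g})}_{L^p(\mathbb{C})}\right).
\]
For the first summand I would simply note that $\norm{\cdot}_{W^{1,p}(\mathbb{C})}$ is built from $\norm{\cdot}_{L^p(\mathbb{C})}$ and $\norm{\nabla\cdot}_{L^p(\mathbb{C})}$, both of which are among the four terms defining $\norm{\cdot}_{\mathcal{D}_{p,q}^{1,6}}$, so this summand is bounded by $\norm{\abs{V_\varphi f}-\abs{V_\varphi g}}_{\mathcal{D}_{p,q}^{1,6}}$.

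For the second summand I would apply Proposition \ref{prop:logdermain} (legitimate since $V_\varphi f$ is centered, and $D=\mathbb{C}$ is permitted there): this yields a polynomial $\sigma$ of degree at most $6$, depending only on $p,q$, with $\norm{\nabla\log(V_\varphi f)\,(\abs{V_\varphi f}-\abs{V_\varphi g})}_{L^p(\mathbb{C})}\le\norm{(\abs{V_\varphi f}-\abs{V_\varphi g})\,\sigma(\abs{\cdot})}_{L^q(\mathbb{C})}$. Then, using $\abs{z}=\sqrt{x^2+y^2}\le\abs{x}+\abs{y}$ together with $t^k\le 1+t^6$ for $t\ge 0$ and $0\le k\le 6$, I would estimate $\sigma(\abs{z})\lesssim 1+(\abs{x}+\abs{y})^6$ pointwise, with implied constant depending only on $\sigma$ and hence only on $p,q$; consequently the $L^q$-norm above is controlled by $\norm{\abs{V_\varphi f}-\abs{V_\varphi g}}_{L^q(\mathbb{C})}+\norm{(\abs{x}+\abs{y})^6(\abs{V_\varphi f}-\abs{V_\varphi g})}_{L^q(\mathbb{C})}$, again two of the defining terms of $\norm{\cdot}_{\mathcal{D}_{p,q}^{1,6}}$. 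Collecting the two constants (each depending only on $p,q$ and the ratio $\norm{f}_{M^{p,p}(\mathbb{R})}/\norm{f}_{M^{\infty,\infty}(\mathbb{R})}$) then yields the asserted bound $d_{M^{p,p}(\mathbb{R})}(f,g)\le c\,(1+C_{poinc}(p,\mathbb{C},\abs{V_\varphi f}^p))\,\norm{\abs{V_\varphi f}-\abs{V_\varphi g}}_{\mathcal{D}_{p,q}^{1,6}}$.

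I do not expect any genuine obstacle here; the substance lies entirely in Theorem \ref{thm:gaborstab1} and Proposition \ref{prop:logdermain}. The one point deserving a little care is matching the polynomial weight coming out of Proposition \ref{prop:logdermain} to the fixed weight $(\abs{x}+\abs{y})^6$ hard-coded into $\mathcal{D}_{p,q}^{1,6}$ — this is exactly why the superscript $6$ shows up in the statement, and why the constraint $q>2p/(2-p)$ is imposed (in Proposition \ref{proplogderivest} it is what keeps the Hölder exponent $r=pq/(q-p)$ strictly below $2$, which is what the Jensen/Hadamard argument there requires).
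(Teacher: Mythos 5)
Your proof is correct and is exactly the argument the paper intends — the paper states it in one line (``apply Proposition \ref{prop:logdermain} to control the logarithmic derivative in Theorem \ref{thm:gaborstab1}''), and you have filled in the bookkeeping of matching the resulting terms to the four summands of $\norm{\cdot}_{\mathcal{D}_{p,q}^{1,6}}$, including the pointwise bound $\sigma(\abs{z})\lesssim 1+(\abs{x}+\abs{y})^6$. Your closing remark on why $q>2p/(2-p)$ is needed (to keep $r=pq/(q-p)<2$ for the Jensen-formula estimate) also matches the paper's reasoning in Propositions \ref{proplogderivest} and \ref{prop:logdermain}.
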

We can also establish the following local version which follows by combining Proposition \ref{prop:logdermain}
and Theorem \ref{thm:gaborstab1finite}.
\begin{theorem}\label{thm:mainD}Let $p\in [1,2)$ and $q\in (2p/(2-p),\infty]$. Suppose that $D\subset\mathbb{C}$ satisfies the assumptions of Proposition \ref{prop:balance}.
	Suppose that $f\in M^{p,p}(\mathbb{R})\cap M^{\infty,\infty}(\realr)$ be such that its Gabor transform $V_\varphi f$ is centered (otherwise we could translate and modulate $f$). Then there exists a constant $c>0$
	\emph{only depending on $p,q$ and 
		$$\max\left\{\frac{\|V_\varphi f\|_{L^p(D)}}{\|V_\varphi f\|_{L^{\infty}(D)}},\frac{\|V_{\varphi '} f\|_{L^\infty(D)}}{ 
			\|V_{\varphi} f\|_{L^\infty(D)}}\right\}$$}
	
	such that for any $g\in M^{p,p}(\realr)$ it holds that
	$$
	\inf_{\alpha\in \mathbb{R}}\|V_\varphi f - e^{\i \alpha}V_\varphi g\|_{L^p(D)}\le c\cdot (1+ C_{poinc}(p,D,|V_\varphi f|^p))\cdot \| |V_\varphi f | - |V_\varphi g|\|_{\mathcal{D}_{p,q}^{1,6}(D)}.
	$$
\end{theorem}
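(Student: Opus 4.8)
The plan is to chain together Theorem~\ref{thm:gaborstab1finite} and Proposition~\ref{prop:logdermain}, and then to recognize the resulting right‑hand side as a constant multiple of $\||V_\varphi f|-|V_\varphi g|\|_{\mathcal{D}_{p,q}^{1,6}(D)}$. The substantive work has already been carried out in those two results, so this step is essentially bookkeeping.

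First I would invoke Theorem~\ref{thm:gaborstab1finite}: since $D$ satisfies the hypotheses of Proposition~\ref{prop:balance} and $f,g\in M^{p,p}(\mathbb{R})$ (with $f\in M^{\infty,\infty}(\mathbb{R})$ ensuring the relevant $L^\infty(D)$‑norms are finite), there is a constant $c_0>0$, depending only on $\max\{\|V_\varphi f\|_{L^p(D)}/\|V_\varphi f\|_{L^\infty(D)},\ \|V_{\varphi'} f\|_{L^\infty(D)}/\|V_\varphi f\|_{L^\infty(D)}\}$, such that
$$\inf_{\alpha\in\mathbb{R}}\|V_\varphi f-e^{\i\alpha}V_\varphi g\|_{L^p(D)}\le c_0\,(1+C_{poinc}(p,D,|V_\varphi f|^p))\Big(\||V_\varphi f|-|V_\varphi g|\|_{W^{1,p}(D)}+\|\nabla\log(V_\varphi f)(|V_\varphi f|-|V_\varphi g|)\|_{L^p(D)}\Big).$$
Since $V_\varphi f$ is centered and $p\in[1,2)$, $q\in(2p/(2-p),\infty]$, I would then apply Proposition~\ref{prop:logdermain} to bound the last summand by $\|(|V_\varphi f|-|V_\varphi g|)\sigma(|\cdot|)\|_{L^q(D)}$ for a fixed polynomial $\sigma$ of degree at most $6$ depending only on $p,q$.

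Next comes the bookkeeping. Because $\deg\sigma\le 6$, there is a universal constant with $|\sigma(|z|)|\le c_1((|x|+|y|)^6+1)$ for $z=x+\i y$, so by the triangle inequality
$$\|(|V_\varphi f|-|V_\varphi g|)\sigma(|\cdot|)\|_{L^q(D)}\le c_1\Big(\|(|x|+|y|)^6(|V_\varphi f|-|V_\varphi g|)\|_{L^q(D)}+\||V_\varphi f|-|V_\varphi g|\|_{L^q(D)}\Big),$$
while $\||V_\varphi f|-|V_\varphi g|\|_{W^{1,p}(D)}\le \||V_\varphi f|-|V_\varphi g|\|_{L^p(D)}+\|\nabla(|V_\varphi f|-|V_\varphi g|)\|_{L^p(D)}$. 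Adding these, the four terms that appear are precisely the four summands in the definition of $\|\cdot\|_{\mathcal{D}_{p,q}^{1,6}(D)}$ evaluated at $|V_\varphi f|-|V_\varphi g|$, so the whole bracket is at most $c_2\,\||V_\varphi f|-|V_\varphi g|\|_{\mathcal{D}_{p,q}^{1,6}(D)}$. Collecting constants — all of which depend only on $p$, $q$ and the displayed quotient — yields the asserted bound, with the factor $1+C_{poinc}(p,D,|V_\varphi f|^p)$ kept explicit as in the statement.

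I do not expect a genuine obstacle. The only points requiring a little care are: (i) verifying that the \emph{centered} hypothesis here is exactly what Proposition~\ref{prop:logdermain} needs (it is, verbatim); and (ii) confirming that the reflection $z\mapsto\bar z$ used internally in the proofs of Theorem~\ref{thm:gaborstab1finite} and Proposition~\ref{prop:logdermain} causes no mismatch — both statements are phrased directly in terms of $V_\varphi f$ and $V_\varphi g$, so they combine without reintroducing the reflection.
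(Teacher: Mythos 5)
Your proposal is correct and is exactly the paper's argument: the paper obtains Theorem \ref{thm:mainD} by combining Theorem \ref{thm:gaborstab1finite} with Proposition \ref{prop:logdermain}, and the remaining step of absorbing the degree-$6$ polynomial weight and the $W^{1,p}$ terms into the $\mathcal{D}_{p,q}^{1,6}(D)$ norm is the same bookkeeping you describe.
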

It remains to interpret the weighted Poincar\'e constants $C_{poinc}(p,D,|V_\varphi f|^p)$ and 
$C_{poinc}(p,\mathbb{C},|V_\varphi f|^p)$. 

In Appendix \ref{app:cheeger} we prove the following result.
\begin{theorem}\label{thm:cheegerpoinc}
	For every connected domain $D\subset \mathbb{R}^2$ ($D=\mathbb{C}$ is allowed!) and every $f\in \mathcal{S}'(\mathbb{R})$
	it holds that
	$$
	C_{poinc}(p,D,|V_\varphi f|^p)\le \frac{4p}{h_{p,D}(f)},
	$$
	where $h_{p,D}(f)$ is defined as in Definition \ref{def:cheegerconst}.
\end{theorem}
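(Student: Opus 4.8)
The plan is to prove the bound as the weighted $L^p$-analogue of Cheeger's inequality, via the classical co-area argument adapted to the weight $w:=|V_\varphi f|^p$. First I would dispose of the trivial cases: if $h_{p,D}(f)=0$ there is nothing to show, and if $w(D)\in\{0,\infty\}$ the statement is vacuous since the weighted mean $F_D^w$ of Definition \ref{def:PoincClass} is then undefined; so assume henceforth $0<w(D)<\infty$ and $h:=h_{p,D}(f)>0$, and let $F\in W^{1,p}(D,w)\cap\mathcal M(D)$ be arbitrary.

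\textbf{Reduction to a scalar, one-sided inequality.} I would first record the elementary fact (a one-line consequence of H\"older's inequality) that $\|F-F_D^w\|_{L^p(D,w)}\le 2\inf_{c\in\mathbb C}\|F-c\|_{L^p(D,w)}$, and then split $F=u+\i v$ into real and imaginary parts. This reduces matters to proving, for a real-valued $u\in W^{1,p}(D,w)$ that is smooth off a discrete set and for a $w$-weighted median $m$ of $u$, the one-sided estimate
\[
\|u-m\|_{L^p(D,w)}\ \le\ \frac{p}{h}\,\|\nabla u\|_{L^p(D,w)}.
\]
Applying this to $u=\mathrm{Re}\,F$ and $v=\mathrm{Im}\,F$ with their respective medians and recombining, using the pointwise identity $|\nabla F|^2=|\nabla u|^2+|\nabla v|^2$ together with the elementary comparisons between $(a^2+b^2)^{p/2}$ and $a^p+b^p$, then yields the claimed inequality with constant $2\cdot 2^{|1/p-1/2|}\le 4$; this is the origin of the factor $4p$.

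\textbf{The co-area step.} To prove the scalar inequality I would write $u-m=(u-m)_+-(u-m)_-$ and note that $\nabla u=0$ a.e.\ on $\{u=m\}$, so that the two gradient contributions add up exactly to $\|\nabla u\|_{L^p(D,w)}^p$; it therefore suffices to bound $\|g\|_{L^p(D,w)}$ for $g:=(u-m)_+\ge 0$, the term $(u-m)_-$ being handled symmetrically. The key trick is to apply the weighted co-area formula not to $g$ but to $\Phi:=g^p$, which gives
\[
\int_D|\nabla\Phi|\,w\,dz\ =\ \int_0^\infty\Big(\int_{\{\Phi=t\}}w\,d\mathcal H^1\Big)dt\ =\ \int_0^\infty\|V_\varphi f\|_{L^p(\partial C_t)}^p\,dt,
\]
where $C_t:=\{\Phi>t\}=\{u>m+t^{1/p}\}$. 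Since $m$ is a median, $w(C_t)\le\tfrac12 w(D)$, so each $C_t$ is an admissible competitor in the infimum \eqref{eq:cheeger} defining $h_{p,D}(f)$; hence $\|V_\varphi f\|_{L^p(\partial C_t)}^p\ge h\,\|V_\varphi f\|_{L^p(C_t)}^p=h\,w(C_t)$ for a.e.\ $t>0$. Integrating in $t$ and using the layer-cake identity $\int_D\Phi\,w\,dz=\int_0^\infty w(C_t)\,dt$ gives $\int_D|\nabla\Phi|\,w\,dz\ge h\int_D\Phi\,w\,dz$. Finally $|\nabla\Phi|=p\,g^{p-1}|\nabla g|$, so H\"older's inequality with exponents $p/(p-1)$ and $p$ bounds the left-hand side by $p\big(\int_D g^pw\big)^{1-1/p}\big(\int_D|\nabla g|^pw\big)^{1/p}$; cancelling the factor $\big(\int_D g^pw\big)^{1-1/p}$ yields exactly $\|g\|_{L^p(D,w)}\le\frac{p}{h}\|\nabla g\|_{L^p(D,w)}$, as required.

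\textbf{Main obstacle.} The genuinely delicate point is justifying the co-area step in the presence of the discrete pole set of the meromorphic $F$ and the zero set of $w$: one must check that for a.e.\ level $t$ the set $C_t$ is really an admissible competitor for $h_{p,D}(f)$, i.e.\ that $\partial C_t\cap D$ is smooth. I would handle this by invoking Sard's theorem on $D$ minus the poles of $F$ — where $u$ is real-analytic — and observing that this exceptional set, being discrete, is $\mathcal H^1$-negligible and so contributes nothing to the boundary integrals. I would also emphasize that the restriction to meromorphic test functions in Definition \ref{def:PoincClass}, far from being a technical nuisance, is precisely what makes the argument go through despite the Lavrentiev phenomenon, since meromorphic functions are automatically smooth away from a discrete set.
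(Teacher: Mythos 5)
Your proposal is correct and follows essentially the same route as the paper's own proof (Proposition \ref{prop:cheeger} combined with Theorem \ref{thm:maincheeger}): split into real and imaginary parts, pass to weighted medians, apply the weighted co-area formula to the $p$-th power of the positive and negative parts so that the superlevel sets become admissible Cheeger competitors, and finish with the layer-cake identity and H\"older, handling the discrete pole set via Sard's theorem and its $\mathcal{H}^1$-negligibility. The only differences are cosmetic bookkeeping of constants (your ``mean is within a factor $2$ of the best constant'' reduction in place of Lemma \ref{lem:qoofmean}, and $|\nabla F|^2=|\nabla u|^2+|\nabla v|^2$ in place of the Cauchy--Riemann identity), both of which still yield the stated bound $4p/h_{p,D}(f)$.
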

Combining Theorem \ref{thm:cheegerpoinc} with Theorem \ref{thm:mainC} we obtain the following fundamental stability result.
\begin{theorem}\label{thm:mainCwithcheeger}Let $p\in [1,2)$ and $q\in (2p/(2-p),\infty]$. 
	Suppose that $f\in M^{p,p}(\mathbb{R})\cap M^{\infty,\infty}(\mathbb{R})$ be such that its Gabor transform $V_\varphi f$ is centered (otherwise we could translate and modulate $f$). Then there exists a constant $c>0$
	\emph{only depending on $p,q$ and the quotient $\|f\|_{M^{p,p}(\mathbb{R})}/\|f\|_{M^{\infty,\infty}(\mathbb{R})}$}
	such that for any $g\in M^{p,p}(\mathbb{R})$ it holds that
	$$
	d_{M^{p,p}(\mathbb{R})}(f,g)\le c\cdot (1+h_p(f)^{-1})\cdot \| |V_\varphi f | - |V_\varphi g|\|_{\mathcal{D}_{p,q}^{1,6}}.
	$$
\end{theorem}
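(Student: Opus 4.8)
The plan is simply to chain the two results already established in the excerpt. Theorem~\ref{thm:mainC} gives, for $f\in M^{p,p}(\mathbb{R})\cap M^{\infty,\infty}(\mathbb{R})$ with $V_\varphi f$ centered and any $g\in M^{p,p}(\mathbb{R})$,
$$
d_{M^{p,p}(\mathbb{R})}(f,g)\le c_0\cdot (1+ C_{poinc}(p,\mathbb{C},|V_\varphi f|^p))\cdot \| |V_\varphi f | - |V_\varphi g|\|_{\mathcal{D}_{p,q}^{1,6}},
$$
where $c_0$ depends only on $p$, $q$ and the quotient $\|f\|_{M^{p,p}(\mathbb{R})}/\|f\|_{M^{\infty,\infty}(\mathbb{R})}$. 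Theorem~\ref{thm:cheegerpoinc}, applied to the connected domain $D=\mathbb{C}$, bounds the weighted Poincar\'e constant by $C_{poinc}(p,\mathbb{C},|V_\varphi f|^p)\le 4p/h_p(f)$, where by convention the right-hand side is $+\infty$ when $h_p(f)=0$ (in which case the asserted inequality holds trivially). First I would substitute this bound into the estimate of Theorem~\ref{thm:mainC}.

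Then I would absorb the harmless factor $4p$ into the constant. Since
$$
1+\frac{4p}{h_p(f)}\le (1+4p)\bigl(1+h_p(f)^{-1}\bigr),
$$
setting $c:=(1+4p)\,c_0$ --- still a constant depending only on $p$, $q$ and $\|f\|_{M^{p,p}(\mathbb{R})}/\|f\|_{M^{\infty,\infty}(\mathbb{R})}$ --- yields exactly
$$
d_{M^{p,p}(\mathbb{R})}(f,g)\le c\cdot (1+h_p(f)^{-1})\cdot \| |V_\varphi f | - |V_\varphi g|\|_{\mathcal{D}_{p,q}^{1,6}},
$$
as claimed.

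There is essentially no obstacle in this final step: all the substance sits in the two cited results, whose proofs in turn rest on Proposition~\ref{prop:main} (the general stability estimate for holomorphic measurements), the simultaneous balancing of the analytic Poincar\'e and sampling constants carried out in Section~\ref{sec:balancesec} together with the signal-independent lower bound $\delta_{\mathbb{C}}(V_\varphi f)\gtrsim 1$ from Proposition~\ref{prop:gaborbal}, the absorption of the logarithmic-derivative term into a $\mathcal{D}_{p,q}^{1,6}$-error via Proposition~\ref{prop:logdermain}, and the Cheeger-type lower bound for the weighted Poincar\'e constant proved in Appendix~\ref{app:cheeger}. The only minor points requiring a word of care are the degenerate case $h_p(f)=0$ and the bookkeeping ensuring that the $f$-dependence of $c$ is only through the stated scale-invariant quotient, both of which are immediate.
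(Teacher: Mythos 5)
Your proposal is correct and coincides with the paper's own argument: Theorem \ref{thm:mainCwithcheeger} is obtained exactly by substituting the Cheeger bound $C_{poinc}(p,\mathbb{C},|V_\varphi f|^p)\le 4p/h_p(f)$ of Theorem \ref{thm:cheegerpoinc} into the estimate of Theorem \ref{thm:mainC} and absorbing the constant $4p$. Your extra remarks on the degenerate case $h_p(f)=0$ and on the $f$-dependence of the constant are fine but add nothing beyond the paper's one-line combination.
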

Combining Theorem \ref{thm:cheegerpoinc} with Theorem \ref{thm:mainD} we obtain the following fundamental local stability result.

\begin{theorem}\label{thm:mainDwithcheeger}Let $p\in [1,2)$ and $q\in (2p/(2-p),\infty]$. Suppose that $D\subset\mathbb{C}$ satisfies the assumptions of Proposition \ref{prop:balance}.
	Suppose that $f\in M^{p,p}(\realr)\cap M^{\infty,\infty}(\realr)$ be such that its Gabor transform $V_\varphi f$ is centered (otherwise we could translate and modulate $f$). Then there exists a constant $c>0$
	\emph{only depending on $p,q$ and 
		$$\max\left\{\frac{\|V_\varphi f\|_{L^p(D)}}{\|V_\varphi f\|_{L^{\infty}(D)}},\frac{\|V_{\varphi '} f\|_{L^\infty(D)}}{ 
			\|V_{\varphi} f\|_{L^\infty(D)}}\right\}$$}
	
	such that for any $g\in M^{p,p}(\mathbb{R})$ it holds that
	$$
	\inf_{\alpha\in \mathbb{R}}\|V_\varphi f - e^{\i \alpha}V_\varphi g\|_{L^p(D)} \le c \cdot (1+h_{p,D}(f)^{-1})\cdot \| |V_\varphi f | - |V_\varphi g|\|_{\mathcal{D}_{p,q}^{1,6}(D)}.
	$$
\end{theorem}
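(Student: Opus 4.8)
The plan is to derive Theorem~\ref{thm:mainDwithcheeger} as an essentially immediate consequence of the two headline results that precede it: the stability estimate of Theorem~\ref{thm:mainD}, which bounds the local reconstruction error in terms of the weighted Poincar\'e constant $C_{poinc}(p,D,|V_\varphi f|^p)$, and the geometric estimate of Theorem~\ref{thm:cheegerpoinc}, which bounds that Poincar\'e constant by $4p\,h_{p,D}(f)^{-1}$. All of the substantive work is already done upstream --- the general holomorphic stability bound of Theorem~\ref{prop:main}, the simultaneous balancing of the analytic Poincar\'e and sampling constants in Proposition~\ref{prop:balance}, the signal-independent growth bound for the logarithmic derivative obtained via Jensen's formula in Proposition~\ref{proplogderivest}, and the Cheeger-type inequality of Theorem~\ref{thm:cheegerpoinc} --- so what remains here is only to chain the two theorems together and to keep careful track of the constant.

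Concretely, I would argue as follows. First, observe that the hypotheses of Theorem~\ref{thm:mainDwithcheeger} are precisely those of Theorem~\ref{thm:mainD}: the domain $D$ satisfies the assumptions of Proposition~\ref{prop:balance}, the signal $f$ lies in $M^{p,p}(\mathbb{R})\cap M^{\infty,\infty}(\mathbb{R})$, and $V_\varphi f$ is centered. Hence Theorem~\ref{thm:mainD} applies verbatim and gives a constant $c>0$, depending only on $p$, $q$ and the indicated maximum of quotients, with
$$
\inf_{\alpha\in\mathbb{R}}\|V_\varphi f - e^{\i\alpha}V_\varphi g\|_{L^p(D)}\le c\cdot\bigl(1+C_{poinc}(p,D,|V_\varphi f|^p)\bigr)\cdot\||V_\varphi f|-|V_\varphi g|\|_{\mathcal{D}_{p,q}^{1,6}(D)}.
$$
Second, since $D$ is convex it is in particular a connected domain, so Theorem~\ref{thm:cheegerpoinc} is available and yields $C_{poinc}(p,D,|V_\varphi f|^p)\le 4p\,h_{p,D}(f)^{-1}$. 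Third, I would combine the two via the elementary estimate
$$
1+C_{poinc}(p,D,|V_\varphi f|^p)\le 1+\frac{4p}{h_{p,D}(f)}\le (1+4p)\bigl(1+h_{p,D}(f)^{-1}\bigr),
$$
insert this into the previous display, and absorb the factor $1+4p$ into $c$; this is legitimate because $c$ is already permitted to depend on $p$, and the stated dependence of the constant on $p$, $q$ and the maximum of the two quotients is therefore unchanged. This produces exactly the asserted inequality.

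There is essentially no obstacle at this final step. The only points meriting a sentence of care are (i) the observation that ``convex with boundary curvature bounded by $1$'' implies ``connected domain'', so that Theorem~\ref{thm:cheegerpoinc} genuinely applies to $D$, and (ii) the degenerate behaviour of the bound: if $h_{p,D}(f)=0$ the right-hand side is infinite and the statement is vacuous, whereas for large $h_{p,D}(f)$ the ``$1+$'' summand keeps the estimate informative. The genuine difficulty of the paper is entirely contained in establishing Theorem~\ref{thm:mainD} and Theorem~\ref{thm:cheegerpoinc}; the present statement is just the clean repackaging of those results in the language of the Cheeger constant from spectral geometry.
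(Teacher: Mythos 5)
Your proposal is correct and coincides with the paper's own argument: the paper obtains Theorem \ref{thm:mainDwithcheeger} precisely by combining Theorem \ref{thm:mainD} with Theorem \ref{thm:cheegerpoinc} (noting that a convex $D$ is connected) and absorbing the factor $4p$ into the constant, exactly as you describe.
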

Remark \ref{rem:balance} together with Proposition \ref{prop:logdermain} and Theorem \ref{thm:cheegerpoinc} gives us a slightly different version of 
the stability result in Theorem \ref{thm:mainDwithcheeger}, where we can drop the assumptions on the domain $D$ altogether.
\begin{theorem}
 Let $p\in [1,2)$ and $q\in (2p/(2-p),\infty]$. 
	Suppose that $f\in M^{p,p}(\realr)\cap M^{\infty,\infty}(\realr)$ be such that its Gabor transform $V_\varphi f$ is centered 
	(otherwise we could translate and modulate $f$) and let $\delta:=\tilde{\delta}_D(V_\varphi f)$ as defined in \eqref{def:deltatilde} and 
	$\kappa:=\frac{\|V_\varphi f\|_{L^p(D)}}{\|V_\varphi f\|_{L^{\infty}(D)}}$. 
	Then there exists a constant $c>0$
	\emph{only depending on $p,q$ }
	such that for any $g\in M^{p,p}(\mathbb{R})$ it holds that
	$$
	\inf_{\alpha\in \mathbb{R}}\|V_\varphi f - e^{\i \alpha}V_\varphi g\|_{L^p(D)}\le c\cdot (1+h_{p,D}(f)^{-1})\cdot (1+\frac{\kappa^p}{\delta^2}) \cdot \| |V_\varphi f | - |V_\varphi g|\|_{\mathcal{D}_{p,q}^{1,6}(D)}.
	$$
\end{theorem}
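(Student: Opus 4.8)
The plan is to reproduce the proof of Theorem~\ref{thm:mainDwithcheeger} step by step, with the single change that the simultaneous control of the analytic Poincar\'e and sampling constants is obtained from Remark~\ref{rem:balance} instead of Proposition~\ref{prop:balance}; this is precisely the substitution that renders the convexity and curvature hypotheses on $D$ unnecessary. As in Section~\ref{sec:gaborsec} I would set $F_1(z):=V_\varphi f(x,-y)$, $F_2(z):=V_\varphi g(x,-y)$ and work on the reflected domain $\widetilde D:=\{\bar z:\ z\in D\}$. Since $y\mapsto -y$ is an isometry of $\mathbb{R}^2$ it leaves all quantities in the statement invariant: $\inf_\alpha\|V_\varphi f-e^{\i\alpha}V_\varphi g\|_{L^p(D)}=\inf_\alpha\|F_1-e^{\i\alpha}F_2\|_{L^p(\widetilde D)}$, the Cheeger constant of the reflected weight on $\widetilde D$ equals $h_{p,D}(f)$, the numbers $\delta:=\tilde{\delta}_D(V_\varphi f)$ and $\kappa$ are unchanged, centeredness of $V_\varphi f$ makes $0$ a maximum of $|F_1|$, and the $\mathcal{D}_{p,q}^{1,6}$-norm of $|V_\varphi f|-|V_\varphi g|$ is unaffected. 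So it suffices to prove the bound for $F_1,F_2$ on $\widetilde D$; below I write $D$ for $\widetilde D$ again. Because $f,g\in M^{p,p}(\mathbb{R})$ we have $F_1,F_2\in L^p(D)$, and $G:=|F_2/F_1|-1$ satisfies $\|G\|_{L^p(D,|F_1|^p)}=\||F_1|-|F_2|\|_{L^p(D)}<\infty$, hence $G\in L^p(D,|F_1|^p)$.

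First I would feed $G$ into Remark~\ref{rem:balance}: it yields a point $z_0\in D$ with $B_{\delta/2}(z_0)\subset D$ and $\inf_{B_{\delta/2}(z_0)}|F_1|>0$ by~\eqref{rembalance:positivity}, together with the bounds~\eqref{rembalance:poinc} and~\eqref{rembalance:samp}, namely
\[
C_{poinc}^a\bigl(p,D,z_0,\tfrac\delta2,|F_1|^p\bigr)\le C_{poinc}(p,D,|F_1|^p)\Bigl(1+\tfrac{2^{p+2}}{\pi}\tfrac{\kappa^p}{\delta^2}\Bigr),\qquad C_{samp}\bigl(p,D,z_0,G,|F_1|^p\bigr)\le 2\cdot4^{-1/p}\,\kappa\,(\pi\delta^2)^{-1/p}.
\]
By the elementary inequality $x^{1/p}\le1+x$ ($x\ge0$, $p\ge1$), both right-hand sides are, up to a factor depending only on $p$, at most $(1+\kappa^p/\delta^2)$ and $C_{poinc}(p,D,|F_1|^p)(1+\kappa^p/\delta^2)$, respectively. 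By Theorem~\ref{thm:gaborholo} there is a continuous nowhere-vanishing $\eta$ with $\eta F_1,\eta F_2\in\mathcal{O}(D)$, and $|F_1|>0$ on $B_{\delta/2}(z_0)$, so Proposition~\ref{prop:main} applies with this $z_0$ and with $\delta/2$ in the role of its radius, giving
\[
\inf_{\alpha\in\mathbb{R}}\|F_1-e^{\i\alpha}F_2\|_{L^p(D)}\le C_{samp}\,\||F_1|-|F_2|\|_{L^p(D)}+2^{|1/p-1/2|}C_{poinc}^a\Bigl(\|\nabla|F_1|-\nabla|F_2|\|_{L^p(D)}+\|\nabla\log(F_1)(|F_1|-|F_2|)\|_{L^p(D)}\Bigr).
\]

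Next I would substitute the two balancing bounds and invoke Theorem~\ref{thm:cheegerpoinc}, $C_{poinc}(p,D,|V_\varphi f|^p)\le 4p/h_{p,D}(f)$ (valid since $D$ is a connected domain), to get $C_{samp}\lesssim 1+\kappa^p/\delta^2$ and $1+C_{poinc}^a\lesssim(1+h_{p,D}(f)^{-1})(1+\kappa^p/\delta^2)$ with implied constants depending only on $p$. It remains to absorb the three error terms into $\||V_\varphi f|-|V_\varphi g|\|_{\mathcal{D}_{p,q}^{1,6}(D)}$: the terms $\||F_1|-|F_2|\|_{L^p(D)}$ and $\|\nabla|F_1|-\nabla|F_2|\|_{L^p(D)}$ are exactly the first two summands in Definition~\ref{def:Dnorms}, while for the logarithmic-derivative term Proposition~\ref{prop:logdermain} applies (because $V_\varphi f$ is centered) and gives
\[
\|\nabla\log(V_\varphi f)(|V_\varphi f|-|V_\varphi g|)\|_{L^p(D)}\le\bigl\|(|V_\varphi f|-|V_\varphi g|)\,\sigma(|\cdot|)\bigr\|_{L^q(D)}
\]
with $\deg\sigma\le6$, whence $\sigma(|z|)\lesssim(|x|+|y|)^6+1$ and the right-hand side is $\lesssim\|(|x|+|y|)^6(|V_\varphi f|-|V_\varphi g|)\|_{L^q(D)}+\||V_\varphi f|-|V_\varphi g|\|_{L^q(D)}\le\||V_\varphi f|-|V_\varphi g|\|_{\mathcal{D}_{p,q}^{1,6}(D)}$. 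Collecting everything and undoing the reflection yields the claim with $c$ depending only on $p,q$.

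I expect the only genuinely delicate point — and the main (if modest) obstacle — to be the balancing step itself: checking that Remark~\ref{rem:balance} really delivers \emph{one and the same} $z_0$ at which the analytic Poincar\'e estimate and the sampling estimate both hold. This is seen by replaying the intersection argument of Proposition~\ref{prop:balance}: the analytic Poincar\'e bound holds for every $z$ in a ball $B_{\delta/4}(\tilde z_0)$ on which $|F_1|\ge\tfrac12\|F_1\|_{L^\infty(D)}$, hence of $(|F_1|^p\,dz)$-measure at least a fixed multiple of $\delta^2\|F_1\|_{L^\infty(D)}^p$, while by Lemma~\ref{lem:z0} the set of points with small sampling constant has $(|F_1|^p\,dz)$-measure exceeding that of $D\setminus B_{\delta/4}(\tilde z_0)$, so the two sets meet — but now run with $\tilde{\delta}_D(F_1)$ in place of $\delta_D(F_1)$, which removes every appeal to convexity or boundary curvature. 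Everything else is a bookkeeping concatenation of Remark~\ref{rem:balance}, Theorem~\ref{thm:gaborholo}, Proposition~\ref{prop:main}, Theorem~\ref{thm:cheegerpoinc} and Proposition~\ref{prop:logdermain}, exactly as in the proof of Theorem~\ref{thm:mainDwithcheeger}.
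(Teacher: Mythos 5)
Your proof is correct and follows precisely the route the paper indicates (``Remark \ref{rem:balance} together with Proposition \ref{prop:logdermain} and Theorem \ref{thm:cheegerpoinc}''): reflect, feed $G=|F_2/F_1|-1$ into Remark \ref{rem:balance} to get a simultaneous $z_0$ with both balancing bounds, apply Proposition \ref{prop:main} at that $z_0$ with radius $\tilde\delta_D/2$, convert the classical Poincar\'e constant via Theorem \ref{thm:cheegerpoinc}, and absorb the logarithmic-derivative term via Proposition \ref{prop:logdermain} into the $\mathcal{D}_{p,q}^{1,6}(D)$-norm. The bookkeeping with $x^{1/p}\le 1+x$ that pulls $(1+\kappa^p/\delta^2)$ out of both balancing estimates is exactly the explicit dependence the theorem is asking for, so there is nothing to add.
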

As a consequence we get the following multicomponent-type stability result.
\begin{corollary}\label{cor:multicomponent}
 Let $p\in [1,2)$ and $q\in (2p/(2-p),\infty]$ and let $D$ be partitioned in subdomains $D_1,\ldots, D_s$, i.e.,
 $$
 D_i\subset D \text{ open}, ~D_i \cap D_j =\emptyset \text{  for  } i\neq j \text{  and  } \bigcup_{i=1}^s \overline{D_i} = \overline{D}.
 $$
 Suppose that $f\in M^{p,p}(\realr)\cap M^{\infty,\infty}(\realr)$ be such that its Gabor transform $V_\varphi f$ is centered (otherwise we could translate and modulate $f$).
 Let 
 $$
 B:=\max_{i=1,\ldots,s} (1+h_{p,D_i}(f)^{-1})\cdot(1+\frac{\kappa_i^p}{\delta_i^2})
 $$
 where we set $\delta_i:=\tilde{\delta}_{D_i}(V_\varphi f)$ as defined in \eqref{def:deltatilde} and
 \begin{equation}
  \kappa_i:=\frac{\norm{V_\varphi f}_{L^p(D_i)}}{\norm{V_\varphi f}_{L^\infty{(D_i)}}}.
 \end{equation}

 Then there exists a constant $c>0$
	\emph{only depending on $p,q$}
	such that for any $g\in M^{p,p}(\mathbb{R})$ it holds that
	$$
	\sum_{i=1}^s \inf_{\alpha_i\in \mathbb{R}}\|V_\varphi f - e^{\i \alpha}V_\varphi g\|_{L^p(D)}\le c\cdot B \cdot \| |V_\varphi f | - |V_\varphi g|\|_{\mathcal{D}_{p,q}^{1,6}(D)}.
	$$
\end{corollary}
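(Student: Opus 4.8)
The plan is to invoke the (unlabeled) stability theorem stated immediately before this corollary --- the one valid for an arbitrary open domain, with constant depending only on $p,q$ and carrying the explicit factor $(1+h_{p,D}(f)^{-1})(1+\kappa^p/\delta^2)$ --- once on each cell $D_i$ of the partition, and then to add up the $s$ resulting local inequalities. First I would check that its hypotheses transfer to every $D_i$. Openness of $D_i$ is assumed, and that theorem (resting on Remark \ref{rem:balance} rather than Proposition \ref{prop:balance}) imposes neither convexity nor a curvature bound on the domain, so nothing is needed there. The remaining hypothesis, that $V_\varphi f$ be \emph{centered} in the sense of Definition \ref{def:centered}, is a \emph{global} statement about $V_\varphi f$ --- its modulus attains a maximum at the origin --- and it is this exact statement, not a version relative to $D_i$, that feeds into Proposition \ref{prop:logdermain}, whose dominating weight $\sigma(|\cdot|)$ is centered at the origin irrespective of the integration domain. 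Hence the hypothesis is inherited, and applying that theorem with $D:=D_i$ yields a constant $c_0=c_0(p,q)$ with
\[
\inf_{\alpha_i\in\mathbb{R}}\|V_\varphi f-e^{\i\alpha_i}V_\varphi g\|_{L^p(D_i)}\le c_0\,\bigl(1+h_{p,D_i}(f)^{-1}\bigr)\Bigl(1+\tfrac{\kappa_i^p}{\delta_i^2}\Bigr)\,\bigl\||V_\varphi f|-|V_\varphi g|\bigr\|_{\mathcal{D}_{p,q}^{1,6}(D_i)},
\]
with $\delta_i=\tilde\delta_{D_i}(V_\varphi f)$ and $\kappa_i=\|V_\varphi f\|_{L^p(D_i)}/\|V_\varphi f\|_{L^\infty(D_i)}$ precisely as in the statement.

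I would then bound every cell coefficient by its maximum over $i$, namely $B$, and sum over $i=1,\dots,s$, getting
\[
\sum_{i=1}^{s}\inf_{\alpha_i\in\mathbb{R}}\|V_\varphi f-e^{\i\alpha_i}V_\varphi g\|_{L^p(D_i)}\le c_0\,B\sum_{i=1}^{s}\bigl\||V_\varphi f|-|V_\varphi g|\bigr\|_{\mathcal{D}_{p,q}^{1,6}(D_i)}.
\]
It then remains to recombine the local error norms into the single norm $\|\,|V_\varphi f|-|V_\varphi g|\,\|_{\mathcal{D}_{p,q}^{1,6}(D)}$. Since the $D_i$ are pairwise disjoint and exhaust $D$ up to a null set, every one of the four ingredients of the norm in Definition \ref{def:Dnorms} is additive in the appropriate power over the partition: writing $G=|V_\varphi f|-|V_\varphi g|$ one has $\sum_i\|G\|_{L^p(D_i)}^p=\|G\|_{L^p(D)}^p$, $\sum_i\|\nabla G\|_{L^p(D_i)}^p=\|\nabla G\|_{L^p(D)}^p$, and the analogous identities with exponent $q$ for the two $L^q$-terms. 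Combining these with the elementary bound $\sum_{i\le s}a_i\le s^{1-1/r}(\sum_{i\le s}a_i^r)^{1/r}$ applied to each of the four pieces gives $\sum_i\|G\|_{\mathcal{D}_{p,q}^{1,6}(D_i)}\le s\,\|G\|_{\mathcal{D}_{p,q}^{1,6}(D)}$; if a constant free of the number of cells is wanted, one instead aggregates the cell errors in $\ell^q$ (equivalently, states the conclusion with the $\mathcal{D}$-norms raised to the power $p$), in which case the recombination is lossless on the $L^p$- and gradient pieces and costs only the usual H\"older factor on the $L^q$-pieces. Combining the last two displays finishes the argument, with $c$ of the claimed form.

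I do not expect a genuine obstacle here: the corollary is a ``partition-and-patch'' consequence of the preceding domain-free stability theorem, and the real work --- carrying the three explicit quantities $h_{p,D_i}(f)$, $\kappa_i$, $\delta_i$ through the estimate --- has already been done once and for all in that theorem. The only two points deserving the short verifications above are that the centering hypothesis is a property of $V_\varphi f$ itself and therefore survives restriction to any subdomain, and that the passage from the family $\{\|\cdot\|_{\mathcal{D}_{p,q}^{1,6}(D_i)}\}_i$ to $\|\cdot\|_{\mathcal{D}_{p,q}^{1,6}(D)}$ is merely additivity of the $L^p$/$L^q$ integrals over the partition (clean in powered form, up to a H\"older factor otherwise).
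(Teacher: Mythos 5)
Your proof is exactly the route the paper takes: the corollary is stated as an immediate consequence of the unlabeled theorem directly preceding it (the domain-free version built on Remark \ref{rem:balance}, Proposition \ref{prop:logdermain} and Theorem \ref{thm:cheegerpoinc}), applied on each cell $D_i$ with the coefficient bounded by $B$ and the local estimates summed, and the paper offers no further argument. Your scruple about recombining the local norms is warranted and in fact sharper than the original text: summing either the cell norms or the crude bounds $\||V_\varphi f|-|V_\varphi g|\|_{\mathcal{D}_{p,q}^{1,6}(D_i)}\le\||V_\varphi f|-|V_\varphi g|\|_{\mathcal{D}_{p,q}^{1,6}(D)}$ inevitably produces a factor growing with the number of cells $s$ (or requires the powered/$\ell^q$ aggregation you describe), which the paper's claim that $c$ depends ``only on $p,q$'' silently suppresses.
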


\section*{Acknowledgements}The authors would like to thank Rima Al-Aifari, Ingrid Daubechies, Charly Gr\"ochenig, Jose-Luis Romero, Stefan Steinerberger and Rachel Yin for inspiring discussions. This work was partly supported by the austrian science fund (FWF) under grant P 30148-N32.

\appendix
\section{The STFT does Phase Retrieval}\label{app:gabor}
In this section we present two remarkable properties of the Gabor transform.
First we show that by multiplication with a function $\eta$ (which is independet from the signal $f$) 
the Gabor transform $V_\varphi f$ becomes an entire function (compare to \cite[Proposition 3.4.1]{grochenig}).

\begin{theorem}
 Let $z:=x+iy\in\mathbb{C}$ and let $\eta(z):=e^{\pi\left(\frac{\abs{z}^2}{2}-\i xy\right)}$.
 Then for every $f\in\mathcal{S}'(\realr)$ the function $z\mapsto \eta(z)\cdot V_{\varphi}f(x,-y)$ is an entire function.
\end{theorem}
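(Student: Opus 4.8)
The plan is to reduce the claim to a statement about a holomorphic family of Schwartz functions, and then to compose with $f$. First I would unwind the pairing: since $\varphi$ is real-valued, Definition~\ref{def:gabor} gives $V_\varphi f(x,-y)=\big(f,\varphi(\cdot-x)e^{2\pi\i y\cdot}\big)_{\mathcal{S}'(\mathbb{R})\times\mathcal{S}(\mathbb{R})}$, and since the pairing is bilinear the scalar $\eta(z)$ may be pulled inside the test function. Writing $z=x+\i y$, a short computation with the exponents,
\begin{equation}
\pi\Big(\tfrac{|z|^2}{2}-\i xy\Big)-\pi(t-x)^2+2\pi\i yt=-\pi t^2+2\pi tz-\tfrac{\pi}{2}z^2=-\pi(t-z)^2+\tfrac{\pi}{2}z^2,
\end{equation}
shows that $\eta(z)\,\varphi(t-x)e^{2\pi\i yt}=e^{\frac{\pi}{2}z^2}e^{-\pi(t-z)^2}=:\Psi_z(t)$, so that $\eta(z)\,V_\varphi f(x,-y)=(f,\Psi_z)$. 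It therefore suffices to show that $z\mapsto(f,\Psi_z)$ is entire.

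Next I would show that $z\mapsto\Psi_z$ is a continuous map from $\mathbb{C}$ into the Fr\'echet space $\mathcal{S}(\mathbb{R})$. The point is that $\big|e^{-\pi(t-z)^2}\big|=e^{\pi(\operatorname{Im}z)^2}e^{-\pi(t-\operatorname{Re}z)^2}$ is a genuine Gaussian in $t$, so that for any multi-index $(a,b)$ the seminorm $\sup_{t\in\mathbb{R}}\big|t^a\partial_t^b\Psi_z(t)\big|$ is finite and bounded locally uniformly in $z$, since $\partial_t^b$ only produces polynomial factors in $t$ and $z$ against this Gaussian; the analogous estimate applied to $\Psi_{z'}-\Psi_z$ yields continuity. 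With this in hand, Morera's theorem finishes the job: for any closed triangle $\Delta$ the $\mathcal{S}(\mathbb{R})$-valued Riemann integral $\oint_{\partial\Delta}\Psi_z\,dz$ exists and vanishes (it is zero pointwise in $t$ by Cauchy's theorem, hence zero in $\mathcal{S}(\mathbb{R})$), and since $f$ is linear and continuous it commutes with this integral, whence $\oint_{\partial\Delta}(f,\Psi_z)\,dz=\big(f,\oint_{\partial\Delta}\Psi_z\,dz\big)=0$; together with continuity of $z\mapsto(f,\Psi_z)$ this yields holomorphy on all of $\mathbb{C}$. Alternatively one can exhibit $\mathcal{S}(\mathbb{R})$-valued holomorphy directly by verifying that $h^{-1}(\Psi_{z+h}-\Psi_z)\to(2\pi(\cdot)-\pi z)\Psi_z$ in every Schwartz seminorm, via Taylor's theorem with remainder and the same uniform Gaussian domination on compact sets of $z$.

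The main obstacle is this middle step, namely making the $\mathcal{S}(\mathbb{R})$-valued regularity precise: the bookkeeping of the Schwartz seminorms of $\Psi_z$ and the justification that the tempered distribution $f$ may be interchanged with the contour integral. Everything else is elementary, and I would remark that for $f\in L^2(\mathbb{R})$ (or $f\in\mathcal{S}(\mathbb{R})$) the statement is classical and follows immediately by differentiating $\int_{\mathbb{R}}f(t)e^{-\pi t^2}e^{2\pi tz}\,dt$ under the integral sign; the content here is only the extension to an arbitrary $f\in\mathcal{S}'(\mathbb{R})$.
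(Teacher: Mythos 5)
Your argument is correct, but it takes a genuinely different route from the paper's proof. The paper invokes the structure theorem for tempered distributions, writing $f=\frac{d^k}{dt^k}h$ with $h$ continuous of polynomial growth, so that the pairing becomes a genuine integral $(-1)^k\int_{\mathbb{R}}h(t)\,\eta(z)\,p(t-z)\,g(t,z)\,dt$ (with $g(t,z)=e^{-\pi(t-x)^2}e^{2\pi\i yt}$ and $p$ a polynomial produced by $\partial_t g=-2\pi(t-z)g$), and then applies the standard criterion for holomorphy of parameter-dependent integrals, dominating the integrand locally uniformly in $z$ by an integrable function. You instead keep $f$ abstract: you use the same algebraic identity $\eta(z)\varphi(t-x)e^{2\pi\i yt}=e^{\frac{\pi}{2}z^2}e^{-\pi(t-z)^2}=\Psi_z(t)$ (your exponent computation and the formula $\partial_z\Psi_z=(2\pi(\cdot)-\pi z)\Psi_z$ both check out), show that $z\mapsto\Psi_z$ is continuous, indeed holomorphic, as an $\mathcal{S}(\mathbb{R})$-valued map via the locally uniform Gaussian bound $\abs{e^{-\pi(t-z)^2}}=e^{\pi(\operatorname{Im}z)^2}e^{-\pi(t-\operatorname{Re}z)^2}$, and conclude by Morera after interchanging the continuous functional $f$ with the $\mathcal{S}(\mathbb{R})$-valued contour integral (legitimate, since continuous linear maps commute with Riemann integrals in a complete locally convex space, and evaluation at each $t$ shows the integral vanishes by Cauchy). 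Both arguments are sound: the paper's needs the structure theorem but only scalar integration theory, while yours avoids the structure theorem at the cost of vector-valued integration and Morera in a Fr\'echet space, and as a by-product yields the slightly stronger statement that $z\mapsto\Psi_z$ is entire as an $\mathcal{S}(\mathbb{R})$-valued map, so the conclusion is uniform over equicontinuous sets of distributions. The seminorm bookkeeping you flag as the main obstacle is routine with the stated Gaussian domination, so there is no genuine gap.
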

\begin{proof}
 For fixed $f$ define $F(z):=\eta(z)\cdot V_{\varphi}f(x,-y)$.
 Since any tempered distribution is a derivative of finite order of a continuous function of polynomial growth (\cite[Theorem 8.3.1.]{friedlanderjoshi}) we can find a function
 $h$ with these properties such that
 \begin{eqnarray*}
  F(z)&=& \eta(z) \cdot \left( \frac{d^k}{dt^k}h(\cdot), e^{-\pi(\cdot -x)^2} e^{2\pi \i y \cdot}\right)_{\mathcal{S}'(\realr) \times \mathcal{S}(\realr)}\\
   &=& (-1)^k \eta(z) \intR h(t)  \frac{d^k}{dt^k} \left( e^{-\pi(t-x)^2} e^{2\pi \i yt}\right) dt
 \end{eqnarray*}
 for some $k\in\mathbb{N}$.
With $g(t,z):= e^{-\pi(t-x)^2} e^{2\pi \i yt}$ we get
$$
\frac{\partial}{\partial t} g (t,z)=-2\pi (t-z) g(t,z)
$$
A simple induction argument yields that any higher derivative of $g$ w.r.t. $t$ is of the form $p(t-z)\cdot g(t,z)$ where $p$ is a polynomial.
Since for any $t$ the function $z\mapsto \eta(z) g(t,z)$ is holomorphic so is the integrand of
$$
F(z)=\intR (-1)^k h(t) \eta(z) p(t-z) g(t,z) dt.
$$
To conclude that $F$ is an entire function it suffices to show that for any bounded disc $D\subset \mathbb{C}$ centered at the origin there is an integrable function $u_D$ such that
the integrand is bounded by $u_D$ uniformly for all $z\in D$ (see \cite[IV Theorem 5.8]{elstrodt}).\\
Let $r$ be the radius of such a disc $D$ then for any $z=x+\i y \in D$ the estimate
$$
e^{-\pi(t-x)^2} \leq g_D(t):=
\begin{cases}
 e^{-\frac{\pi}{4}t^2}, &\abs{t}\geq 2r\\
 1 &\text{otherwise}
\end{cases}
$$
holds.
Further there is a polynomial $\widetilde{p}$ such that $p(t-z)\leq \tilde{p}(t)$ for all $z\in D$.
Therefore
$$\abs{h(t) \eta(z) p(t-z) g(t,z)} \leq \sup_{z\in D} \abs{\eta(z)} h(t) \widetilde{p}(t) g_D(t)=:u_D(t)$$
Since $g_D$ decays exponentially and $h$ and $\tilde{p}$ each have polynomial growth we get the desired result.
\end{proof}
The following theorem states that the Fourier transform of the spectrogram turns out to be the product of the ambiguity functions 
of the window $g$ and the signal $f$(see \cite{cohen}, \cite{claasen}).
This result allows us to write down a reconstruction formula for our problem.

We will present a proof of the statement for the case where $f$ is a tempered distribution.
To that end we first of all have to give a meaningful definition of $\mathcal{A}f$ for $f$ a tempered distribution.\\
For any $F:\realr^2\rightarrow \mathbb{C}$ we define two linear transforms by
\begin{equation}\label{def:lintrafos}
 TF(x,y):=F(x,x-y) \quad \text{and} \quad SF(x,y):=F(y,x).
\end{equation}
Clearly $T^{-1}=T$ and $S^{-1}=S$ hold.
For $F\in \mathcal{S}'(\realr^2)$ let $TF\in \mathcal{S}'(\realr^2)$ be defined by
$$
\left(TF,\Theta\right)_{\mathcal{S}'(\realr^2)\times\mathcal{S}(\realr^2)} := \left(F,T\Theta\right)_{\mathcal{S}'(\realr^2)\times\mathcal{S}(\realr^2)}
$$
and $SF\in \mathcal{S}(\realr^2)$ analogously.\\
Note that this notation makes sense: If $F$ is a regular tempered distribution we have
$$
\left(TF,\Theta\right)= \intR\intR F(x,y)T\Theta(x,y)dx dy = \intR\intR TF(x,y)\Theta(x,y) dx dy
$$
since $T^{-1}=T$ and $T$ describes a linear coordinate transform with jacobian determinant $-1$.

For $f\in \mathcal{S}'(\realr)$ we can define a tempered distribution by $\mathcal{A}f:=S\circ \mathcal{F}_1 \circ T \left(f\otimes \bar{f}\right)$, 
where $\mathcal{F}_1$ denotes the Fourier transform w.r.t. the first variable of a bivariate tempered distribution, i.e.
$$
\left(\mathcal{F}_1 F, \Theta \right)_{\mathcal{S}'(\realr^2)\times \mathcal{S}(\realr^2)} = \left(F, (x,y)\mapsto \intR \Theta(t,y) e^{2\pi \i x t} dt \right)_{\mathcal{S}'(\realr^2)\times \mathcal{S}(\realr^2)}
$$
We call $\mathcal{A}f$ the ambiguity function of $f$.
For $f\in \mathcal{S}(\realr)$ the calculation
\begin{equation}\label{ambiguitycalc}
\mathcal{A}f(x,y)=\left[\mathcal{F}_1\circ T(f\otimes \bar{f})\right](y,x)=\intR f(t)\overline{f(t-x)} e^{-2\pi \i yt} dt
\end{equation}
shows that $\mathcal{A}f$ is indeed an extension of the definition of the ambiguity function (see Theorem \ref{thm:uniqueness}).\\

In the following $\mathcal{F}$ will denote the Fourier transform of bivariate functions. By duality $\mathcal{F}$ can be defined on tempered distributions:
$$
\left(\mathcal{F} F, \Theta \right)_{\mathcal{S}'(\realr^2)\times \mathcal{S}(\realr^2)} = \left(F, (x,y)\mapsto \intR\intR \Theta(s,t) e^{2\pi \i (xs+yt)} ds~dt \right)_{\mathcal{S}'(\realr^2)\times \mathcal{S}(\realr^2)}
$$

\begin{theorem}\label{thm:factorizationspectrogram}
 Let $f\in \mathcal{S}'(\realr)$ and $g\in \mathcal{S}(\realr)$ then $\mathcal{F} \abs{V_g f}^2=S\mathcal{A}f\cdot S\mathcal{A}g$, i.e.
 \begin{equation}\label{eq:factorizationspectrogram}
 \left( \mathcal{F} \abs{V_g f}^2 , \Theta \right)_{\mathcal{S}'(\realr^2)\times \mathcal{S}(\realr^2)} =  
 \left( S \mathcal{A} f, S\mathcal{A} g\cdot \Theta \right)_{\mathcal{S}'(\realr^2)\times \mathcal{S}(\realr^2)}
 \end{equation}
 holds for all $\Theta \in \mathcal{S}(\realr^2)$.
 \end{theorem}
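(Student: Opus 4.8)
The plan is to prove the identity first for Schwartz signals and then bootstrap to general tempered distributions by writing both sides of \eqref{eq:factorizationspectrogram} as a pairing of the tensor $f\otimes\bar f\in\mathcal{S}'(\mathbb{R}^2)$ (where $\bar f\in\mathcal{S}'(\mathbb{R})$ is defined by $(\bar f,h):=\overline{(f,\bar h)}$) against explicit Schwartz test functions. \emph{Step 1 (Schwartz case).} For $f\in\mathcal{S}(\mathbb{R})$ one has $|V_gf(x,y)|^2=\iint f(t)\overline{f(s)}\,\overline{g(t-x)}g(s-x)\,e^{-2\pi\i y(t-s)}\,dt\,ds$, and pairing with $\Theta\in\mathcal{S}(\mathbb{R}^2)$ amounts to integrating $|V_gf|^2$ against $\check\Theta(x,y):=\iint\Theta(\xi,\eta)e^{2\pi\i(x\xi+y\eta)}\,d\xi\,d\eta$. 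I would apply Fubini (all factors are absolutely integrable) and perform the $y$-integral by Fourier inversion, which collapses the $\eta$-variable to $\eta=t-s$; after the substitution $x=s-u$ the surviving integral unwinds, upon reinserting the definitions $\mathcal{A}h=S\mathcal{F}_1T(h\otimes\bar h)$ and $\mathcal{A}g(a,b)=\int g(u)\overline{g(u-a)}e^{-2\pi\i ub}\,du$, to exactly $(S\mathcal{A}f,\,S\mathcal{A}g\cdot\Theta)$. This is the classical Cohen--Claasen computation.

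\emph{Step 2 (reduction to an identity of test functions).} For $f\in\mathcal{S}'(\mathbb{R})$ one uses that, since $g\in\mathcal{S}(\mathbb{R})$, $V_gf$ is a $C^\infty$ function of polynomial growth (see \cite{grochenig}), so $|V_gf|^2=V_gf\cdot\overline{V_gf}$ is a tempered distribution and $\mathcal{F}|V_gf|^2$ is well defined. Writing $\overline{V_gf(x,y)}=(\bar f,g(\cdot-x)e^{2\pi\i y\cdot})$ and using multiplicativity of the tensor product gives $|V_gf(x,y)|^2=(f\otimes\bar f,\,G_{x,y})$ with $G_{x,y}(t,s):=\overline{g(t-x)}g(s-x)e^{-2\pi\i y(t-s)}$. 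The four-variable function $(x,y,t,s)\mapsto G_{x,y}(t,s)\check\Theta(x,y)$ is Schwartz on $\mathbb{R}^4$ — the $g$-factors give decay in $t-x$ and $s-x$, whereupon the decay of $\check\Theta$ in $(x,y)$ yields decay in all four variables — so the distributional Fubini theorem (cf. \cite{friedlanderjoshi}) permits pulling the $(x,y)$-integration inside the pairing:
$$
(\mathcal{F}|V_gf|^2,\Theta)=\bigl(f\otimes\bar f,\ K^L_\Theta\bigr),\qquad K^L_\Theta:=\iint G_{x,y}\,\check\Theta(x,y)\,dx\,dy\ \in\ \mathcal{S}(\mathbb{R}^2).
$$
On the other side $S\mathcal{A}f=\mathcal{F}_1T(f\otimes\bar f)$ (as $SS=\mathrm{id}$), and passing to transposes of the continuous operators $T$ and $\mathcal{F}_1$ on $\mathcal{S}'(\mathbb{R}^2)$ gives $(S\mathcal{A}f,\,S\mathcal{A}g\cdot\Theta)=(f\otimes\bar f,\,K^R_\Theta)$ with $K^R_\Theta:=T^{\top}\mathcal{F}_1^{\top}(S\mathcal{A}g\cdot\Theta)\in\mathcal{S}(\mathbb{R}^2)$; here $S\mathcal{A}g\in\mathcal{S}(\mathbb{R}^2)$ because $g$ is Schwartz and $T,S,\mathcal{F}_1$ preserve $\mathcal{S}(\mathbb{R}^2)$. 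Both identities hold for every $f\in\mathcal{S}'(\mathbb{R})$.

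\emph{Step 3 (conclusion via polarisation).} Step 1 yields $(f\otimes\bar f,K^L_\Theta)=(f\otimes\bar f,K^R_\Theta)$ for every $f\in\mathcal{S}(\mathbb{R})$. Substituting $f=f_1+\lambda f_2$ and letting $\lambda$ run through $\{1,\i,-1,-\i\}$, a polarisation identity extracts $(f_1\otimes\bar f_2,\,K^L_\Theta-K^R_\Theta)=0$ for all $f_1,f_2\in\mathcal{S}(\mathbb{R})$; since $\bar f_2$ ranges over all of $\mathcal{S}(\mathbb{R})$ and the test products $\phi\otimes\psi$ separate distributions, $K^L_\Theta=K^R_\Theta$ as Schwartz functions. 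Therefore $(\mathcal{F}|V_gf|^2,\Theta)=(f\otimes\bar f,K^L_\Theta)=(f\otimes\bar f,K^R_\Theta)=(S\mathcal{A}f,S\mathcal{A}g\cdot\Theta)$ for every $f\in\mathcal{S}'(\mathbb{R})$ and every $\Theta\in\mathcal{S}(\mathbb{R}^2)$, which is the assertion. (Alternatively, one can skip polarisation and instead compute $K^L_\Theta$ and $K^R_\Theta$ explicitly, checking their equality by a change of variables together with the symmetry $\overline{\mathcal{A}g(a,b)}=e^{2\pi\i ab}\mathcal{A}g(-a,-b)$.)

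\emph{Main obstacle.} The conceptual skeleton is short, so the real effort lies in the routine-but-delicate bookkeeping of Step 2 — verifying that $K^L_\Theta$ is genuinely a Schwartz function and that the interchange of the distributional pairing with the $(x,y)$-integration is licit (which is where one invokes joint Schwartz-ness of the four-variable integrand and the kernel/Fubini theorem for tempered distributions) — and, in Step 1, keeping the Fourier-sign conventions in the definitions of $V_g$, $T$, $S$ and $\mathcal{F}_1$ perfectly aligned so that the computation lands on $S\mathcal{A}f\cdot S\mathcal{A}g$ exactly.
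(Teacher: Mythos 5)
Your proposal is correct in substance but takes a genuinely different route from the paper's proof. The paper works with $f\in\mathcal{S}'(\mathbb{R})$ throughout: it reduces to $\Theta\in C_c^\infty(\mathbb{R}^2)$ and, at first, to compactly supported windows $g$, justifies the interchange of the $(x,y)$-integration with the pairing against $f\otimes\bar f$ by an explicit Riemann-sum argument (convergence of the sums $J_n$ to $J$ in the Schwartz topology via an equicontinuity estimate that uses the compact supports), then computes $J$, $TJ$ and $\mathcal{F}_1\circ TJ$ to identify the kernel with $S\mathcal{A}g\cdot\mathcal{F}\Theta$, and finally removes the compact-support assumption on $g$ by a density/dominated-convergence argument combined with continuity of $g\mapsto\mathcal{A}g$. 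You instead prove the identity for Schwartz $f$ by the classical Fubini computation, represent both sides for arbitrary tempered $f$ as pairings of $f\otimes\bar f$ against fixed Schwartz kernels $K^L_\Theta$ and $K^R_\Theta$ (the left side via the tensor-product/Fubini theorem for tempered distributions, whose hypothesis -- joint Schwartz-ness of the four-variable integrand -- indeed holds because $|t-s|\le|t-x|+|s-x|$ and the factors $g(t-x)$, $g(s-x)$, $\check\Theta(x,y)$ supply all needed decay; the right side directly from $\mathcal{A}f=S\circ\mathcal{F}_1\circ T(f\otimes\bar f)$ by transposition, using $\mathcal{A}g\in\mathcal{S}(\mathbb{R}^2)$), and then conclude $K^L_\Theta=K^R_\Theta$ from the Schwartz case by sesquilinear polarisation and density of tensor products. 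Your route buys a shorter argument with no Riemann-sum bookkeeping, no reduction to compactly supported $\Theta$, and no separate approximation step in $g$; the paper's route buys self-containedness, since it never invokes the kernel/Fubini theorem as a black box. One point to make explicit when you write out Step 1: with the conventions of Definition 2.1 and Theorem 2.3, the Fubini computation naturally produces a conjugate of $\mathcal{A}g$ (with reflected arguments) on the window factor, so to land on the stated form $S\mathcal{A}f\cdot S\mathcal{A}g$ you must invoke the symmetry $\overline{\mathcal{A}g(a,b)}=e^{2\pi i ab}\,\mathcal{A}g(-a,-b)$ that you already mention, or track the paper's sign conventions for the distributional $\mathcal{F}_1$ and $\mathcal{F}$; the same bookkeeping issue is present (and glossed over) in the paper's own proof at the identification $J(s,t)=\left[\mathcal{F}_2\Theta(\cdot,s-t)\ast h_{s-t}(\cdot)\right](s)$ with $h_\tau(\cdot)=g(\cdot)\bar g(\cdot-\tau)$, so this is a conventions matter rather than a defect of your strategy.
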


 \begin{proof}
First note that $V_g f$ has at most polynomial growth (see \cite[Theorem 11.2.3.]{grochenig}). So $\abs{V_g f}^2$ also has polynomial growth, 
therefore is in $\mathcal{S}'(\realr^2)$ and its Fourier transform is well defined.\\
 To simplify notation we will use duality brackets without explicitly stating in which spaces we take duality.
 From the context it will be clear if we mean duality either in $\mathcal{S}'(\realr)\times \mathcal{S}(\realr)$ or in $\mathcal{S}'(\realr^2)\times \mathcal{S}(\realr^2)$.\\
 Since compactly supported functions are dense in $\mathcal{S}(\realr^2)$ and $\mathcal{F}:\mathcal{S}(\realr^2)\rightarrow \mathcal{S}(\realr^2)$ is unitary it suffices to show
 $$
 \left(\mathcal{F}\abs{V_g f}^2, \mathcal{F}\Theta \right)=
 \left( S\mathcal{A}f, S\mathcal{A} g\cdot \mathcal{F}\Theta \right)
 $$
 for all $\Theta \in C_c^{\infty}(\realr^2)$, where we denote by $C_c^{\infty}(\realr^2)$ the space of infinitely often differentiable functions on $\realr^2$ with compact support.

 For the moment let us assume that also $g$ is compactly supported.
The spectrogram can be written as
\begin{eqnarray*}
 \abs{V_g f(x,y)}^2 &=& \left(f,e^{-2\pi \i y \cdot}\bar{g}(\cdot -x)\right)\cdot \overline{\left(f,e^{-2\pi \i y\cdot}\bar{g}(\cdot-x)\right)}\\
		    &=& \left(f\otimes \bar{f}, (s,t)\mapsto e^{-2\pi \i ys}\bar{g}(s-x) e^{2\pi \i y t}g(t-x)\right)
\end{eqnarray*}
We obtain
\begin{eqnarray}\label{eq:intspecphi}
 \left( \mathcal{F}\left(\abs{V_g f}^2\right), \mathcal{F}\Theta\right) &=& \intR\intR \abs{V_g f}^2 \Theta(x,y) dx dy \\
 &=& \intR\intR \Theta(x,y) \left( f\otimes \bar{f}, (s,t)\mapsto e^{-2\pi \i ys}\bar{g}(s-x) e^{2\pi \i y t}g(t-x)\right) dx dy. \nonumber
\end{eqnarray}

What we want to do next is to interchange integration and evaluation by the distribution $f\otimes \bar{f}$ in the equation above.
To this end we approximate the integral by a sequence of Riemann sums and use the linearity of $f\otimes\bar{f}$.

Let functions $J_n$ for $n\in\mathbb{N}$ and $J$ be defined by
\begin{eqnarray*}
 J_n(s,t)&:=& n^{-2} \sum_{k,l\in\mathbb{Z}} \Theta\left(\frac{k}{n},\frac{l}{n}\right) 
 e^{-2\pi \i \frac{l}{n}s}\bar{g}\left(s-\frac{k}{n}\right) e^{2\pi \i \frac{l}{n} t} g\left(t-\frac{k}{n}\right),\\
 J(s,t)&:=& \intR \intR \Theta(x,y)e^{-2\pi \i y s} \bar{g}(s-x)e^{2\pi \i y t} g(t-x) dx dy.
 \end{eqnarray*}
 For $M>0$ such that $\suppp \Theta \subset [-M/2,M/2]^2$ and $\suppp g\subset [-M/2,M/2]$ clearly both 
 $\suppp J_n$ and $\suppp J$ are subsets of $[-M,M]^2$.
 Furthermore the indices $k,l$ in the definition of $J_n$ will in fact only run over the finite set $\mathbb{Z}\cap [-Mn,Mn]$.
 
Note that \eqref{eq:intspecphi} is an integral of a continuous and compactly supported function and therefore 
\begin{equation}
 \left( \mathcal{F}\abs{V_g f}^2, \mathcal{F}\Theta \right) = \lim_{n\rightarrow \infty} \left(f\otimes \bar{f}, J_n\right)
\end{equation}
holds. Clearly $J_n$ converges to $J$ pointwise. To interchange taking the limit and evaluation by $f\otimes \bar{f}$ we will show that 
$J_n$ converges to $J$ w.r.t. Schwartz space topology, i.e.
$$
\sup_{s,t} \abs{s^{\beta_1}t^{\beta_2} \frac{\partial^{\alpha_1+\alpha_2}}{\partial s^{\alpha_1}\partial t^{\alpha_2}} \left(J_n(s,t)-J(s,t)\right)}
$$
goes to zero for any $\alpha_1,\alpha_2,\beta_1,\beta_2\in \mathbb{N}\cup\{0\}$.\\
The polynomial factor can be omitted as there is a mutual compact support of $(J_n)_{n\in\mathbb{N}}$ and $J$. 
Using $\mathfrak{D}:=\frac{\partial^{\alpha_1+\alpha_2}}{\partial s^{\alpha_1} \partial t^{\alpha_2}}$ let $\Psi$ be defined by
$$
\Psi(x,y,s,t):=\mathfrak{D} \left( e^{-2\pi \i y s}\bar{g}(s-x)e^{2\pi \i y t} g(t-x)\right).
$$
Then obviously 
\begin{eqnarray*}
 \mathfrak{D}J(s,t)&=&\intR\intR \Theta(x,y)\Psi(x,y,s,t) dxdy,\\
 \mathfrak{D}J_n(s,t)&=&n^{-2} \sum_{k,l} \Theta\left(\frac{k}{n},\frac{l}{n}\right) \Psi\left(\frac{k}{n},\frac{l}{n},s,t\right).
\end{eqnarray*}
Again for any fixed $(s,t)$ the values $\mathfrak{D}J_n(s,t)$ can be interpreted as Riemann approximations for the integral $\mathfrak{D}J(s,t)$ and we can infer pointwise convergence.
By showing that $\left(\mathfrak{D}J_n\right)_{n\in\mathbb{N}}$ is equicontinuous on the compact set $[-M,M]^2$ we can conclude that $\mathfrak{D}J_n\rightarrow \mathfrak{D}J$ uniformly.
Since $\Psi$ is a smooth function there exists a $c>0$ such that
$$
\abs{\Psi(x,y,s,t)-\Psi(x,y,s',t')}\leq c \abs{(s,t)-(s',t')},
$$
for all $(x,y,s,t), (x,y,s',t') \in [-\frac{M}{2},\frac{M}{2}]^2\times [-M,M]^2$.
Equicontinuity holds by the estimate
\begin{eqnarray*}
 \abs{\mathfrak{D}J_n(s,t)-\mathfrak{D}J_n(s',t')} &\leq& n^{-2} \sum_{k,l} \abs{\Theta\left(\frac{k}{n},\frac{l}{n}\right)} 
 \abs{ \Psi\left(\frac{k}{n},\frac{l}{n},s,t\right) - \Psi\left(\frac{k}{n},\frac{l}{n},s',t'\right)}\\
 &\leq& n^{-2}\sum_{k,l} \norm{\Theta}_{L^\infty(\realr^2)} \cdot c \cdot \abs{(s,t)-(s',t')} \\
 &\leq& \norm{\Theta}_{L^\infty(\realr^2)} M^2 c \abs{(s,t)-(s',t')}
\end{eqnarray*}
where we used the fact that for every $n$ the indices run over the finite set $\abs{k},\abs{l} \leq \frac{Mn}{2}$.\\

Defining $h_{\tau}(\cdot):=g(\cdot)\bar{g}(\cdot-\tau)$ we obtain
$$
J(s,t)=\left[ \mathcal{F}_2\Theta(\cdot,s-t) \ast h_{s-t}(\cdot)\right](s)
$$
and therefore
$$
TJ(s,t):=J(s,s-t)=\left[\mathcal{F}_2\Theta(\cdot,t)\ast h_t(\cdot)\right](s).
$$
Fourier transform in the first variable gives
$$
\mathcal{F}_1\circ T J (s,t)= \mathcal{F}\Theta(s,t) \cdot \widehat{h_t}(s).
$$
Now $\widehat{h_t}$ turns out to be the ambiguity function of $g$:
$$
\widehat{h_t}(s)= \mathcal{F}\left(g(\cdot)\bar{g}(\cdot-t)\right)(s)=\mathcal{A}g(t,s)
$$
Putting it all together we get
\begin{eqnarray*}
 \left( \mathcal{F} \abs{V_g f}^2, \mathcal{F}\Theta\right) &=& \left( f\otimes \bar{f}, J\right) = \left(T(f\otimes \bar{f}),TJ\right) = \left( \mathcal{F}_1\circ T (f\otimes \bar{f}), \mathcal{F}_1 \circ T J \right)\\ 
 &=& \left(\mathcal{F}_1\circ T (f\otimes\bar{f}), S \mathcal{A} g\cdot \mathcal{F}\Theta\right) = \left( S \mathcal{A}f, S\mathcal{A}g \cdot \mathcal{F}\Theta\right).
\end{eqnarray*}

It remains to proof that the result holds true for any Schwartz function $g$.
We will do this by a density argument:
For $g\in \mathcal{S}(\realr)$ one can find a sequence $\left(g_n\right)_{n\in\mathbb{N}}\subset \mathcal{S}(\realr)$ of compactly supported functions converging to $g$.

Since for any $f\in \mathcal{S}'(\realr)$ there exist $C>0$ and $L>0$ such that
$$
\abs{\left(f,h\right)} \leq C \sum_{\alpha,\beta \leq L} \norm{\frac{d^\alpha}{d \cdot^\alpha} \left( \cdot^{\beta} \cdot h(\cdot)\right)}_{L^\infty(\realr)} \quad \text{for all  } h\in \mathcal{S}(\realr)
$$ 
(see \cite[chapter 8.3]{friedlanderjoshi})
we can estimate
\begin{eqnarray*}
 \abs{V_{g-g_n}f(x,y)} &=& \abs{\left(f, e^{2\pi \i y \cdot} \overline{(g_n-g)}(\cdot-x)\right)}\\
    &\leq& C \sum_{\alpha,\beta\leq L} \norm{\frac{d^{\alpha}}{d\cdot^\alpha} \left(\cdot^{\beta} e^{2\pi \i y \cdot} \overline{(g_n-g)}(\cdot-x)\right)} _{L^\infty(\realr)}\\
    &=& C \sum_{\alpha,\beta\leq L} \norm{\frac{d^{\alpha}}{d \cdot^\alpha} \left((\cdot +x)^{\beta} e^{2\pi \i y \cdot} \overline{(g_n-g)}(\cdot)\right)} _{L^\infty(\realr)}\\
    &\leq& p(x,y) \cdot \max_{\alpha,\beta\leq L} \norm{ \frac{d^\alpha}{d \cdot^\alpha} \cdot^{\beta}(g_n(\cdot)-g(\cdot))}_{L^\infty(\realr)}
\end{eqnarray*}
for some polynomial $p$.\\
In particular for any compact $K\subset \realr^2$ there is a constant $C_K$ independet from $n$ such that the function on the right hand side of the inequality above can be bounded by $C_K$ for all 
$(x,y)\in K$.

The STFT is continuous therefore also the function 
$$
\abs{V_{g_n}f(x,y)} \leq \abs{V_{g_n-g}f (x,y)} + \abs{V_g f(x,y)}
$$
can be bounded by a constant independent from $n$ on any compact $K$.
Obviously $\abs{V_{g_n}f}^2$ converges to $\abs{V_g f}^2$ pointwise.
By dominated convergence we get
\begin{eqnarray*}
 \lim_{n\rightarrow\infty} \left( \mathcal{F}\abs{V_{g_n} f}^2, \mathcal{F}\Theta\right) &=& 
 \lim_{n\rightarrow \infty} \intR\intR \abs{V_{g_n} f(x,y)}^2 \Theta(x,y) dxdy\\
 &=& \intR\intR \abs{V_{g} f(x,y)}^2 \Theta(x,y) dxdy = 
 \left( \mathcal{F}\abs{V_g f}^2, \mathcal{F}\Theta\right)
\end{eqnarray*}

Since $g\mapsto g\otimes\bar{g}$ is continuous as mapping from $\mathcal{S}(\realr)$ to $\mathcal{S}'(\realr^2)$ and so are $S, \mathcal{F}_1$ and $T$ on $\mathcal{S}(\realr^2)$
so is their composition $\mathcal{A}g=S\circ\mathcal{F}_1\circ T (f\otimes \bar{f})$ which implies convergence of $\left(\mathcal{A}_{g_n}\right)_{n\in\mathbb{N}}$ to $\mathcal{A}_g$. 
Multiplication by a fixed Schwartz function is again a continuos operator on $\mathcal{S}(\realr^2)$ therefore
\begin{equation}
 \lim_{n\rightarrow\infty} \left(S\mathcal{A}f,S\mathcal{A}g_n \cdot \mathcal{F}\Theta\right)= \left(S\mathcal{A}f,S\mathcal{A}g \cdot \mathcal{F}\Theta\right)
\end{equation}
\end{proof}

As a consequence of Theorem \ref{thm:factorizationspectrogram} we obtain that a window function $g$ whose ambiguity function has no zeros allows phase retrieval:

\begin{theorem}\label{thm:ambiguitypr}
 Let $g\in \mathcal{S}(\realr)$ be such that its ambiguity function $\mathcal{A}g$ has no zeros.
 Then for any $f,h\in\mathcal{S}'(\realr)$ with $\abs{V_g f}=\abs{V_g h}$ there exists $\alpha\in\realr$ such that $h=e^{\i\alpha}f$.
 If $f\in \mathcal{S}(\realr)$ then
 \begin{equation}\label{spectrogramreconstformula}
  f(t)\cdot \overline{f(0)} = \mathcal{F}_2^{-1}\left(S\mathcal{F}\abs{V_g f}^2 / \mathcal{A}g\right)(t,t), \quad t\in\realr
 \end{equation}
holds true, where $S$ is defined by \eqref{def:lintrafos} and $\mathcal{F}_2^{-1}$ denotes the inverse Fourier transform w.r.t. the second variable.
\end{theorem}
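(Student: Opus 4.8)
The plan is to deduce everything from the spectrogram factorization of Theorem~\ref{thm:factorizationspectrogram}, namely $\mathcal{F}\abs{V_gf}^2 = S\mathcal{A}f\cdot S\mathcal{A}g$, which converts the phaseless hypothesis into an equality of ambiguity functions, and then to extract $f$ itself from the tensor $f\otimes\bar f$.

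First I would reduce to $\mathcal{A}f=\mathcal{A}h$. Since $V_gf$ and $V_gh$ are continuous functions of polynomial growth, the hypothesis $\abs{V_gf}=\abs{V_gh}$ gives $\abs{V_gf}^2=\abs{V_gh}^2$ in $\mathcal{S}'(\realr^2)$, hence $\mathcal{F}\abs{V_gf}^2=\mathcal{F}\abs{V_gh}^2$. By Theorem~\ref{thm:factorizationspectrogram} this means $(u,\psi\cdot\Theta)=0$ for all $\Theta\in\mathcal{S}(\realr^2)$, where $u:=S\mathcal{A}f-S\mathcal{A}h\in\mathcal{S}'(\realr^2)$ and $\psi:=S\mathcal{A}g$ is a Schwartz function (because $g\in\mathcal{S}(\realr)$) which vanishes nowhere (because $\mathcal{A}g$ does). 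One cannot simply divide $u$ by $\psi$, since $1/\psi$ need not be slowly increasing; instead, for $\Xi\in C_c^\infty(\realr^2)$ one has $\Xi/\psi\in C_c^\infty(\realr^2)\subset\mathcal{S}(\realr^2)$, so $(u,\Xi)=(u,\psi\cdot(\Xi/\psi))=0$, and density of $C_c^\infty(\realr^2)$ in $\mathcal{S}(\realr^2)$ together with continuity of $u$ forces $u=0$, i.e.\ $S\mathcal{A}f=S\mathcal{A}h$. Since $S$, $\mathcal{F}_1$ and $T$ are bijections of $\mathcal{S}'(\realr^2)$ and $\mathcal{A}F=S\circ\mathcal{F}_1\circ T(F\otimes\bar F)$, this yields $f\otimes\bar f=h\otimes\bar h$.

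Next I would recover $f$ up to a unimodular constant. If $f=0$, testing $h\otimes\bar h=0$ against $\phi\otimes\bar\phi$ gives $\abs{(h,\phi)}^2=0$ for all $\phi\in\mathcal{S}(\realr)$, so $h=0=e^{\i 0}f$. Otherwise fix $\phi$ with $(f,\phi)\neq0$; testing $f\otimes\bar f=h\otimes\bar h$ against $\psi\otimes\bar\phi$ and using $(F\otimes\bar F,\psi\otimes\bar\phi)=(F,\psi)\,\overline{(F,\phi)}$ gives $\overline{(f,\phi)}\,f=\overline{(h,\phi)}\,h$ in $\mathcal{S}'(\realr)$. Taking $\psi=\phi$ shows $\abs{(h,\phi)}=\abs{(f,\phi)}\neq0$, whence $h=e^{\i\alpha}f$ with $e^{\i\alpha}=\overline{(f,\phi)}/\overline{(h,\phi)}$.

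Finally, for $f\in\mathcal{S}(\realr)$ the reconstruction formula is immediate: now $V_gf\in\mathcal{S}(\realr^2)$, so all objects above are genuine Schwartz functions and the division by the nowhere-vanishing $S\mathcal{A}g$ may be performed pointwise, giving $\mathcal{A}f=(S\mathcal{F}\abs{V_gf}^2)/\mathcal{A}g$ (using $S\circ S=\mathrm{id}$ and that $S$ respects pointwise quotients). By the explicit formula \eqref{ambiguitycalc}, $\mathcal{A}f(x,\cdot)$ is the Fourier transform of $t\mapsto f(t)\overline{f(t-x)}$, hence $\mathcal{F}_2^{-1}\mathcal{A}f(x,t)=f(t)\overline{f(t-x)}$, and specializing to $x=t$ yields $\mathcal{F}_2^{-1}\mathcal{A}f(t,t)=f(t)\overline{f(0)}$, which is \eqref{spectrogramreconstformula}. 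The only genuinely delicate point in the argument is the division step: one must avoid multiplying a tempered distribution by the possibly super-polynomially growing reciprocal $1/\mathcal{A}g$, and that is exactly what the compact-support-plus-density detour circumvents; everything else is bookkeeping with the definitions of $\mathcal{A}$, $S$, $T$ and $\mathcal{F}_1$.
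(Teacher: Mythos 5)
Your proposal is correct and follows essentially the same route as the paper: reduce to $\mathcal{A}f=\mathcal{A}h$ via Theorem \ref{thm:factorizationspectrogram} and the observation that $\Theta/(S\mathcal{A}g)$ remains a compactly supported smooth test function, then test $f\otimes\bar f=h\otimes\bar h$ against tensor products $\psi\otimes\bar\phi$ to extract the unimodular factor, and finally divide pointwise and invert $\mathcal{F}_2$ for the reconstruction formula. Your explicit treatment of the degenerate case $f=0$ is a minor tidying of the paper's argument, not a different method.
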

\begin{proof}
 For $\Theta\in C_c^{\infty}(\realr^2)$ so is the function $(S\mathcal{A}g)^{-1}\cdot \Theta$.
 By Theorem \ref{thm:factorizationspectrogram} the tempered distributions $\mathcal{A}f$ and $\mathcal{A}h$ coincide on the dense subspace $C_c^{\infty}(\realr^2)$ and are therefore equal.
 For arbitrary $\phi, \psi \in \mathcal{S}(\realr)$ we get
 \begin{equation*}
  \left(\mathcal{A}f,S\circ \mathcal{F}_1 \circ T (\phi\otimes\bar{\psi})\right) = \left( f\otimes \bar{f}, \phi\otimes\bar{\psi}\right) = (f,\phi)\cdot \overline{(f,\psi)}
 \end{equation*}
 and further
 \begin{equation*}
  (f,\phi)\cdot \overline{(f,\psi)} = (h,\phi)\cdot \overline{(h,\psi)}.
 \end{equation*}
The choice $\psi=\phi$ implies $\abs{(f,\phi)}=\abs{(h,\phi)}$.\\
Let $\psi$ be such that $(h,\psi)\neq 0$ then we obtain the equation
$$
(h,\phi)=\frac{\overline{(f,\psi)}}{\overline{(h,\psi)}}(f,\phi)
$$
Since the fraction has modulus one the statement holds.\\

For $f\in\mathcal{S}(\realr)$ equation \eqref{eq:factorizationspectrogram} implies
$$
S \mathcal{F}\abs{V_g f}^2 = \mathcal{A}f \cdot \mathcal{A}g
$$
pointwise.
Looking at \eqref{ambiguitycalc} shows $f(t)\cdot \overline{f(t-x)} =\mathcal{F}_2^{-1}\mathcal{A}f(x,t)$.
Combining these observations yields
$$
f(t)\cdot \overline{f(0)} = \mathcal{F}_2^{-1}\mathcal{A}f(t,t) = \mathcal{F}_2^{-1}\left(S\mathcal{F}\abs{V_g f}^2 / \mathcal{A} g\right)(t,t).
$$
\end{proof}
\begin{remark}
 If we restrict the signals $f$ and $h$ to be in $L^2(\realr)$ the ambiguity function $\mathcal{A}g$ can in fact 
 vanish on a set of measure zero and the statement of Theorem \ref{thm:ambiguitypr} still holds.
\end{remark}
By calculating the ambiguity function for the Gaussian we can conclude that the Gabor transform does phase retrieval:

\begin{lemma}\label{lem:GaborGauss}
Let $\varphi(\cdot )=e^{-\pi \cdot ^2}$ be the Gaussian.
	Then we have
	$$
		\mathcal{A}\phi(x,y)=c \cdot e^{-\pi \i x y} \cdot e^{-\pi/2 (x^2+y^2)}
	$$
\end{lemma}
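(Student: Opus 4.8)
The plan is to reduce the statement to a single Gaussian integral and evaluate it. By the definition of the ambiguity function in Theorem~\ref{thm:uniqueness} (which, by \eqref{ambiguitycalc}, agrees with the distributional definition from Appendix~\ref{app:gabor} on the Schwartz function $\varphi$), and using that $\varphi$ is real-valued,
$$
\mathcal{A}\varphi(x,y)=\int_{\mathbb{R}}\varphi(t)\overline{\varphi(t-x)}\,e^{-2\pi\i ty}\,dt=\int_{\mathbb{R}}e^{-\pi t^2}e^{-\pi(t-x)^2}e^{-2\pi\i ty}\,dt.
$$
The first step is to complete the square in the real part of the exponent: one checks that $-\pi t^2-\pi(t-x)^2=-2\pi\bigl(t-\tfrac{x}{2}\bigr)^2-\tfrac{\pi}{2}x^2$, so that
$$
\mathcal{A}\varphi(x,y)=e^{-\pi x^2/2}\int_{\mathbb{R}}e^{-2\pi(t-x/2)^2}e^{-2\pi\i ty}\,dt.
$$

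The second step is the translation $\tau=t-\tfrac{x}{2}$, which extracts the phase factor $e^{-\pi\i xy}$ from $e^{-2\pi\i ty}=e^{-2\pi\i\tau y}e^{-\pi\i xy}$ and leaves the Fourier transform of a centred Gaussian:
$$
\mathcal{A}\varphi(x,y)=e^{-\pi x^2/2}\,e^{-\pi\i xy}\int_{\mathbb{R}}e^{-2\pi\tau^2}e^{-2\pi\i\tau y}\,d\tau.
$$
The third step is to evaluate this last integral. I would invoke the standard identity $\int_{\mathbb{R}}e^{-a\tau^2}e^{-2\pi\i\tau y}\,d\tau=\sqrt{\pi/a}\,e^{-\pi^2 y^2/a}$ valid for $a>0$; if one prefers a self-contained derivation, the differentiation-under-the-integral-sign argument (differentiate in $y$, integrate by parts, solve the resulting ODE $\widehat{g_a}'(y)=-\tfrac{2\pi^2 y}{a}\widehat{g_a}(y)$) does the job, exactly as in the companion computation for $\mathcal{A}\varphi$. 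With $a=2\pi$ this gives $\tfrac{1}{\sqrt2}\,e^{-\pi y^2/2}$.

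Combining the three steps,
$$
\mathcal{A}\varphi(x,y)=e^{-\pi x^2/2}\cdot e^{-\pi\i xy}\cdot\tfrac{1}{\sqrt2}e^{-\pi y^2/2}=\tfrac{1}{\sqrt2}\,e^{-\pi\i xy}\,e^{-\pi(x^2+y^2)/2},
$$
which is the claimed formula with $c=\tfrac{1}{\sqrt2}$. I do not expect any genuine obstacle: the result is a one-line Gaussian computation, and the only points requiring a little care are the constant bookkeeping in the completion of the square and in the Gaussian Fourier transform, together with the (routine) remark that for the Schwartz window $\varphi$ the distributional and classical definitions of $\mathcal{A}\varphi$ coincide, so that manipulating the integral directly is legitimate.
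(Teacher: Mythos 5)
Your proposal is correct and follows essentially the same route as the paper's proof: the substitution $\tau=t-x/2$ (equivalently, completing the square) to extract the factors $e^{-\pi x^2/2}$ and $e^{-\pi\i xy}$, followed by evaluation of the Fourier transform of $e^{-2\pi\tau^2}$, for which the paper uses exactly the differentiation-and-ODE argument you cite as an alternative. Your explicit constant $c=1/\sqrt{2}$ and the remark that the distributional and classical definitions of $\mathcal{A}\varphi$ agree for the Schwartz window are harmless refinements of the same computation.
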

for some positive constant $c$.
\begin{proof}
 Using the substition $\tau = t-x/2$ gives
 \begin{eqnarray*}
  \mathcal{A}\phi(x,y)&=&\intR e^{-\pi t^2}   e^{-\pi(t-x)^2}  e^{-2\pi \i y t} ~dt \\
  &=& \intR e^{-\pi(\tau +x/2)^2}  e^{-\pi (\tau - x/2)^2}  e^{-2\pi \i y(\tau+x/2)^2} ~d\tau\\
  &=& e^{-\pi \i y x}   e^{-\pi/2\cdot x^2} \intR e^{-2\pi\tau^2}  e^{-2\pi \i y \tau} ~d\tau
\end{eqnarray*}
It is well known that the Fourier transform of a Gaussian is again a Gaussian.
We will still do the calculation to get the constants:
Let $g(\cdot):=e^{-2\pi \cdot^2}$. Then
$$
\widehat{g}'(y) = -\intR 2\pi \i t \cdot e^{-2\pi t^2}e^{-2\pi \i y t}~dt = \frac{\i}{2}\cdot \widehat{g'}(y)=-\pi y \cdot \widehat{g}(y).
$$
Therefore with $c=\intR g>0$ we have $\widehat{g}(y)=c \cdot e^{-\pi/2\cdot y^2}$.
\end{proof}

Applying Lemma \ref{lem:GaborGauss} with the value $\beta=\pi$ and the using the fact that $\mathcal{A}\varphi=V_\varphi \varphi$ implies that the ambiguity function of the Gaussian
is again a (two-dimensional) Gaussian.
Therefore by Theorem \ref{thm:ambiguitypr} the Gabor transform does Phase retrieval:

\begin{corollary}
 Let $\varphi(\cdot):=e^{-\cdot^2}$ be the Gauss window.
 Let $f,h \in\mathcal{S}'(\realr)$ be such that $\abs{V_\varphi f}=\abs{V_\varphi h}$ then there exists $\alpha \in\realr$ such that $h=e^{\i\alpha}f$.
\end{corollary}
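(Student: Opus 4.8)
The plan is to deduce the corollary immediately from Theorem \ref{thm:ambiguitypr}, for which the only thing to check is that the Gaussian window has a nowhere-vanishing ambiguity function. (The corollary as stated writes $\varphi(\cdot)=e^{-\cdot^2}$; this is a harmless normalization slip, and I would run the argument with the window $\varphi(t)=e^{-\pi t^2}$ fixed throughout the paper, which is the one for which Lemma \ref{lem:GaborGauss} is stated.)

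First I would record the identity $V_\varphi\varphi=\mathcal{A}\varphi$. This is immediate from comparing the integral formula for $V_g f$ in Definition \ref{def:gabor} with the formula \eqref{ambiguitycalc} for the ambiguity function, using that $\varphi$ is real-valued so that $\overline{\varphi}=\varphi$. Then I would invoke Lemma \ref{lem:GaborGauss}, which gives
$$
\mathcal{A}\varphi(x,y)=c\cdot e^{-\pi \i xy}\cdot e^{-\pi/2(x^2+y^2)},\qquad c=\int_{\realr}e^{-2\pi\tau^2}\,d\tau>0 .
$$
Taking absolute values, $\abs{\mathcal{A}\varphi(x,y)}=c\,e^{-\pi/2(x^2+y^2)}$, which is strictly positive for every $(x,y)\in\realr^2$; hence $\mathcal{A}\varphi$ has no zeros.

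With this in hand, Theorem \ref{thm:ambiguitypr} applies verbatim with $g=\varphi$: for any $f,h\in\mathcal{S}'(\realr)$ with $\abs{V_\varphi f}=\abs{V_\varphi h}$ there exists $\alpha\in\realr$ such that $h=e^{\i\alpha}f$. This is exactly the assertion of the corollary.

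I do not expect a genuine obstacle here: all of the analytic content is already packaged in Lemma \ref{lem:GaborGauss} (the classical fact that the Fourier transform of a Gaussian is a Gaussian, together with a change of variables) and in Theorem \ref{thm:ambiguitypr} (which itself rests on the factorization $\mathcal{F}\abs{V_g f}^2=S\mathcal{A}f\cdot S\mathcal{A}g$ from Theorem \ref{thm:factorizationspectrogram}). The only micro-points worth stating explicitly are that $\varphi$ is real-valued, so that $V_\varphi\varphi$ genuinely equals the ambiguity function, and that the Gaussian normalizing constant $c$ is strictly positive; both are trivial.
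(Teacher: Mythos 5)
Your proposal is correct and follows essentially the same route as the paper: verify via Lemma \ref{lem:GaborGauss} (together with $\mathcal{A}\varphi=V_\varphi\varphi$) that the Gaussian's ambiguity function is a nowhere-vanishing two-dimensional Gaussian, and then apply Theorem \ref{thm:ambiguitypr}. The extra remarks about the normalization of the window and the positivity of the constant $c$ are fine but not points of divergence.
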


\section{Poincar\'e and Cheeger Constants}\label{app:cheeger}
In this section we relate the Poincar\'e constant to a geometric quantity, the so called Cheeger constant.
This concept goes back to Jeff Cheeger \cite{cheeger}.
We will further show that on a bounded domain which is equipped with a weight arising from a Gabor measurement there always holds a Poincar\'e inequality.
Moreover we find that when the weight $w$ is choosen to be a Gaussian there exists a finite constant $C>0$ independent from $R>0$ such that
$$
C_{poinc}(p,B_R(0),w)\leq C.
$$

\subsection{Cheeger constant}\label{sec:cheegepoinc}

The goal is to estimate the Poincar\'e constant from above in terms of the Cheeger constant.
First recall the definition of Lipschitz and locally Lipschitz functions:
\begin{definition}
 Let $A\subset \realr^d$ and $f:A\rightarrow \realr$.
 Then $f$ is called
 \begin{enumerate}[(i)]
  \item \emph{Lipschitz} (on $A$) if there exists a $C>0$ such that
  $$ \abs{f(x)-f(y)}\leq C \abs{x-y} $$
  for all $x,y\in A$.
  \item \emph{locally Lipschitz} (on $A$) if $f$ is Lipschitz on any compact subset of $A$.
 \end{enumerate}
\end{definition}
Let $\mathcal{H}^{d-1}$ denote the $(d-1)$-dimensional Hausdorff meausure. A definition and some basic properties about Hausdorff measures aswell as a proof of the following formula
can be found in \cite[Chapter 3.4.3.]{evansgariepy}.

 \begin{theorem}[Coarea formula or Change of Variables formula]\label{thm:coareaformula}
  Let $f:\mathbb{R}^d\rightarrow \mathbb{R}$ be Lipschitz and $g:\mathbb{R}^d\rightarrow \mathbb{R}$ be an integrable function.
  Then the restriction $g|_{f^{-1}\{t\}}$ is integrable w.r.t.  $\mathcal{H}^{d-1}$ for almost all $t\in\mathbb{R}$ and
  $$
  \int_{\mathbb{R}^d} g(x)\left| \nabla f(x) \right| ~dx = \int_\realr \int_{f^{-1}\{t\}} g(s)~d\mathcal{H}^{d-1}(s)~dt.
  $$
 \end{theorem}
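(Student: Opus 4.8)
The plan is to follow the classical route to the coarea formula (the one carried out in detail in \cite{evansgariepy}): reduce to a measure-theoretic identity, establish it first for affine maps, then for $C^1$ maps by localization, and finally for Lipschitz maps by approximation. First I would reduce the statement. Splitting $g=g^+-g^-$ and using linearity it suffices to treat $g\ge 0$; monotone convergence and approximation of nonnegative measurable functions by simple functions then reduce matters to $g=\chi_A$ with $A\subset\mathbb{R}^d$ Borel. So the goal becomes: the two set functions
\[
\nu(A):=\int_A \abs{\nabla f(x)}\,dx,\qquad \mu(A):=\int_{\mathbb{R}}\mathcal{H}^{d-1}\big(A\cap f^{-1}\{t\}\big)\,dt
\]
agree on Borel sets. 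Here $\abs{\nabla f}$ is defined a.e.\ by Rademacher's theorem, and one must first check $\mu$ makes sense, i.e.\ that $t\mapsto\mathcal{H}^{d-1}(A\cap f^{-1}\{t\})$ is Lebesgue measurable; this follows from upper semicontinuity of this quantity in $t$ when $A$ is compact, extended to Borel $A$ by a monotone class argument.

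The analytic heart is an \emph{Eilenberg-type inequality}: for every $A$,
\[
\int_{\mathbb{R}}\mathcal{H}^{d-1}\big(A\cap f^{-1}\{t\}\big)\,dt\le c_d\cdot\mathrm{Lip}(f)\cdot\abs{A},
\]
proved by a Vitali covering argument (cover $A$ by small balls, bound the oscillation of $f$ on each ball by $\mathrm{Lip}(f)$ times its diameter, bound the $\mathcal{H}^{d-1}$-content of each level slice of a ball by (diam)$^{d-1}$, and integrate in $t$). This inequality does two things: it shows $\mu\ll\mathcal{L}^d$, and, applied on balls where $f$ is differentiable with vanishing gradient (so that $\mathrm{Lip}(f)$ may be replaced by an arbitrarily small $\varepsilon$ locally), it shows $\mu$ vanishes on the set $\{\nabla f=0\}$; together with Rademacher's theorem (the non-differentiability set is $\mathcal{L}^d$-null, hence $\mu$-null by the inequality again), it follows that both $\mu$ and $\nu$ are concentrated on $G:=\{x:\ f\text{ differentiable at }x,\ \nabla f(x)\neq 0\}$. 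On $G$ I would then prove the identity in stages: for $f$ affine it is exactly Fubini's theorem (slicing $\mathbb{R}^d$ by the parallel hyperplanes $\{\langle a,x\rangle+b=t\}$ with constant Jacobian $\abs{a}=\abs{\nabla f}$); for $f\in C^1$ with $\nabla f\neq 0$ on an open set one localizes, uses the implicit function theorem to straighten the level sets into coordinate hyperplanes (or, equivalently, partitions into small cubes on which $f$ agrees with its linearization up to error $\varepsilon$ and $\nabla f$ varies by less than $\varepsilon$, applies the affine case cube by cube, and refines the partition); for general Lipschitz $f$ one mollifies, $f_k:=f*\rho_k$, so that $f_k\to f$ locally uniformly, $\nabla f_k\to\nabla f$ in $L^1_{\mathrm{loc}}$ and a.e., $\mathrm{Lip}(f_k)\le\mathrm{Lip}(f)$, applies the $C^1$ case to each $f_k$, and passes to the limit.

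The main obstacle is exactly this last limiting step: while the left-hand side passes to the limit by $L^1_{\mathrm{loc}}$ convergence of the gradients, controlling $\lim_k\int_{\mathbb{R}}\mathcal{H}^{d-1}(A\cap f_k^{-1}\{t\})\,dt$ is delicate because the level-set measures could a priori concentrate. This is precisely where the Eilenberg inequality is indispensable: it gives the equi-absolute-continuity of the measures $\mu_k$ (uniformly in $k$, since $\mathrm{Lip}(f_k)$ is bounded), which combined with lower semicontinuity of the $\mathcal{H}^{d-1}$-content of level sets under uniform convergence and a Fatou/reverse-Fatou sandwich yields the identity in the limit. An alternative that avoids approximation altogether — and is closer to Federer's original argument — is to use the structure (``linearization'') theorem for Lipschitz maps directly: for each $\varepsilon>0$ one decomposes $G$, up to a null set, into countably many pieces on each of which $f$ coincides with an affine map up to an error $\varepsilon\abs{x-y}$ and on which $\abs{\nabla f}$ is essentially constant, applies the affine case on each piece, and lets $\varepsilon\to 0$; either way, the combinatorial/covering bookkeeping in this step is the part requiring the most care.
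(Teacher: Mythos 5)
The paper does not prove this statement at all: it is a classical result quoted verbatim from \cite[Chapter 3.4.3]{evansgariepy}, and your outline (reduction to nonnegative $g$ and then to characteristic functions, the Eilenberg inequality, Rademacher's theorem, the linear/affine case via Fubini, and the passage to general Lipschitz $f$ via mollification or, equivalently, Federer-style Lipschitz linearization) is precisely the standard argument given in that reference. Your proposal is therefore correct in structure and takes essentially the same route as the proof the paper relies on, with the understanding that it is a sketch and that the covering and limiting details you yourself flag as delicate are exactly where the full written proof spends its effort.
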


 We will need the Coarea formula to hold under slightly different assumptions:
 \begin{lemma}\label{lem:coareaformula}
  Let $D \subset \realr^d$ be an open set and $w$ a nonnegative, integrable function on $D$.
  Let $u$ be a measurable and realvalued function and assume that there exists a set $E\subset D$ such that
  \begin{enumerate}[(i)]
   \item\label{propcaitem1} $u$ restricted to $D\setminus E$ is locally Lipschitz,
   \item\label{propcaitem2} there exists a sequence $D_1 \subset  D_2 \subset \ldots \subset D\setminus E$ 
   of bounded open sets such that $\bigcup_n D_n=D\setminus E$ and $\overline{D_n}\subset D\setminus E$ for all $n\in \mathbb{N}$,
   \item\label{propcaitem3} $w$ vanishes for $\mathcal{H}^{d-1}$ almost all $x\in E$.
  \end{enumerate}
Then 
$$
\int_D w(x)\abs{\nabla u(x)}~dx = \intR \int_{u^{-1}\{t\}} w(s)~d\mathcal{H}^{d-1}(s)~dt.
$$
 \end{lemma}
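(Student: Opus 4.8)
\emph{Proof strategy.} The plan is to bootstrap the classical coarea formula of Theorem~\ref{thm:coareaformula} --- which applies to a globally Lipschitz function and an arbitrary integrable weight --- by exhausting $D\setminus E$ with the open sets $D_n$ from hypothesis (ii) and running the argument on each piece. Before doing so I would dispose of the set $E$ on both sides of the asserted identity. For the left-hand side, note that any subset of $\realr^d$ of positive Lebesgue measure has infinite $\mathcal{H}^{d-1}$-measure; hence the set $\{x\in E:\ w(x)\neq 0\}$, which by hypothesis (iii) is $\mathcal{H}^{d-1}$-null, is also Lebesgue-null, so $w$ vanishes $\mathcal{L}^d$-a.e. on $E$ and $\int_D w\,|\nabla u|\,dx=\int_{D\setminus E} w\,|\nabla u|\,dx$ (the gradient $\nabla u$ existing $\mathcal{L}^d$-a.e. on $D\setminus E$ by Rademacher's theorem, since $u$ is locally Lipschitz there). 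For the right-hand side, hypothesis (iii) gives directly $\int_{u^{-1}\{t\}} w\,d\mathcal{H}^{d-1}=\int_{u^{-1}\{t\}\cap(D\setminus E)} w\,d\mathcal{H}^{d-1}$ for every $t\in\realr$, since the portion of each level set lying in $E$ carries none of the $\mathcal{H}^{d-1}$-mass of $w$.

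Next, I would fix $n\in\mathbb{N}$. Since $\overline{D_n}$ is a compact subset of $D\setminus E$, hypothesis (i) makes $u|_{\overline{D_n}}$ Lipschitz, so it admits a McShane extension $u_n:\realr^d\to\realr$ that is Lipschitz on all of $\realr^d$ and coincides with $u$ on $\overline{D_n}$. Applying Theorem~\ref{thm:coareaformula} with $f:=u_n$ and the integrable weight $g:=w\,\chi_{D_n}$ (supported in $D_n\subset D$ and equal to $w$ there), and using that $D_n$ is open --- so that $\nabla u_n=\nabla u$ holds $\mathcal{L}^d$-a.e. on $D_n$ and $u_n^{-1}\{t\}\cap D_n=u^{-1}\{t\}\cap D_n$ for all $t$ --- yields
\[
\int_{D_n} w\,|\nabla u|\,dx=\int_\realr\Big(\int_{u^{-1}\{t\}\cap D_n} w\,d\mathcal{H}^{d-1}\Big)\,dt ,
\]
and in particular the inner integral is, for a.e. $t$, a measurable function of $t$.

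Finally I would let $n\to\infty$, invoking monotone convergence --- permissible since all integrands are nonnegative and, by hypothesis (ii), $D_n\uparrow D\setminus E$. On the left, $\int_{D_n} w\,|\nabla u|\,dx\uparrow\int_{D\setminus E} w\,|\nabla u|\,dx=\int_D w\,|\nabla u|\,dx$ by the first reduction above. On the right, for each fixed $t$ the sets $u^{-1}\{t\}\cap D_n$ increase to $u^{-1}\{t\}\cap(D\setminus E)$, so $\int_{u^{-1}\{t\}\cap D_n} w\,d\mathcal{H}^{d-1}\uparrow\int_{u^{-1}\{t\}} w\,d\mathcal{H}^{d-1}$ by monotone convergence together with the second reduction; a further application of monotone convergence in the $t$-variable then gives $\int_\realr\int_{u^{-1}\{t\}\cap D_n} w\,d\mathcal{H}^{d-1}\,dt\uparrow\int_\realr\int_{u^{-1}\{t\}} w\,d\mathcal{H}^{d-1}\,dt$. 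Comparing the two limits proves the claimed equality.

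The only point requiring real care is the very first one: arguing that hypothesis (iii), which a priori only controls $w$ on $E$ up to $\mathcal{H}^{d-1}$-null sets, nonetheless forces $w=0$ Lebesgue-a.e. on $E$, so that $E$ may be removed from the domain integral as well. Everything after that is routine bookkeeping with the exhaustion $(D_n)$ and the monotone convergence theorem; if one prefers to avoid Lipschitz extensions, one may instead quote a version of Theorem~\ref{thm:coareaformula} valid for Lipschitz functions defined on an open set and apply it to $u|_{D_n}$ directly, with no change to the argument.
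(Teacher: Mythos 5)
Your proof is correct and follows essentially the same route as the paper's: extend $u$ from each compact $\overline{D_n}$ to a globally Lipschitz $u_n$ (your McShane extension is the same device the paper invokes via Evans--Gariepy, Ch.\ 3.1), apply Theorem~\ref{thm:coareaformula} with weight $w\chi_{D_n}$, and pass to the limit by monotone convergence, using hypothesis (iii) to discard $E$ from both sides. The one small added value in your write-up is that you justify explicitly why $\mathcal{H}^{d-1}$-nullity of $\{x\in E: w(x)\neq 0\}$ forces $\mathcal{L}^d$-nullity (via finite $\mathcal{H}^{d-1}$-measure implying $\mathcal{L}^d$-measure zero), a point the paper simply asserts in passing.
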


\begin{proof}
 By extending $w$ by $0$ outside of $D$ we consider $w$ as a function defined on the whole space $\realr^d$.
 By assumptions \eqref{propcaitem1} and \eqref{propcaitem2} the restriction of $u$ on the compact set $\overline{D_n}$ is Lipschitz and thus
 also $u$ restricted to $D_n$ is Lipschitz and therefore has an extension $u_n$ which is Lipschitz on $\realr^d$ (compare \cite[Chapter 3.1.]{evansgariepy}).
 With $w_n:=w \cdot \chi_{D_n}$ we can apply Theorem \ref{thm:coareaformula} on $f=u_n$ and $g=w_n$ for any $n$ and obtain
 \begin{equation}\label{eq:propcoarea}
  \int_{\realr^d} w_n(x)\abs{\nabla u_n (x)}~dx = \intR \int_{u_n^{-1}\{t\}} w_n(s) ~d\mathcal{H}^{d-1}(s)~dt.
 \end{equation}
Since $u_n(x)=u(x)$ for $x\in D_n$ and $D_n$ is open Rademacher's theorem implies $\nabla u_n(x)=\nabla u(x)$ makes sense for almost all $x\in D_n$.

By monotone convergence we can take the limit $n\rightarrow \infty$ in the left hand side of equation \eqref{eq:propcoarea}:
\begin{equation}
 \lim_n \int_{\realr^d} w_n(x)\abs{\nabla u_n (x)}~dx = \lim_n \int_{D_n} w(x)\abs{\nabla u(x)}~dx = \int_{D\setminus E} w(x)\abs{\nabla u(x)}~dx.
\end{equation}
Assumption \eqref{propcaitem3} in particular implies that $w$ vanishes for almost every $x\in E$ (w.r.t. $d$-dimensional Lebesgue measure).
Therefore we can conclude
$$
\lim_n \int_{\realr^d} w_n(x)\abs{\nabla u_n (x)}~dx = \int_{D} w(x)\abs{\nabla u(x)}~dx.
$$

Let us have a closer look at the right hand side of equation \eqref{eq:propcoarea}: 
Since $u_n$ and $u$ coincide on $D_n$ we have
\begin{eqnarray*}
 \lim_n \intR \int_{u_n^{-1}\{t\}} w_n(s) ~d\mathcal{H}^{d-1}(s)~dt &=& \lim_n \intR \int_{u_n^{-1}\{t\}\cap D_n} w(s) ~d\mathcal{H}^{d-1}(s)~dt\\
 &=& \lim_n \intR \int_{u^{-1}\{t\} \cap D_n} w(s) ~d\mathcal{H}^{d-1}(s)~dt\\
 &=& \intR \int_{u^{-1}\{t\} \cap (D\setminus E)} w(s) ~d\mathcal{H}^{d-1}(s)~dt,
\end{eqnarray*}
where we used again monotone convergence.
Assumption \eqref{propcaitem3} tells us that $E$ is a zero set w.r.t. $w(.)d\mathcal{H}^{d-1}(.)$ and so we finally obtain the claimed equality.
\end{proof}

We consider now a domain $D\subset \realr^2$ which can be bounded or unbounded, together with a nonnegative and integrable weight $w$.
We define a measure $\mu$ on $D$ by
\begin{equation}\label{cheeger:defmu}
\mu(C):=\int_C w(x) ~dx, \quad C\subset D \text{  measurable w.r.t. Lebesgue measure.}
\end{equation}
For $A\subset D$ a $1$-dimensional manifold we use the notation
\begin{equation}\label{cheeger:defnu}
\nu(A):=\int_A w(s)~d\sigma(s),
\end{equation}
where $\sigma$ denotes the surface measure on $A$.
Further let us define a system of subsets of $D$ by
\begin{equation}\label{def:cheegerc}
\mathcal{C}=\mathcal{C}(D,w):= \left\{ \emptyset \neq C \subset D ~\text{open}: ~\partial C\cap D \text{ is a $1$-dim. manifold and } ~\mu(C)\leq \frac{1}{2}\mu(D)\right\}.
\end{equation}
The \emph{Cheeger constant} $h$ w.r.t. $D$ and $w$ is defined by
\begin{equation}\label{def:cheegerdw}
h=h(D,w):=\inf_{C\in\mathcal{C}} \frac{\nu(\partial C \cap D)}{\mu(C)}.
\end{equation}
\begin{remark}
 If $D$ is not connected, there is a component $C$ of $D$ such that $C\in\mathcal{C}$.
 Since $\partial C \cap D = \emptyset$ we clearly have $h=0$ in that case.
\end{remark}
For a measurable, realvalued function $u$ on $D$ we denote the sub-, super- and levelsets of $u$ by
$$
\mathcal{S}_t:=\{x\in D: u(x)<t\},\quad \suplvlset :=\{x\in D: u(x)>t\}
 \quad\text{and}\quad \lvlset := \{x\in D: u(x)=t\}. 
$$

In Proposition \ref{prop:cheeger} we will now establish a first connection between the Cheeger constant and a Poincare-type inequality:

\begin{proposition}\label{prop:cheeger}
 Let $D\subset\realr^2$ be a domain and $w$ a weight on $D$.
 Let $h$ denote the Cheeger constant of $D$ and $w$ and let $\mu$ be defined as in Equation \eqref{cheeger:defmu}.
 Let $u$ be a nonnegative function on $D$ and assume there exists $E\subset D$ such that the Conditions \eqref{propcaitem1}-\eqref{propcaitem3} of Lemma \ref{lem:coareaformula} hold.
 Let $\mathcal{U}_t$ and $\mathcal{A}_t$ denote the super- and the levelsets w.r.t. $u$.
 Further suppose that
 \begin{enumerate}[(1)]
  \item\label{propcheeger:enum1} $\mu(\mathcal{U}_0)\leq \frac{1}{2}\mu(D)$,
  \item\label{propcheeger:enum2} $\mathcal{A}_t$ is a $1$-dimensional manifold for almost all $t>0$,
  \item\label{propcheeger:enum3} $\mathcal{A}_t=\partial\mathcal{U}_t\cap D$ for almost all $t>0$,
  \item\label{propcheeger:enum4} $\suplvlset$ is open for almost all $t>0$.
 \end{enumerate}
Then the inequality
\begin{equation}\label{eq:firstcheegerpoincare}
h\int_D u(x)~d\mu(x) \leq \int_D \abs{\nabla u(x)}~d\mu(x)
\end{equation}
holds true.
\end{proposition}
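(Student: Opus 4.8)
The plan is to use the coarea formula (Lemma \ref{lem:coareaformula}) to write the right-hand side as an integral over level sets, and the layer-cake (Cavalieri) representation to write the left-hand side the same way, after which the inequality reduces to the defining property of the Cheeger constant applied to each superlevel set $\suplvlset$. First I would apply Lemma \ref{lem:coareaformula} to $u$ and $w$, which is legitimate precisely because Conditions \eqref{propcaitem1}--\eqref{propcaitem3} are assumed; this gives
\begin{equation*}
\int_D \abs{\nabla u(x)}\,d\mu(x) = \int_D w(x)\abs{\nabla u(x)}\,dx = \int_{\realr} \int_{u^{-1}\{t\}} w(s)\,d\hausdorffh(s)\,dt = \int_0^\infty \nu(\lvlset)\,dt,
\end{equation*}
where the last equality uses $u\ge 0$ (so only $t\ge 0$ contributes) and the notation \eqref{cheeger:defnu}; here $d=2$ so $\hausdorffh=\mathcal{H}^1=\sigma$.

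Next I would handle the left-hand side. By the layer-cake formula, since $u\ge 0$,
\begin{equation*}
\int_D u(x)\,d\mu(x) = \int_0^\infty \mu(\suplvlset)\,dt.
\end{equation*}
Now fix $t>0$ in the full-measure set where Conditions \eqref{propcheeger:enum2}--\eqref{propcheeger:enum4} hold, so $\suplvlset$ is open, $\lvlset$ is a $1$-dimensional manifold, and $\lvlset = \partial\suplvlset\cap D$. I claim $\suplvlset\in\mathcal{C}(D,w)$: it is a nonempty (we may assume $\mu(\suplvlset)>0$, else the term contributes nothing) open subset of $D$ whose boundary in $D$ is a $1$-manifold, and $\mu(\suplvlset)\le \mu(\suplvlset[0])\le \tfrac12\mu(D)$ by monotonicity of superlevel sets in $t$ together with Condition \eqref{propcheeger:enum1}. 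Hence by the definition \eqref{def:cheegerdw} of the Cheeger constant,
\begin{equation*}
h \le \frac{\nu(\partial\suplvlset\cap D)}{\mu(\suplvlset)} = \frac{\nu(\lvlset)}{\mu(\suplvlset)},
\end{equation*}
i.e. $h\,\mu(\suplvlset) \le \nu(\lvlset)$ for a.e. $t>0$.

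Finally I would integrate this pointwise (in $t$) inequality over $(0,\infty)$ and combine with the two representations above:
\begin{equation*}
h\int_D u\,d\mu = h\int_0^\infty \mu(\suplvlset)\,dt \le \int_0^\infty \nu(\lvlset)\,dt = \int_D \abs{\nabla u}\,d\mu,
\end{equation*}
which is exactly \eqref{eq:firstcheegerpoincare}. The main obstacle — really the only nontrivial point — is making the two layer-cake/coarea representations rigorous and measurable in $t$: one must check that $t\mapsto\mu(\suplvlset)$ and $t\mapsto\nu(\lvlset)$ are measurable (the former is monotone, hence automatic; the latter follows from the coarea formula, which asserts integrability in $t$), that the exceptional $t$-sets from Conditions \eqref{propcheeger:enum2}--\eqref{propcheeger:enum4} and from Lemma \ref{lem:coareaformula} are all Lebesgue-null so their union is too, and that the identification $\partial\suplvlset\cap D=\lvlset$ is used only where valid. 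These are routine measure-theoretic bookkeeping steps; the geometric heart of the argument is simply "apply the Cheeger inequality slice by slice."
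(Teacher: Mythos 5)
Your proposal is correct and follows essentially the same route as the paper: apply the coarea formula (Lemma \ref{lem:coareaformula}) to write $\int_D \abs{\nabla u}\,d\mu$ as $\int_0^\infty \nu(\lvlset)\,dt$, note that for a.e.\ $t>0$ the superlevel set $\suplvlset$ is admissible in the definition of $h$ (using the monotonicity $\mu(\suplvlset)\le\mu(\mathcal{U}_0)\le\tfrac12\mu(D)$ and Conditions (2)--(4)), and combine with the layer-cake identity $\int_D u\,d\mu=\int_0^\infty\mu(\suplvlset)\,dt$. The measure-theoretic bookkeeping you flag (restricting to the full-measure set $\Gamma$ of good $t$ and to $\{t:\mu(\suplvlset)>0\}$) is exactly how the paper's proof handles it.
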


\begin{proof}
Let $\Gamma\subset (0,\infty)$ be such that 
$$
\mathcal{A}_t \text{  is a $1$-dim. manifold} \quad\text{and}\quad \mathcal{A}_t=\partial\mathcal{U}_t\cap D \quad \text{for all } t\in \Gamma
$$
and that $(0,\infty)\setminus \Gamma$ is of Lebesgue measure zero.\\
Applying Lemma \ref{lem:coareaformula} and using the fact that $\mathcal{H}^1$ coincides with the surface measure on $1$-dimensional manifolds
gives us
\begin{eqnarray*}
 \int_D\abs{\nabla u(x)}w(x)~dx &=& \int_0^{\infty} \int_{\mathcal{A}_t} w(s)~d\mathcal{H}^1(s)~dt = \int_\Gamma \int_{\mathcal{A}_t} w(s)~d\mathcal{H}^1(s)~dt\\
 &=& \int_\Gamma \nu(\lvlset)~dt =\int_\Gamma \nu(\partial\suplvlset \cap D)~dt,
\end{eqnarray*}
where $\nu$ is defined as in \eqref{cheeger:defnu}.
Now we can estimate
\begin{eqnarray*}
 \int_D\abs{\nabla u(x)}~d\mu(x) &\geq& \int_{\Gamma\cap \{t:\mu(\mathcal{U}_t)>0\}} \frac{\nu(\partial\mathcal{U}_t\cap D)}{\mu(\mathcal{U}_t)}\cdot \mu(\mathcal{U}_t)~dt\\
 &\geq& h \int_{\Gamma\cap \{t:\mu(\mathcal{U}_t)>0\}} \mu(\mathcal{U}_t)~dt = h \int_{(0,\infty)} \mu(\mathcal{U}_t)~dt = h \int_D u(x) ~d\mu(x)
\end{eqnarray*}
\end{proof}
Before we proove Theorem \ref{thm:maincheeger} we need one more lemma:
\begin{lemma}\label{lem:qoofmean}
 Let $D\subset \realr^2$ be a domain, $w$ a weight on $D$ and $p\in[1,\infty)$.
 Then for any $F=u+\i v \in L^p(D,w)\cap L^1(D,w)$ and any $a,b \in \realr$ we have
 $$
 \norm{F-F_D^w}_{L^p(D,w)}^p \leq \max\{2^{3p/2-1},2^p\} \left(\norm{u-a}_{L^p(D,w)}^p+\norm{v-b}_{L^p(D,w)}^p\right).
 $$
\end{lemma}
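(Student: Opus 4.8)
The plan is to split $F$ and its weighted mean into real and imaginary parts and reduce everything to two scalar estimates, using that the componentwise weighted means $u_D^w,v_D^w$ are near-optimal constant $L^p(D,w)$-approximations by Jensen's inequality.

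First I would record the following elementary fact: for any real-valued $h \in L^p(D,w)\cap L^1(D,w)$ and any $c\in\realr$,
$$
\|h_D^w - c\|_{L^p(D,w)}^p = |h_D^w - c|^p\,w(D) \le \|h-c\|_{L^p(D,w)}^p .
$$
Indeed $h_D^w - c = (h-c)_D^w$, so Jensen's inequality applied to the convex function $t\mapsto|t|^p$ and the probability measure $w\,dx / w(D)$ gives $|h_D^w-c|^p \le \frac{1}{w(D)}\int_D|h-c|^p w\,dx$, and multiplying by $w(D)$ yields the claim. Combined with the triangle inequality this gives $\|h - h_D^w\|_{L^p(D,w)} \le \|h-c\|_{L^p(D,w)} + \|c - h_D^w\|_{L^p(D,w)} \le 2\|h-c\|_{L^p(D,w)}$, hence $\|h-h_D^w\|_{L^p(D,w)}^p \le 2^p\|h-c\|_{L^p(D,w)}^p$. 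I would apply this once with $(h,c)=(u,a)$ and once with $(h,c)=(v,b)$.

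Next, since $F_D^w = u_D^w + \i v_D^w$, one has pointwise $|F - F_D^w|^2 = (u-u_D^w)^2 + (v - v_D^w)^2$, so raising to the power $p/2$ and using the elementary inequality
$$
(\xi^2 + \zeta^2)^{p/2} \le \max\{2^{p/2-1},1\}\,(|\xi|^p + |\zeta|^p), \qquad \xi,\zeta\in\realr
$$
(which for $p\ge 2$ is convexity of $t\mapsto t^{p/2}$ and for $1\le p\le 2$ is subadditivity of the same), then integrating against $w\,dx$, gives
$$
\|F - F_D^w\|_{L^p(D,w)}^p \le \max\{2^{p/2-1},1\}\big(\|u-u_D^w\|_{L^p(D,w)}^p + \|v-v_D^w\|_{L^p(D,w)}^p\big).
$$
Plugging in the two bounds from the previous paragraph produces the constant $\max\{2^{p/2-1},1\}\cdot 2^p = \max\{2^{3p/2-1},2^p\}$, which is exactly the assertion.

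There is no real obstacle here; the only points that need a little care are checking that $F_D^w$ is well defined — which is precisely why the hypothesis $F\in L^1(D,w)$ is imposed, on top of the standing assumption $0 < w(D) < \infty$ built into the notation $F_D^w$ — and obtaining the stated (sharp) constant, which forces the case distinction between $p\le 2$ and $p\ge 2$ in the inequality $(\xi^2+\zeta^2)^{p/2}\le \max\{2^{p/2-1},1\}(|\xi|^p+|\zeta|^p)$.
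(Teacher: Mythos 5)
Your proof is correct and, at the level of overall architecture, matches the paper's: split $F-F_D^w$ into real and imaginary parts, dominate $\left(\xi^2+\zeta^2\right)^{p/2}$ by $\max\{2^{p/2-1},1\}(|\xi|^p+|\zeta|^p)$, and then reduce to a scalar estimate $\|u-u_D^w\|_{L^p(D,w)}^p\le 2^p\|u-a\|_{L^p(D,w)}^p$ (and likewise for $v$). Where you genuinely diverge is in the proof of that scalar estimate: the paper normalizes to $f_D^w=0$ and proceeds by partitioning $D$ into the three sets $\{f>0\}$, $\{-2a\le f\le 0\}$, $\{f<-2a\}$ and combining per-region bounds, whereas you obtain it in two lines from Jensen's inequality (the weighted mean $h_D^w$ is a near-optimal constant approximant, with $|h_D^w-c|^p w(D)\le\|h-c\|_{L^p(D,w)}^p$) plus the triangle inequality. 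Your route is shorter, more transparent, and makes the appearance of the factor $2^p$ structurally obvious. It also exposes a small inaccuracy in the paper's write-up: Step~1 there asserts $\|f\|_{L^p(D,w)}\le\|f+a\|_{L^p(D,w)}$ for mean-zero $f$, which is false as stated (the mean does not minimize the $L^p$ distance to constants for $p\ne 2$; e.g.\ $p=1$, $f=3$ on a set of $w$-measure $1$ and $f=-1$ on a set of $w$-measure $3$, take $a=1$); what the paper's computation actually proves, and what is used in Step~2, is $\|f\|_{L^p(D,w)}^p\le 2^p\|f+a\|_{L^p(D,w)}^p$, exactly the bound your Jensen argument delivers directly. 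So the final constant $\max\{2^{3p/2-1},2^p\}$ is the same in both treatments, and your version is arguably the cleaner one to record.
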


\begin{proof}
 \begin{enumerate}[Step 1.]
  \item 
  Assume $f\in L^p(D,w)\cap L^1(D,w)$ is a realvalued function with $f_D^w=0$.
  We first show that 
  \begin{equation}\label{eq:step1}
  \norm{f}_{L^p(D,w)} \leq \norm{f+a}_{L^p(D,w)}
  \end{equation}
  for arbitrary $a\in\realr$. This statement can be found in \cite{dyda2013weighted} but we still give a proof.\\
  W.l.o.g. we may assume that $a$ is positive.
  Let $\mu$ be defined as in \eqref{cheeger:defmu}.
  Then we have 
  $$
  \int_{\{x\in D: f(x)>0\}} \abs{f(x)}^p ~d\mu(x) \leq \int_{\{x\in D: f(x)>0\}} \abs{f(x)+a}^p~d\mu(x)
  $$
  and
  $$
  \int_{\{x\in D: f(x)<-2a\}} \abs{f(x)+a}^p d\mu(x) \geq 2^{-p} \int_{\{x\in D: f(x)<-2a\}} \abs{f(x)}^p d\mu(x).
  $$
  Furthermore, using $\int_{\{x\in D:f(x)<0\}} \abs{f(x)}~d\mu(x) = \int_{\{x\in D:f(x)\geq0\}} \abs{f(x)}~d\mu(x)$ we obtain
  \begin{eqnarray*}
   \int_{\{x\in D: -2a\leq f(x)\leq 0\}} \abs{f(x)}^p~d\mu(x) &\leq& (2a)^{p-1} \int_{\{x\in D: -2a\leq f(x)\leq 0\}} \abs{f(x)}~d\mu(x)\\
   &\leq& (2a)^{p-1} \int_{\{x\in D: f(x)>0\}} \abs{f(x)}~d\mu(x)\\
   &\leq& 2^{p-1} \int_{\{x\in D:f(x)>0\}} \abs{f(x)+a}^p~d\mu(x).
  \end{eqnarray*}
Combining these estimates and since $2^{p-1}+1\leq 2^p$ we see that the inequality \eqref{eq:step1} holds.

\item
For any nonnegative numbers $\alpha, \beta$ we have 
$$
(\alpha^2+\beta^2)^{p/2} \leq \max\{2^{p/2-1},1\} \left(\alpha^p+\beta^p\right).
$$
Using this inequality and applying \eqref{eq:step1} on $u$ and $v$ respectively we obtain
\begin{eqnarray*}
 \norm{F-F_D^w}_{L^p(D,w)}^p &=&\int_D \left( (u(x)-u_D^w)^2 + (v(x)-v_D^w)^2\right)^{p/2} ~d\mu(x)\\
 &\leq& \max\{2^{p/2-1},1\} \cdot 2^p \int_D \abs{u(x)-u_D^w}^p+ \abs{v(x)-v_D^w}^p ~d\mu(x)\\
 &\leq& \max\{2^{p/2-1},1\} \cdot 2^p \int_D \abs{u(x)-a}^p+ \abs{v(x)-b}^p ~d\mu(x).
\end{eqnarray*}
 \end{enumerate}
\end{proof}

Finally we establish a weighted Poincar\'e inequality for certain meromorphic functions.
Looking at \eqref{eq:firstcheegerpoincare} it is not surprising that the corresponding Poincare\'e constant can be controlled by the reciprocal of the Cheeger constant:

\begin{theorem}\label{thm:maincheeger}
 Let $D\subset \realr^2$ be a domain, $w$ a weight on $D$ and $p\in [1,\infty)$. Let $h$ denote the Cheeger constant of $D$ and $w$. 
 Assume that $h$ is positive. Then a weighted Poincar\'e inequality holds and $C_{\text{poinc}}(D,w,p)\leq \frac{4p}{h}$, i.e.
 \begin{equation}\label{est:poincarecheeger}
 \norm{F-F_D^w}_{L^p(D,w)} \leq \frac{4p}{h} \norm{\nabla F}_{L^p(D,w)}
 \end{equation}
 for all $F\in W^{1,p}(D,w)\cap L^1(D,w) \cap \mathcal{M}(D)$.
\end{theorem}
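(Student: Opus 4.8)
The plan is to reduce the complex, meromorphic statement to a scalar Poincar\'e inequality and then feed the latter into the Cheeger-type estimate of Proposition~\ref{prop:cheeger}. First I would dispose of the trivial cases: if $F$ is constant both sides vanish, and if $w(D)=\infty$ then $F_D^w=0$, in which case the argument below goes through verbatim with every median taken to be $0$. So assume $F$ non-constant and $w(D)<\infty$, and let $E\subset D$ be the (discrete, hence $\mathcal{H}^1$-null) set of poles of $F$; on the open set $D\setminus E$ the function $F$ is holomorphic, so its real part $u$ and imaginary part $v$ are real-analytic, harmonic and non-constant there. Since $F\in W^{1,p}(D,w)$ we have $u,v\in L^p(D,w)$, so Lemma~\ref{lem:qoofmean}, applied with $a$ a $\mu$-median of $u$ and $b$ a $\mu$-median of $v$ (where $\mu$ is the measure \eqref{cheeger:defmu}), gives
$$
\norm{F-F_D^w}_{L^p(D,w)}^p\le \max\{2^{3p/2-1},2^p\}\big(\norm{u-a}_{L^p(D,w)}^p+\norm{v-b}_{L^p(D,w)}^p\big).
$$
This reduces the theorem to the scalar estimate $\norm{g-m}_{L^p(D,w)}\le \tfrac{p}{h}\,\norm{\nabla g}_{L^p(D,w)}$ for $g\in\{u,v\}$ and $m$ a corresponding median.

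For that scalar estimate I would split $g-m=\phi_+-\phi_-$ with $\phi_\pm:=(\pm(g-m))_+\ge 0$ and apply Proposition~\ref{prop:cheeger} to the nonnegative function $\phi_+^p$ (extended arbitrarily across the null set $E$). Its hypotheses should be checkable: $\phi_+^p$ is locally Lipschitz on $D\setminus E$ (a composition of the smooth $g$ with the Lipschitz map $t\mapsto(t-m)_+$ and the locally Lipschitz map $t\mapsto t^p$); the exhaustion required in \eqref{propcaitem2} exists because $D\setminus E$ is open; condition \eqref{propcaitem3} is vacuous since $\mathcal{H}^1(E)=0$; $\mu(\{\phi_+^p>0\})=\mu(\{g>m\})\le\tfrac12\mu(D)$ by the median property, which is \eqref{propcheeger:enum1}; and, because $g$ is non-constant and real-analytic on $D\setminus E$, all but countably many reals are regular values of $g$, so for a.e.\ $t>0$ the level set $\mathcal{A}_t=\{g=m+t^{1/p}\}$ is a $1$-manifold coinciding with $\partial\mathcal{U}_t\cap D$, where $\mathcal{U}_t=\{g>m+t^{1/p}\}$ is open — giving \eqref{propcheeger:enum2}--\eqref{propcheeger:enum4}. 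Proposition~\ref{prop:cheeger} then yields $h\int_D\phi_+^p\,d\mu\le\int_D|\nabla\phi_+^p|\,d\mu=p\int_D\phi_+^{p-1}|\nabla\phi_+|\,d\mu$, and H\"older's inequality with exponents $\tfrac{p}{p-1},p$ (the case $p=1$ being immediate) turns this into $\norm{\phi_+}_{L^p(D,w)}\le\tfrac ph\norm{\nabla\phi_+}_{L^p(D,w)}$; applying the same to $-g$ handles $\phi_-$. Since $\{g=m\}$ is Lebesgue-null (non-constant real-analytic), a.e.\ $\nabla\phi_+=\chi_{\{g>m\}}\nabla g$ and $\nabla\phi_-=-\chi_{\{g<m\}}\nabla g$ have disjoint supports, so $|g-m|^p=\phi_+^p+\phi_-^p$ and $|\nabla\phi_+|^p+|\nabla\phi_-|^p\le|\nabla g|^p$ a.e.; adding the two estimates gives the scalar claim with constant exactly $\tfrac ph$ — note that splitting into positive and negative parts costs no extra factor of $2$.

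To finish I would plug the scalar bounds for $u$ and $v$ back into the first display and use the elementary pointwise inequality $|\nabla u|^p+|\nabla v|^p\le\max\{1,2^{1-p/2}\}\,|\nabla F|^p$ (the $\ell^p$-versus-$\ell^2$ norm comparison on $\mathbb{R}^2$ applied to $(|\nabla u|,|\nabla v|)$, together with $|\nabla F|^2=|\nabla u|^2+|\nabla v|^2$). This yields
$$
\norm{F-F_D^w}_{L^p(D,w)}^p\le \max\{2^{3p/2-1},2^p\}\cdot\max\{1,2^{1-p/2}\}\cdot\Big(\tfrac ph\Big)^p\norm{\nabla F}_{L^p(D,w)}^p ,
$$
and a one-line case distinction ($p\ge2$ versus $1\le p\le2$) shows the product of the two maxima is at most $2^{3p/2}$, whose $p$-th root is $2^{3/2}<4$. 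Hence $\norm{F-F_D^w}_{L^p(D,w)}\le\tfrac{4p}{h}\norm{\nabla F}_{L^p(D,w)}$, with a little room to spare.

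The hard part will be the second step — concretely, checking that the super- and level sets of $\phi_+^p$ satisfy the geometric conditions \eqref{propcheeger:enum2}--\eqref{propcheeger:enum4} of Proposition~\ref{prop:cheeger}. This is exactly where the holomorphy of $F$ off its poles is indispensable: real-analyticity of $u$ and $v$ is what forces a.e.\ level of $g$ to be a regular value, so that the level sets are genuine $1$-manifolds and agree with the topological boundaries of the super-level sets, and the fact that the exceptional set $E\cup\{g=m\}$ (where this description may fail) is negligible for $\mu$, for $\nu$ and for Lebesgue measure is what lets one ignore it throughout the coarea argument.
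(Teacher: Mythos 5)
Your proof is correct and follows essentially the same route as the paper: Lemma~\ref{lem:qoofmean} with medians to reduce to a real scalar estimate, then Proposition~\ref{prop:cheeger} applied to the $p$-th powers of the positive and negative parts of $u-m_u$ (resp.\ $v-m_v$), H\"older to recover the $L^p$ gradient, and finally the gradient comparison between $\nabla u,\nabla v$ and $\nabla F$. The only cosmetic difference is in the last step: the paper uses the Cauchy--Riemann identity $|\nabla u|=|\nabla v|=|\nabla F|/\sqrt 2$ to get the exact factor $2^{1-p/2}$, whereas you use the looser $\ell^p$--$\ell^2$ comparison $\max\{1,2^{1-p/2}\}$, which is slightly weaker for $p>2$ but still yields the claimed bound $4p/h$.
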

\begin{proof}
Let $u$ and $v$ denote real and imaginary part of $F$ and let $\mu$ be defined as in \eqref{cheeger:defmu}.
Let $m_u$ be a median of $u$, 
i.e.: 
$$\mu(\mathcal{U}_{m_u}^u)\leq \frac{1}{2}\mu(D) \quad\text{and}\quad\mu(\mathcal{S}_{m_u}^u)\leq \frac{1}{2}\mu(D),$$
where $\mathcal{U}^u_t$ and $\mathcal{S}^u_t$ denote super- and sublevelsets of $u$.\\
To see why such a numer exists observe that the mapping $t\mapsto \mu(\mathcal{U}_t^u)$ is continuous and takes its values in the interval $[0,\mu(D)]$.
Therefore there has to be a number $m_u$ such that $\mu(\mathcal{U}_{m_u}^u)= \frac{1}{2}\mu(D)$ and since
$$
\mu(\mathcal{S}_{m_u}^u)\leq \mu(D)-\mu(\mathcal{U}_{m_u}^u) = \frac{1}{2}\mu(D)
$$
$m_u$ is a median of $u$.\\
Let $m_v$ be a median of $v$, defined analogously.
By Lemma \ref{lem:qoofmean} we get
$$
\norm{F-F_D^w}_{L^p(D,w)}^p \leq \max\{2^{3p/2-1},2^p\} \left( \norm{u-m_u}_{L^p(D,w)}^p + \norm{v-m_v}_{L^p(D,w)}^p\right).
$$
We will only estimate the first term in the right hand side of the inequality above. The second one can be dealt with in the exact same way.\\
Let $u_+$ and $u_-$ denote the positive and the negative part of the function $u-m_u$.
Then we have
$$
\psi := (u-m_u)\cdot \abs{u-m_u}^{p-1} = u_+^p-u_-^p
$$
and
$$
\norm{\psi}_{L^p(D,w)}^p=\norm{u-m_u}_{L^p(D,w)}^p=\norm{u_+^p}_{L^1(D,w)}+\norm{u_-^p}_{L^1(D,w)}.
$$
We want to apply Proposition \ref{prop:cheeger} on both $u_+^p$ and $u_-^p$.\\
First we check that the Assumptions \eqref{propcaitem1}-\eqref{propcaitem3} of Lemma \ref{lem:coareaformula} are satisfied:
\begin{itemize}
 \item[{\bf ad \eqref{propcaitem1}.}]
Let $E$ denote the set of points $x\in D$ such that $x$ is a pole of $F$.
The restriction of $\psi$ on $D\setminus E$ is a smooth function and therefore locally Lipschitz.
Since for $x,y\in D\setminus E$ we have
$$
\abs{u_{\pm}^p(x)-u_{\pm}^p(y)}\leq \abs{\psi(x)-\psi(y)}
$$
also $u_+^p$ and $u_-^p$ are locally Lipschitz on $D\setminus E$.
\item[\bf{ad \eqref{propcaitem2}.}]
Obviously, setting
$$
D_n:=\left(D\cap B_n(0)\right) \setminus \bigcup_{x\in E} \overline{B_{1/n}(x)}
$$
for $n\in\mathbb{N}$ is a valid choice.
\item[{\bf ad \eqref{propcaitem3}.}]
Since $E$ is a discrete set this property holds for any weight $w$.
\end{itemize}

Next we verify that also the Assumptions \eqref{propcheeger:enum1}-\eqref{propcheeger:enum4} of Proposition \ref{prop:cheeger} hold for $u_+^p$ and $u_-^p$:
We write $\suplvlset^+$, $\suplvlset^-$ and $\suplvlset^\psi$ for the superlevelsets of $u_+^p$, $u_-^p$ and $\psi$ and accordingly 
for the sub- and levelsets of the same functions.

\begin{itemize}
 \item[{\bf ad \eqref{propcheeger:enum1}.}] This property follows directly from the definition of $\psi$.
 \item[{\bf ad \eqref{propcheeger:enum4}.}] Since $\psi$ is continuous on $D\setminus E$ its super- and sublevelsets are open in $D\setminus E$. 
 The set of poles $E$ is discrete, therefore any open set in $D\setminus E$ also is open in $D$.
 The property then follows by the observation that for $t>0$ the following two equalities hold:
 $$
 \suplvlset^+=\suplvlset^\psi\quad \text{and}\quad \suplvlset^-=\mathcal{S}_{-t}^{\psi}.
 $$
 \item[{\bf ad \eqref{propcheeger:enum3}.}] 
Let $x\in\partial\suplvlset^{\psi}\cap D$, then for any $\varepsilon>0$ the ball $B_{\varepsilon}(x)$ contains points 
$y,y'$ such that $\psi(y)>t$ and $\psi(y')\leq t$.
By continuity of $\psi$ we infer $\psi(x)=t$, i.e. $x\in\lvlset^{\psi}$. Thus the inclusion 
$\partial\suplvlset^{\psi}\cap D \subset \lvlset$ holds for all $t$.\\
Assume next that the set $J:=\{t\in\realr: \lvlset^\psi \supsetneq \partial \suplvlset^\psi\cap D\}$ is of positive measure.
For $t\in J$ there exists an $x\in \lvlset^\psi$ such that $x\notin \partial\suplvlset^\psi\cap D$.
Therefore for sufficiently small $\varepsilon>0$ we have $B_{\varepsilon}(x)\cap \suplvlset^\psi=\emptyset$, so $\psi$ 
has a local maximum in $x$, i.e. $\nabla \psi(x)=0$.
Sard's theorem tells us that
$\psi\left( \{x: \nabla \psi(x)=0\}\right) \supseteq J$
is a zero set, which contradicts our assumption. This proves that $\lvlset^\psi = \partial \suplvlset^\psi\cap D$ 
for almost all $t\in\realr$.
A similar argument shows that also $\mathcal{A}^\psi_t=\partial \mathcal{S}^\psi_t\cap D$ holds true for almost all $t$.\\
The observation
$$
 \partial \suplvlset^+\cap D = \partial \suplvlset^\psi\cap D = \lvlset^\psi = \lvlset^+ \quad \text{and}\quad
 \partial \suplvlset^-\cap D = \partial \mathcal{S}_{-t}^\psi\cap D = \mathcal{A}_{-t}^\psi = \mathcal{A}_t^-
$$
concludes the argument.
 \item[{\bf ad \eqref{propcheeger:enum2}.}] Clearly it suffices to show that $\lvlset^\psi$ is a $1$-dimensional manifold for almost 
 all $t\in\realr$.\\
 Let $t\notin J$ and $x\in\lvlset^\psi$ then by the implicit function theorem $\lvlset^\psi$ is 
 locally the graph of a smooth function which implies that both $\mathcal{A}_t^+$ and $\mathcal{A}_t^-$ are $1$-dimensional manifolds for almost all $t>0$.
\end{itemize}
Proposition \ref{prop:cheeger} can now be applied on $u_+^p$ and $u_-^p$:
\begin{eqnarray*}
h \norm{u-m_u}_{L^p(D,w)}^p &=& h \int_D u_+^p(x) + u_-^p(x)~d\mu(x)\\
 &\leq& \int_D \abs{\nabla u_+^p(x)} + \abs{\nabla u_-^p(x)} ~d\mu(x) = \int_D \abs{\nabla \psi(x)} ~d\mu(x).
\end{eqnarray*}
We want to show that $ \norm{u-m_u}_{L^p(D,w)} \leq p/h \cdot \norm{\nabla u}_{L^p(D,w)}$:\\
For $p=1$ we are done.
For any $p>1$ we have
\begin{equation*}
 \abs{\nabla \psi(x)} \leq p \abs{u(x)-m_u}^{p-1} \abs{\nabla u(x)};
\end{equation*}
using Hölder's inequality we obtain 
\begin{eqnarray*}
 \int_D \abs{\nabla \psi(x)}~d\mu(x) &\leq& p \int_D \abs{u(x)-m_u}^{p-1} \cdot \abs{\nabla u(x)} ~d\mu(x)\\
 &\leq& p \left( \int_D \abs{u(x)-m_u}^{(p-1)p'} ~d\mu(x)\right)^{1/p'}\cdot \left(\int_D \abs{\nabla u(x)}^p d\mu(x) \right)^{1/p}\\
 &=& p \norm{u-m_u}_{L^p(D,w)}^{p-1} \norm{\nabla u}_{L^p(D,w)}.
\end{eqnarray*}

Note that by Cauchy Riemann equations we have for any $x\in D\setminus E$ that
$$
\abs{\nabla F(x)}=\sqrt{2}\abs{\nabla u(x)}=\sqrt{2}\abs{\nabla v(x)}.
$$
Combining our estimates we finally obtain
\begin{eqnarray*}
 \norm{F-F_D^w}_{L^p(D,w)}^p &\leq& \max\{2^{3p/2-1},2^p\} \left(\frac{p}{h}\right)^p \int_D \abs{\nabla u(x)}^p+\abs{\nabla v(x)}^p~d\mu(x)\\
  &=& \max\{2^{3p/2-1},2^p\} \left(\frac{p}{h}\right)^p 2^{1-p/2} \norm{\nabla F}_{L^p(D,w)}^p.
\end{eqnarray*}
Since $\max\{2^{p}, 2^{p/2+1}\} < 4$ Inequality \eqref{est:poincarecheeger} holds true.
\end{proof}

\subsection{Positivity of the Cheeger Constant for Finite Domains}
The goal of this section is to prove the following statement.
\begin{theorem}\label{thm:cheegerfin}
	Let $D\subset \realr^2$ be a bounded domain with Lipschitz boundary. Further, let $f\in \mathcal{S}'(\mathbb{R})$ such that $V_\varphi f$ has no zeros on $\partial D$
	and $1\le p < \infty$. 
	Then
	$$
	h_{p,D}(f)>0.
	$$
\end{theorem}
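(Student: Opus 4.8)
The plan is to deduce the claim from a weighted relative isoperimetric inequality: writing $w:=|V_\varphi f|^p$, $\mu:=w\,dx$ and $\nu:=w\,d\mathcal{H}^1$, it suffices to produce a constant $c_0>0$, independent of the competitor $C$, with $\mu(C)\le c_0\,\nu(\partial C\cap D)$ whenever $\mu(C)\le\frac12\mu(D)$, since this is precisely $h_{p,D}(f)\ge c_0^{-1}>0$. The first step is to record the structure of $w$. By Theorem~\ref{thm:gaborholo} the map $z=x+\i y\mapsto\eta(z)\,V_\varphi f(x,-y)$ is entire, and since $\eta$ never vanishes and $f\neq 0$ (because $V_\varphi f$ does not vanish on $\partial D$), the zero set of $V_\varphi f$ is discrete; by compactness of $\overline D$ and the hypothesis that $V_\varphi f$ has no zeros on $\partial D$, it is a finite set of interior points $p_1,\dots,p_N\in D$. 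Near $p_i$ one has $|V_\varphi f(x,y)|\asymp|(x,y)-p_i|^{m_i}$, where $m_i\ge 1$ is the order of the corresponding zero of the entire function, hence $w\asymp|(x,y)-p_i|^{\alpha_i}$ with $\alpha_i:=pm_i\ge 1$; on the complement of any fixed neighbourhood of $\{p_1,\dots,p_N\}$ the weight $w$ is continuous and bounded between two positive constants.

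Next I would build a finite ``good cover'' of $D$ and glue Poincar\'e inequalities. Pick disjoint balls $B_i=B_{r_i}(p_i)\Subset D$ so small that on $\overline{B_i}$ the weight $w$ is comparable (with constants depending on $i$) to the power weight $|(x,y)-p_i|^{\alpha_i}$, and set $W:=D\setminus\bigcup_i\overline{B_{r_i/2}(p_i)}$, on which $w$ is bounded between positive constants. Then $D=W\cup\bigcup_iB_i$; the sets $W,B_1,\dots,B_N$ are open and connected (removing finitely many disjoint closed disks compactly contained in $D$ leaves $D$ connected), and each $B_i$ meets $W$, so the nerve of this cover is connected. On each $B_i$ a weighted $(1,1)$-Poincar\'e inequality holds: up to the comparison constants the weight is $|\cdot|^{\alpha_i}$ with $\alpha_i>-2$, for which weighted Poincar\'e inequalities are classical (see e.g.\ \cite{dyda2013weighted}; alternatively one checks this directly in polar coordinates, the only point being $\alpha_i>-1$ so that $w$ is integrable along radial segments). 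On $W$, a bounded Lipschitz domain carrying a weight comparable to Lebesgue measure, the ordinary $(1,1)$-Poincar\'e inequality applies. A standard chaining argument along the connected nerve then yields a global weighted Poincar\'e inequality
\[
\big\|F-F_D^w\big\|_{L^1(D,w)}\ \le\ C_{poinc}\,\big\|\nabla F\big\|_{L^1(D,w)}.
\]

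Finally I would apply this to $F=\chi_C$. For an admissible $C$ the set $\partial C\cap D$ is a smooth $1$-manifold and, since $\alpha_i\ge 1>-1$, $\nu(\partial C\cap D)<\infty$; a routine mollification produces smooth functions converging to $\chi_C$ in $L^1(D,w)$ whose gradients converge in $L^1(D,w)$-mass to $\nu(\partial C\cap D)$, so the Poincar\'e inequality passes to $\chi_C$. Using $\|\chi_C-(\chi_C)_D^w\|_{L^1(D,w)}=2\mu(C)\mu(D\setminus C)/\mu(D)$ together with the fact that $\mu(C)\le\frac12\mu(D)$ forces $\mu(D\setminus C)\ge\frac12\mu(D)$, one gets $\mu(C)\le C_{poinc}\,\nu(\partial C\cap D)$, i.e.\ $h_{p,D}(f)\ge C_{poinc}^{-1}>0$.

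I expect the genuine difficulty to be entirely the local analysis at the zeros of $V_\varphi f$: away from them $w$ is comparable to Lebesgue measure and the assertion is just the classical positivity of the Cheeger constant of a bounded Lipschitz domain, whereas near a zero $w$ degenerates like $|(x,y)-p_i|^{\alpha_i}$ and one must know this polynomial degeneration is mild enough to keep a Poincar\'e/isoperimetric inequality alive --- which it is, precisely because $\alpha_i>-2$ (indeed $\alpha_i\ge 1$). A minor secondary point is the passage from smooth functions to $\chi_C$ in the glued inequality, where the Lavrentiev phenomenon \cite{zhikov1995lavrentiev} could in principle intervene; it does not here, because the competitors have smooth boundary inside $D$ and the weight is of power type near its zeros.
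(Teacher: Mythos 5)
Your proposal is correct in substance but follows a genuinely different architecture than the paper. The paper first replaces $|V_\varphi f|^p$ by an equivalent model weight (constant away from the zeros, $|z-\zeta_i|^{m_ip}$ near each zero), reduces to \emph{connected} competitors via Lemma~\ref{lem:cheegerconnected}, and then runs a case analysis: competitors whose boundary has substantial length away from the zeros are handled directly; competitors touching $\partial D$ live in a collar where the weight is constant and are handled by the unweighted Cheeger constant $h(D,1)>0$, which the paper proves by a $BV$ direct-method minimization (Proposition~\ref{prop:cheegerunweighted}, Theorem~\ref{thm:cheegerfinunbounded}); competitors inside a small ball around a zero are handled by the weighted isoperimetric inequality for $|z|^q$ from \cite{Dahlberg2010}; the rest by the classical isoperimetric inequality. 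You instead prove a global weighted $(1,1)$-Poincar\'e inequality on $D$ by gluing local ones (power weight on balls around the zeros, Lebesgue-comparable weight on the Lipschitz remainder $W$) and then derive the isoperimetric bound by testing with $\chi_C$. Your route avoids both the reduction to connected sets and the $BV$ minimization for $h(D,1)$, replacing them by standard Poincar\'e inequalities and a finite chaining argument; the local power-weight Poincar\'e inequality you need does hold (your polar-coordinate sketch works -- the relevant threshold is in fact $\alpha_i>-2$, not $-1$, but this is immaterial since $\alpha_i\ge 1$). The paper's route, in exchange, needs no gluing and imports the sharp isoperimetric input near the zeros off the shelf.

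The one place where your write-up is lighter than it should be is the passage from smooth functions to $\chi_C$. A plain mollification of $\chi_C$ (extended by zero) does \emph{not} have weighted gradient mass in $D$ converging to $\nu(\partial C\cap D)$ when $\partial C$ meets $\partial D$ on a set of positive length: the transition layer along $\partial C\cap\partial D$ contributes $\int_{\partial C\cap\partial D}w\,d\mathcal{H}^1$ as well. This is harmless for Theorem~\ref{thm:cheegerfin} as stated, since the numerator in Definition~\ref{def:cheegerconst} integrates over all of $\partial C$; to obtain the sharper bound with $\partial C\cap D$ only (the version \eqref{def:cheegerdw} actually used in Theorem~\ref{thm:maincheeger}), you should approximate in the open set $D$ with strict convergence of the total variation (variable-scale/partition-of-unity mollification as in \cite{evansgariepy,ambrosio}), which produces gradients concentrating only on $\partial^*C\cap D$. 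A second small point: finiteness of $\nu(\partial C\cap D)$ does not force $\chi_C\in BV(D)$, because $\partial C$ may have infinite Euclidean length accumulating at a zero of $V_\varphi f$ while its weighted length stays finite; this is repaired by first excising balls $B_\rho(\zeta_i)$ (whose weighted boundary length is $O(\rho^{1+\alpha_i})$) and letting $\rho\to 0$. With these two standard fixes your argument is complete; your remark that the Lavrentiev phenomenon \cite{zhikov1995lavrentiev} does not intervene is correct, since you only ever use the Poincar\'e inequality for smooth functions and pass to $\chi_C$ by explicit approximation.
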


To this end we will need the fact that in the definition of the Cheeger constant it suffices to consider connected sets $C$.
\begin{lemma}\label{lem:cheegerconnected}
 Let $w$ be a nonnegative weight on a domain $D\subset \realr^2$ and let $h$ denote the Cheeger constant (see Equation \eqref{def:cheegerdw}).
 Let $\mathcal{C}$ be defined as in Equation \eqref{def:cheegerc}.
 Then 
 \begin{equation}\label{eq:lemcheegerconnected}
 h=\inf_{\substack{C\in\mathcal{C}\\ C \text{ connected}}} \frac{\int_{\partial C} w~d\sigma}{\int_C w}.
 \end{equation}
\end{lemma}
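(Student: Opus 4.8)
\emph{The easy inclusion.} Write $h_c$ for the right-hand side of \eqref{eq:lemcheegerconnected}. Since the infimum defining $h_c$ ranges over a subfamily of $\mathcal{C}$, we have $h\le h_c$ for free, so everything reduces to showing that each $C\in\mathcal{C}$ with $\mu(C)>0$ satisfies $\nu(\partial C\cap D)\ge h_c\,\mu(C)$; taking the infimum over such $C$ then yields $h\ge h_c$. (Throughout, $\mu$ and $\nu$ are the measures of \eqref{cheeger:defmu}--\eqref{cheeger:defnu}, so $\int_{\partial C}w\,d\sigma=\nu(\partial C\cap D)$ and $\int_C w=\mu(C)$.) The plan is: first replace $C$ by a set $C^*\in\mathcal C$ of the \emph{same} $\mu$-measure, with $\nu$-perimeter no larger, and such that distinct components of $C^*$ no longer share boundary; then split $C^*$ into components and invoke the elementary mediant inequality $\frac{\sum_k a_k}{\sum_k b_k}\ge\inf_k\frac{a_k}{b_k}$ (for $a_k\ge 0$, $b_k>0$).

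\emph{Local structure from the manifold hypothesis.} The whole argument rests on the assumption that $\partial C\cap D$ is an embedded $1$-manifold: near each $x\in\partial C\cap D$ it is a single arc splitting a small disk $B\subset D$ into two half-disks $B^+,B^-$, and since $C$ is open and disjoint from $\partial C$, each of $B^\pm$ lies in $C$ or is disjoint from $\overline C$. Three consequences will be used: (a) every point of $D$ has a neighbourhood meeting at most two connected components of $C$ (at most two half-disks at a boundary point, one component at an interior point, none off $\overline C$), so $C$ has at most countably many components, all open, and no compact subset of $D$ meets infinitely many of them; (b) the components of $\partial C\cap D$ do not accumulate in $D$, and along each such component the property ``$C$ lies on both sides'' is locally constant, hence constant; (c) for a component $C_i$ of $C$, the set $\partial C_i\cap D$ is relatively clopen in $\partial C\cap D$, hence itself a $1$-manifold, and $\bigcup_i(\partial C_i\cap D)=\partial C\cap D$. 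Finally, a $1$-manifold in $\mathbb{R}^2$ is Lebesgue-null and $\mu$ is absolutely continuous with respect to Lebesgue measure, so $\mu(\partial C\cap D)=0$; note that the naive estimate $\nu(\partial C\cap D)\ge\sum_i\nu(\partial C_i\cap D)$ is \emph{false} in general, because two components of $C$ can abut along a common arc (e.g.\ $C$ a disk with a concentric circle deleted).

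\emph{Regularisation and conclusion.} Let $\Gamma$ be the union of those components of $\partial C\cap D$ along which $C$ lies on both sides; by (b) this is a relatively closed $1$-submanifold of $D$ without boundary, with $\mu(\Gamma)=0$. Put $C^*:=C\cup\Gamma$. Near any point of $\Gamma$ one has $B=B^+\cup(\text{arc})\cup B^-$ with $B^\pm\subset C$ and the arc in $\Gamma$, so the point is interior to $C^*$; hence $C^*$ is open, $\mu(C^*)=\mu(C)\le\tfrac12\mu(D)$, and $\partial C^*\cap D=(\partial C\cap D)\setminus\Gamma$ is again a $1$-manifold, so $C^*\in\mathcal C$, with $\nu(\partial C^*\cap D)=\nu(\partial C\cap D)-\nu(\Gamma)\le\nu(\partial C\cap D)$. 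By construction $C^*$ has no two-sided boundary components: every component of $\partial C^*\cap D$ is a one-sided component $M$ of $\partial C\cap D$, which lies at positive distance from $\Gamma$, so $C^*=C\cup\Gamma$ is still one-sided along $M$. Therefore, for the components $\{C^*_k\}$ of $C^*$, the sets $\partial C^*_k\cap D$ are pairwise disjoint and cover $\partial C^*\cap D$, while $\mu(C^*)=\sum_k\mu(C^*_k)$. Each $C^*_k$ is open, connected, has $\partial C^*_k\cap D$ a $1$-manifold and $\mu(C^*_k)\le\tfrac12\mu(D)$, hence $C^*_k\in\mathcal C$, so $\nu(\partial C^*_k\cap D)\ge h_c\,\mu(C^*_k)$ (trivially so if $\mu(C^*_k)=0$). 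Summing,
$$
\nu(\partial C\cap D)\ \ge\ \nu(\partial C^*\cap D)\ =\ \sum_k\nu(\partial C^*_k\cap D)\ \ge\ h_c\sum_k\mu(C^*_k)\ =\ h_c\,\mu(C^*)\ =\ h_c\,\mu(C).
$$
Taking the infimum over $C\in\mathcal C$ gives $h\ge h_c$, hence equality.

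\emph{The main obstacle.} The one genuinely delicate point is exactly the interface phenomenon: components of $C$ may share a boundary arc, which both breaks the naive perimeter-additivity and can make the Cheeger ratio of $C$ strictly smaller than that of each of its components. The regularisation $C\rightsquigarrow C\cup\Gamma$ is designed to delete precisely these interfaces at no cost in $\mu$-measure, and checking that it keeps us inside $\mathcal C$ — above all that the boundary stays a $1$-manifold, and that $\Gamma$ is closed and is absorbed into the interior — is where the embedded-$1$-manifold hypothesis on $\partial C\cap D$ is used in an essential way.
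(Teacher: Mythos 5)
Your proof is correct, and it follows the same skeleton as the paper's argument — decompose $C$ into its (at most countably many) open connected components and apply the mediant inequality $\frac{\sum_l a_l}{\sum_l b_l}\ge\inf_l\frac{a_l}{b_l}$ — but with one genuine addition: the regularisation $C\rightsquigarrow C^*=C\cup\Gamma$ that removes two-sided boundary components before splitting into components. The paper's proof goes directly from $C$ to its components $C_l$ and writes $\int_{\partial C\cap D}w\,d\sigma=\sum_l\int_{\partial C_l\cap D}w\,d\sigma$, i.e.\ it takes weighted perimeter to be additive over components; as you correctly observe, this can fail (your example of a disk with a concentric circle deleted: the deleted circle is counted once in $\partial C\cap D$ but twice in $\sum_l\partial C_l\cap D$, and the resulting inequality $\nu(\partial C\cap D)\le\sum_l\nu(\partial C_l\cap D)$ goes in the wrong direction for the mediant step). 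The lemma itself survives because such a $C$ is never a better competitor than its filled-in version, and your step $C^*=C\cup\Gamma$ makes exactly this precise: it preserves $\mu(C)$, does not increase $\nu$, stays in $\mathcal C$ (here your clopen-ness argument for $\partial C_l^*\cap D$ inside the $1$-manifold $\partial C^*\cap D$, and the local two-half-disk analysis, are used essentially), and restores disjointness of the component boundaries so that additivity and then the mediant inequality apply. So your write-up is not merely an alternative route; it supplies the repair that the paper's one-line equality needs, at the cost of the extra topological bookkeeping about one-sided versus two-sided boundary components. The only cosmetic caveats are the definitional edge case $\mu(C)=0$ (which you already handle trivially) and the phrase ``lies at positive distance from $\Gamma$,'' which is stronger than needed — local disjointness of the arc through a point of a one-sided component from $\Gamma$ (immediate since components are maximal connected sets) already gives one-sidedness of $C^*$ along $M$.
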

\begin{proof}
 The inequality "$\leq$" in equation \eqref{eq:lemcheegerconnected} is trivial.
 
 It is an easy exercise to see that for positive numbers $(a_l)_{l\in\mathbb{N}}, (b_l)_{l\in\mathbb{N}}$ the following inequality holds:
 \begin{equation}\label{ineq:albl}
 \frac{\sum_l a_l}{\sum_l b_l} \geq \inf_l \frac{a_l}{b_l}.
 \end{equation}
 
For arbitrary $C\in\mathcal{C}$ -- since $C$ is open -- we can write $C$ as a disjoint union of at most countably many connected, open sets $C_l, ~l\in\mathbb{N}$.
Applying Inequality \eqref{ineq:albl} on $a_l=\int_{\partial C_l\cap D} w ~d\sigma$ and $b_l=\int_{C_l} w$ gives 
\begin{equation*}
 \frac{\int_{\partial C\cap D} w~d\sigma}{\int_C w}=\frac{\sum_l \int_{\partial C_l\cap D} w~d\sigma}{\sum_l \int_{C_l} w} 
 \geq \inf_l \frac{\int_{\partial C_l\cap D} w~d\sigma}{\int_{C_l} w} \geq \inf_{\substack{C\in\mathcal{C}\\ C~\text{connected}}} \frac{\int_{\partial C\cap D} w~d\sigma}{\int_C w}.
\end{equation*}
Taking the infimum over all $C\in \mathcal{C}$ yields the desired result.
\end{proof}
First we will show that Theorem \ref{thm:cheegerfin} holds in the case $w\equiv 1$.
Recall that for $D\subset \realr^d$ open a function $u\in L^1(D)$ is of bounded variation ($u\in BV(D)$) if 
$$
\sup_{\phi\in C_c^1(D,\realr^d)\atop \abs{\phi}\leq 1} \int_D u \dvg \phi \leq \infty,
$$
where $C_c^1(D,\realr^d)$ denotes the set of continuously differentiable functions from $D$ to $\realr^d$ whose support is a compact subset of $D$.
We will make use of the following proporties of functions of bounded variation (see \cite[Chapter 5]{evansgariepy} for details):
For any $u\in BV(D)$ there exists a Radon measure $\mu$ on $D$ and a $\mu$-measurable function $\sigma:D\rightarrow \realr^d$ such that 
$\abs{\sigma} = 1$ $\mu-$a.e. and
$$
\int_D u \dvg \phi = - \int_D \phi\cdot \sigma ~d\mu, \quad \text{for all} ~\phi \in C_c^1(D,\realr^d).
$$
We will use the notation $\abs{\nabla u}=\mu$ in the following. 
Equipped with the norm 
$$
\norm{\cdot}_{BV(D)} := \norm{\cdot}_{L^1(D)}+\abs{\nabla \cdot}(D)
$$
$BV(D)$ becomes a Banach space.\\

To show that Theorem \ref{thm:cheegerfin} holds in the case $w\equiv 1$ let us consider the functional
\begin{equation}\label{cheegerfin:functional}
 \mathcal{F}: u \mapsto \abs{\nabla u}(D),\quad u\in BV(D)
\end{equation}
and the minimization problem
\begin{equation}\label{cheegerfin:minimization}
 \text{minimize } \mathcal{F} \text{ in }~V:=\{u\in BV(D): \norm{u}_{L^1(D)} =1,~\abs{\suppp u}\leq \frac12\abs{D}\}.
\end{equation}
Prooving that $h(D,1)>0$ by showing that \eqref{cheegerfin:minimization} has a solution is inspired by \cite{carlier2007,Ionescu2005} 
where they considered a slightly different problem, namely 
prooving positivity of the quantity
$$
\inf_{C\subset D \text{ open, } \partial C \text{ smooth}} \frac{\ell(\partial C)}{\abs{C}}.
$$
Note that this situation corresponds to estimating Poincar\'e constants for functions which satisfy Dirichlet conditions:
$$
\norm{u}_{L^p(D)} \leq C~\norm{\nabla u}_{L^p(D)}, \quad \text{for all } u ~\text{vanishing on }\partial D.
$$

\begin{proposition}\label{prop:cheegerunweighted}
 Let $D\subset\realr^2$ be a bounded and connected Lipschitz domain
 and let $\mathcal{C}:=\{C\subset D: \partial C\cap D \text{ is smooth},~\abs{C}\leq \frac12 \abs{D}\}$.
 Let $\mathcal{F}$ and $V$ be defined as in \eqref{cheegerfin:functional}, \eqref{cheegerfin:minimization}.
 Then
 \begin{enumerate}[(i)]
  \item There exists $u^*\in V$ such that 
  $$
  \mathcal{F}(u^*)=\inf_{u\in V} \mathcal{F}(u)>0.
  $$
  \item For any $C\in\mathcal{C}$ we have $\mathcal{F}(\chi_C)=\ell(\partial C\cap D)$.
 \end{enumerate}
\end{proposition}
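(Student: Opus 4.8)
The plan is to dispatch part (ii) by the Gauss--Green formula and part (i) by the direct method of the calculus of variations, the only genuinely delicate point being the behaviour of the support constraint under $L^1$-convergence.

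For part (ii), given $C\in\mathcal{C}$ the set is open with $\partial C\cap D$ a smooth $1$-manifold, so $\chi_C\in BV(D)$, and I would test $\abs{\nabla\chi_C}(D)$ against $\phi\in C_c^1(D;\realr^2)$ with $\abs{\phi}\le 1$: the divergence theorem gives $\int_D\chi_C\,\dvg\phi=\int_{\partial C\cap D}\phi\cdot\nu\,d\mathcal{H}^1$, with $\nu$ the outward unit normal along $\partial C\cap D$, whence $\abs{\int_D\chi_C\,\dvg\phi}\le\mathcal{H}^1(\partial C\cap D)=\ell(\partial C\cap D)$ and, taking the supremum over admissible $\phi$, $\mathcal{F}(\chi_C)\le\ell(\partial C\cap D)$; the reverse inequality comes from choosing admissible $\phi$ that approximate $\nu$ along $\partial C\cap D$. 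This is the classical identification of the distributional perimeter of a smoothly bounded set with the length of its boundary, so I would simply cite \cite[Chapter 5]{evansgariepy} (see also \cite{ambrosio}).

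For part (i), first note $V\neq\emptyset$ (take $\chi_B/\abs{B}$ for a small ball $B\subset D$ with $\abs{B}\le\frac12\abs{D}$), so $m:=\inf_{u\in V}\mathcal{F}(u)<\infty$. I would take a minimizing sequence $u_n\in V$ with $\mathcal{F}(u_n)\to m$, observe that $\norm{u_n}_{BV(D)}=\norm{u_n}_{L^1(D)}+\abs{\nabla u_n}(D)=1+\mathcal{F}(u_n)$ is bounded, and invoke the compact embedding $BV(D)\hookrightarrow L^1(D)$ (valid since $D$ is a bounded Lipschitz domain) to extract a subsequence, not relabeled, with $u_n\to u^*$ in $L^1(D)$ and a.e.\ in $D$. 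Then $\norm{u^*}_{L^1(D)}=\lim_n\norm{u_n}_{L^1(D)}=1$, and lower semicontinuity of the total variation under $L^1$-convergence gives $\mathcal{F}(u^*)\le\liminf_n\mathcal{F}(u_n)=m$. To conclude $u^*\in V$ I must transport the constraint $\abs{\suppp u_n}=\abs{\{u_n\ne 0\}}\le\frac12\abs{D}$, i.e.\ $\abs{\{u_n=0\}}\ge\frac12\abs{D}$, to the limit: for a.e.\ $x$ with $u_n(x)\to u^*(x)$ and $u^*(x)\ne 0$ one has $\chi_{\{u_n=0\}}(x)=0$ for all large $n$, so $\limsup_n\chi_{\{u_n=0\}}\le\chi_{\{u^*=0\}}$ a.e., and the reverse Fatou lemma (applicable since $0\le\chi_{\{u_n=0\}}\le 1$ on the bounded set $D$) yields $\frac12\abs{D}\le\limsup_n\abs{\{u_n=0\}}\le\abs{\{u^*=0\}}$, hence $\abs{\{u^*\ne 0\}}\le\frac12\abs{D}$ and $u^*\in V$, so $\mathcal{F}(u^*)=m$. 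For the positivity $m>0$ I would argue by contradiction: the same construction applied to a sequence with $\mathcal{F}(u_n)\to 0$ produces $u^*\in V$ with $\abs{\nabla u^*}(D)=0$; since $D$ is connected this forces $u^*\equiv c$ a.e., and $\norm{u^*}_{L^1(D)}=1$ gives $\abs{c}\abs{D}=1$, so $c\ne 0$ and $\abs{\{u^*\ne 0\}}=\abs{D}>\frac12\abs{D}$, contradicting $u^*\in V$. Thus $m>0$ and the minimizer $u^*$ constructed above satisfies $\mathcal{F}(u^*)=m>0$.

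The hard part is the step transporting the support constraint through the limit: unlike the $L^1$-norm and the total variation, $\abs{\suppp u}$ is only one-sidedly stable under $L^1$-convergence, so it has to be handled via the reverse Fatou estimate above together with the a.e.\ convergence of the chosen subsequence; it is also precisely this constraint that excludes the constant function and makes $m$ strictly positive. Everything else — coercivity in $BV$, the compactness $BV(D)\hookrightarrow\hookrightarrow L^1(D)$ for bounded Lipschitz domains, lower semicontinuity of $u\mapsto\abs{\nabla u}(D)$, and the fact that a $BV$ function with vanishing total variation on a connected domain is a.e.\ constant — is classical and can be taken from \cite{evansgariepy} and \cite{ambrosio}.
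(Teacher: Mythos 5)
Your proof is correct and follows essentially the same route as the paper: the direct method in $BV(D)$ using the compact embedding into $L^1(D)$ and lower semicontinuity of the total variation, exclusion of nonzero constants on the connected domain to get positivity, and the classical identification of the perimeter of a set with smooth (relative) boundary with $\mathcal{H}^1(\partial C\cap D)$ for part (ii). The only cosmetic difference is that you pass the support constraint to the limit via a.e.\ convergence and reverse Fatou, while the paper uses an Egorov-type almost-uniform-convergence argument; these are interchangeable implementations of the same step.
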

\begin{proof}
 \begin{itemize}
  {\bf ad (i).} Choose a minimizing sequence $(u_n)_{n\in\mathbb{N}}\subset V$, i.e. 
  $$
  \lim_n \mathcal{F}(u_n)=\inf_{u\in V} \mathcal{F}(u).
  $$
  Then $(u_n)_{n\in\mathbb{N}}$ is bounded in $BV(D)$ and therefore by \cite[Chapter 5.2.3, Theorem 4]{evansgariepy} there is a subsequence which we still call $(u_n)_{n\in\mathbb{N}}$
  and a $u^*\in BV(D)$ such that $u_n \rightarrow u^*$ in $L^1(D)$.
  Obviously $\norm{u^*}_{L^1(D)}=1$.\\
  
  To show that $u^*\in V$ it remains to verify that $\abs{\suppp u^*}\leq \frac12 \abs{D}$.\\
  Assume that $\abs{\suppp u^*}> \frac12 \abs{D}$.
  Then for $\varepsilon>0$ sufficiently small we have $\abs{D_\varepsilon}>\frac12 \abs{D}$ where we set $D_\varepsilon:=\{x:\abs{u^*(x)}>\varepsilon\}$.
  Since $L^1$-convergence implies almost uniformly convergence there must be a set $E_\varepsilon\subset D$ such that 
  $$
  \abs{E_\varepsilon}<\frac12 \abs{D}-\abs{D_\varepsilon}\quad \text{and}\quad u_n\rightarrow u^* ~\text{uniformly on } D\setminus E_\varepsilon.
  $$
  In particular convergence is uniformly on $D_\varepsilon \setminus E_\varepsilon$.
  Therefore there exists $N\in\mathbb{N}$ such that $\abs{u_n(x)-u^*(x)}<\varepsilon/2$ for all $x\in D_\varepsilon \setminus E_\varepsilon$.
  Applying the inverse triangle inequality yields $\abs{u_n(x)}>\varepsilon/2$ for these $n$ and $x$.
  By construction we have $\abs{D_\varepsilon\setminus E_\varepsilon}>\frac12 \abs{D}$ which contradicts the assumption that $(u_n)_{n\in\mathbb{N}}\subset V$.\\
  
  By \cite[Chapter 5.2.1, Theorem 1]{evansgariepy} the functional $\mathcal{F}$ is lower semicontinuous w.r.t. $L^1$-norm. Thus we obtain
  $$
  \inf_{u\in V} \mathcal{F}(u)\leq \mathcal{F}(u^*) \leq \liminf_{n} \mathcal{F}(u_n) = \lim_n \mathcal{F}(u_n) = \inf_{u\in V} \mathcal{F}(u).
  $$
  
  What remains is to show that $\mathcal{F}(u^*)$ is strictly positive:\\
  Assume $\mathcal{F}(u^*)=0$. Then $\abs{\nabla u^*}$ is the zero measure which by \cite[Proposition 3.2(a)]{ambrosio} amounts to $u^*$ being constant on the connected set $D$.
  Since there is no constant function $u^*$ such that 
  $$
  \abs{\suppp u^*} \leq \frac12 \abs{D} \quad \text{and}\quad \norm{u^*}_{L^1(D)}=1,
  $$
  we have a contradiction.
  {\bf ad (ii).}
  Any $C\in\mathcal{C}$ can be extended to a set $C'\supseteq C$ such that 
  $$
  C'\cap D = C \quad \text{and}\quad \partial C' ~\text{ is smooth}.
  $$
  Note that if $\partial C\subset D$ we can choose $C'=C$.
  Since $C'$ has smooth boundary the length of $\partial C'\cap D$ can be measured by the total variation of the gradient of $\chi_{C'}$, i.e.
  $$
  \abs{\nabla \chi_{C'}}(D)= \mathcal{H}^{n-1}(\partial C'\cap D),
  $$
  see \cite[Chapter 5.1, Example 2]{evansgariepy}.
  Therefore we obtain
  $$
  \abs{\nabla \chi_C}(D)=\abs{\nabla \chi_{C'}}(D)= \mathcal{H}^{n-1}(\partial C'\cap D)=  \mathcal{H}^{n-1}(\partial C\cap D)=\ell(\partial C\cap D).
  $$
 \end{itemize}
\end{proof}

As a direct consequence we obtain the following theorem.
\begin{theorem}\label{thm:cheegerfinunbounded}
 Let $D\subset \realr^2$ be a bounded Lipschitz domain.
 Then 
 $$
 h(D,1)>0,
 $$
 where $h(D,1)$ is defined as in \eqref{def:cheegerdw}.
\end{theorem}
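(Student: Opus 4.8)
The plan is to derive Theorem~\ref{thm:cheegerfinunbounded} as an immediate consequence of Proposition~\ref{prop:cheegerunweighted} by a normalization/scaling argument; the genuine analytic content (existence of a minimizer, strict positivity of its energy) is already packaged in that proposition, so the remaining work is bookkeeping. Since a \emph{domain} is connected, the remark following \eqref{def:cheegerdw} does not force $h=0$, and for the constant weight $w\equiv 1$ the definition \eqref{def:cheegerdw} reads
$$
h(D,1)=\inf_{C\in\mathcal{C}(D,1)}\frac{\ell(\partial C\cap D)}{|C|},
$$
where $\mathcal{C}(D,1)$ consists of the nonempty open $C\subset D$ with $\partial C\cap D$ a smooth $1$-manifold and $|C|\le\tfrac12|D|$; this collection sits inside the family $\mathcal{C}$ appearing in Proposition~\ref{prop:cheegerunweighted}. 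It thus suffices to show that the above ratio is bounded below by $m:=\inf_{u\in V}\mathcal{F}(u)$, which by Proposition~\ref{prop:cheegerunweighted}(i) equals $\mathcal{F}(u^\ast)>0$, with $V$ and $\mathcal{F}$ as in \eqref{cheegerfin:functional}--\eqref{cheegerfin:minimization}.

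To carry this out I would fix an arbitrary $C\in\mathcal{C}(D,1)$. If $\ell(\partial C\cap D)=\infty$ the ratio is infinite and nothing needs to be shown, so I may assume $\ell(\partial C\cap D)<\infty$, whence $\chi_C\in BV(D)$. As $C$ is nonempty and open, $|C|>0$, so $u:=\chi_C/|C|\in BV(D)$ satisfies $\|u\|_{L^1(D)}=1$. Its support has Lebesgue measure $|C|$: indeed $\partial(\suppp u)\subset(\partial C\cap D)\cup\partial D$, and the first set is a smooth curve while the second is Lebesgue-null because $D$ has Lipschitz boundary. Hence $|\suppp u|=|C|\le\tfrac12|D|$ and $u\in V$.

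Then, invoking the positive $1$-homogeneity of the total variation together with Proposition~\ref{prop:cheegerunweighted}(ii), I obtain
$$
\mathcal{F}(u)=\frac{1}{|C|}\mathcal{F}(\chi_C)=\frac{\ell(\partial C\cap D)}{|C|}.
$$
Since $u\in V$, Proposition~\ref{prop:cheegerunweighted}(i) gives $\mathcal{F}(u)\ge m>0$; as $C$ was arbitrary, taking the infimum over $\mathcal{C}(D,1)$ yields $h(D,1)\ge m>0$.

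The only steps that warrant any care are the reduction to sets $C$ of finite relative perimeter (so that $\chi_C\in BV(D)$) and the verification that the normalized indicator lies in the constraint set $V$; neither is a real obstacle. All the hard work — the direct-method argument producing $u^\ast$ and, crucially, the argument that $\mathcal{F}(u^\ast)>0$ because no constant function can satisfy $\|u\|_{L^1(D)}=1$ and $|\suppp u|\le\tfrac12|D|$ simultaneously on a connected $D$ — is supplied by Proposition~\ref{prop:cheegerunweighted}, which we are entitled to assume.
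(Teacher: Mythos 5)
Your proposal is correct and follows essentially the same route as the paper: the paper's proof likewise writes $\ell(\partial C\cap D)/|C|=\mathcal{F}\bigl(\|\chi_C\|_{L^1(D)}^{-1}\chi_C\bigr)\ge\mathcal{F}(u^*)>0$ using Proposition~\ref{prop:cheegerunweighted}(i)--(ii) and the $1$-homogeneity of the total variation. Your extra checks (finite perimeter so that $\chi_C\in BV(D)$, and $|\suppp(\chi_C/|C|)|=|C|$ because $\partial C\cap D$ and $\partial D$ are Lebesgue-null) are details the paper leaves implicit, not a deviation in method.
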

\begin{proof}
We use the notation of Proposition \ref{prop:cheegerunweighted}.
Since $\mathcal{F}(\chi_C)=\ell(\partial C\cap D)$ for any $C\in\mathcal{C}$ we obtain
$$
\frac{\ell(\partial C \cap D)}{\abs{C}} = \frac{\mathcal{F}(\chi_C)}{\norm{\chi_C}_{L^1(D)}} = \mathcal{F}\left( \norm{\chi_C}_{L^1(D)}^{-1}\cdot \chi_C\right)\geq \mathcal{F}(u^*)>0.
$$
\end{proof}

Let us get to the general case where $w$ emerges from an Gabor measurement, i.e. $w=\abs{V_\varphi f}^p$.
On a bounded domain $D$ one can construct a rather simple, equivalent weight $w_r\sim w$ which we will analyze:
\begin{proof}[Proof of Theorem \ref{thm:cheegerfin}]
	The idea of the proof is to construct a weight that is equivalent to $\abs{V_\varphi f}^p$ which locally is either constant or looks like $z\mapsto \abs{z}^q$ for some 
	positive number $q$. We then seek to exploit results on Cheeeger constants for the case $w\equiv 1$ aswell as isoperimetric inequalities w.r.t weights of the form $\abs{z}^p$.\\
	
	Up to multiplication with a nonzero function $\eta$ and a reflection in the plane the function $V_\varphi f$ is an entire function (see Theorem \ref{thm:gaborholo}), therefore 
	can only have a finite number of zeros $(\zeta_i)_{i=1}^N$ in $D$.
	We have for every $z=x+\i y \in D$ that
	\begin{equation}
	\label{eq:gaborfactorfinite}
	V_\varphi f(z) = \prod_{i=1}^N (z - \zeta_i)^{m_i} \cdot g(z),
	\end{equation}
	where $m_i\in \mathbb{N}$ denotes the multiplicity of the zero $\zeta_i$ and $g$ is a continuous function without zeros on $\overline{D}$.
	Due to the compactness of $\overline{D}$ the function $\abs{g}$ assumes a nonzero minimum and a maximum on $\overline{D}$.\\
	For $r>0$ let $D_0^r:= \bigcup_{i=1}^N B_r(\zeta_i)$ and define 
	$$
	w_r(z):=\left\{ \begin{array}{cc} 1 & z\in D\setminus D_0^r \\ |z-\zeta_i|^{m_i\cdot p} & z\in B_r(\zeta_i)\mbox{ for some }i\in \{1,\dots , N\}\end{array} \right.
	$$
	This definition is ambiguous if $z\in B_r(\zeta_i)\cap B_r(\zeta_j)$ for $i\neq j$. But clearly there is a $r_0>0$ such that all these intersections will be empty for $r<r_0$.\\
	For $\delta>0$ let us define the set
	$$
	D_\delta:=\{x\in D: \dist(x,\partial D)<\delta\}.
	$$
	Obviously for $\delta\rightarrow 0$ we have that $\abs{D_\delta}\rightarrow 0$.
	Since $V_\varphi f$ has no zeros on the boundary $\partial D$ we can choose $\delta$ such that 
	$$
	\abs{D_\delta}\leq \frac12 \abs{D} \quad \text{and}\quad \bigcup_{i=1}^N B_\delta(\zeta_i) \cap D_\delta = \emptyset.
	$$
	From now on we consider the weight $w_r$ for fixed $0<r< \min\{r_0,\delta,1\}$.
	Since $w_r \sim \abs{V_\varphi f}^p$ in $D$ and by Lemma \ref{lem:cheegerconnected} it suffices to show that the quantity 
	$$
	\frac{\int_{\partial C} w_r~d\sigma}{\int_C w_r}
	$$
	can be uniformly bounded from below by a positive constant
	for all connected, open sets $C\subset D$ with smooth boundary such that $\int_C \abs{V_\varphi f}^p \leq \frac12 \int_C \abs{V_\varphi f}^p$.\\
	Let us fix a $C$ with these properties.
	We can now look at the following two cases seperately:
	\begin{itemize}
	 \item[Case A:] $\ell(\partial C\cap D \setminus \bigcup_{i=1}^N B_{r/2}(\zeta_i)) \geq r/2$,
	 \item[Case B:] $\ell(\partial C\cap D \setminus \bigcup_{i=1}^N B_{r/2}(\zeta_i)) < r/2$.
	\end{itemize}
	Within Case B we further distinguish
	\begin{itemize}
	 \item[Case B.1:] $\partial C\cap \partial D \neq \emptyset$,
	 \item[Case B.2:] $\partial C\cap \partial D = \emptyset$ and $C\cap B_{r/2}(\zeta_i)\neq \emptyset$ for some $i$,
	 \item[Case B.3:] $\partial C \cap \partial D= \emptyset$ and $C\cap \bigcup_{i=1}^N B_{r/2}(\zeta_i) =\emptyset$.
	\end{itemize}

	{\bf ad A.} Let $m:=\max_i\{m_i\}$. Since $w_r(z)\geq (r/2)^{m p}$ for $z\in D \setminus \bigcup_{i=1}^N B_{r/2}(\zeta_i)$ we get the estimate
	$$
	\frac{\int_{\partial C\cap D} w_r~d\sigma}{\int_C w_r} \geq \frac{(r/2)^{m p+1}}{\int_D w_r}.
	$$
	
	{\bf ad B.1.} By construction we have $C\subset D_\delta$. Since $w_r\equiv 1$ in $D_\delta$ and $\abs{C}\leq \frac12 \abs{D}$ we obtain
	$$
	\frac{\int_{\partial C\cap D} w_r~d\sigma}{\int_C w_r} \geq h(D,1).
	$$
	By Theorem \ref{thm:cheegerfinunbounded} $h(D,1)$ is positive.
	
	{\bf ad B.2.} We can infer that $C\subset B_r(\zeta_i)$ for suitable $i$ and let $q=m_i p$. Let us assume for simplicity that $\zeta_i=0$.
	Let $\mu$ and $\nu$ be defined as in \eqref{cheeger:defmu} and \eqref{cheeger:defnu} w.r.t. the weight $\abs{\cdot}^{m_i p}$.\\
	Since 
	$$
	\mu(B_{2r}(2r))\geq \mu(B_r(0))\geq \mu(C)
	$$
	by continuity of $s\mapsto \mu(B_s(s))$ there exists $s\in(0,2r]$ such that $\mu(B_s(s))=\mu(C)$.
	We can now appeal to a result about weighted isoperimetric problems (\cite[see Theorem 3.16]{Dahlberg2010}) which guarantees 
	$\nu(\partial B_s(s))\leq \nu(\partial C)$.
	Since
	$$
	\nu(B_s(s))\geq s\pi (\sqrt{2}s)^q \quad \text{and} \quad \mu(B_s(s)) \leq 2s^2\pi (2s)^q
	$$
	we obtain
	$$
	\frac{\nu(\partial C)}{\mu(C)} \geq \frac{\nu(\partial B_s(s))}{\mu(B_s(s))} \gtrsim s^{-1}
	$$
	The function $s^{-1}$ is bounded from below by a positive constant on the interval $(0,2r]$ and we are done in this case.
	
	{\bf ad B.3.} We estimate
	$$
	\frac{\int_{\partial C\cap D} w_r~d\sigma}{\int_C w_r} \geq \frac{(r/2)^{m p} \ell(\partial C)}{\abs{C}} \geq \frac{(r/2)^{m p}}{\abs{D}^{1/2}} 
	\frac{\ell(\partial C)}{\abs{C}^{1/2}}
	$$
	By the isoperimetric inequality the fraction $\ell(\partial C)/\abs{C}^{1/2}$ has a positive lower bound independent from $C$.
\end{proof}
\subsection{Cheeger Constant of a Gaussian}

In this subsection we will study the Cheeger constant of the Gaussian $\varphi=e^{-\pi.^2}$ on disks centered at $0$.

\begin{theorem}\label{thm:CheegerGauss}
	For $p\in [1,\infty)$
	there exists a constant $\delta>0$, depending on $p$, but independent of $R>0$ such that  
	$$
		h_{p,B_R(0)}(\varphi)\geq \delta.
	$$
\end{theorem}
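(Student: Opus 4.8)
The plan is to reduce the statement to an explicit Gaussian weight. By Lemma~\ref{lem:GaborGauss} together with the identity $\mathcal{A}\varphi=V_\varphi\varphi$ one has $\abs{V_\varphi\varphi(x,y)}=c\,e^{-\frac{\pi}{2}(x^2+y^2)}$ for some $c>0$, so $\abs{V_\varphi\varphi}^p=c^p\,e^{-\alpha\abs{z}^2}$ with $\alpha:=\tfrac{\pi p}{2}>0$. The prefactor $c^p$ cancels in the Cheeger quotient, so it suffices to bound from below, uniformly in $R$, the quantity $h\bigl(B_R(0),e^{-\alpha\abs{z}^2}\bigr)$ of \eqref{def:cheegerdw}, i.e.\ the infimum of $\nu(\partial C\cap B_R(0))/\mu(C)$ over open $C\subset B_R(0)$ with smooth relative boundary and $\mu(C):=\int_C e^{-\alpha\abs{z}^2}\le\tfrac12\mu(B_R(0))$, where $\nu$ is the weighted surface measure. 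By Lemma~\ref{lem:cheegerconnected} one may moreover assume $C$ connected. I would then split the argument according to whether $R$ is bounded or large.

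For $R\le R_0$ (with $R_0$ fixed below) the weight $e^{-\alpha\abs{z}^2}$ is pinched between $e^{-\alpha R_0^2}$ and $1$ on $\overline{B_R(0)}$, so the Cheeger ratio is comparable to the \emph{unweighted} one $\ell(\partial C\cap B_R(0))/\abs{C}$. A scaling argument (lengths scale like $R$, areas like $R^2$, and the mass constraint is scale invariant) gives $h(B_R(0),1)=R^{-1}h(B_1(0),1)$, which is positive by Theorem~\ref{thm:cheegerfinunbounded} and bounded below by $R_0^{-1}h(B_1(0),1)$ for $R\le R_0$; applying the (symmetric) relative isoperimetric inequality in the disc to $B_R(0)\setminus\overline C$ disposes of the sets with $\abs{C}>\tfrac12\abs{B_R(0)}$ using only that $\abs{B_R(0)\setminus C}\ge\mu(B_R(0)\setminus C)\ge\tfrac12 e^{-\alpha R_0^2}\abs{B_R(0)}$. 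This yields a lower bound depending only on $p$ and $R_0$.

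For $R>R_0$ the key input is the Gaussian isoperimetric inequality in $\realr^2$ for the density $e^{-\alpha\abs{z}^2}$: half\nobreakdash-planes are the isoperimetric regions, and the Cheeger constant of $(\realr^2,e^{-\alpha\abs{z}^2}\,dz)$ equals $h_\infty:=2\sqrt{\alpha/\pi}$ (the weighted perimeter over weighted measure of a half\nobreakdash-plane through the origin). Since an admissible $C$ satisfies $\mu(C)\le\tfrac12\mu(B_R(0))\le\tfrac12\mu(\realr^2)$, decomposing its reduced boundary into the part inside $B_R(0)$ and the part on $\partial B_R(0)$ — the latter of total weight at most $\nu(\partial B_R(0))=2\pi R\,e^{-\alpha R^2}$ — gives
\[
\int_{\partial C\cap B_R(0)}e^{-\alpha\abs{z}^2}\,d\sigma\ \ge\ h_\infty\,\mu(C)-2\pi R\,e^{-\alpha R^2}.
\]
Fixing a threshold $\mu_0>0$ and then $R_0$ so large that $2\pi R\,e^{-\alpha R^2}\le\tfrac{h_\infty}{2}\mu_0$ for all $R\ge R_0$, one immediately obtains $\nu(\partial C\cap B_R(0))/\mu(C)\ge h_\infty/2$ whenever $\mu(C)\ge\mu_0$.

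The remaining, and genuinely delicate, case is $R\ge R_0$ with $\mu(C)<\mu_0$: such a $C$ has exponentially small Gaussian mass and must concentrate near $\partial B_R(0)$, so the crude estimate on $\nu(\partial C\cap\partial B_R(0))$ is too wasteful — this is the main obstacle. The plan here is a ``collar'' analysis: describe the part of $\partial C$ lying on $\partial B_R(0)$ by the inward thickness $t(z)$ of $C$, split it according to $t(z)\ge1$ or $t(z)<1$; on the first family the collar already carries mass $\mu(C)\ge e^{-\alpha R^2}\mathcal{H}^1(\{t\ge1\})$, and on the second family the inner edge of the thin collar lies in $\partial C\cap B_R(0)$ and contributes at least $\tfrac12 e^{-\alpha R^2}\mathcal{H}^1(\{t<1\})$ to $\nu(\partial C\cap B_R(0))$. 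Hence $\nu(\partial C\cap\partial B_R(0))\le\mu(C)+2\,\nu(\partial C\cap B_R(0))$, so the Gaussian isoperimetric inequality forces $3\,\nu(\partial C\cap B_R(0))+\mu(C)\ge h_\infty\mu(C)$, i.e.\ $\nu(\partial C\cap B_R(0))\ge\tfrac{h_\infty-1}{3}\mu(C)$, a genuine bound since $h_\infty=2\sqrt{\alpha/\pi}=\sqrt{2p}\ge\sqrt2>1$ for $p\ge1$. Making ``inward thickness'' rigorous — $\partial C$ being only assumed smooth \emph{inside} $B_R(0)$ — is the technical heart; alternatively this whole case (indeed the whole theorem) can be bypassed by noting that the normalized restriction of $e^{-\alpha\abs{z}^2}\,dz$ to the convex set $B_R(0)$ is a log\nobreakdash-concave probability measure whose one\nobreakdash-dimensional marginals have variance at most that of the full Gaussian, uniformly in $R$, so that Bobkov's isoperimetric inequality for log\nobreakdash-concave measures \cite{bobkov99} already delivers a Cheeger lower bound depending only on $\alpha$, hence only on $p$. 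Either route gives $h_{p,B_R(0)}(\varphi)\ge\delta(p)>0$.
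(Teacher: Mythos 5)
Your reduction to the radial Gaussian weight $e^{-\alpha\abs{z}^2}$, $\alpha=\pi p/2$, via Lemma \ref{lem:GaborGauss}, and the restriction to connected competitors via Lemma \ref{lem:cheegerconnected}, are exactly the paper's starting point, and your key tool (Gaussian/log-concave isoperimetry, $\nu(\partial C)\gtrsim\mu(C)$ for $\mu(C)\le\tfrac12$) is also the paper's, cf.\ \eqref{ineq:isoperimetricgauss2}. Where you genuinely diverge is in how the ``free'' portion of $\partial C$ on the circle $\partial B_R(0)$ is neutralized. The paper makes no small-$R$/large-$R$ split: for a connected $C$ touching the circle it isolates the unique boundary component $A_0$ meeting $\partial B_R(0)$, introduces the radius $\rho$ with $\mu(B_\rho(0))=\tfrac34\mu(B_R(0))$ (so $R-\rho\gtrsim R$), and shows by arc/chord comparisons that either $\ell(A_0\cap\partial B_R(0))\lesssim\ell(A_0\cap B_R(0))$, so the circle part of $\nu(\partial C)$ is dominated by the interior part, or else passes to the enclosed set $E=C'\setminus\overline C$, which has mass $\ge\tfrac14\mu(B_R(0))$ and whose boundary lies in $\partial C\cap B_R(0)$, and applies the isoperimetric inequality to $E$ or its complement. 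You instead (a) subtract the total circle weight $2\pi R e^{-\alpha R^2}$, which works for $R\ge R_0$ and $\mu(C)\ge\mu_0$, (b) treat bounded $R$ by comparison with the unweighted relative Cheeger constant of the disc via Theorem \ref{thm:cheegerfinunbounded} and scaling (your fix for the weighted-versus-Lebesgue mass constraint is fine), and (c) finish the thin/low-mass regime either by a collar argument or by applying the log-concave isoperimetric inequality to the normalized restriction of the Gaussian to $B_R(0)$.

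Two caveats. The collar argument is, as you yourself admit, only a sketch at its decisive point: ``inward thickness'' must be defined for boundary points on the circle at which $C$ need not contain any radial segment, and this is precisely the difficulty the paper's case analysis is built to avoid; if this were your only finishing move the proof would have a genuine gap (note also that, as written, it leans on the sharp constant $h_\infty=\sqrt{2p}>1$ from the Borell--Sudakov--Tsirel'son inequality, a stronger input than the unspecified-constant form the paper uses, although choosing a larger collar thickness would remove that reliance). The restricted-measure alternative, by contrast, does close the argument, covers all $R$ at once (making your $R\le R_0$ discussion superfluous), and is arguably cleaner than the paper's casework: the density $e^{-\alpha\abs{z}^2}\chi_{B_R(0)}$ is log-concave, its dispersion (a first or second moment of $\abs{z}$, which is what controls the constant in the quantitative form of \cite{bobkov99} -- a dependence the paper's own citation suppresses) is bounded by that of the full Gaussian uniformly in $R$, since conditioning on $\{\abs{z}\le R\}$ stochastically decreases $\abs{z}$. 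The one point you should make explicit is that the perimeter of $C$ with respect to the restricted measure sees only $\partial C\cap B_R(0)$: no mass is gained by enlarging $C$ across the circle, and by convexity of $B_R(0)$ every point of $B_R(0)\setminus C$ within distance $\varepsilon$ of $C$ is within distance $\varepsilon$ of $\partial C\cap B_R(0)$, so the free boundary is invisible to the isoperimetric inequality. With that spelled out, your level of rigor matches the paper's own application of \eqref{ineq:isoperimetricgauss2} to sets whose full boundary is not smooth where it meets the circle.
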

\begin{proof}
	By Lemma \ref{lem:GaborGauss} there exists a positive constant $r$ such that
	$$
	\abs{V_\varphi \varphi (x,y)}^p = r e^{-p\pi/2(x^2+y^2)}.
	$$
	For $q\geq \pi/2$ let $w_q(x,y):=e^{-q(x^2+y^2)}$. Proving the statement amounts to showing that $h(B_R(0),w_q)$ is uniformly bounded away from zero for any fixed $q\geq\pi/2$.
	The restriction is however not necessary and it suffices to assume $q>0$.\\
	
	Let $\beta>0$ be such that $\intRtwo \beta w_q=1$.
	For $C\subset \realr^2$ and $A$ a $1$-dimensional manifold we will use the notations
	$$
	\mu(C):=\beta \int_C w_q \quad\text{and}\quad \nu(A):=\beta \int_A w_q~d\sigma,
	$$
	where $\sigma$ denotes the surface measure on $A$. For $R>0$ let us define
	\begin{eqnarray*}
	 \mathcal{C}_R&:=&\{C\subset B_R(0)~\text{open, connected}: \partial C\cap B_R(0) \text{ is smooth and } \mu(C)\leq \frac12 \mu(B_R(0))\},\\
	 \mathcal{C}_R^i &:=& \{C\in \mathcal{C}_R:  \partial C \cap \partial B_R(0)= \emptyset\ \},\\
	 \mathcal{C}_R^b &:=& \{C\in \mathcal{C}_R: \partial C \cap \partial B_R(0)\neq \emptyset\}.
	\end{eqnarray*}
	Clearly $\mathcal{C}_R=\mathcal{C}_R^i \cup \mathcal{C}_R^b$.\\
	
	Our proof will heavily rely on the fact that on probability spaces with log-concave measures a isoperimetric inequality holds true (\cite{bobkov99}), i.e. there exists $c>0$ such that
	\begin{equation}\label{ineq:isoperimetricgauss}
	\nu(\partial C) \geq c I(\mu(C)) \quad \text{for all } C\subset \realr^2 \text{  with smooth boundary},
	\end{equation}
	where $I:=\gamma \circ \Gamma^{-1}$ with
	$$
	\gamma(t):=\frac{1}{\sqrt{2\pi}} e^{-x^2/2} \quad \text{and} \quad \Gamma(t):=\int_{-\infty}^t \gamma(s)~ds.
	$$
	
	The function $I:[0,1]\rightarrow [0,\frac{1}{\sqrt{2\pi}}]$ is strictly positive on $(0,1)$ and satisfies $I(0)=I(1)=0$.
	An elementary calculation yields $I''(t)= - 1/\gamma(\Gamma^{-1}(t)) \leq 0$, therefore $I$ is concave.
	Since $I\left(\frac12\right) = 1/\sqrt{2\pi}$ we have for any $C$ such that $\mu(C)\leq \frac12$ that
	\begin{equation}\label{ineq:isoperimetricgauss2}
	\nu(\partial C) \geq c \cdot I(\mu(C)) \geq c\cdot \sqrt\frac{2}{\pi} \mu(C).
	\end{equation}
	
	Note that since $\realr^2$ has no boundary Equation \eqref{ineq:isoperimetricgauss2} tells us that $h(\realr^2,w_q)\geq c \sqrt{\frac{2}{\pi}}$.\\
	
	First let $C\in \mathcal{C}_R^i$. Since $\mu(C)\leq \frac12 \mu(B_R(0)) \leq \frac12$ we have
	$$
	\frac{\nu(\partial C \cap B_R(0))}{\mu(C)} = \frac{\nu(\partial C)}{\mu(C)} \geq c\cdot \sqrt\frac{2}{\pi}.
	$$
	
	Thus it remains to look at sets $C\in \mathcal{C}_R^b$:\\
	For $R>0$ let $\rho=\rho(R)>0$ be such that 
	$\mu(B_\rho (0)) = \frac34 \mu(B_R(0))$.
	Since $C$ is connected there is exactly one connected component $A_0$ of $\partial C$ such that $A_0\cap \partial B_R(0)\neq \emptyset$.\\

	We will now have a look at the ratio $(R-\rho(R))/R$:
	Let $R<1/\sqrt{q}$ and let $\alpha:=\sqrt{\frac{3}{4e}}$, then
	$$
	\mu(B_{\alpha R}(0)) = \beta \int_{B_{\alpha R}(0)} e^{-q\abs{z}^2}~dz 
	\leq \beta \alpha^2 R^2 \pi = e\alpha^2 \cdot \beta e^{-1} R^2\pi \leq e\alpha^2\mu(B_R(0))=\frac34 \mu(B_R(0))
	$$
	and thus we obtain for $R\in (0,1/\sqrt{q})$
	$$
	\frac{R-\rho(R)}{R} \geq 1-\alpha >0.
	$$
	Note that $(R-\rho(R))/R$ is a nonnegative and continuous function of $R>0$. Since $\rho(R)$ converges to a finite limit for $R\rightarrow\infty$ there exists a $\kappa>0$ that 
	only depends on $q$ such that
	\begin{equation}\label{ineq:Rminusrho}
	R-\rho(R) \geq \kappa R \quad \text{for all }R>0.
	\end{equation}
	We will now distinguish three cases:
	\begin{itemize}
	 \item[Case A:] $A_0 \cap B_\rho \neq \emptyset$,
	 \item[Case B:] $A_0 \cap B_\rho = \emptyset$ and $C_0\cap B_\rho\neq \emptyset$,
	 \item[Case C:] $A_0 \cap B_\rho = \emptyset$ and $C_0\cap B_\rho= \emptyset$.
	\end{itemize}
	In the first two cases we will show that there exists a positive $\lambda$ that does not depend on $R$ and $C$ such that 
	$$
	\ell(A_0\cap \partial B_R(0)) \leq \lambda\cdot \ell(A_0\cap B_R(0)).
	$$
	This implies that $\nu(A_0\cap \partial B_R(0))\leq \lambda \nu(A_0\cap B_R(0))$ and therefore
	$$
	\nu(A_0) = \nu(A_0\cap B_R(0))+ \nu(A_0 \cap \partial B_R(0)) \leq (1+\lambda) \nu (A_0\cap B_R(0)).
	$$
	Now we can estimate
	\begin{eqnarray*}
	 \frac{\nu(\partial C \cap B_R(0))}{\mu(C)} &=& \frac{\nu(A_0\cap B_R(0)) + \nu(\partial C\setminus A_0)}{\mu(C)}\\
	 &\geq& (1+\lambda)^{-1} \frac{\nu(\partial C)}{\mu(C)} \geq (1+\lambda)^{-1} c \sqrt{\frac{2}{\pi}},
	\end{eqnarray*}
	where we used \eqref{ineq:isoperimetricgauss2}.\\
	
	{\bf ad A.}
	By \eqref{ineq:Rminusrho} we have
	$$
	\ell(A_0\cap B_R(0))\geq 2\abs{R-\rho(R)} \geq 2\kappa R.
	$$
	Since $\ell(A_0\cap \partial B_R(0))\leq 2R\pi$ we can choose $\lambda=\pi/\kappa$.\\
	
	{\bf ad B.}
	If $C$ is such that $A_0\cap \partial B_R(0)$ is not contained in any open halfplane $H$ such that $0\in \partial H$ then there has to be a connected 
	component of $A_0\cap B_R(0)$ with Euclidean length at least $R$.
	
	If $A_0\cap \partial B_R(0)$ however is contained in some halfplane $H$ there is an arch $\Lambda$ of $B_R(0)$ in $H$ whose endpoints are contained in $A_0\cap \partial B_R(0)$ 
	and $\Lambda \supseteq A_0\cap \partial B_R(0)$.
	Then $\ell(A_0\cap B_R(0)) \geq l$, where $l$ denotes the distance between the endpoints of $\Lambda$.\\
	From basic geometry we know that $\ell(\Lambda)/2R = \arcsin(l/2R)$.
	Since $\arcsin(x)\leq \frac{\pi}{2}x$ for $x\in [0,1]$ we obtain
	$$
	 \frac{\ell(A_0\cap \partial B_R(0))}{\ell(A_0\cap B_R(0))} \leq \frac{\ell(\Lambda)}{l} = \frac{\arcsin(l/2R)}{l/2R}\leq \frac{\pi}{2}.
	$$\\
	
	{\bf ad C.}
	Let $C'$ denote the open and bounded set with boundary $A_0$ and set $E:=C'\setminus \bar{C}$.
	Since $B_\rho(0)$ is contained in $C'$ we have $\mu(C')\geq \frac34 \mu(B_R(0))$ and
	$$
	\mu(E)=\mu(C'\setminus C)=\mu(C')-\mu(C)\geq \frac34 \mu(B_R(0)) -\frac12 \mu(B_R(0)) = \frac14 \mu(B_R(0)).
	$$
	
	In case $\mu(E)\leq\frac12$ we estimate using \eqref{ineq:isoperimetricgauss2}
	$$
	\nu(\partial C\cap B_R(0)) \geq \nu(\partial E)\geq c \sqrt{\frac{2}{\pi}} \mu(E) \geq c\sqrt{\frac{2}{\pi}} \frac14 \mu(B_R(0)) \geq c\sqrt{\frac{2}{\pi}} \frac12 \mu(C)
	$$
	
	If $\mu(E)>\frac12$ we can apply \eqref{ineq:isoperimetricgauss2} on the unbounded set $E':=\mathbb{R}^2\setminus \overline{E}$.
	Since $B_R(0)\supseteq E$ it follows that $\mu(B_R(0)) >\frac12$ and we obtain
	\begin{eqnarray*}
	\nu(\partial C\cap B_R(0)) &\geq& \nu(\partial E) = \nu(\partial E') \geq c \sqrt{\frac{2}{\pi}} \mu(E')\\
	&=& c\sqrt{\frac{2}{\pi}} (1-\mu(E)) \geq c \sqrt{\frac{2}{\pi}} \left(2\mu(B_R(0)) - \mu(B_R(0))\right) \geq c\sqrt{\frac{2}{\pi}} \mu(C)
	\end{eqnarray*}
	\end{proof}
	
\section{Spectral Clustering Algorithm}\label{app:algorithm}
In this section we provide some details on the partitioning algorithm used for our experiment in Section \ref{sec:partalg}.
Spectral clustering methods are based on relating optimal partitioning of a graph to the eigenvector corresponding to the second eigenvalue of the so called graph Laplacian.\\
	
Suppose we are given a set of finitely many points $V:=\{v_1,\ldots,v_l\}\subset \realr^d$ and a similarity measure $w$ on $V$, i.e.,
$$
w:V\times V \rightarrow [0,\infty), \quad w \text{ is symmetric and } w(v_i,v_i)=0  ~\text{for all } i\in\{1,\ldots,l\}.
$$
A weighted, undirected graph $G$ is associated to the pair $(V,w)$ in a very natural way:
The vertices of $G$ are exactly the points $v_1,\ldots,v_l$. Two vertices $v_i,v_j$ are connected if and only if $w(v_i,v_j)>0$ and 
in this case their connecting edge has weight $w(v_i,v_j)$. The matrix $W:=(w(v_i,v_j))_{i,j=1}^l$ is called the \emph{weight matrix} of $G$.
For any $i\in \{1,\ldots,l\}$ and $C\subset V$ we will use the notations
\begin{eqnarray*}
 d_i :=\sum_{j=1}^l W_{i,j},\quad vol(C):= \sum_{j: v_j\in C} d_j,\\
 cut_G(C) := \sum_{i,j: v_i\in C, v_j\in V\setminus C} W_{i,j},
\end{eqnarray*}
for the \emph{degree} of the $i$-th vertex, the \emph{volume} of $C$ and the \emph{cut} of $C$ in $V$.
The \emph{Cheeger ratio} of a set $C\subset V$ is defined by 
$$
h_G(C):= \frac{cut_G(C)}{\min\{vol(C),vol(V\setminus C)\}}
$$
and the \emph{Cheeger constant} of $G$ by $h_G:=\min_{C\subset V} h_G(C)$.

Let us from now on assume the graph is connected, i.e. $d_i>0$ for all $i$.
To compute a partition such that the corresponding Cheeger ratio is quasioptimal we will draw onto the results in \cite{Buhler}.
Let $I$ denote the $l\times l$ identity matrix and $D$ the diagonal matrix with entries $d_1,\ldots, d_l$ then the \emph{normalized graph Laplacian} is given by the matrix
$$
L:=I-D^{-\frac12} W D^{-\frac12}.
$$
The vector $(1,\ldots,1)^T$ is an eigenvector of $L$ with corresponding eigenvalue $0$.
By the assumption that $G$ is connected all other eigenvalues of $L$ will be positive.
The partition is computed by thresholding an eigenvector corresponding to the smallest positive eigenvale of $L$.

Let $u$ be an element of the eigenspace of the smallest positive eigenvalue of $L$ and let 
$$
C^* \text{ be a minimizer of } h_G(\cdot):\{C_t: t\in\realr\}\rightarrow [0,\infty),
$$
where $C_t:=\{v_i: u_i>t\}$.
Then for $h_G^*:=h_G(C^*)$ it holds that 
$$
h_G \le h_G^* \le 2\cdot \sqrt{h_G}.
$$

In our case we have samples of $\abs{V_\varphi f(x,y)}$ available for $(x,y)\in Z \subset \Delta\cdot \mathbb{Z}^2 + d$, where $d\in \realr^2$ and $\Delta>0$.
On the set $Z$ we define a similarity measure $w$ by
$$
w(z,z'):=
	\begin{cases}
        \frac12 (\abs{V_\varphi f}^p(z)+ \abs{V_\varphi f}^p(z')), & \text{if } \abs{z-z'}=\Delta,\\
        0, & \text{otherwise.}
        \end{cases}
$$
Let $C\subset Z$. Since $\abs{V_\varphi f}$ is smooth we obtain (for small $\Delta$) that
\begin{eqnarray*}
 cut_G(C)=\sum_{\substack{z\in C\\ z'\in Z\setminus C}} w(z,z') &=& \sum_{\substack{z\in C, z'\in Z\setminus C\\ \abs{z-z'}=\Delta}} \frac12 (\abs{V_\varphi f}^p(z)+ \abs{V_\varphi f}^p(z'))\\
 &\approx& \sum_{\substack{z\in C, z'\in Z\setminus C\\ \abs{z-z'}=\Delta}} \abs{V_\varphi f}^p(\frac{z+z'}{2}).
\end{eqnarray*}
Therefore -- up to the factor $\Delta$ -- $cut_G(D)$ can be interpreted as a discrete version of the boundary integral in the nominator in Equation \eqref{eq:cheeger}.
Similarly $vol(C)$ can be interpreted as an approximation of $\int_C \abs{V_\varphi f}$ that occurs in the denominator.

\end{document}